\documentclass[11pt]{article}

\vspace{-1cm}

\title{\Large\bf 
New structure on the quantum alcove model with\\ applications to representation theory and Schubert calculus%
\footnote{Key words and phrases: quantum Yang-Baxter move, quantum Bruhat graph, quantum alcove model, level-zero Demazure module, semi-infinite flag manifold. 
\newline
Mathematics Subject Classification 2010: Primary 05E10; Secondary 14N15, 14M15.}%
}
\author{%
Takafumi Kouno \\
 \small Department of Mathematics, Tokyo Institute of Technology, \\
 \small 2-12-1 Oh-okayama, Meguro-ku, Tokyo 152-8551, Japan \\
 \small (e-mail: {\tt kouno.t.ab@m.titech.ac.jp}) \\[5mm]
Cristian Lenart \\ 
 \small Department of Mathematics and Statistics, State University of New York at Albany, \\ 
 \small Albany, NY 12222, U.S.A. \\ 
 \small (e-mail: {\tt clenart@albany.edu}) \\[5mm]
Satoshi Naito \\ 
 \small Department of Mathematics, Tokyo Institute of Technology, \\
 \small 2-12-1 Oh-okayama, Meguro-ku, Tokyo 152-8551, Japan \\
 \small (e-mail: {\tt naito@math.titech.ac.jp}) \\[5mm]
}
\date{}

\usepackage[truedimen,margin=23truemm]{geometry}
\usepackage{hyperref}

\usepackage{amsmath, amssymb}
\usepackage{amsthm}

\usepackage{bm}


\numberwithin{equation}{section}

\theoremstyle{plain}
\newtheorem{lem}{Lemma}[section]
\newtheorem{prop}[lem]{Proposition}
\newtheorem{thm}[lem]{Theorem}
\newtheorem{cor}[lem]{Corollary}
\newtheorem{conj}[lem]{Conjecture}

\newtheorem{ithm}{Theorem}

\theoremstyle{definition}
\newtheorem{dfn}[lem]{Definition}

\theoremstyle{remark}
\newtheorem{ex}[lem]{Example}
\newtheorem{rem}[lem]{Remark}
\newtheorem{rems}[lem]{Remarks}


\newcommand{\BZ}{\mathbb{Z}}
\newcommand{\BR}{\mathbb{R}}
\newcommand{\BC}{\mathbb{C}}

\newcommand{\sQ}{\mathsf{Q}}
\newcommand{\sR}{\mathsf{R}}
\newcommand{\sS}{\mathsf{S}}
\newcommand{\sT}{\mathsf{T}}

\newcommand{\q}{\mathsf{q}}

\newcommand{\CA}{\mathcal{A}}
\newcommand{\CP}{\mathcal{P}}

\newcommand{\Fg}{\mathfrak{g}}
\newcommand{\Fh}{\mathfrak{h}}

\newcommand{\bk}{\mathbf{k}}
\newcommand{\bp}{\mathbf{p}}
\newcommand{\bq}{\mathbf{q}}
\newcommand{\br}{\mathbf{r}}
\newcommand{\bG}{\mathbf{G}}
\newcommand{\bchi}{\bm{\chi}}
\newcommand{\bpsi}{\bm{\psi}}
\newcommand{\bomega}{\bm{\omega}}

\newcommand{\ve}{\varepsilon}
\newcommand{\vpi}{\varpi}

\newcommand{\vtl}{\vartriangleleft}

\DeclareMathOperator{\QBG}{QBG}
\DeclareMathOperator{\Hom}{Hom}
\DeclareMathOperator{\gch}{gch}
\DeclareMathOperator{\ed}{end}
\DeclareMathOperator{\wt}{wt}
\DeclareMathOperator{\height}{height}
\DeclareMathOperator{\coheight}{coheight}
\DeclareMathOperator{\down}{down}
\DeclareMathOperator{\nega}{neg}
\DeclareMathOperator{\sgn}{sgn}
\DeclareMathOperator{\Par}{Par}
\DeclareMathOperator{\bPar}{\overline{\Par}}

\newcommand{\af}{\mathrm{af}}

\newcommand{\wti}[1]{\widetilde{#1}}
\newcommand{\wh}[1]{\widehat{#1}}
\newcommand{\pair}[2]{\langle #1, #2 \rangle}
\newcommand{\lrpair}[2]{\left\langle #1, #2 \right\rangle}
\newcommand{\bra}[1]{[\![ #1 ]\!]}
\newcommand{\pra}[1]{(\!( #1 )\!)}

\newcommand{\E}[1]{\mathrm{(E #1 )}}

\makeatletter
\renewcommand\section{\@startsection{section}{1}{0pt}
{-3.5ex plus -1ex minus -.2ex}{1.0ex plus .2ex}{\large\bf}}
\renewcommand\subsection{\@startsection{subsection}{1}{0pt}
{2.5ex plus 1ex minus .2ex}{-1em}{\bf}}
\makeatother

\newenvironment{enu}{%
 \begin{enumerate}%
}{\end{enumerate}}

\begin{document}

\maketitle

\vspace{-13mm}

\begin{abstract} The quantum alcove model associated to a dominant weight plays an important 
role in many branches of mathematics, such as combinatorial representation theory, the theory of Macdonald polynomials, and Schubert calculus.
For a dominant weight, it is proved by Lenart-Lubovsky that the quantum alcove model does not depend on the choice of a reduced alcove path, which is a shortest path of alcoves from the fundamental one to its translation by the given dominant weight. This is established through quantum Yang-Baxter moves, which biject the objects of the model associated with two such alcove paths, and can be viewed as a generalization of jeu de taquin slides to arbitrary root systems. 
The purpose of this paper is to give a generalization of quantum Yang-Baxter moves to the quantum alcove model corresponding to an arbitrary weight, which was used to express a general Chevalley formula in the equivariant $K$-group of semi-infinite flag manifolds. The generalized quantum Yang-Baxter moves give rise to a ``sijection'' (bijection between signed sets), and are shown to preserve certain important statistics, including weights and heights. As an application, we prove that the generating function of these statistics does not depend on the choice of a reduced alcove path. Also, we obtain an identity for the graded characters of Demazure submodules of level-zero extremal weight modules over a quantum affine algebra, which can be thought of as a representation-theoretic analogue of the mentioned Chevalley formula. Other applications and some open problems involving ``signed crystals'' are discussed.
\end{abstract}


\section{Introduction.}

The quantum alcove model was introduced in \cite{LL1}. In \cite{LNSSS2} it was proved to be a uniform model for tensor products of single-column Kirillov-Reshetikhin crystals of quantum affine algebras, and its relevance to the theory of Macdonald polynomials was also discussed. Crystals are colored directed graphs encoding the structure of quantum algebra representations when the quantum parameter goes to 0~\cite{Kas}. The quantum alcove model generalizes the alcove model in \cite{LP1}, which has a similar representation theoretic application~\cite{LP2}.

Let $\lambda$ be a dominant weight, and let $\Gamma$ be a reduced $\lambda$-chain (of roots), or equivalently, a shortest path of alcoves from the fundamental one to its translation by $\lambda$ (see  \cite{LP1}).
One associates with $\Gamma$ (viewed as a sequence) a certain family $\CA(\Gamma)$ of subsets of its indices, called admissible subsets.
Here we remark that there are two or more reduced $\lambda$-chains in general, and therefore the model is not uniquely  determined (for the fixed dominant weight $\lambda$). 
However, for any two reduced $\lambda$-chains $\Gamma_1$ and $\Gamma_2$, there exists a bijection between $\CA(\Gamma_1)$ and $\CA(\Gamma_2)$
which preserves the corresponding crystal operators, as well as some important statistics: weights, heights, $\down(\cdot)$, and $\ed(\cdot)$;
the precise definitions of these statistics are given in Section~\ref{subsec:def_quantum_alcove_models}.  
The construction of this bijection was given in \cite{LL2} in terms of so-called quantum Yang-Baxter moves, which are
explicitly described by reduction to the rank 2 root systems. The main idea is the following: given $\Gamma_1$ and $\Gamma_2$ as above, it is known from \cite{LP1} that $\Gamma_2$ is obtained from $\Gamma_1$ by repeated application of a certain procedure called a ``Yang-Baxter transformation'', see Section~\ref{subsec:YB-transformations}; hence it suffices to construct a bijection (i.e., a quantum Yang-Baxter move) between $\CA(\Gamma_1)$ and $\CA(\Gamma_2)$ when $\Gamma_1$ and $\Gamma_2$ are related by a Yang-Baxter transformation. 

The quantum Yang-Baxter moves generalize the Yang-Baxter moves for the alcove model, which were defined and studied in~\cite{L}. It is pointed out in~\cite{LL2} that the quantum Yang-Baxter moves realize the combinatorial $R$-matrix, namely the (unique) affine crystal isomorphism permuting factors in a tensor product of single-column Kirillov-Reshetikhin crystals. It is also explained that these moves can be viewed as a generalization of jeu de taquin slides (for semistandard Young tableaux, relevant to type $A$) to arbitrary root systems. 

For an arbitrary (not necessarily dominant) weight $\lambda$, we also consider a (not necessarily reduced) $\lambda$-chain $\Gamma$ (of not necessarily positive roots). For an arbitrary element $w$ of the finite Weyl group $W$, let $\CA(w, \Gamma)$ denote the collection of $w$-admissible subsets. This generalization is introduced in \cite{LNS} to describe the Chevalley formula for the equivariant $K$-group of semi-infinite flag manifolds, and for the equivariant quantum $K$-theory of flag manifolds $G/B$  (both of arbitrary type), cf. also \cite{KNS, NOS}. 
We also define statistics $\wt(A)$, $\height(A)$, $\down(A)$, $\ed(A)$ for $A \in \CA(w, \Gamma)$ 
in the same way as for $A \in \CA(\Gamma)=\CA(e,\Gamma)$ with $\lambda$ dominant, where $e\in W$ is the identity; 
in addition, we define $n(A) \in \BZ_{\ge 0}$. 

Our main result is the existence of a bijection between $\CA(w, \Gamma_1)$ and $\CA(w, \Gamma_2)$ which preserves the statistics above, where $\Gamma_2$ is obtained from $\Gamma_1$ by a Yang-Baxter transformation.
%
%
\begin{ithm}[Theorems~\ref{thm:YB-move} and \ref{thm:wt_height_preserving}]\label{thm:YB-move_intro}
Let $\CA(w, \Gamma_{1})$ and $\CA(w, \Gamma_{2})$ be quantum alcove models associated to the same weight 
such that $\Gamma_{2}$ is obtained from $\Gamma_{1}$ by a Yang-Baxter transformation. 
Then, there exist subsets $\CA_{0}(w, \Gamma_{1}) \subset \CA(w, \Gamma_{1})$ and $\CA_{0}(w,\Gamma_{2}) \subset \CA(w, \Gamma_{2})$ which satisfy the following. 
\begin{enu}
\item There exists a ``sign-preserving'' bijection $Y: \CA_{0}(w, \Gamma_{1}) \rightarrow \CA_{0}(w, \Gamma_{2})$,  
which also preserves the statistics $\wt(\cdot)$, $\height(\cdot)$, $\down(\cdot)$, and $\ed(\cdot)$. 

\item If we set $\CA_{0}^{C}(w, \Gamma_{1}) := \CA(w, \Gamma_{1}) \setminus \CA_{0}(w, \Gamma_{1})$ and $\CA_{0}^{C}(w, \Gamma_{2}) := \CA(w, \Gamma_{2}) \setminus \CA_{0}(w, \Gamma_{2})$, 
then there exists a ``sign-reversing'' involution $I_{1}$ (resp., $I_{2}$) on $\CA_{0}^{C}(w, \Gamma_{1})$ (resp., $\CA_{0}^{C}(w, \Gamma_{2})$) 
which preserves the statistics $\wt(\cdot)$, $\height(\cdot)$, $\down(\cdot)$, and $\ed(\cdot)$. 
\end{enu}
\end{ithm}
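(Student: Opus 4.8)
The plan is to reduce the entire construction to rank $2$ root systems, exploiting the fact that a Yang-Baxter transformation is a \emph{local} operation: it modifies $\Gamma_{1}$ only within a contiguous segment whose roots lie in a fixed rank $2$ root subsystem, while leaving the remaining entries of the chain, and hence the behavior of admissible subsets outside this segment, unchanged. First I would isolate this segment and show that each $A \in \CA(w, \Gamma_{1})$ is determined by its restriction to the segment together with the fixed data on the complement, and likewise for $\CA(w, \Gamma_{2})$. This localizes the problem: it suffices to construct the subsets $\CA_{0}$, the bijection $Y$, and the involutions $I_{1}, I_{2}$ on the rank $2$ segments, after which they extend to the full chains by acting as the identity away from the segment. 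In the language of the theorem, this produces the required ``sijection'' between the two signed sets $\CA(w,\Gamma_{1})$ and $\CA(w,\Gamma_{2})$.

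Next I would carry out an explicit case analysis over the rank $2$ root systems $A_{1} \times A_{1}$, $A_{2}$, $B_{2}$, and $G_{2}$. For each such system I would enumerate the $w$-admissible subsets supported on the two segments related by the Yang-Baxter transformation, organized according to the corresponding paths in the quantum Bruhat graph $\QBG$. The essential new feature, compared with the dominant case of \cite{LL2}, is that, because $\Gamma$ need not be reduced and $w$ is arbitrary, the segments may contain repeated or negative roots; this produces admissible subsets carrying a nontrivial sign $(-1)^{n(A)}$. I would sort these into those possessing a natural partner under the move, which comprise $\CA_{0}(w, \Gamma_{1})$ and $\CA_{0}(w, \Gamma_{2})$ and are matched by the sign-preserving bijection $Y$, and those that do not, which populate the complements $\CA_{0}^{C}$.

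To define the involutions $I_{1}$ and $I_{2}$ I would pair each leftover admissible subset with another leftover subset differing by a single ``toggle'' within the segment, chosen so that the two members of a pair carry opposite signs while sharing the same values of $\wt(\cdot)$, $\height(\cdot)$, $\down(\cdot)$, and $\ed(\cdot)$; this is precisely a sign-reversing, statistic-preserving involution, and it forces the signed cardinality of each complement to vanish. The preservation of the four statistics under $Y$, which is the content of Theorem~\ref{thm:wt_height_preserving}, would then be verified directly from the explicit rank $2$ description, by tracking how the weight, the height, and the endpoints recorded by $\down(\cdot)$ and $\ed(\cdot)$ transform under the local rearrangement of roots.

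The hard part will be the rank $2$ analysis for the non-simply-laced systems $B_{2}$ and $G_{2}$, where the quantum Bruhat graph has more edges, including multiple down covers of differing lengths, and the number of admissible subsets supported on a segment grows substantially. In these cases the correct partition into $\CA_{0}$ and its complement, together with the correct pairing rule for the sign-reversing involution, is delicate to pin down: one must ensure simultaneously that $Y$ is a genuine bijection preserving all four statistics and that every leftover subset cancels against an oppositely-signed partner with matching statistics. Organizing the bookkeeping so that these verifications remain structurally transparent, rather than devolving into an unstructured enumeration, is the principal obstacle.
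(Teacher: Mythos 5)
Your overall architecture (localize the Yang--Baxter move to its rank~$2$ segment, analyze the segment, then extend by the identity elsewhere) matches the paper's strategy, but there is a genuine gap in your reduction step. You treat the passage to rank~$2$ as automatic: ``it suffices to construct $\CA_0$, $Y$, $I_1$, $I_2$ on the rank~$2$ segments.'' The problem is that the paths being matched are paths in $\QBG(W)$ for the \emph{full} Weyl group $W$, whose edge labels happen to lie in the rank~$2$ subsystem $\Delta_{\alpha,\beta}$; they are \emph{not} paths in $\QBG(\overline{W})$ for the dihedral reflection subgroup $\overline{W}$. The paper explicitly warns that $\QBG(\overline{W})$ is not a subgraph of $\QBG(W)$ (quantum edges depend on $\ell$ and on $2\pair{\rho}{\gamma^\vee}$, both computed in the ambient group), so a case-by-case enumeration in the rank~$2$ root systems does not by itself yield the required matching for segments of paths starting at an arbitrary $v\in W$. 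Bridging this requires the nontrivial dihedral-coset machinery of \cite[Proposition~5.1, Theorem~5.3]{LL2} (the factorization $u=\overline{u}\lfloor u\rfloor$ and the resulting weight- and sign-compatible injection $\CP(v,\Pi;w,\xi)\hookrightarrow\CP(\overline{v},\Pi;\overline{w},\xi)$), which your proposal never invokes. Relatedly, the paper's engine for the rank~$2$ dichotomy is not raw enumeration of admissible subsets but the quantum Bruhat operators $\sR_\gamma=1+\sQ_\gamma$, whose Yang--Baxter equation~\eqref{eq:Yang-Baxter_eq} forces the signed path counts for $\Pi$ and $\Pi'$ to agree; the matrix computations (Propositions~\ref{prop:matrix} and \ref{prop:matrix_G}) then pin down the multiplicities $\{1,2\}$ (resp.\ $\{1,2,3\}$ in $G_2$) that make the pairing well defined. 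Without some substitute for this mechanism, your ``delicate bookkeeping'' has no structural reason to close up, and in $G_2$ the naive dichotomy genuinely fails: there are exceptional configurations (cases (E1)--(E4) in the paper) with three same-statistics partners, where the sijection requires an explicit ad hoc choice rather than a ``unique natural partner.''

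A second, smaller gap concerns Theorem~\ref{thm:wt_height_preserving}. You propose to verify preservation of $\wt(\cdot)$ and $\height(\cdot)$ ``directly from the explicit rank~$2$ description,'' but these statistics are not functions of the segment's root labels alone: they involve the integers $l_k$ recording which hyperplanes $H_{\beta_k,-l_k}$ the ambient alcove path crosses. The paper's proof needs the geometric input that all hyperplanes along the reversed segment share a common point (\cite[Lemma~3.5]{LL2}), which, via conjugation by a translation $t_\mu$, converts the equality $\ed(\bp(Y(A))^{(2)})=\ed(\bp(A)^{(2)})$ of products of linear reflections into the equality of the corresponding products of affine reflections (and, with \cite[Lemma~44]{LNS}, gives height preservation). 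This ingredient does not follow from the combinatorial matching and must be supplied separately. Finally, a minor correction: within the Yang--Baxter segment the roots are distinct (the paper notes this), so the new feature relative to \cite{LL2} is only the possible negativity of roots, not repetition.
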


The map $Y$ in Theorem~\ref{thm:YB-move_intro} can be viewed as a generalization of the bijection described in terms of quantum Yang-Baxter moves when $\lambda$ is a dominant weight.
Although the map $Y$ is not a bijection from the whole of $\CA(w, \Gamma_{1})$ onto the whole of $\CA(w, \Gamma_{2})$, 
there exist nice involutions outside the domain of $Y$ and outside the image of $Y$. 
If we regard $\CA(w, \Gamma_{i})$, $i = 1, 2$, as a signed set equipped with the sign function $A \mapsto (-1)^{n(A)}$, 
then the collection $(I_{1}, I_{2}, Y)$ of maps is a ``sijection'' (i.e., a signed bijection) $\CA(w, \Gamma_{1}) \Rightarrow \CA(w, \Gamma_{2})$ 
which preserves $\wt(\cdot)$, $\height(\cdot)$, $\down(\cdot)$, and $\ed(\cdot)$;
the notion of a sijection was introduced in \cite[Section~2]{FK}. 

Recall that an element of the affine Weyl group $W_{\af}$ can be written $x = w t_{\xi}$,  with $w$ in the finite Weyl group $W$ 
and $\xi$ in the coroot lattice $Q^{\vee}$. For $\CA(w, \Gamma)$, with $\Gamma$ a (not necessarily reduced) $\lambda$-chain for an arbitrary weight $\lambda$, and $x = w t_{\xi}$ in $W_{\af}$, 
we define a generating function $\bG_{\Gamma}(x)$ of the statistics $\wt(\cdot)$, $\ed(\cdot)$, $\height(\cdot)$, and $\down(\cdot)$ as follows: 
\begin{equation*}
\bG_{\Gamma}(x) := \sum_{A \in \CA(w, \Gamma)} (-1)^{n(A)} q^{-\height(A)-\pair{\lambda}{\xi}} e^{\wt(A)} \ed(A)t_{\xi+\down(A)}. 
\end{equation*} 
We also think of $\bG_{\Gamma}$ as a linear function on the group algebra of $W_{\af}$ with the coefficients introduced above. 
In the case that $\lambda$ is a dominant weight and $x = e$, this function is a refinement of the specialization at $t = 0$ of the symmetric Macdonald polynomial $P_{\lambda}(q, t)$, since we know from \cite[Theorem~7.9]{LNSSS2} that 
\begin{equation*}
P_{\lambda}(q, 0) = \sum_{A \in \CA(\Gamma)} q^{\height(A)} e^{\wt(A)}. 
\end{equation*}
There is a similar relationship in the case of nonsymmetric Macdonald polynomials~\cite{LNSSS3}. 

The existence of our generalized quantum Yang-Baxter moves implies the independence of the generating function $\bG_{\Gamma}(x)$, and thus of the quantum alcove model for an arbitrary weight, from the associated chain of roots $\Gamma$. Here we need $\Gamma$ to be ``weakly reduced,'' which means that it does not contain both a simple root and its negative. 
\begin{ithm}[Theorem~\ref{thm:generating_function_reduced}]\label{thm:generating_function_reduced_intro}
Let $\lambda$ be an arbitrary weight, and $x\in W_{\af}$. Given weakly reduced $\lambda$-chains 
$\Gamma_{1}$ and $\Gamma_{2}$, 
we have $\bG_{\Gamma_1}(x) = \bG_{\Gamma_2}(x)$. 
\end{ithm}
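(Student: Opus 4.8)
The plan is to reduce the statement to the case where $\Gamma_2$ is obtained from $\Gamma_1$ by a single Yang-Baxter transformation, and then to read off the equality of generating functions directly from the sijection furnished by Theorem~\ref{thm:YB-move_intro}. The first step is a connectivity result: any two weakly reduced $\lambda$-chains $\Gamma_1$ and $\Gamma_2$ should be linked by a finite sequence of Yang-Baxter transformations passing only through weakly reduced $\lambda$-chains. For genuinely reduced chains with $\lambda$ dominant this is the fact recalled in the introduction from \cite{LP1}; the work here is to extend it to arbitrary weights in the weakly reduced setting. I would argue geometrically: a $\lambda$-chain encodes the sequence of affine hyperplanes crossed along a path of alcoves with prescribed endpoints, two such paths can be deformed into one another across codimension-two faces, and each elementary deformation is exactly a Yang-Baxter transformation. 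The weakly reduced hypothesis---that no simple root occurs together with its negative---is precisely what is needed to exclude the degenerate crossings that would take a chain outside the admissible class.

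Granting the connectivity, it suffices to prove $\bG_{\Gamma_1}(x) = \bG_{\Gamma_2}(x)$ when $\Gamma_2$ arises from $\Gamma_1$ by one Yang-Baxter transformation. Writing $x = w t_{\xi}$, I split each defining sum along the decomposition $\CA(w,\Gamma_i) = \CA_0(w,\Gamma_i) \sqcup \CA_0^{C}(w,\Gamma_i)$ of Theorem~\ref{thm:YB-move_intro}. Since $\xi$ and $\lambda$ are fixed, the whole term $q^{-\height(A)-\pair{\lambda}{\xi}} e^{\wt(A)} \ed(A) t_{\xi+\down(A)}$ depends on $A$ only through $\height$, $\wt$, $\ed$, and $\down$, all of which are preserved by the maps in Theorem~\ref{thm:YB-move_intro}. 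On the complement $\CA_0^{C}(w,\Gamma_i)$ the involution $I_i$ is sign-reversing and preserves these four statistics, so each orbit $\{A, I_i(A)\}$ contributes two summands that agree in every factor but carry opposite signs $(-1)^{n(\cdot)}$; these cancel, and the complement contributes nothing to $\bG_{\Gamma_i}(x)$. On the remaining part, the bijection $Y \colon \CA_0(w,\Gamma_1) \to \CA_0(w,\Gamma_2)$ is sign-preserving and preserves all four statistics, hence carries each summand of $\bG_{\Gamma_1}(x)$ to the matching summand of $\bG_{\Gamma_2}(x)$ term by term. Combining the two observations gives $\bG_{\Gamma_1}(x) = \bG_{\Gamma_2}(x)$ for a single transformation, and the connectivity then yields the general case by composition.

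The main obstacle I anticipate is the connectivity step, since the classical argument of \cite{LP1} is tailored to dominant weights and genuinely reduced chains. One must verify that the Yang-Baxter transformations available in the weakly reduced setting are rich enough to connect any two weakly reduced $\lambda$-chains, and that every intermediate chain can be kept weakly reduced---in particular, that one is never forced to introduce a simple root and its negative simultaneously. Once this combinatorial-geometric statement is secured, all representation-theoretic content is absorbed into Theorem~\ref{thm:YB-move_intro}, and the passage to generating functions is the routine sign-cancellation argument above.
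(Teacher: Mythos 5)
Your second step---the sign-cancellation argument showing $\bG_{\Gamma_1}(x)=\bG_{\Gamma_2}(x)$ when $\Gamma_2$ arises from $\Gamma_1$ by a single Yang--Baxter transformation---is correct and is exactly the paper's Proposition~\ref{prop:generating_function_YB}: the involutions $I_1,I_2$ kill the contributions of $\CA_0^{C}(w,\Gamma_i)$, and the statistic-preserving, sign-preserving bijection $Y$ matches the surviving terms.

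The genuine gap is in your first step. A Yang--Baxter transformation merely reverses a segment of the chain, so it preserves the length $r$ of the $\lambda$-chain. But weakly reduced $\lambda$-chains need not be reduced: the definition only forbids a simple root occurring together with its negative, so a weakly reduced chain may contain pairs $(\beta,-\beta)$ with $\beta$ non-simple, and hence two weakly reduced $\lambda$-chains can have different lengths. No sequence of (YB) moves can connect such a pair, so your claimed connectivity ``through weakly reduced chains by Yang--Baxter transformations alone'' is false, and the geometric deformation argument you sketch only applies to shortest (reduced) alcove paths. The paper's route is different: it connects each $\Gamma_i$ to a reduced chain $\Gamma_i^\ast$ using \emph{both} procedures (YB) and (D) (the deletion of a segment $(\pm\beta,\mp\beta)$), then connects $\Gamma_1^\ast$ to $\Gamma_2^\ast$ by (YB) alone. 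This forces a second invariance statement that Theorem~\ref{thm:YB-move_intro} does not provide, namely Proposition~\ref{prop:generating_function_D}: $\bG_\Gamma(x)$ is unchanged under deleting a pair $(\pm\beta,\mp\beta)$ with $\beta$ \emph{not} simple, proved by a separate involution $I_D$ (pairing subsets containing $u+1$ with those containing $u+2$) together with an index-shifting bijection. You also misidentify the role of the weakly reduced hypothesis: it is not needed to keep (YB) moves inside the admissible class (reversal never changes the multiset of roots), but rather to guarantee that the reduction process never has to delete a pair $(\pm\alpha_i,\mp\alpha_i)$ with $\alpha_i$ simple---and this matters because for a simple-root pair the invariance genuinely fails, the two generating functions differing by a factor $(1-q^{-h}t_{\beta^\vee})$ as the paper's remark following Proposition~\ref{prop:generating_function_D} shows. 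Without the deletion proposition your argument cannot reach arbitrary weakly reduced chains.
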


We will now discuss several applications of Theorem~\ref{thm:generating_function_reduced_intro} and, implicitly, of the generalized quantum Yang-Baxter moves underlying it.

We give a combinatorial realization of the symmetry of the general Chevalley formula in~\cite{LNS} coming from commutativity in equivariant $K$-theory. Indeed, given arbitrary weights $\mu,\,\nu$, we can successively apply the Chevalley formula for the multiplication by the classes of the line bundles corresponding to them, in either order. The fact that the result is the same is expressed by the following identity, where $\Gamma_1$ is a $\mu$-chain, $\Gamma_2$ is a $\nu$-chain, and $\circ$ indicates composition:
\begin{equation}\label{comm_mu_nu_intro}
\bG_{\Gamma_1}\circ\bG_{\Gamma_2}(x) = \bG_{\Gamma_2}\circ\bG_{\Gamma_1}(x).
\end{equation}
It will be shown that~\eqref{comm_mu_nu_intro} is realized combinatorially via successive applications of the sijection in Theorem~\ref{thm:YB-move_intro}, assuming that the concatenation of $\Gamma_1 $ and $\Gamma_2$ is weakly reduced.

On another hand, we use Theorem~\ref{thm:generating_function_reduced_intro} to obtain an identity for the graded characters of Demazure submodules of level-zero extremal weight modules over a quantum affine algebra, which can be viewed as a representation-theoretic analogue of the general Chevalley formula in \cite{LNS}. 
For a dominant weight $\mu$ and an element $x$ of the affine Weyl group, 
let $V_{x}^{-}(\mu)$ denote the Demazure submodule of the level-zero extremal weight module $V(\mu)$ of extremal weight $\mu$ over the quantum affine algebra. 
For an arbitrary weight $\lambda$, let $\bPar(\lambda)$ denote the set of certain tuples $\bchi$ of partitions bounded by $\lambda$, to which we assign the quantities $|\bchi|$ and $\iota(\bchi)$; 
for the definitions of $\bPar(\lambda)$, $|\bchi|$, and $\iota(\bchi)$, see \eqref{eq:def_Par} and \eqref{eq:def_statistics_par} in Section~\ref{subsec:character_identity}. 

\begin{ithm}[Theorem~\ref{thm:PC-type_formula}]\label{thm:PC-type_formula_intro}
Let $\mu$ be a dominant weight, and $x = w t_{\xi}\in W_{\af}$. 
Take an arbitrary weight $\lambda$ such that $\mu + \lambda$ is dominant, 
and let $\Gamma$ be a reduced $\lambda$-chain. We have 
\begin{equation*}
\begin{split}
& \gch V_{x}^{-}(\mu + \lambda) = \\ 
& \sum_{A \in \CA(w, \Gamma)} \sum_{\bchi \in \bPar(\lambda)} (-1)^{n(A)} q^{-\height(A) - \pair{\lambda}{\xi} - |\bchi|} e^{\wt(A)} \gch V_{\ed(A)t_{\xi + \down(A) + \iota(\bchi)}}^{-}(\mu). 
\end{split}
\end{equation*}
\end{ithm}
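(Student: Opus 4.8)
The plan is to derive the identity from Theorem~\ref{thm:generating_function_reduced_intro} by combining it with the known combinatorial description of the graded characters $\gch V_{x}^{-}(\nu)$ of level-zero Demazure submodules (for $\nu$ dominant) in terms of the quantum alcove model, together with the general Chevalley formula of \cite{LNS}, of which the asserted identity is to be the representation-theoretic shadow. The organizing idea is that the linear operator $\bG_{\Gamma}$ acts on graded characters exactly as multiplication by the line-bundle class $[\mathcal{L}_{\lambda}]$ acts on semi-infinite Schubert classes, so that passing from $\gch V_{x}^{-}(\mu)$ to $\gch V_{x}^{-}(\mu+\lambda)$ is matched with a single such multiplication. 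Because the left-hand side visibly does not involve $\Gamma$, and Theorem~\ref{thm:generating_function_reduced_intro} shows the right-hand side is likewise independent of the choice of reduced $\lambda$-chain, I am free to fix a convenient $\Gamma$, for instance a concatenation of elementary chains attached to the fundamental-weight components of $\lambda$.

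First I would settle a base case in which $\Gamma$ has minimal length, for instance with $\lambda$ a minuscule or fundamental weight, where the inner sum over $\bPar(\lambda)$ trivializes and the statement reduces to a one-step Chevalley-type expansion of $\gch V_{x}^{-}(\mu+\lambda)$; this should follow directly from the semi-infinite Lakshmibai--Seshadri path description, with each admissible subset $A\in\CA(w,\Gamma)$ contributing one term carrying weight $e^{\wt(A)}$, degree $q^{-\height(A)}$, and the shifted Demazure module $\gch V_{\ed(A)t_{\xi+\down(A)}}^{-}(\mu)$. I would then bootstrap to an arbitrary $\lambda$ by writing its reduced chain as a concatenation of such elementary chains and iterating: across the concatenation the finite Weyl-group data collapse into a single $A\in\CA(w,\Gamma)$, the coroot-lattice shifts accumulate into $\xi+\down(A)$, and the degrees add into $-\height(A)$. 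The commutativity relation~\eqref{comm_mu_nu_intro}, itself a consequence of Theorem~\ref{thm:generating_function_reduced_intro}, guarantees that the outcome is insensitive to the order of the elementary steps, hence genuinely attached to $\lambda$ rather than to a presentation of it.

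The hard part will be the auxiliary summation over $\bchi\in\bPar(\lambda)$, together with its degree shift $q^{-|\bchi|}$ and its coroot-lattice shift $t_{\iota(\bchi)}$. These partition tuples encode the affine, or loop, directions of the semi-infinite flag manifold, which are inert for a single finite Chevalley step but are unavoidable because $\gch V_{x}^{-}(\mu)$ is the character of an infinite-dimensional module: unwinding its degree grading turns each such direction into a geometric series, and the product of these series reorganizes precisely into tuples of partitions bounded by $\lambda$. The delicate points are (i) to check that the power $|\bchi|$ of $q$ and the translation $\iota(\bchi)$ reproduce exactly the degree and the coroot shift generated by these directions, and (ii) to confirm that the partition data factor cleanly through the iteration of the base case, so that the $\bchi$-sum decouples from the admissible-subset bookkeeping. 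Once these two matchings are verified termwise, comparing the resulting expression with the definition of $\bG_{\Gamma}(x)$ and invoking Theorem~\ref{thm:generating_function_reduced_intro} to discard the chain dependence will finish the argument.
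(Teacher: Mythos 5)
Your overall skeleton (fix a convenient chain, use chain-independence and a composition/commutativity property, reduce to known one-step expansions) is the same as the paper's, but two of your concrete steps fail as stated. First, the base case: for $\lambda=\vpi_i$ the set $\bPar(\vpi_i)$ defined in \eqref{eq:def_Par} consists of \emph{all} single-row partitions $(\chi_1^{(i)})$ with $\chi_1^{(i)}\in\BZ_{\ge 0}$, so the inner sum over $\bchi$ is an infinite series; it does not trivialize for fundamental or minuscule weights (it trivializes only when $\lambda$ is anti-dominant). Moreover, the one-step expansion you want to take as the base case does not ``follow directly from the semi-infinite Lakshmibai--Seshadri path description'': it is the substantial representation-theoretic content of \cite{KNS} and \cite{NOS}, which the paper imports as Theorems~\ref{thm:PC-dominant} (dominant $\lambda$, with the full $\bPar(\lambda)$-sum) and \ref{thm:PC-anti_dominant} (anti-dominant $\lambda$, with no partition sum). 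Without citing or reproving those, your induction has no starting point. Note also that your organizing heuristic --- that $\bG_\Gamma$ ``is'' multiplication by a line-bundle class --- cannot be the logical basis of the proof: as the paper points out, the $K$-theoretic Chevalley formula only recovers the character identity for $\mu$ sufficiently dominant, whereas the theorem is claimed for every dominant $\mu$, including $\mu=0$.

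Second, the iteration scheme. Peeling off one fundamental weight at a time requires, at each application of a Chevalley-type expansion, that the current base weight plus the increment be dominant (both cited theorems assume $\mu+\lambda\in P^{+}$ for the relevant pair); for $\lambda$ of mixed sign and $\mu$ only barely dominant, this can fail at intermediate stages for some orderings, and the commutativity relation~\eqref{comm_mu_nu_intro} cannot repair it, since an identity of generating functions says nothing about the representation-theoretic validity of each intermediate expansion. The paper avoids this entirely by splitting $\lambda$ into just two pieces, $\lambda=\lambda^{+}+\lambda^{-}$ (cancellation free), expanding first along $\lambda^{+}$ with base $\mu+\lambda^{-}$ and then along $\lambda^{-}$ with base $\mu$, and verifying that $\mu+\lambda\in P^{+}$ forces $\mu+\lambda^{-}\in P^{+}$. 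Finally, the partition bookkeeping you correctly flag as the ``hard part'' is exactly the content of Theorem~\ref{comm_mu_nu_par}: the bijection $\bPar(\mu)\times\bPar(\nu)\rightarrow\bPar(\lambda)$ together with Lemma~\ref{lem:statistics-par} is what makes the statistics $|\bchi|$ and $\iota(\bchi)$ compose correctly across the two steps. Leaving this at the level of a geometric-series heuristic leaves the proof incomplete, because the termwise matching you defer is precisely the assertion to be proved.
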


The right-hand side of the above formula is proved to be identical to zero if $\mu + \lambda \notin P^{+}$, see Appendix~C. In the case $\mu=0$, this proof provides a combinatorial analog of the vanishing of the {0-th}~cohomology of the semi-infinite flag manifold for line bundles associated to weights that are
not dominant, see~\cite{KNS,NOS} and the details in Appendix C.

Here we should mention that, in \cite{FM}, an identity for generalized Weyl modules similar to the one in Theorem~\ref{thm:PC-type_formula_intro} is obtained in the case that $\lambda$ is a fundamental weight $\varpi_{i}$, $i \in I$; 
a generalized Weyl module can be viewed as the $\q = 1$ limit of a certain finite-dimensional quotient of a Demazure submodule of a level-zero extremal weight module over the quantum affine algebra $U_{\q}(\Fg_{\af})$ associated to the affine Lie algebra $\Fg_{\af}$ (see \cite{N} for an explicit relation between the graded characters of these modules).

The proof of the general Chevalley formula for semi-infinite flag manifolds in \cite{LNS} can be considerably simplified by using Theorem~\ref{thm:generating_function_reduced_intro}, in a similar way to the proof of Theorem~\ref{thm:PC-type_formula_intro}. Alternatively, the general Chevalley formula can be deduced from Theorem~\ref{thm:PC-type_formula_intro} by exactly the same argument as that in~\cite{KNS} and \cite{NOS}.
Conversely, the general Chevalley formula implies Theorem~\ref{thm:PC-type_formula_intro} for $\mu$ sufficiently dominant, but not for an arbitrary dominant $\mu$; in particular, we cannot set $\mu=0$. In this sense, Theorem~\ref{thm:PC-type_formula_intro} and the corresponding vanishing mentioned above are slightly stronger than the general Chevalley formula.

In parallel with the notion of a sijection, we can define a ``signed crystal,'' and then an isomorphism of signed crystals as a sijection commuting with the crystal operators. Using the crystal structure on admissible subsets corresponding to a dominant weight~\cite{LL1,LNSSS2}, as well as the generalized quantum Yang-Baxter moves, we propose a way to obtain a signed crystal structure on $\CA(w,\Gamma)$, where $\Gamma$ is a reduced $\lambda$-chain for an arbitrary weight $\lambda$. We then conjecture that the identities in Theorems~\ref{thm:generating_function_reduced_intro} and \ref{thm:PC-type_formula_intro}, as well as~\eqref{comm_mu_nu_intro}, can be lifted to isomorphisms of the underlying signed crystals. In fact, by phrasing~\eqref{comm_mu_nu_intro} as the invariance of a composite $\bG_{\Gamma_1}\circ\cdots\circ\bG_{\Gamma_p}(x)$ under permuting the maps $\bG_{\Gamma_\cdot}$, we would extend the realization of the combinatorial $R$-matrix in~\cite{LL2}. As in~\cite{LL2}, the key fact in all these conjectures is the commutation of the generalized quantum Yang-Baxter moves with the crystal operators, which is also conjectured.

In conclusion, the generalized quantum Yang-Baxter moves add very useful structure to the quantum alcove model for an arbitrary weight.

This paper is organized as follows. 
In Section~\ref{sec:basic}, we fix our basic notation, and recall the definitions and some properties of the quantum Bruhat graph and the quantum alcove model. 
In Section~\ref{sec:quantum_Yang-Baxter_moves}, we state our main result precisely; its proof is given in Section~\ref{sec:proofs}. 
Finally, we prove the equality between the generating functions associated to two reduced $\lambda$-chains, 
and derive the identity above for the graded characters of (level-zero) Demazure submodules in Section~\ref{sec:generating_function}. The other applications of the generalized quantum Yang-Baxter moves mentioned above are also discussed in Section~\ref{sec:generating_function}. 

\subsection*{Acknowledgements.}
The authors would like to thank Daisuke Sagaki for helpful discussions. 
The authors are also grateful to Shunsuke Tsuchioka for useful advices. 
T.K. was supported in part by JSPS Grant-in-Aid for Scientific Research 20J12058. 
C.L. was supported in part by the NSF grant DMS-1855592.
S.N. was supported in part by JSPS Grant-in-Aid for Scientific Research (B) 16H03920.



\section{Preliminaries.}\label{sec:basic}
We fix our basic notation of this paper. 
Also, we recall the definitions and some properties of the quantum Bruhat graph and the quantum alcove model. 


\subsection{Basic notation.}

Throughout this paper, let $\Fg$ be a complex simple Lie algebra or the complex Lie algebra of type $A_{1} \times A_{1}$, 
with Cartan subalgebra $\Fh \subset \Fg$. 
We denote by $\pair{\cdot}{\cdot}$ the canonical pairing of $\Fh$ and $\Fh^{\ast} = \Hom_{\BC}(\Fh, \BC)$. 

Let $\Delta$ denote the root system of $\Fg$, 
with $\Delta^{+} \subset \Delta$ the set of all positive roots. 
Let $I$ be the set of indices of the Dynkin diagram of $\Fg$, 
and let $\alpha_{i}$, $i \in I$, be the simple roots of $\Delta$. 
For $\alpha \in \Delta$, we define $\sgn(\alpha) \in \{1, -1\}$ and $|\alpha| \in \Delta^{+}$ by 
\begin{align*}
\sgn(\alpha) &:= \begin{cases} 1 & \text{if } \alpha \in \Delta^{+}, \\ -1 & \text{if } \alpha \in -\Delta^{+}, \end{cases} \\ 
|\alpha| &:= \sgn(\alpha)\alpha. 
\end{align*}
We set $Q := \sum_{i \in I} \BZ \alpha_{i}$ and 
$Q^{\vee} := \sum_{i \in I} \BZ \alpha_{i}^{\vee}$, 
where $\alpha^{\vee}$ is the coroot of $\alpha \in \Delta$; 
also, we set $Q^{\vee, +} := \sum_{i \in I} \BZ_{\ge 0} \alpha_{i}^{\vee}$. 
Let $W = \langle s_{i} \mid i \in I \rangle$ be the Weyl group of $\Fg$, with length function $\ell: W \rightarrow \BZ_{\geq 0}$ 
and the longest element $w_{\circ} \in W$; here, for $\alpha \in \Delta^{+}$, $s_{\alpha} \in W$ denotes the reflection 
corresponding to $\alpha$, and $s_{i} = s_{\alpha_{i}}$ is the simple reflection for $i \in I$. 

For each $i \in I$, let $\vpi_{i}$ denote the fundamental weight corresponding to $\alpha_{i}$. 
Let $P := \sum_{i \in I} \BZ \vpi_{i}$ be the weight lattice of $\Fg$, with 
$P^{+} := \sum_{i \in I} \BZ_{\ge 0} \vpi_{i}$ the set of dominant weights; 
also, we set $\Fh^{\ast}_{\BR} := P \otimes_{\BZ} \BR$.


\subsection{The quantum Bruhat graph.}
We recall the definition of the \emph{quantum Bruhat graph}, introduced in \cite{BFP}.
We set $\rho := (1/2) \sum_{\alpha \in \Delta^{+}} \alpha$. 
\begin{dfn}[{\cite[Definition~6.1]{BFP}}]
The \emph{quantum Bruhat graph} $\QBG(W)$ is the $\Delta^{+}$-labeled directed graph 
whose vertices are the elements of $W$ and whose edges are of the following form: 
$x \xrightarrow{\alpha} y$, with $x, y \in W$ and $\alpha \in \Delta^{+}$, such that $y = xs_{\alpha}$ 
and either of the following (B) or (Q) holds: 
\begin{itemize}
\item[(B)] $\ell(y) = \ell(x) + 1$; 
\item[(Q)] $\ell(y) = \ell(x) -2\pair{\rho}{\alpha^{\vee}} + 1$. 
\end{itemize}
If (B) (resp., (Q)) holds, then the edge $x \xrightarrow{\alpha} y$ is called a \emph{Bruhat edge} (resp., \emph{quantum edge}). 
\end{dfn}

Let $\bp: w_{0} \xrightarrow{\beta_{1}} w_{1} \xrightarrow{\beta_{2}} \cdots \xrightarrow{\beta_{r}} w_{r}$ be a directed path in $\QBG(W)$. 
We set 
\begin{align*}
\ell(\bp) &:= r, \\ 
\ed(\bp) &:= w_{r}, \\ 
\wt(\bp) &:= \sum_{\substack{k \in \{1, \ldots, r\} \\ w_{k-1} \xrightarrow{\beta_{k}} w_{k} \text{ is a quantum edge}}} \beta_{k}^{\vee}.  
\end{align*}

\begin{dfn}[{\cite[(2.2)]{D}}]
A total order $\vtl$ on $\Delta^{+}$ is a \emph{reflection order} 
if for all $\alpha, \beta \in \Delta^{+}$ such that $\alpha+\beta \in \Delta^{+}$, 
either $\alpha \vtl \alpha+\beta \vtl \beta$ or $\beta \vtl \alpha+\beta \vtl \alpha$ holds. 
\end{dfn}

Let $\vtl$ be a reflection order on $\Delta^{+}$. 
A directed path $\bp$ in $\QBG(W)$ of the form: 
\begin{equation*}
\bp: w_{0} \xrightarrow{\beta_{1}} w_{1} \xrightarrow{\beta_{2}} \cdots \xrightarrow{\beta_{r}} w_{r},
\end{equation*}
with $\beta_{1} \vtl \cdots \vtl \beta_{r}$, is called a \emph{label-increasing} directed path with respect to $\vtl$. 
%
%
\begin{thm}[{\cite[Theorem~6.4]{BFP}}]\label{thm:shellability}
Let $\vtl$ be a reflection order on $\Delta^{+}$. 
For all $v, w \in W$, there exists a unique label-increasing directed path from $v$ to $w$ in $\QBG(W)$ with respect to $\vtl$. 
Moreover, the unique label-increasing directed path from $v$ to $w$ has the minimum length. 
\end{thm}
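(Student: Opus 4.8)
The plan is to prove both assertions simultaneously by a ``sorting'' argument built on local moves, in the spirit of the proof that any two reduced words of an element are connected by braid relations. The basic observation is that edges of $\QBG(W)$ act on vertices by right multiplication, so a length-two subpath $w_{k-1} \xrightarrow{\beta} w_{k} \xrightarrow{\gamma} w_{k+1}$ satisfies $w_{k+1} = w_{k-1}s_{\beta}s_{\gamma}$; hence $w_{k-1}^{-1}w_{k+1}$ lies in the dihedral reflection subgroup $W'$ generated by $s_{\beta}$ and $s_{\gamma}$, whose positive roots span a rank-two subsystem $\Delta' \subset \Delta$. First I would record a \emph{rank-two reduction lemma}: the edges of $\QBG(W)$ joining vertices of the coset $w_{k-1}W'$ are governed, after translating by $w_{k-1}$, by the quantum Bruhat graph of $\Delta'$, with the distinction between Bruhat edges (B) and quantum edges (Q) read off compatibly. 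I would also use the standard fact that the restriction of the reflection order $\vtl$ to $\Delta'^{+}$ is again a reflection order. This reduces every local analysis to the explicitly understood rank-two systems $A_{1}\times A_{1}$, $A_{2}$, $B_{2}$, and $G_{2}$.

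With this in hand I would prove \textbf{existence} as follows. Since $\QBG(W)$ is strongly connected (it contains the Hasse diagram of Bruhat order as upward edges of type (B) together with downward quantum edges, so every vertex lies on directed paths to and from both $e$ and $w_{\circ}$), there is at least one directed path from $v$ to $w$. Call a consecutive pair $\beta_{k},\beta_{k+1}$ of labels a \emph{descent} if $\beta_{k+1} \vtl \beta_{k}$. Whenever a path has a descent, the rank-two reduction lets me replace the offending length-two subpath by the unique label-increasing path between its two endpoints inside the relevant rank-two system, which by inspection of each rank-two case (using the betweenness axiom of $\vtl$) has length at most two. To see that iterating this terminates, I would attach to each path a well-founded statistic, such as the lexicographically ordered ascending rearrangement of its multiset of labels, and check that each local move strictly decreases it. The process therefore halts at a path with no descents, i.e.\ a label-increasing path from $v$ to $w$.

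For \textbf{uniqueness} I would invoke Newman's diamond lemma: since the rewriting system terminates, it suffices to verify \emph{local confluence}, namely that two distinct local moves applicable to overlapping positions of a common path can be completed to a common path. The overlaps involve three consecutive labels, whose reflections generate a reflection subgroup of rank at most three, so local confluence reduces to a finite check in rank-three subsystems assembled from the rank-two data. Confluence together with termination yields a unique normal form, which is precisely the unique label-increasing path. Finally, for \textbf{minimum length} I would note that every local move replaces a length-two subpath by a path of length at most two and hence never increases total length; since any directed path from $v$ to $w$ can be sorted to the label-increasing one without lengthening, the label-increasing path is a geodesic.

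The technical heart, and the step I expect to be the main obstacle, is the rank-two (and, for confluence, rank-three) case analysis in the presence of quantum edges. Because whether an edge has type (B) or (Q) depends on the length function of the full group $W$ and not merely on $\Delta'$, I must show that the Bruhat/quantum type is correctly recovered after the dihedral restriction; this is exactly the content of the rank-two reduction lemma, whose verification is where the large length drop $\ell(y)=\ell(x)-2\pair{\rho}{\alpha^{\vee}}+1$ of a quantum edge has to be tracked carefully. Once the rank-two and rank-three pictures are pinned down, existence, uniqueness, and minimality all follow from the bookkeeping above.
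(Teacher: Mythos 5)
First, for context: the paper does not prove Theorem~\ref{thm:shellability} at all; it is quoted from \cite[Theorem~6.4]{BFP}. The known proof (which this paper mirrors when it proves its own rank-2 analogue, Proposition~\ref{prop:rank2_shellability}) is algebraic: one introduces the operators $\sR_\gamma=1+\sQ_\gamma$, proves the Yang--Baxter equation \eqref{eq:Yang-Baxter_eq}, deduces that the product of the $\sR_\gamma$ along a reflection order is independent of the order, and reads off path counts from matrix coefficients. Your path-rewriting (``sorting'') plan is a genuinely different route, but as proposed it has gaps that are structural, not just technical.

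The first gap is \textbf{termination}. Your statistic (lex order on the ascending rearrangement of the label multiset) is not monotone under the moves, because the replacement path's labels range over the whole rank-2 subsystem generated by the descent pair, and these can lie $\vtl$-outside the interval spanned by that pair. Concretely, in a $C_2$-subsystem whose positive roots are ordered $\alpha_1\vtl 2\alpha_1+\alpha_2\vtl\alpha_1+\alpha_2\vtl\alpha_2$, one checks $s_{\alpha_2}s_{\alpha_1}=s_{2\alpha_1+\alpha_2}s_{\alpha_1+\alpha_2}$; hence a descent labeled $(\alpha_2,\alpha_1)$ is replaced by the increasing pair $(2\alpha_1+\alpha_2,\alpha_1+\alpha_2)$, whose sorted sequence is lexicographically \emph{larger} than $(\alpha_1,\alpha_2)$. (The symmetric convention fails on the mirror-image move, since the ``extreme'' and ``middle'' pairs simply trade places depending on the descent's orientation.) Without a valid well-founded statistic, neither existence nor any appeal to Newman's lemma gets off the ground. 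The second gap is \textbf{uniqueness}: even granting termination and local confluence, Newman's lemma yields a unique normal form \emph{within each connected component} of the rewriting relation. Two distinct label-increasing paths from $v$ to $w$ would each be normal forms, and nothing in your argument connects them (or connects arbitrary paths from $v$ to $w$) by moves; such a connectivity statement is essentially as strong as the theorem itself. This is precisely what the operator approach circumvents, by comparing generating functions of \emph{all} paths simultaneously instead of rewriting individual paths. Finally, the rank-two reduction lemma you lean on is stated in a form that is false: the subgraph of $\QBG(W)$ induced on a coset $w_{k-1}W'$, transported by naive translation, is \emph{not} $\QBG(W')$, since both the existence and the (B)/(Q) type of an edge depend on the length function of $W$ and on $\pair{\rho}{\alpha^{\vee}}$ for the full system. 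The correct statement requires the nontrivial decomposition $u=\overline{u}\lfloor u\rfloor$ and gives an injection of path sets, as in \cite[Proposition~5.1 and Theorem~5.3]{LL2}; your outline assumes this hardest ingredient rather than proving it.
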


The property of $\QBG(W)$ in Theorem~\ref{thm:shellability} is called the \emph{shellability}.
 
For all $v, w \in W$, 
there exists at least one shortest directed path $\bp$ from $v$ to $w$; 
we set 
\begin{equation*}
\ell(v \Rightarrow w) := \ell(\bp), \quad \wt(v \Rightarrow w) := \wt(\bp). 
\end{equation*}
Note that by \cite[Lemma~1\,(2)]{P} or \cite[Proposition~8.1]{LNSSS1}, $\wt(v \Rightarrow w)$ is well-defined. 

We consider a ``generalization'' of label-increasing directed paths. 
Let $\Pi = (\gamma_{1}, \ldots, \gamma_{r})$ be a sequence of roots, i.e., $\gamma_{1}, \ldots, \gamma_{r} \in \Delta$. 
Assume that $\gamma_{1}, \ldots, \gamma_{r}$ are distinct. 
Then we say that a directed path $\bp$ is \emph{$\Pi$-compatible} if $\bp$ is of the following form: 
%
%
\begin{equation}\label{eq:Pi-compatible}
\bp: w_{0} \xrightarrow{|\gamma_{j_{1}}|} w_{1} \xrightarrow{|\gamma_{j_{2}}|} \cdots \xrightarrow{|\gamma_{j_{p}}|} w_{p}, 
\end{equation}
with $1 \le j_{1} < \cdots < j_{p} \le r$. 
For $w \in W$, we denote by $\CP(w, \Pi)$ the set of all $\Pi$-compatible directed paths in $\QBG(W)$ which start at $w$. 
\begin{rem}
If $\{\gamma_{1}, \ldots, \gamma_{r}\} \subset \Delta^{+}$, 
and if there exists a reflection order $\vtl$ on $\Delta^{+}$ such that $\gamma_{1} \vtl \cdots \vtl \gamma_{r}$, 
then a $\Pi$-compatible directed path in $\QBG(W)$ is a label-increasing directed path with respect to $\vtl$. 
\end{rem}

Let $\Pi = (\gamma_{1}, \ldots, \gamma_{r})$ be a sequence of root, with $\gamma_{1}, \ldots, \gamma_{r}$ not necessarily distinct. 
For a directed path $\bp$ of the form: 
\begin{equation*}
\bp: w_{0} \xrightarrow{|\gamma_{j_{1}}|} w_{1} \xrightarrow{|\gamma_{j_{2}}|} \cdots \xrightarrow{|\gamma_{j_{p}}|} w_{p}, 
\end{equation*}
with $1 \le j_{1} < \cdots < j_{p} \le r$, we define $\nega(\bp)$ by 
\begin{equation*}
\nega(\bp) := \# \{ k \in \{1, \ldots, p\} \mid \gamma_{j_{k}} \in -\Delta^{+} \}. 
\end{equation*}


%
%
\subsection{The quantum alcove model.}\label{subsec:def_quantum_alcove_models}
We briefly review the quantum alcove model, introduced in \cite{LL1}.

First, we recall from \cite{LP1} the definition of alcove paths. 
For $\alpha \in \Delta$ and $k \in \BZ$, we set $H_{\alpha, k} := \{\nu \in \Fh_{\BR}^{\ast} \mid \pair{\nu}{\alpha^{\vee}} = k\}$; 
$H_{\alpha, k}$ is a hyperplane in $\Fh_{\BR}^{\ast}$. 
Also, for $\alpha \in \Delta$ and $k \in \BZ$, we denote by $s_{\alpha, k}$ the reflection with respect to $H_{\alpha, k}$. 
Note that $s_{\alpha, k}(\nu) = \nu - (\pair{\nu}{\alpha^{\vee}} - k)\alpha$ for $\nu \in \Fh_{\BR}^{\ast}$. 
Each connected component of the space 
\begin{equation*}
\Fh_{\BR}^{\ast} \setminus \bigcup_{\alpha \in \Delta^{+}, k \in \BZ} H_{\alpha, k}
\end{equation*}
is called an \emph{alcove}. 
If two alcoves $A$ and $B$ have a common wall, then we say that $A$ and $B$ are \emph{adjacent}. 
For adjacent alcoves $A$ and $B$, we write $A \xrightarrow{\beta} B$, $\beta \in \Delta$, 
if the common wall of $A$ and $B$ is contained in $H_{\beta, k}$ for some $k \in \BZ$, 
and $\beta$ points in the direction from $A$ to $B$. 

\begin{dfn}[{\cite[Definition~5.2]{LP1}}]
A sequence $(A_{0}, \ldots, A_{r})$ of alcoves is called an \emph{alcove path} if $A_{i-1}$ and $A_{i}$ are adjacent for all $i = 1, \ldots, r$. 
If the length $r$ of an alcove path $(A_{0}, \ldots, A_{r})$ is minimal among all alcove paths from $A_{0}$ to $A_{r}$, 
we say that $(A_{0}, \ldots, A_{r})$ is \emph{reduced}. 
\end{dfn}

Let $\theta \in \Delta^{+}$ denote the highest root of $\Delta$. The \emph{fundamental alcove} $A_{\circ}$ is defined by 
\begin{equation*}
A_{\circ} := \{ \nu \in \Fh_{\BR}^{\ast} \mid \pair{\nu}{\alpha_{i}^{\vee}} > 0 \text{ for all } i \in I \text{ and } \pair{\nu}{\theta^{\vee}} < 1 \}. 
\end{equation*}
Also, for $\lambda \in P$, we define $A_{\lambda}$ by 
\begin{equation*}
A_{\lambda} := A_{\circ} + \lambda = \{ \nu + \lambda \mid \nu \in A_{\circ} \}. 
\end{equation*}

\begin{dfn}[{\cite[Definition~5.4]{LP1}}]
Let $\lambda \in P$. 
A sequence $(\beta_{1}, \ldots, \beta_{r})$ of roots $\beta_{1}, \ldots, \beta_{r} \in \Delta$ 
is called a \emph{$\lambda$-chain} 
if there exists an alcove path $(A_{0}, \ldots, A_{r})$, with $A_{0} = A_{\circ}$ and $A_{r} = A_{-\lambda}$, such that 
\begin{equation*}
A_{\circ} = A_{0} \xrightarrow{-\beta_{1}} A_{1} \xrightarrow{-\beta_{2}} \cdots \xrightarrow{-\beta_{r}} A_{r} = A_{-\lambda}. 
\end{equation*}
\end{dfn}

Now, following \cite[Section~3.2]{LNS}, we review the quantum alcove model. 
\begin{dfn}[{\cite[Definition~17]{LNS}}]
Let $\lambda \in P$, and let $\Gamma = (\beta_{1}, \ldots, \beta_{r})$ be a $\lambda$-chain. 
Fix $w \in W$. A subset $A = \{j_{1} < \cdots < j_{p}\} \subset \{1, \ldots, r\}$ is said to be \emph{$w$-admissible} if 
\begin{equation*}
\bp(A): w = w_{0} \xrightarrow{|\beta_{j_{1}}|} w_{1} \xrightarrow{|\beta_{j_{2}}|} \cdots \xrightarrow{|\beta_{j_{p}}|} w_{p}
\end{equation*}
is a directed path in $\QBG(W)$. 
Let $\CA(w, \Gamma)$ denote the set of all $w$-admissible subsets of $\{1, \ldots, r\}$. 
\end{dfn}
\begin{rem}
The original definition of admissible subsets in \cite{LL1} is only for $w = e \in W$. 
The notion of $w$-admissible subsets for an arbitrary $w \in W$ is introduced in \cite{LNS}. 
\end{rem}

Let $\lambda \in P$, and let $\Gamma = (\beta_{1}, \ldots, \beta_{r})$ be a $\lambda$-chain.
By the definition of $\lambda$-chains, there exists an alcove path $(A_{\circ} = A_{0}, \ldots, A_{r} = A_{-\lambda})$ such that 
\begin{equation*}
A_{\circ} = A_{0} \xrightarrow{-\beta_{1}} A_{1} \xrightarrow{-\beta_{2}} \cdots \xrightarrow{-\beta_{r}} A_{r} = A_{-\lambda}. 
\end{equation*}
For $k = 1, \ldots, r$, we take $l_{k} \in \BZ$ such that $H_{\beta_{k}, -l_{k}}$ contains the common wall of $A_{k-1}$ and $A_{k}$, 
and set $\wti{l_{k}} := \pair{\lambda}{\beta_{k}^{\vee}} - l_{k}$. 

Fix $w \in W$. For $A = \{j_{1} < \cdots < j_{p}\} \in \CA(w, \Gamma)$, we set 
\begin{equation*}
\ed(A) := ws_{|\beta_{j_{1}}|} \cdots s_{|\beta_{j_{p}}|}, \quad \wt(A) := -ws_{\beta_{j_{1}}, -l_{j_{1}}} \cdots s_{\beta_{j_{p}}, -l_{j_{p}}} (-\lambda); 
\end{equation*} 
we call $\wt(A)$ the \emph{weight} of $A$.
Also, we define a subset $A^{-} \subset A$ by 
\begin{equation*}
A^{-} := \left\{ j_{k} \in A \ \middle| \ ws_{|\beta_{j_{1}}|} \cdots s_{|\beta_{j_{k-1}}|} \xrightarrow{|\beta_{j_{k}}|} ws_{|\beta_{j_{1}}|} \cdots s_{|\beta_{j_{k}}|} \text{ is a quantum edge} \right\},  
\end{equation*}
and set 
\begin{equation*}
\down(A) := \sum_{j \in A^{-}} |\beta_{j}|^{\vee}, \quad \height(A) := \sum_{j \in A^{-}} \sgn(\beta_{j})\wti{l_{j}}; 
\end{equation*}
note that $\ed(A) = \ed(\bp(A))$ and $\down(A) = \wt(\bp(A))$. 
In addition, we define $n(A) \in \BZ_{\ge 0}$ by $n(A) := \# \{ j \in A \mid \beta_{j} \in -\Delta^{+} \}$; 
note that $n(A) = \nega(\bp(A))$. 

\begin{rem}\label{rem:coheight}
In \cite[(31)]{LNSSS3}, an additional statistic, called \emph{coheight}, is introduced. 
Assume that $\lambda \in P^{+}$ and $w = e$. 
For $A \in \CA(e, \Gamma)$, $\coheight(A) \in \BZ_{\ge 0}$ is defined by 
\begin{equation}\label{eq:coheight}
\coheight(A) := \sum_{j \in A^{-}} l_{j}. 
\end{equation}
The coheight is used in \cite{LNSSS3} to describe 
the specialization at $t = 0$ of nonsymmetric Macdonald polynomials in terms of the quantum alcove model (see \cite[Theorems~29, 31]{LNSSS3}). 
\end{rem}


%
%
\section{Generalization of quantum Yang-Baxter moves.}\label{sec:quantum_Yang-Baxter_moves}
Quantum Yang-Baxter moves for a dominant weight are introduced in \cite{LL2}. 


%
%
\subsection{Yang-Baxter transformation of \texorpdfstring{$\lambda$}{lambda}-chains.}\label{subsec:YB-transformations}
Let $\lambda \in P$, and let $\Gamma = (\beta_{1}, \ldots, \beta_{r})$ be a $\lambda$-chain (of roots). 
The following procedure (YB) is called the \emph{Yang-Baxter transformation}: 
\begin{itemize}
\item[(YB)] Take a segment $(\beta_{t+1}, \ldots, \beta_{t+q})$ of $\Gamma$ of the form 
\begin{equation*}
(\beta_{t+1}, \ldots, \beta_{t+q}) = (\alpha, s_{\alpha}(\beta), s_{\alpha}s_{\beta}(\alpha), \ldots, s_{\beta}(\alpha), \beta)
\end{equation*}
for some $\alpha, \beta \in \Delta$ with $\pair{\alpha}{\beta^{\vee}} \le 0$, or equivalently $\pair{\beta}{\alpha^{\vee}} \le 0$, and $\alpha \not= -\beta$, 
and set 
\begin{equation*}
\Gamma' := (\beta_{1}, \beta_{2}, \ldots, \beta_{t}, \beta_{t+q}, \beta_{t+q-1}, \ldots, \beta_{t+1}, \beta_{t+q+1}, \beta_{t+q+2}, \ldots, \beta_{r}), 
\end{equation*}
i.e., reverse the segment $(\beta_{t+1}, \ldots, \beta_{t+q})$ of $\Gamma$. 
\end{itemize}

Also, we define a procedure (D), called \emph{deletion},
as follows: 
\begin{itemize}
\item[(D)] Take a segment $(\beta_{t+1}, \beta_{t+2})$ of $\Gamma$ of the form $(\beta_{t+1}, \beta_{t+2}) = (\beta, -\beta)$ for some $\beta \in \Delta$, 
and set $\Gamma' = (\beta_{1}, \ldots, \beta_{t}, \beta_{t+3}, \ldots, \beta_{q})$, i.e., delete the segment $(\beta_{t+1}, \beta_{t+2})$ of $\Gamma$. 
\end{itemize}

It is known that every $\lambda$-chain can be transformed into an arbitrary reduced $\lambda$-chain by repeated application of the procedures (YB) and (D) 
(see \cite[Remark~38]{LNS}, or \cite[Lemma~9.3]{LP1}). 


\subsection{Quantum Yang-Baxter moves.}\label{subsec:main}
Let $\lambda \in P^{+}$ be a dominant weight, and 
let $\Gamma_1$, $\Gamma_2$ be $\lambda$-chains 
such that $\Gamma_{2}$ is obtained from $\Gamma_{1}$ by the Yang-Baxter transformation (YB). 
\emph{Quantum Yang-Baxter moves}, introduced in \cite[Section~3.1]{LL2}, 
give a bijection $\CA(e, \Gamma_{1}) \rightarrow \CA(e, \Gamma_{2})$ 
which preserves weights and heights. 

Our main result is the existence of a generalization of quantum Yang-Baxter moves for an arbitrary (not necessarily dominant) weight $\lambda \in P$ and an arbitrary $w \in W$. 

Let $\lambda \in P$ be an arbitrary weight, 
and let $\Gamma_{1}$ and $\Gamma_{2}$ be $\lambda$-chains such that $\Gamma_{2}$ is obtained from $\Gamma_{1}$ by the Yang-Baxter transformation (YB). 
If we write $\Gamma_{1} = (\beta_{1}, \ldots, \beta_{r})$ and $\Gamma_{2} = (\beta'_{1}, \ldots, \beta'_{r})$, then there exists $1 \le t \le r$ such that 
\begin{itemize}
\item $(\beta_{t+1}, \ldots, \beta_{t+q}) = (\alpha, s_{\alpha}(\beta), s_{\alpha}s_{\beta}(\alpha), \ldots, s_{\beta}(\alpha), \beta)$ for some $q \ge 1$ and some $\alpha, \beta \in \Delta$ such that $\pair{\alpha}{\beta^{\vee}} \le 0$ and $\alpha \not= -\beta$, 
\item $\Gamma_{2} = (\beta'_{1}, \ldots, \beta'_{r}) = (\beta_{1}, \beta_{2}, \ldots, \beta_{t}, \beta_{t+q}, \beta_{t+q-1}, \ldots, \beta_{t+1}, \beta_{t+q+1}, \beta_{t+q+2}, \ldots, \beta_{r})$. 
\end{itemize}

We take the alcove path $(A_{\circ} = A_{0}, \ldots, A_{r} = A_{-\lambda})$ corresponding to $\Gamma_{1}$, 
and take integers $l_{k} \in \BZ$ for $k = 1, \ldots, r$ such that for each $k = 1, \ldots, r$, 
the hyperplane $H_{\beta_{k}, -l_{k}}$ contains the common wall of $A_{k-1}$ and $A_{k}$. 
Also, we take the alcove path $(A_{\circ} = A'_{0}, \ldots, A'_{r} = A_{-\lambda})$ corresponding to $\Gamma_{2}$, 
and we take integers $l'_{k} \in \BZ$ for $k = 1, \ldots, r$ such that for each $k = 1, \ldots, r$, 
the hyperplane $H_{\beta'_{k}, -l'_{k}}$ contains the common wall of $A'_{k-1}$ and $A'_{k}$. 
Then it follows that $A'_{k} = A_{k}$ and $l'_{k} = l_{k}$ for $k = 1, \ldots, t, t+q+1, \ldots, r$, 
and that $l'_{t+p} = l_{t+q+1-p}$ for $p = 1, \ldots, q$. 

Now, we divide $\Gamma_{1}$ into three parts $\Gamma_{1}^{(1)}$, $\Gamma_{1}^{(2)}$, and $\Gamma_{1}^{(3)}$ as follows: 
%
%
\begin{equation}\label{eq:division_Gamma1}
\Gamma_{1}^{(1)} := (\beta_{1}, \ldots, \beta_{t}), \ \Gamma_{1}^{(2)} := (\beta_{t+1}, \ldots, \beta_{t+q}), \ \Gamma_{1}^{(3)} := (\beta_{t+q+1}, \ldots, \beta_{r}). 
\end{equation}
Also, we divide $\Gamma_{2}$ into three parts $\Gamma_{2}^{(1)}$, $\Gamma_{2}^{(2)}$, and $\Gamma_{2}^{(3)}$ as follows: 
%
%
\begin{equation}\label{eq:division_Gamma2}
\Gamma_{2}^{(1)} := (\beta'_{1}, \ldots, \beta'_{t}), \ \Gamma_{2}^{(2)} := (\beta'_{t+1}, \ldots, \beta'_{t+q}), \ \Gamma_{2}^{(3)} := (\beta'_{t+q+1}, \ldots, \beta'_{r}). 
\end{equation}
Note that $\Gamma_{1}^{(1)} = \Gamma_{2}^{(1)}$ and $\Gamma_{1}^{(3)} = \Gamma_{2}^{(3)}$; 
in addition, $\beta_{t+1}, \ldots, \beta_{t+q}$ are distinct. 
Next, let $w \in W$. For a $w$-admissible subset $A \in \CA(w, \Gamma_{1})$, we define $A^{(1)}$, $A^{(2)}$, and $A^{(3)}$ by 
%
%
\begin{equation}\label{eq:division_A}
A^{(1)} := A \cap \{1, \ldots, t\}, \ A^{(2)} := A \cap \{t+1, \ldots, t+q\}, \ A^{(3)} := A \cap \{t+q+1, \ldots, r\}. 
\end{equation}
Also, for $B \in \CA(w, \Gamma_{2})$, we define $B^{(1)}$, $B^{(2)}$, and $B^{(3)}$ by 
%
%
\begin{equation}\label{eq:division_B}
B^{(1)} := B \cap \{1, \ldots, t\}, \ B^{(2)} := B \cap \{t+1, \ldots, t+q\}, \ B^{(3)} := B \cap \{t+q+1, \ldots, r\}. 
\end{equation}

Unlike the case that $\lambda$ is dominant, there does not exist a bijection between $\CA(w, \Gamma_{1})$ and $\CA(w, \Gamma_{2})$ in general. 
\begin{ex}
Assume that $\Fg$ is of type $A_{2}$. 
We set $\Gamma_{1} := (\alpha_{2}, -\alpha_{1}, -\theta, -\alpha_{1})$ and $\Gamma_{2} := (-\theta, -\alpha_{1}, \alpha_{2}, -\alpha_{1})$, 
where $\theta = \alpha_{1} + \alpha_{2}$. 
Then we see that $\Gamma_{1}$ and $\Gamma_{2}$ are $(-2\varpi_{1}+\varpi_{2})$-chains 
such that $\Gamma_{2}$ is obtained from $\Gamma_{1}$ by a Yang-Baxter transformation (YB). 
Let $w = s_{2}$. By direct calculation, we have 
\begin{align*}
\CA(w, \Gamma_{1}) &= \{ \emptyset, \{1\}, \{2\}, \{3\}, \{4\}, \{1,2\}, \{1,4\}, \{2,4\}, \{3,4\}, \{1,2,3\}, \{1,2,4\}, \{1,2,3,4\} \}, \\[3mm] 
\CA(w, \Gamma_{2}) &= \{ \emptyset, \{1\}, \{2\}, \{3\}, \{4\}, \{1,2\}, \{1,3\}, \{1,4\}, \{2,3\}, \{2,4\}, \{3,4\}, \{1,2,3\}, \{1,2,4\}, \\ 
& \hspace*{7mm} \{1,3,4\}, \{2,3,4\}, \{1,2,3,4\} \}. 
\end{align*}
Hence we have $\# \CA(w, \Gamma_{1}) = 12$, while $\# \CA(w, \Gamma_{2}) = 16$. 
This shows that there does not exist a bijection $\CA(w, \Gamma_{1}) \rightarrow \CA(w, \Gamma_{2})$. 
\end{ex}

Thus, for a generalization of quantum Yang-Baxter moves, we need to consider certain subsets of $\CA(w, \Gamma_{1})$ and $\CA(w, \Gamma_{2})$. 
The following theorem is our main result; the proof is given in the next section. 
%
%
\begin{thm}\label{thm:YB-move}
There exist subsets $\CA_{0}(w, \Gamma_{1}) \subset \CA(w, \Gamma_{1})$ and $\CA_{0}(w,\Gamma_{2}) \subset \CA(w, \Gamma_{2})$ which satisfy the following. 
\begin{enu}
\item There exists a bijection $Y: \CA_{0}(w, \Gamma_{1}) \rightarrow \CA_{0}(w, \Gamma_{2})$ 
such that for all $A \in \CA_{0}(w, \Gamma_{1})$, it holds that 
\begin{itemize}
\item $(Y(A))^{(1)} = A^{(1)}$, $\ed((Y(A))^{(2)}) = \ed(A^{(2)})$, $(Y(A))^{(3)} = A^{(3)}$, 
\item $\down(Y(A)) = \down(A)$, and 
\item $(-1)^{n(Y(A))} = (-1)^{n(A)}$. 
\end{itemize}
\item For $k = 1, 2$, we set $\CA_{0}^{C}(w, \Gamma_{k}) := \CA(w, \Gamma_{k}) \setminus \CA_{0}(w, \Gamma_{k})$ . 
There exists an involution $I_{k}$ on $\CA_{0}^{C}(w, \Gamma_{k})$ 
such that for all $A \in \CA_{0}^{C}(w, \Gamma_{k})$, it holds that 
\begin{itemize}
\item $(I_{k}(A))^{(1)} = A^{(1)}$, $\ed((I_{k}(A))^{(2)}) = \ed(A^{(2)})$, $(I_{k}(A))^{(3)} = A^{(3)}$, 
\item $\down(I_{k}(A)) = \down(A)$, and 
\item $(-1)^{n(I_{k}(A))} = -(-1)^{n(A)}$. 
\end{itemize}
\end{enu}
\end{thm}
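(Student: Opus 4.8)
The plan is to localize the whole construction to the reversed segment and then reduce it to a rank-two (dihedral) computation. First I would note that, since $\Gamma_{1}^{(1)} = \Gamma_{2}^{(1)}$ and $\Gamma_{1}^{(3)} = \Gamma_{2}^{(3)}$ by \eqref{eq:division_Gamma1}--\eqref{eq:division_Gamma2}, and since all three maps are required to fix the outer indices and to satisfy $\ed((Y(A))^{(2)}) = \ed(A^{(2)})$ (and likewise for $I_{k}$), the outer parts of an admissible subset transfer verbatim between the two chains. Let $v$ be the endpoint of the initial part of $\bp(A)$ indexed by $A^{(1)}$; because $\ed(A^{(2)})$ is preserved, the endpoint $v'$ of the middle portion is preserved as well, so the third part of $\bp$, indexed by the unchanged $A^{(3)}$, is literally the same directed path in $\QBG(W)$ in both chains. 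Consequently admissibility, $\down(\cdot)$, and $n(\cdot)$ each split as a fixed outer contribution plus a middle contribution, and it suffices to construct $Y$ and $I_{k}$ at the level of the middle segment, i.e., on the $\Gamma_{1}^{(2)}$-compatible versus $\Gamma_{2}^{(2)}$-compatible directed paths from $v$ with a prescribed endpoint $v'$.

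Second, I would carry out the rank-two reduction. All roots of $\Gamma_{1}^{(2)} = (\alpha, s_{\alpha}(\beta), \dots, s_{\beta}(\alpha), \beta)$ lie in the subsystem $\Delta' := \Delta \cap (\BR\alpha + \BR\beta)$ of rank at most two, and every middle path stays inside the single coset $vW'$, where $W' := \langle s_{\alpha}, s_{\beta}\rangle$. The key point is to describe the restriction of $\QBG(W)$ to $vW'$: whether a given step is a Bruhat or a quantum edge is governed entirely by the position of $v$ relative to the hyperplanes $H_{\gamma,k}$ for $\gamma \in \Delta'$, so I would establish that this restriction is a \emph{tilted} copy of the quantum Bruhat graph of the dihedral group $W'$, with the tilt recorded by $v$ (equivalently by the signs $\sgn(\beta_{j})$ and the integers $\wti{l_{j}}$ attached to the segment). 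Since $\Gamma_{2}^{(2)}$ is exactly the reversal of $\Gamma_{1}^{(2)}$, the problem then depends only on the type of $\Delta'$ — one of $A_{1}\times A_{1}$, $A_{2}$, $B_{2}$, $G_{2}$ — and on finitely many tilts, as in \cite{LL2}, but now with $\alpha,\beta$ allowed to be negative.

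Third, within each rank-two type and each tilt I would compare the $\Gamma_{1}^{(2)}$-compatible and $\Gamma_{2}^{(2)}$-compatible paths from $v$ to a fixed endpoint $v'\in vW'$. When the absolute values $|\beta_{t+1}|, \dots, |\beta_{t+q}|$ are increasing for some reflection order $\vtl$ on $(\Delta')^{+}$ (as always happens in the dominant case), Theorem~\ref{thm:shellability} gives a unique label-increasing path for each of the two opposite orders, with equal $\down(\cdot)$; matching these yields the sign-preserving bijection $Y$, and I would define $\CA_{0}(w,\Gamma_{k})$ to consist of the admissible subsets whose middle part is of this \emph{matched} type. When the segment contains negative roots, the order on $\{|\beta_{j}|\}$ need not be a reflection order, shellability no longer applies, and the two chains produce different numbers of compatible paths (as the example with $\#\CA = 12$ versus $16$ shows); the surplus must cancel. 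On the complement $\CA_{0}^{C}(w,\Gamma_{k})$ I would build $I_{k}$ as an involution pairing two middle index sets that represent the same element of $W'$ — hence share the endpoint $v'$ and $\ed(A^{(2)})$ — and carry the same quantum-edge coroot sum $\down(\cdot)$, but whose numbers of negative-root steps differ by an odd amount, so that $(-1)^{n(\cdot)}$ is reversed.

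The main obstacle will be this last step in the signed (non-reflection-order) case: one must check, uniformly across the four rank-two types and all tilts, that the unmatched compatible paths can be paired by an involution that simultaneously fixes the endpoint and $\down(\cdot)$ while reversing $(-1)^{n(\cdot)}$. This is the genuinely new phenomenon beyond \cite{LL2}, where the absence of negative roots makes $Y$ a bijection on all of $\CA(e,\Gamma_{1})$ and no involution is needed. I expect the verification to proceed by an explicit finite analysis of the dihedral quantum Bruhat graphs, organized by the sign pattern of the segment and by the relative lengths $\ell(vw')$, $w'\in W'$; the delicate bookkeeping is to keep $\down(\cdot)$ invariant under the pairing, since changing which roots are traversed can a priori change which edges are quantum.
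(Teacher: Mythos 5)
Your architecture coincides with the paper's: split $A$ into $A^{(1)},A^{(2)},A^{(3)}$, note the outer parts transfer verbatim, reduce the middle part to a rank-two comparison of $\Gamma_1^{(2)}$- versus $\Gamma_2^{(2)}$-compatible paths, and organize the answer as a matched bijection $Y$ together with sign-reversing involutions on the complements. The genuine gap is at the step you yourself call the ``main obstacle'': you assert, but do not prove, the trichotomy on which everything rests — that every middle path either has a \emph{unique} partner in the reversed chain with the \emph{same} sign and the same $\down(\cdot)$, or a \emph{unique} partner within its own chain with \emph{opposite} sign and the same $\down(\cdot)$ — and ``explicit finite analysis of the dihedral quantum Bruhat graphs'' is not a proof, since existence, uniqueness, sign-coherence and down-invariance must all be verified simultaneously for every tilt. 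The paper's engine for exactly this is the quantum Bruhat operator calculus: with $\sR_\gamma = 1+\sQ_\gamma$ and $\sQ_{-\gamma}=-\sQ_\gamma$, products $\sR_{\beta_q}\cdots\sR_{\beta_1}v$ compute signed generating functions of compatible paths graded by $Q^{\wt(\bp)}$ (Lemma~\ref{lem:prod_quantum_Bruhat_op}); the Yang-Baxter equation \eqref{eq:Yang-Baxter_eq} forces the signed counts attached to $\Pi$ and $\Pi'$ to agree; and the matrix computations of Proposition~\ref{prop:matrix} show the unsigned multiplicities are at most $2$, that multiplicity $2$ forces the signed count to vanish (whence the two paths have opposite sign), and that multiplicity $1$ on one side forces multiplicity $1$ on the other. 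These three facts yield Proposition~\ref{prop:rank2_shellability}, and the $Q^\xi$-grading makes preservation of $\down(\cdot)$ automatic rather than a piece of ``delicate bookkeeping.'' Without this counting device (or an equivalent one) your pairing on $\CA_0^C(w,\Gamma_k)$ is only wished into existence.

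Moreover, the clean dichotomy you propose is actually \emph{false} in type $G_2$: the paper isolates exceptional configurations (E1)--(E4) (e.g.\ $v=s_1s_2s_1$, endpoint $s_1s_2$, weight $\alpha_1^\vee+\alpha_2^\vee$) in which one chain carries three compatible paths and the reversed chain carries one, so a path can simultaneously admit an internal partner and a cross-chain partner; there $Y$ and $I_k$ must be defined by hand, with two of the three paths paired by the involution and the remaining one matched across (Propositions~\ref{prop:rank2_shellability_G1}, \ref{prop:rank2_shellability_G2} and Remark~\ref{rem:explicit_rank2_shellability_G}). Your definition of $\CA_0(w,\Gamma_k)$ and your pairing rule would need this case-by-case repair, so the uniform statement you envisage cannot cover $G_2$ as written. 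A smaller point: your description of the restriction of $\QBG(W)$ to the coset $vW'$ as a ``tilted copy'' of the dihedral quantum Bruhat graph is the right picture but is itself a nontrivial input — as the paper notes, $\QBG(\overline{W})$ is \emph{not} a subgraph of $\QBG(W)$, and what is actually used is the injection of path sets from \cite[Theorem~5.3]{LL2} — so this too must be cited or proved rather than asserted.
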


\begin{rem}
In order to explain our maps $Y$, $I_{1}$, and $I_{2}$ in Theorem~\ref{thm:YB-move}, 
we have a useful notion, called a \emph{sijection}, introduced in \cite{FK}; 
for the definition of sijections, see \cite[Section~2]{FK}. 
For sets $S$, $T$ equipped with sign functions $S \rightarrow \{ \pm 1\}$, $T \rightarrow \{ \pm 1 \}$, 
a sijection from $S$ to $T$ is the collection $(\iota_{S}, \iota_{T}, \varphi)$ 
of a sign-reversing involution $\iota_{S}$ on a subset $S_{0}$ of $S$, 
a sign-reversing involution $\iota_{T}$ on a subset $T_{0}$ of $T$, 
and a sign-preserving bijection $\varphi$ from $S \setminus S_{0}$ to $T \setminus T_{0}$ (see \cite[p.9]{FK}).
In this terminology, our collection $(I_{1}, I_{2}, Y)$ in Theorem~\ref{thm:YB-move} is a sijection from $\CA(w, \Gamma_{1})$ to $\CA(w, \Gamma_{2})$. 
This sijection can be thought of as a generalization of quantum Yang-Baxter moves. 
\end{rem}

As in the case that $\lambda$ is dominant, we can prove that the maps $Y$, $I_{1}$, and $I_{2}$ preserve weights and heights. 
%
%
\begin{thm}\label{thm:wt_height_preserving}
\begin{enu}
\item For all $A \in \CA_{0}(w, \Gamma_{1})$, it holds that $\wt(Y(A)) = \wt(A)$ and $\height(Y(A)) = \height(A)$. 
\item Let $k = 1, 2$. For all $A \in \CA_{0}^{C}(w, \Gamma_{k})$, it holds that $\wt(I_{k}(A)) = \wt(A)$ and $\height(I_{k}(A)) = \height(A)$. 
\end{enu}
\end{thm}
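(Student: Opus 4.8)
The plan is to prove Theorem~\ref{thm:wt_height_preserving} by reducing everything to properties of the maps $Y$, $I_1$, $I_2$ that are already guaranteed by Theorem~\ref{thm:YB-move}, together with the defining formulas for $\wt(\cdot)$ and $\height(\cdot)$ from Section~\ref{subsec:def_quantum_alcove_models}. The key observation I would exploit is that both statistics are built out of the three pieces $A^{(1)},A^{(2)},A^{(3)}$ in a controlled way: the weight is determined by the composite reflection $w s_{\beta_{j_1},-l_{j_1}}\cdots s_{\beta_{j_p},-l_{j_p}}(-\lambda)$, and the height is the sum $\sum_{j\in A^-}\sgn(\beta_j)\wti{l_j}$ over the quantum-edge indices. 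Since Theorem~\ref{thm:YB-move} already tells us that $Y$ (and each $I_k$) fixes $A^{(1)}$ and $A^{(3)}$ pointwise, fixes $\ed(A^{(2)})$, and preserves $\down(\cdot)$, the whole problem localizes to the middle segment $\Gamma^{(2)}$ on which the Yang-Baxter transformation acts.

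First I would handle the weight. I would write the affine reflection product defining $\wt(A)$ as a composition over the three blocks and note that the factors coming from $A^{(1)}$ are literally unchanged (same indices, same $l_j$), so they contribute identically. For the middle block, I would use the fact that $\wt(A)$ can be rewritten as an action of the finite Weyl group element $\ed(A)$ on $-\lambda$ plus a correction governed by $\down(A)$; more precisely I would express $\wt$ in terms of $\ed(A)(-\lambda)$ and the translation part recorded by $\down(A)$, invoking the standard identity relating $\wt(A)$, $\ed(A)$, and $\down(A)$ in the quantum alcove model (the affine Weyl group decomposition $x=wt_\xi$). Because $Y$ preserves $\ed(A^{(2)})$ and hence $\ed(A)$ as a whole (the outer blocks being untouched), and preserves $\down(A)$, these two ingredients pin down $\wt(A)$, giving $\wt(Y(A))=\wt(A)$. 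The third block contributes a reflection product that is conjugated by the same middle element on both sides, so it too is unaffected.

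Next I would treat the height. Here the essential input is the relation $l'_{t+p}=l_{t+q+1-p}$ recorded before \eqref{eq:division_Gamma1}, which says that reversing the segment also reverses the list of heights $l_j$, together with $\wti{l_k}=\pair{\lambda}{\beta_k^\vee}-l_k$. The height $\height(A)=\sum_{j\in A^-}\sgn(\beta_j)\wti{l_j}$ splits as a sum over $A^{(1),-}$, $A^{(2),-}$, and $A^{(3),-}$. The outer two sums are unchanged since the indices, signs, and $l_j$ all coincide. For the middle sum, the point is that the quantum-edge set $A^{(2),-}$ corresponds, under the bijection, to a quantum-edge set for a path with the same source and endpoint $\ed(A^{(2)})$; since $\down(A^{(2)})=\sum_{j\in A^{(2),-}}|\beta_j|^\vee$ is preserved and the $\wti{l_j}$ are matched up by the index-reversal, the middle contributions to the height agree. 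I would make this precise by relating $\sum_{j\in A^{(2),-}}\sgn(\beta_j)\wti{l_j}$ to a quantity depending only on $\ed(A^{(2)})$ and $\down(A^{(2)})$ via the weight of the corresponding path in $\QBG(W)$, both of which are preserved.

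The main obstacle I anticipate is the middle-block height computation: unlike the weight, which packages cleanly into $\ed$ and $\down$, the height is a raw sum of signed $\wti{l_j}$ over the quantum indices, and one must verify that this sum is genuinely a function of the pair $(\ed(A^{(2)}),\down(A^{(2)}))$ alone rather than of the individual indices. The cleanest route is to show that $\height(A^{(2)})$ equals $\pair{\lambda}{\down(A^{(2)})}$ minus a term depending only on the path's weight and endpoint, using $\wti{l_j}=\pair{\lambda}{\beta_j^\vee}-l_j$ and an Eulerian-type identity for the reduced $\lambda$-chain that expresses $\sum l_j$ over a directed path purely through its source and sink. Establishing this path-invariance of the height within the rank-two local picture — exactly where the Yang-Baxter transformation lives — is where the real work sits; once it is in place, Theorem~\ref{thm:wt_height_preserving} follows immediately by assembling the three blocks and applying the properties of $Y$, $I_1$, $I_2$ from Theorem~\ref{thm:YB-move}.
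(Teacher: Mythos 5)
Your three-block decomposition, the observation that the outer blocks contribute identically, and the rewriting of the middle height as $\pair{\lambda}{\down(\cdot)}$ minus a signed sum of the $l_j$'s (via $\wti{l_j}=\pair{\lambda}{\beta_j^\vee}-l_j$) all coincide with the paper's proof. However, both of your middle-block arguments rest on claims that are false for general chains, and the missing ingredient is the same in both cases. For the weight, there is no ``standard identity'' determining $\wt(A)$ from $\ed(A)$ and $\down(A)$: the affine product $s_{\beta_{j_1},-l_{j_1}}\cdots s_{\beta_{j_p},-l_{j_p}}$ has a translation part involving all the integers $l_j$, not only the quantum-edge data. Concretely, if a root $\beta$ occurs at two positions $j\neq j'$ of a $\lambda$-chain with $l_j\neq l_{j'}$ (which happens whenever $|\pair{\lambda}{\beta^\vee}|\geq 2$), and $w\rightarrow ws_{|\beta|}$ is a Bruhat edge, then $A=\{j\}$ and $A'=\{j'\}$ have the same $\ed$ and the same $\down=0$, yet $\wt(A)=w\lambda-\wti{l_j}\,w\beta\neq w\lambda-\wti{l_{j'}}\,w\beta=\wt(A')$. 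The analogous example with a quantum edge shows that the sum $\sum_{j\in (A^{(2)})^{-}}\sgn(\beta_j)l_j$ is likewise not a function of the endpoint and weight of $\bp(A)^{(2)}$, so the ``Eulerian-type identity'' you invoke cannot follow from reducedness of the $\lambda$-chain alone; moreover, you explicitly leave its proof open (``where the real work sits''), so the height argument is incomplete even granting your reduction.

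What rescues both statements --- and what the paper uses --- is a geometric property special to Yang-Baxter segments: by \cite[Lemma~3.5]{LL2}, the hyperplanes crossed in the segment share a common point, i.e., $\bigcap_{k=t+1}^{t+q}H_{\beta_k,-l_k}\neq\emptyset$. Choosing $\mu$ in this intersection, so that $\pair{\mu}{\beta_k^\vee}=-l_k$ for all $k$, one computes $\wh{r}_{(l_{t+1},\ldots,l_{t+q}),\Gamma_1^{(2)}}(A^{(2)})=t_{\mu}\,v^{-1}\ed(\bp(A)^{(2)})\,t_{-\mu}$, where $v=\ed(\bp(A)^{(1)})$; thus the middle affine reflection product depends only on the endpoint $\ed(\bp(A)^{(2)})$, which $Y$, $I_1$, $I_2$ preserve by Theorem~\ref{thm:YB-move}, and the weight claim follows from the factorization of $\wt$ over the three blocks. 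For the height, the same common-point property is exactly the hypothesis of \cite[Lemma~44]{LNS}, which yields $\height_{(l_{t+q},\ldots,l_{t+1}),\Gamma_2^{(2)}}(\ed(\bp(Y(A))^{(2)}),Y(A)^{(2)})=\height_{(l_{t+1},\ldots,l_{t+q}),\Gamma_1^{(2)}}(\ed(\bp(A)^{(2)}),A^{(2)})$ and completes your reduction. So your outline is repairable by inserting these two facts at the flagged points, but as written the two key middle-block steps are unjustified, and the one for the weight is justified by an assertion that is actually false.
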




\section{Proofs of main results.}\label{sec:proofs}
We prove Theorems~\ref{thm:YB-move} and \ref{thm:wt_height_preserving} in this section. 
The proofs are based on a property analogous to the shellability of $\QBG(W)$ 
for the rank 2 root systems. 


\subsection{Quantum Bruhat operators.}
Let $K$ be a field which contains the ring $\BC \bra{Q^{\vee, +}} := \BC \bra{Q_{i} \mid i \in I}$ of formal power series, 
where $Q_{i}$, $i \in I$, are variables, and set $Q^{\xi} := \prod_{i \in I} Q_{i}^{m_{i}}$ for $\xi = \sum_{i \in I}m_{i}\alpha_{i}^{\vee} \in Q^{\vee,+}$. 
For $\gamma \in \Delta^{+}$, following \cite{BFP}, we define the \emph{quantum Bruhat operator} $\sQ_{\gamma}$ on the group algebra $K[W]$ of $W$ by 
\begin{equation*}
\sQ_{\gamma} v := \begin{cases} vs_{\gamma} & \text{ if } v \xrightarrow{\gamma} v s_{\gamma} \text{ is a Bruhat edge,} \\ Q^{\gamma^{\vee}} v s_{\gamma} & \text{ if } v \xrightarrow{\gamma} v s_{\gamma} \text{ is a quantum edge,} \\ 0 & \text{ otherwise.} \end{cases} 
\end{equation*}
We set $\sQ_{-\gamma} := -\sQ_{\gamma}$ for $\gamma \in \Delta^{+}$, and then $\sR_{\gamma} := 1 + \sQ_{\gamma}$ for $\gamma \in \Delta$. 
The operators $\{ \sR_{\gamma} \mid \gamma \in \Delta \}$ satisfy the \emph{Yang-Baxter equation}: 
for $\alpha, \beta \in \Delta$ such that $\pair{\alpha}{\beta^{\vee}} \le 0$ and $\alpha \not= -\beta$, 
it holds that
%
%
\begin{equation}\label{eq:Yang-Baxter_eq}
\sR_{\alpha} \sR_{s_{\alpha}(\beta)} \sR_{s_{\alpha}s_{\beta}(\alpha)} \cdots \sR_{s_{\beta}(\alpha)} \sR_{\beta} = \sR_{\beta} \sR_{s_{\beta}(\alpha)} \cdots \sR_{s_{\alpha}s_{\beta}(\alpha)} \sR_{s_{\alpha}(\beta)} \sR_{\alpha}; 
\end{equation}
the proof of this equation is the same as that of \cite[Proposition~36]{LNS}. 

We give some properties of quantum Bruhat operators. 
%
%
\begin{lem}\label{lem:prod_quantum_Bruhat_op}
Let $\Pi = (\beta_{1}, \ldots, \beta_{r})$ be a sequence of roots such that $\beta_{1}, \ldots, \beta_{r}$ are distinct. 
\begin{enu}
\item For $v \in W$, we have 
\begin{equation*}
\sR_{\beta_{r}} \sR_{\beta_{r-1}} \cdots \sR_{\beta_{1}} v = \sum_{\bp \in \CP(v, \Pi)} (-1)^{\nega(\bp)} Q^{\wt(\bp)} \ed(\bp). 
\end{equation*}
\item For $v \in W$, we have 
\begin{equation*}
\sR_{|\beta_{r}|} \sR_{|\beta_{r-1}|} \cdots \sR_{|\beta_{1}|} v = \sum_{\bp \in \CP(v, \Pi)} Q^{\wt(\bp)} \ed(\bp). 
\end{equation*}
\end{enu}
\end{lem}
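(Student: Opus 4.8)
The plan is to prove Lemma~\ref{lem:prod_quantum_Bruhat_op} by induction on $r$, expanding the product of $\sR$-operators from right to left and tracking how each factor contributes to the $\Pi$-compatible paths. The key observation is that a directed path $\bp \in \CP(v, \Pi)$ is exactly a record of which indices $j_{1} < \cdots < j_{p}$ we ``activate'' when we apply the operators $\sR_{\beta_{r}} \cdots \sR_{\beta_{1}}$ to $v$: each $\sR_{\beta_{k}} = 1 + \sQ_{\beta_{k}}$ offers a binary choice, where the summand $1$ corresponds to \emph{skipping} index $k$ (the path does not step along $\beta_{k}$), and the summand $\sQ_{\beta_{k}}$ corresponds to \emph{taking} an edge labeled $|\beta_{k}|$ (provided such an edge exists in $\QBG(W)$ from the current vertex). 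Since the operators are applied in the order $\sR_{\beta_{1}}$ first, then $\sR_{\beta_{2}}$, and so on, the activated indices are automatically read off in increasing order, which matches the compatibility condition $1 \le j_{1} < \cdots < j_{p} \le r$ in~\eqref{eq:Pi-compatible}.

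\medskip

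First I would establish~(2), which is cleaner because the operators $\sR_{|\beta_{k}|}$ are indexed by positive roots and carry no signs. For the inductive step, I would write $\sR_{|\beta_{r}|} \cdots \sR_{|\beta_{1}|} v = \sR_{|\beta_{r}|} \left( \sum_{\bp' \in \CP(v, \Pi')} Q^{\wt(\bp')} \ed(\bp') \right)$, where $\Pi' = (\beta_{1}, \ldots, \beta_{r-1})$, using the induction hypothesis. Then I would apply $\sR_{|\beta_{r}|} = 1 + \sQ_{|\beta_{r}|}$ to each term $Q^{\wt(\bp')} \ed(\bp')$. The ``$1$'' part leaves the path $\bp'$ unchanged (now viewed as a $\Pi$-compatible path that does not use index $r$), while the $\sQ_{|\beta_{r}|}$ part, applied to the endpoint $\ed(\bp')$, produces either $\ed(\bp') s_{|\beta_{r}|}$ (Bruhat edge, no new $Q$-factor) or $Q^{|\beta_{r}|^{\vee}} \ed(\bp') s_{|\beta_{r}|}$ (quantum edge, adding $|\beta_{r}|^{\vee}$ to the weight), or $0$ if no edge exists. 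By the definition of $\wt(\bp)$ as the sum of the coroots $\beta_{k}^{\vee}$ over quantum edges, the new $Q$-exponent is precisely $\wt$ of the extended path. This exhibits a bijection between the terms produced and the paths $\CP(v, \Pi)$, distinguishing those that use index $r$ from those that do not.

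\medskip

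For part~(1), I would run the same induction, but now $\sR_{\beta_{r}} = 1 + \sQ_{\beta_{r}}$ where $\sQ_{\beta_{r}} = \sgn(\beta_{r}) \sQ_{|\beta_{r}|}$ by the convention $\sQ_{-\gamma} = -\sQ_{\gamma}$. The only new feature is the sign: whenever $\beta_{r} \in -\Delta^{+}$, activating index $r$ contributes a factor of $-1$, which matches the increment of $\nega(\bp)$ by one (since $\beta_{r} \in -\Delta^{+}$ adds to the count in the definition of $\nega$). Thus the signs $(-1)^{\nega(\bp)}$ bookkeep exactly the parity of negative roots used, and the argument is otherwise identical to part~(2). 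Alternatively, one may deduce~(1) from~(2) directly by noting that $\sR_{\beta_{k}} = 1 + \sgn(\beta_{k})\sQ_{|\beta_{k}|}$, so expanding the product and comparing the two expressions term-by-term over $\CP(v,\Pi)$ immediately yields the sign factor $(-1)^{\nega(\bp)}$.

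\medskip

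\textbf{The main obstacle} I anticipate is purely bookkeeping: one must verify carefully that the endpoint $\ed(\bp')$ of an inductively-constructed path is exactly the vertex to which $\sQ_{|\beta_{r}|}$ is applied, so that ``taking an edge'' in the operator calculus coincides with ``extending the path in $\QBG(W)$.'' This requires that $\ed(\bp') = v s_{|\beta_{j_{1}}|} \cdots s_{|\beta_{j_{p}}|}$ really does equal the product of simple/reflection generators read off from the activated indices, which follows from the definition of $\sQ_{\gamma}$ acting by right multiplication $v \mapsto v s_{\gamma}$. The distinctness hypothesis on $\beta_{1}, \ldots, \beta_{r}$ ensures that no index is counted twice and that the resulting subset $\{j_{1} < \cdots < j_{p}\}$ is well-defined, so that the correspondence between terms in the expanded product and paths in $\CP(v, \Pi)$ is genuinely a bijection rather than a multiset map. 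Once this identification is made precise, both parts follow by a routine induction.
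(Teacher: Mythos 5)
Your proof is correct and is essentially the paper's own argument: the paper expands the full product $(1+\sQ_{\beta_{r}})\cdots(1+\sQ_{\beta_{1}})$ at once as a sum over subsets $J\subset\{1,\ldots,r\}$, identifies the nonzero terms $\sQ_{|\beta_{j_{s}}|}\cdots\sQ_{|\beta_{j_{1}}|}v$ with $\Pi$-compatible paths, and obtains the signs from $\sQ_{\beta}=\sgn(\beta)\sQ_{|\beta|}$, which is exactly what your induction on $r$ unwinds to (and coincides with the ``alternative'' route you mention for part (1)).
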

\begin{proof}
For $J \subset \{1, \ldots, r\}$, we set $\nega(J) := \{j \in J \mid \beta_{j} \in -\Delta^{+}\}$. 
We see that 
%
%
\begin{align}
\sR_{\beta_{r}} \sR_{\beta_{r-1}} \cdots \sR_{\beta_{1}} &= (1+\sQ_{\beta_{r}})(1+\sQ_{\beta_{r-1}}) \cdots (1+\sQ_{\beta_{1}}) \nonumber \\ 
&= \sum_{\{j_{1} < \ldots < j_{s}\} \subset \{1, \ldots, r\}} \sQ_{\beta_{j_{s}}}\sQ_{\beta_{j_{s-1}}} \cdots \sQ_{\beta_{j_{1}}} \nonumber \\ 
&= \sum_{\{j_{1} < \ldots < j_{s}\} \subset \{1, \ldots, r\}} (\sgn(\beta_{j_{s}})\sQ_{|\beta_{j_{s}}|})(\sgn(\beta_{j_{s-1}})\sQ_{|\beta_{j_{s-1}}|}) \cdots (\sgn(\beta_{j_{1}})\sQ_{|\beta_{j_{1}}|}) \nonumber \\ 
&= \sum_{J = \{j_{1} < \ldots < j_{s}\} \subset \{1, \ldots, r\}} (-1)^{\nega(J)} \sQ_{|\beta_{j_{s}}|}\sQ_{|\beta_{j_{s-1}}|} \cdots \sQ_{|\beta_{j_{1}}|}. \label{eq:prod_quantum_Bruhat_op}
\end{align}
Similarly, we see that 
%
%
\begin{equation}
\sR_{|\beta_{r}|} \sR_{|\beta_{r-1}|} \cdots \sR_{|\beta_{1}|} = \sum_{\{j_{1} < \ldots < j_{s}\} \subset \{1, \ldots, r\}} \sQ_{|\beta_{j_{s}}|}\sQ_{|\beta_{j_{s-1}}|} \cdots \sQ_{|\beta_{j_{1}}|}.  \label{eq:prod_quantum_Bruhat_op_abs}
\end{equation}

For $J = \{j_{1}, \ldots, j_{s}\} \subset \{1, \ldots, r\}$, 
if we have the edge $vs_{|\beta_{j_{1}}|} \cdots s_{|\beta_{j_{a-1}}|} \xrightarrow{|\beta_{j_{a}}|} vs_{|\beta_{j_{1}}|} \cdots s_{|\beta_{j_{a}}|}$ in $\QBG(W)$ for all $1 \le a \le s$, 
then we set $\delta(J) := 1$, and define a directed path $\bp(J)$ in $\QBG(W)$ by 
\begin{equation*}
\bp(J): v \xrightarrow{|\beta_{j_{1}}|} vs_{|\beta_{j_{1}}|} \xrightarrow{|\beta_{j_{2}}|} \cdots \xrightarrow{|\beta_{j_{s}}|} vs_{|\beta_{j_{1}}|}\cdots s_{|\beta_{j_{s}}|}; 
\end{equation*}
otherwise, we set $\delta(J) := 0$. 
By the definition of quantum Bruhat operators, we have 
\begin{equation*}
\sQ_{|\beta_{j_{s}}|}\sQ_{|\beta_{j_{s-1}}|} \cdots \sQ_{|\beta_{j_{1}}|} v = \begin{cases} Q^{\wt(\bp(J))} \ed(\bp(J)) & \text{if } \delta(J) = 1, \\ 0 & \text{if } \delta(J) = 0. \end{cases}
\end{equation*}
If $\delta(J) = 1$, then we have $\nega(J) = \nega(\bp(J))$. 
Therefore, by \eqref{eq:prod_quantum_Bruhat_op}, we deduce that 
\begin{align*}
\sR_{\beta_{r}} \sR_{\beta_{r-1}} \cdots \sR_{\beta_{1}} v &= \sum_{J = \{j_{1} < \ldots < j_{s}\} \subset \{1, \ldots, r\}} (-1)^{\nega(J)} \sQ_{|\beta_{j_{s}}|}\sQ_{|\beta_{j_{s-1}}|} \cdots \sQ_{|\beta_{j_{1}}|} v \\ 
&= \sum_{\substack{J \subset \{1, \ldots, r\} \\ \delta(J) = 1}} (-1)^{\nega(\bp(J))} Q^{\wt(\bp(J))} \ed(\bp(J)) \\ 
&= \sum_{\bp \in \CP(v, \Pi)} (-1)^{\nega(\bp)} Q^{\wt(\bp)} \ed(\bp), 
\end{align*}
as desired. This proves part (1) of the lemma. 

Also, we see from \eqref{eq:prod_quantum_Bruhat_op_abs} that 
\begin{align*}
\sR_{|\beta_{r}|} \sR_{|\beta_{r-1}|} \cdots \sR_{|\beta_{1}|} v &= \sum_{\{j_{1} < \ldots < j_{s}\} \subset \{1, \ldots, r\}} \sQ_{|\beta_{j_{s}}|}\sQ_{|\beta_{j_{s-1}}|} \cdots \sQ_{|\beta_{j_{1}}|} v \\ 
&= \sum_{\substack{J \subset \{1, \ldots, r\} \\ \delta(J) = 1}} Q^{\wt(\bp(J))} \ed(\bp(J)) \\ 
&= \sum_{\bp \in \CP(v, \Pi)} Q^{\wt(\bp)} \ed(\bp). 
\end{align*}
This proves part (2) of the lemma. 
\end{proof}

%
\begin{rem}\label{rem:SkTk}
If we set $\CP(v, \Pi; w, \xi) := \{ \bp \in \CP(v, \Pi) \mid \ed(\bp) = w, \ \wt(\bp) = \xi \}$ for $v, w \in W$ and $\xi \in Q^{\vee, +}$, 
then by Lemma~\ref{lem:prod_quantum_Bruhat_op}\,(1), we deduce that 
%
%
\begin{equation}\label{eq:Tk}
\sR_{\beta_{r}} \cdots \sR_{\beta_{1}} v = \sum_{w \in W} \sum_{\xi \in Q^{\vee, +}} \left( \sum_{\bp \in \CP(v, \Pi; w, \xi)} (-1)^{\nega(\bp)} \right) Q^{\xi} w. 
\end{equation}
Also, if we set $c_{\xi, w}^{v} := \# \CP(v, \Pi; w, \xi)$ for $v, w \in W$ and $\xi \in Q^{\vee, +}$, then we deduce from Lemma~\ref{lem:prod_quantum_Bruhat_op}\,(2) that 
%
%
\begin{equation}\label{eq:Sk}
\sR_{|\beta_{r}|} \sR_{|\beta_{r-1}|} \cdots \sR_{|\beta_{1}|} v = \sum_{w \in W} \sum_{\xi \in Q^{\vee, +}} c_{\xi, w}^{v} Q^{\xi} w. 
\end{equation}
\end{rem}



\subsection{Key propositions to a generalization of quantum Yang-Baxter moves.}\label{subsec:rank2_shellability}
We prove a certain property of $\QBG(W)$, which plays an important role in the proof of Theorem~\ref{thm:YB-move}. 
Let $\alpha, \beta \in \Delta$ be such that $\pair{\alpha}{\beta^{\vee}} \le 0$ and $\alpha \not= -\beta$. 
We define sequences of roots $\Pi$, $\Pi'$ by 
\begin{align*}
\Pi &= (\gamma_{1}, \ldots, \gamma_{q}) := (\alpha, s_{\alpha}(\beta), s_{\alpha}s_{\beta}(\alpha), \ldots, s_{\beta}(\alpha), \beta), \\ 
\Pi' &= (\gamma'_{1}, \ldots, \gamma'_{q}) := (\beta, s_{\beta}(\alpha), \ldots, s_{\alpha}s_{\beta}(\alpha), s_{\alpha}(\beta), \alpha) = (\gamma_{q}, \ldots, \gamma_{2}, \gamma_{1}); 
\end{align*}
note that $\gamma_{1}, \ldots, \gamma_{q}$ are distinct. 
Also, let $\Delta_{\alpha, \beta}$ be the root subsystem of $\Delta$ generated by $\alpha$ and $\beta$. 
Then $\Delta_{\alpha, \beta}$ is a root system of rank 2. 
More precisely, we see that $\Delta_{\alpha, \beta}$ is isomorphic to the root system of type $A_{1} \times A_{1}$, $A_{2}$, $C_{2}$, or $G_{2}$. 

Assume temporarily that $\Delta$ is not of type $G_{2}$. Then we can prove the following property, 
which can be thought of as a generalization of the shellability of $\QBG(W)$ (Theorem~\ref{thm:shellability}) for the rank 2 root systems. 
%
%
\begin{prop}\label{prop:rank2_shellability}
Let $v \in W$, and let $\bp$ be a $\Pi$-compatible directed path in $\QBG(W)$ which starts at $v$, i.e., $\bp \in \CP(v, \Pi)$. 
Then only one of the following occurs. 
\begin{enu}
\item There exists a unique $\bp' \in \CP(v, \Pi) \setminus \{\bp\}$ such that $\ed(\bp') = \ed(\bp)$ and $\wt(\bp') = \wt(\bp)$. 
This $\bp'$ satisfies $(-1)^{\nega(\bp')} = -(-1)^{\nega(\bp)}$. 
Moreover, there does not exist a path $\bq \in \CP(v, \Pi')$ such that $\ed(\bq) = \ed(\bp)$ and $\wt(\bq) = \wt(\bp)$. 
\item There exists a unique $\bp' \in \CP(v, \Pi')$ such that $\ed(\bp') = \ed(\bp)$ and $\wt(\bp') = \wt(\bp)$. 
This $\bp'$ satisfies $(-1)^{\nega(\bp')} = (-1)^{\nega(\bp)}$. 
Moreover, there does not exist a path $\bq \in \CP(v, \Pi) \setminus \{\bp\}$ such that $\ed(\bq) = \ed(\bp)$ and $\wt(\bq) = \wt(\bp)$. 
\end{enu}
\end{prop}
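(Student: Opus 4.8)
The plan is to reduce Proposition~\ref{prop:rank2_shellability} to a statement entirely within the rank-2 root subsystem $\Delta_{\alpha,\beta}$, then verify it by the algebraic identity encoded in the Yang--Baxter equation~\eqref{eq:Yang-Baxter_eq}. First I would observe that a $\Pi$-compatible path $\bp$ starting at $v$ only ever uses reflections $s_{|\gamma_k|}$ with $\gamma_k \in \Delta_{\alpha,\beta}$, so both $\ed(\bp)$ and $\wt(\bp)$ are governed by the action of the rank-2 subsystem. This is where the assumption that $\Delta$ is not of type $G_2$ enters: it restricts $\Delta_{\alpha,\beta}$ to one of $A_1\times A_1$, $A_2$, or $C_2$, so that $q \le 4$ and the list of $\Pi$-compatible and $\Pi'$-compatible paths is short enough to be surveyed completely.

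The central tool is the Yang--Baxter equation~\eqref{eq:Yang-Baxter_eq}: since $\Pi = (\gamma_1,\dots,\gamma_q)$ and $\Pi' = (\gamma_q,\dots,\gamma_1)$ are exactly the two sides of~\eqref{eq:Yang-Baxter_eq}, Lemma~\ref{lem:prod_quantum_Bruhat_op}\,(1) gives
\begin{equation*}
\sum_{\bp \in \CP(v, \Pi)} (-1)^{\nega(\bp)} Q^{\wt(\bp)} \ed(\bp) = \sum_{\bq \in \CP(v, \Pi')} (-1)^{\nega(\bq)} Q^{\wt(\bq)} \ed(\bq).
\end{equation*}
Fixing a target pair $(w,\xi)$ with $w \in W$, $\xi \in Q^{\vee,+}$ and extracting the coefficient of $Q^{\xi}w$, this identity says that the signed count of paths in $\CP(v,\Pi)$ with $\ed=w$, $\wt=\xi$ equals the signed count of such paths in $\CP(v,\Pi')$. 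So I would fix $\bp$, set $(w,\xi) := (\ed(\bp),\wt(\bp))$, and analyze the multiset of all paths in $\CP(v,\Pi)$ and $\CP(v,\Pi')$ hitting this $(w,\xi)$.

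The dichotomy (1)/(2) is then a statement about how the signed cancellation is realized. The claim amounts to: for each $(w,\xi)$ actually attained, either there are exactly two paths in $\CP(v,\Pi)$ with opposite signs and none in $\CP(v,\Pi')$ (case~(1)), or there is exactly one path on each side with equal signs (case~(2)); and no other configuration arises. I would prove this by a finite but careful case analysis over the possible types of $\Delta_{\alpha,\beta}$ and over the starting coset of $v$ relative to the rank-2 parabolic --- effectively computing the rank-2 quantum Bruhat graph on the relevant vertices and listing all label-increasing-in-$\Pi$ and in-$\Pi'$ paths. The key structural input making the count come out to ``at most two paths per $(w,\xi)$, with the stated sign pattern'' should be the shellability of $\QBG(W)$ (Theorem~\ref{thm:shellability}): in a genuine reflection order there is a \emph{unique} minimal label-increasing path, and $\Pi$, $\Pi'$ are precisely the two reflection orders on $\Delta^+_{\alpha,\beta}$, so uniqueness on each side forces the ``one-and-one, equal sign'' scenario of case~(2), while the quantum (non-Bruhat) edges produce the extra, sign-reversed partner internal to a single side, giving case~(1).

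The main obstacle I anticipate is precisely disentangling these two mechanisms: the quantum Bruhat graph is not merely a shellable poset, because quantum edges can decrease length, so ``label-increasing with respect to $\Pi$'' no longer coincides with a reflection-order shelling in the naive sense, and a target $(w,\xi)$ can be reached both by a short (Bruhat-type) path and by a longer path accumulating quantum-edge weight. Controlling exactly when a second path of the same $(w,\xi)$ appears on the \emph{same} side (case~(1)) versus being forced to the \emph{opposite} side (case~(2)), and checking that the parity $(-1)^{\nega}$ behaves as asserted in each instance, will require the explicit rank-2 computations; the exclusion of $G_2$ is what keeps this case analysis finite and tractable.
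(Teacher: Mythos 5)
Your strategy for the rank-2 case itself---extract the coefficient of $Q^{\xi}w$ from the Yang--Baxter identity via Lemma~\ref{lem:prod_quantum_Bruhat_op}\,(1), then pin down how the signed cancellation is realized by explicitly enumerating paths in the rank-2 quantum Bruhat graph---is essentially the paper's argument (the paper packages that enumeration as the matrix statements about $\sS_{k}$, $\sT_{k}^{\pm}$, $\sS'_{k}$ in Proposition~\ref{prop:matrix}). The genuine gap is at your very first step: the reduction to the rank-2 subsystem is not the triviality you make it out to be. Whether a step $x \to xs_{|\gamma|}$ is an edge of $\QBG(W)$, and whether it is a Bruhat or a quantum edge, is decided by the length function of the \emph{full} Weyl group $W$ and by $\pair{\rho}{\gamma^{\vee}}$ computed in the full root system; none of this is intrinsic to $\Delta_{\alpha,\beta}$. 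Indeed, the paper stresses that $\QBG(\overline{W})$ is \emph{not} a subgraph of $\QBG(W)$. So knowing that $\ed(\bp) \in v\overline{W}$ and that $\wt(\bp)$ lies in the coroot lattice of $\Delta_{\alpha,\beta}$ does not make the statement ``entirely within the rank-2 root subsystem''; your finite case analysis, as described, proves the proposition only when $\Delta$ itself has rank 2. To bridge this, the paper imports the coset factorization $u = \overline{u}\lfloor u \rfloor$ of \cite[Proposition~5.1, Remarks~5.2\,(2)]{LL2} and, crucially, \cite[Theorem~5.3]{LL2}, which gives an injection $\CP(v,\Pi;w,\xi) \hookrightarrow \CP(\overline{v},\Pi;\overline{w},\xi)$ into the rank-2 quantum Bruhat graph compatible with edge types, weights, and signs; the general case then follows from the rank-2 case by counting/contradiction arguments, plus one more application of the Yang--Baxter equation to show that at least one partner path exists. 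Some substitute for this transfer result is indispensable in your plan, and nothing in your proposal supplies it.

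A secondary inaccuracy: $\Pi$ and $\Pi'$ are \emph{not} ``the two reflection orders on $\Delta^{+}_{\alpha,\beta}$.'' Their entries may be negative roots, and even after taking absolute values they are cyclic rotations of reflection orders (this is the content of \eqref{eq:chain_rank2}--\eqref{eq:def_prec'}). Shellability (Theorem~\ref{thm:shellability}) therefore applies directly only to the un-rotated products (the cases $k = 0, q$ in the paper's notation), which is precisely why the paper must treat $k = 1, q-1$ by a separate argument and the middle case $k = 2$ in type $C_{2}$ by explicit matrix computation. Your ``obstacle'' paragraph shows you sense this, but the key structural input you cite---uniqueness of the label-increasing path on each side---would be misapplied if taken literally; the enumeration has to be carried out for the rotated orders, as in Proposition~\ref{prop:matrix}, and it is exactly there that the configuration of case~(1) (two opposite-sign paths on one side, none on the other) arises.
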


The proof of this proposition can be reduced to the case that $\Delta$ is a root system of rank 2;
in Appendix~\ref{sec:App_B}, we explain how to construct the explicit correspondence $\bp \mapsto \bp'$ through an example. 
Now we assume that $\Delta$ is a root system of type $A_{1} \times A_{1}$, $A_{2}$, or $C_{2}$. 
Then we see that there exists some $k = 1, \ldots, q$ such that $\gamma_{k}$ and $\gamma_{k+1}$ are the simple roots of $\Delta$ (for convenience of notation, we set $\gamma_{q+1} := \gamma_{1}$). 
We set 
%
%
\begin{equation}\label{eq:chain_rank2}
(\beta_{1}, \ldots, \beta_{q}) := (|\gamma_{k}|, |\gamma_{k-1}|, \ldots, |\gamma_{1}|, |\gamma_{q}|, \ldots, |\gamma_{k+1}|). 
\end{equation}
Then we have 
\begin{equation*}
(\beta_{1}, \ldots, \beta_{q}) = (\beta_{1}, s_{\beta_{1}}(\beta_{q}), s_{\beta_{1}}s_{\beta_{q}}(\beta_{1}), \ldots, s_{\beta_{q}}(\beta_{1}), \beta_{q}). 
\end{equation*}
Also, if we set 
\begin{equation*}
\Pi^{\pm} := (\mp \beta_{k}, \mp \beta_{k-1}, \ldots, \mp \beta_{1}, \pm \beta_{q}, \pm \beta_{q-1}, \ldots, \pm \beta_{k+1}), 
\end{equation*}
then $\Pi = \Pi^{+}$ or $\Pi = \Pi^{-}$.  
Note that the total order $\prec$ on $\{\beta_{1}, \ldots, \beta_{q}\} = \Delta^{+}$ defined by 
%
%
\begin{equation}\label{eq:def_prec}
\beta_{1} \prec \beta_{2} \prec \cdots \prec \beta_{q} 
\end{equation}
is a reflection order; 
the total order $\prec'$ defined by 
%
%
\begin{equation}\label{eq:def_prec'}
\beta_{q} \prec' \beta_{q-1} \prec' \cdots \prec' \beta_{1}
\end{equation}
is also a reflection order. 
We consider the following operators for $k = 0, 1, \ldots, q$: 
\begin{align*}
\sT_{k}^{\pm} &:= \sR_{\pm \beta_{k+1}} \cdots \sR_{\pm \beta_{q}} \sR_{\mp \beta_{1}} \cdots \sR_{\mp \beta_{k}}, \\ 
\sS_{k} &:= \sR_{\beta_{k+1}} \cdots \sR_{\beta_{q}} \sR_{\beta_{1}} \cdots \sR_{\beta_{k}}, \\ 
\sS'_{k} &:= \sR_{\beta_{k}} \cdots \sR_{\beta_{1}} \sR_{\beta_{q}} \cdots \sR_{\beta_{k+1}}.
\end{align*}

In the following proposition, the matrices of operators on $K[W]$ are the representation matrices with respect to the basis $W$ of $K[W]$. 
Note that for a $W$-linear operator $\sT: K[W] \rightarrow K[W]$, the matrix of $\sT$ is defined by $(c_{v, w})_{v, w \in W}$ 
if $\sT w = \sum_{v \in W} c_{v, w} v$, $c_{v, w} \in K$. 
%
%
\begin{prop}\label{prop:matrix}
\begin{enu}
\item All the entries of the matrix of $\sS_{k}$, $k = 0, 1, \ldots, q$, are of the form $\sum_{j = 1}^{r} m_{j}Q^{\xi_{j}}$, where all $\xi_{j} \in Q^{\vee,+}$ are distinct, and $m_{j} \in \{1, 2\}$. 
\item Let $v, w \in W$. Assume that the $(v, w)$-entry of the matrix of $\sS_{k}$ is of the form $\sum_{j = 1}^{r} m_{j}Q^{\xi_{j}}$ as in (1). 
Also, assume that the $(v, w)$-entry of the matrix of $\sT_{k}^{\pm}$ is of the form $\sum_{\xi \in Q^{\vee, +}} n_{\xi}^{\pm} Q^{\xi}$. 
For $j = 1, \ldots, r$, if $m_{j} = 2$, then $n_{\xi_{j}}^{\pm} = 0$, and if $m_{j} = 1$, then $n_{\xi_{j}}^{\pm} \in \{1, -1\}$. 
Moreover, for $\xi \in Q^{\vee, +} \setminus \{\xi_{1}, \ldots, \xi_{r}\}$, we have $n_{\xi}^{\pm} = 0$. 
\item Let $v, w \in W$. Assume that the $(v, w)$-entry of the matrix of $\sS_{k}$ is of the form $\sum_{j = 1}^{r} m_{j}Q^{\xi_{j}}$ as in (1). 
Also, assume that the $(v, w)$-entry of the matrix of $\sS'_{k}$ is of the form $\sum_{\xi \in Q^{\vee, +}} n_{\xi}Q^{\xi}$. 
For $j = 1, \ldots, r$, if $m_{j} = 2$, then $n_{\xi_{j}} = 0$, and if $m_{j} = 1$, then $n_{\xi_{j}} = 1$. 
\end{enu}
\end{prop}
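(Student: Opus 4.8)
The plan is to prove Proposition~\ref{prop:matrix} by reducing everything to the rank~2 root systems $\Delta_{\alpha,\beta}$ of type $A_1\times A_1$, $A_2$, or $C_2$, and then exploiting the Yang--Baxter equation~\eqref{eq:Yang-Baxter_eq} together with the explicit combinatorics of label-increasing paths in $\QBG(W)$ for these small rank cases. The first observation is that all three operators $\sS_k$, $\sS'_k$, $\sT_k^\pm$ are products of the $\sR_{\pm\beta_j}$ over the full reflection order $\Delta^+=\{\beta_1,\ldots,\beta_q\}$, so by Lemma~\ref{lem:prod_quantum_Bruhat_op} and Remark~\ref{rem:SkTk} their matrix entries are counting generating functions over $\Pi$-compatible (equivalently, label-increasing) paths. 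Concretely, the $(v,w)$-entry of $\sS_k$ equals $\sum_\xi c_{\xi,w}^v Q^\xi$ via~\eqref{eq:Sk}, where $c_{\xi,w}^v=\#\CP(v,\Pi;w,\xi)$ counts label-increasing paths from $v$ to $w$ of quantum weight $\xi$ with respect to the reflection order $\prec$ in~\eqref{eq:def_prec}. By the shellability theorem (Theorem~\ref{thm:shellability}), there is a \emph{unique} label-increasing path between any two vertices, so naively each $c_{\xi,w}^v\in\{0,1\}$; the multiplicity $m_j=2$ in part~(1) must then arise from the degenerate $A_1\times A_1$ case, where two distinct roots can share a reflection and a single pair $(v,w)$ can be reached by two label-increasing paths of the \emph{same} weight. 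I would therefore begin by carefully cataloguing, for each of the three rank~2 types, the full multiset of label-increasing paths and their weights, which establishes part~(1).

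For part~(2), the key identity is that $\sT_k^\pm$ and $\sS_k$ differ only by the signs $\sgn(\pm\beta_j)$ attached to each factor, in the manner made precise by the computation~\eqref{eq:prod_quantum_Bruhat_op} in the proof of Lemma~\ref{lem:prod_quantum_Bruhat_op}. Indeed, writing $\sR_{\pm\beta_j}=1\pm\sQ_{\beta_j}$ and expanding, the $(v,w)$-entry of $\sT_k^\pm$ becomes $\sum_\xi\bigl(\sum_{\bp}(-1)^{\nega(\bp)}\bigr)Q^\xi$, a \emph{signed} count over the same paths $\bp$ that $\sS_k$ counts without sign. Thus for each monomial $Q^{\xi_j}$, the coefficient $n_{\xi_j}^\pm$ is an alternating sum over the $m_j$ paths of weight $\xi_j$. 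When $m_j=1$ there is a single path and $n_{\xi_j}^\pm=\pm(-1)^{\nega(\bp)}\in\{1,-1\}$; when $m_j=2$ the two paths of common weight must carry \emph{opposite} signs so that they cancel, yielding $n_{\xi_j}^\pm=0$. This last cancellation is the heart of the argument: I would verify, case by case in the $A_1\times A_1$ degeneration, that the two label-increasing paths of equal weight differ by exactly one unit in $\nega(\cdot)$, making the signed contributions cancel. The vanishing for $\xi\notin\{\xi_1,\ldots,\xi_r\}$ follows because $\sT_k^\pm$ and $\sS_k$ are supported on the same set of monomials.

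Part~(3) compares $\sS_k$ (built from the reflection order $\prec$) with $\sS'_k=\sR_{\beta_k}\cdots\sR_{\beta_1}\sR_{\beta_q}\cdots\sR_{\beta_{k+1}}$, which is the product in the \emph{reversed} reflection order $\prec'$ of~\eqref{eq:def_prec'}. Here the crucial point is that both $\prec$ and $\prec'$ are genuine reflection orders on $\Delta^+$, so Theorem~\ref{thm:shellability} applies to each: $n_\xi$ counts $\prec'$-label-increasing paths while $m_j$ counts $\prec$-label-increasing paths, both \emph{unsigned}. For a $\prec$-unique path ($m_j=1$) I expect a matching $\prec'$-path of the same weight, giving $n_{\xi_j}=1$, whereas for $m_j=2$ the two $\prec$-paths should collapse to zero $\prec'$-paths of that weight, giving $n_{\xi_j}=0$. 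The main obstacle, as I anticipate it, is precisely this bookkeeping in the $A_1\times A_1$ case and, more seriously, pinning down the weights $\wt(\bp)$ of the quantum edges so that the claimed multiplicities and sign patterns genuinely hold for \emph{every} pair $(v,w)$ simultaneously; one must check that the coincidences of weights are never accidental across different $(v,w)$. I would handle this by invoking the well-definedness of $\wt(v\Rightarrow w)$ (from \cite[Proposition~8.1]{LNSSS1}) to control weights abstractly, then completing the finitely many rank~2 verifications by explicit enumeration, with the $G_2$ case excluded by hypothesis and treated separately elsewhere.
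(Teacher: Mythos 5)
There is a genuine gap, and it is fatal to the structure of your argument. Your proposal rests on applying the shellability theorem (Theorem~\ref{thm:shellability}) to the operators $\sS_{k}$ and $\sS'_{k}$ themselves, claiming that the $(v,w)$-entry of $\sS_k$ counts label-increasing paths with respect to $\prec$, so that ``naively each $c_{\xi,w}^v\in\{0,1\}$'' and multiplicity $m_j=2$ can only come from the degenerate $A_1\times A_1$ case. This is false: for $0<k<q$ the sequence underlying $\sS_{k}=\sR_{\beta_{k+1}}\cdots\sR_{\beta_{q}}\sR_{\beta_{1}}\cdots\sR_{\beta_{k}}$ (read in order of application) is the \emph{cyclic rotation} $(\beta_{k},\ldots,\beta_{1},\beta_{q},\ldots,\beta_{k+1})$ of the reflection order, and a cyclic rotation of a reflection order is not a reflection order, so Theorem~\ref{thm:shellability} gives no uniqueness for these paths; the same objection applies to your reading of $\sS'_{k}$ in part~(3). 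The consequences of this misreading are visible in the paper's explicit type-$C_{2}$ matrices in Section~\ref{subsec:matrix_typeC2}: the matrices of $\sR_{\alpha_{1}+\alpha_{2}}\sR_{\alpha_{2}}\sR_{\alpha_{1}}\sR_{2\alpha_{1}+\alpha_{2}}$ and $\sR_{2\alpha_{1}+\alpha_{2}}\sR_{\alpha_{1}}\sR_{\alpha_{2}}\sR_{\alpha_{1}+\alpha_{2}}$ contain entries $2$, $2Q_{1}$, $2Q_{2}$, $2Q_{1}Q_{2}$, so multiplicity $2$ genuinely occurs in type $C_{2}$ (and, conversely, in $A_{1}\times A_{1}$ the two roots are orthogonal and no coefficient $2$ ever appears --- your attribution is exactly backwards). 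Since parts (2) and (3) of your plan are organized around verifying cancellations ``in the $A_1\times A_1$ degeneration,'' the plan would have you checking the wrong cases and asserting uniqueness where it fails.

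For comparison, the paper's proof uses shellability only where it legitimately applies: for $k=0,q$ the sequences \emph{are} reflection orders and the claim is immediate. For $k=1$ and $k=q-1$, the rotated sequence splits as (simple root)$\,\cdot\,$(reflection order) or (reflection order)$\,\cdot\,$(simple root), so each entry of $\sS_{k}$ is an explicit sum of at most two monomials, namely $\delta_{v,w}Q^{\wt(v\Rightarrow w)}+\delta_{vs_{\beta_{1}},w}Q^{\wt(v\rightarrow vs_{\beta_{1}})+\wt(vs_{\beta_{1}}\Rightarrow w)}$, while $\sT_{k}^{\pm}$ is the corresponding difference --- this yields parts (1) and (2). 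Part (3) for $k=1$ is the delicate step: when the two monomials coincide, the paper shows $(c^{v}_{w,\xi})'=0$ by a concatenation argument with shortest label-increasing paths, and when $m_j=1$ it deduces $n_{\xi_j}=1$ from the Yang--Baxter equation \eqref{eq:Yang-Baxter_eq} applied to $\sT_{k}^{\pm}$. Finally, the one remaining case, $k=2$ in type $C_{2}$, is handled by brute-force matrix computation, not by any path-uniqueness principle. Your closing remark that one could ``complete the finitely many rank~2 verifications by explicit enumeration'' is in principle a valid fallback (the check is finite), but the enumeration would have to be organized around the correct path sets --- those compatible with the rotated, non-reflection orders --- and would contradict, rather than confirm, the structural claims your proposal is built on.
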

The proof of Proposition~\ref{prop:matrix} is based on direct calculations, which we give later. 

\begin{proof}[Proof of Proposition~\ref{prop:rank2_shellability}]
First, we show the proposition for the root system $\Delta$ of type $A_{1} \times A_{1}$, $A_{2}$, or $C_{2}$. 
As in \eqref{eq:chain_rank2}, we take a sequence 
\begin{equation*}
(\beta_{1}, \ldots, \beta_{q}) := (|\gamma_{k}|, |\gamma_{k-1}|, \ldots, |\gamma_{1}|, |\gamma_{q}|, \ldots, |\gamma_{k+1}|)
\end{equation*}
of roots. 
Recall from \eqref{eq:Sk} that 
\begin{equation*}
\sS_{k} v = \sum_{w \in W} \sum_{\xi \in Q^{\vee, +}} c_{w, \xi}^{v} Q^{\xi} w, 
\end{equation*}
where $c_{w, \xi}^{v} = \# \CP(v, \Pi; w, \xi)$. 
By Proposition~\ref{prop:matrix}\,(1), we have $c_{w, \xi}^{v} \in \{0, 1, 2\}$. 
Also, again from \eqref{eq:Sk}, we see that 
\begin{equation*}
\sS'_{k} v = \sum_{w \in W} \sum_{\xi \in Q^{\vee, +}} (c_{w, \xi}^{v})' Q^{\xi} w, 
\end{equation*}
where $(c_{w, \xi}^{v})' = \# \CP(v, \Pi'; w, \xi)$. 

We write 
\begin{equation*}
\sT_{k}^{\pm} v = \sum_{w \in W} \sum_{\xi \in Q^{\vee, +}} d_{w, \xi}^{v, \pm} Q^{\xi} w, 
\end{equation*}
where $d_{w, \xi}^{v, \pm} \in \BZ$. 
Then, by \eqref{eq:Tk}, if $\Pi = \Pi^{+}$, then 
%
%
\begin{equation}\label{eq:d+}
d_{w, \xi}^{v, +} = \sum_{\bq \in \CP(v, \Pi; w, \xi)} (-1)^{\nega(\bq)}, \quad d_{w, \xi}^{v, -} = \sum_{\bq \in \CP(v, \Pi; w, \xi)} (-1)^{\ell(\bq)-\nega(\bq)}; 
\end{equation}
if $\Pi = \Pi^{-}$, then 
%
%
\begin{equation}\label{eq:d-}
d_{w, \xi}^{v, +} = \sum_{\bq \in \CP(v, \Pi; w, \xi)} (-1)^{\ell(\bq)-\nega(\bq)}, \quad d_{w, \xi}^{v, -} = \sum_{\bq \in \CP(v, \Pi; w, \xi)} (-1)^{\nega(\bq)}. 
\end{equation}

We set $w := \ed(\bp)$ and $\xi := \wt(\bp)$. Since $\bp \in \CP(v, \Pi; w, \xi)$, we have $c_{w, \xi}^{v} \not= 0$. 
First, assume that $c_{w, \xi}^{v} = 2$. 
Then there exists a unique $\bp' \in \CP(v, \Pi; w, \xi) \setminus \{\bp\}$, i.e., 
there exists a unique $\bp' \in \CP(v, \Pi) \setminus \{\bp\}$ such that $\ed(\bp') = \ed(\bp) = w$ and $\wt(\bp') = \wt(\bp) = \xi$. 
By Proposition~\ref{prop:matrix}\,(2), we have $d_{w, \xi}^{v, \pm} = 0$. 
Hence, by \eqref{eq:d+} and \eqref{eq:d-}, we obtain 
\begin{equation*}
\sum_{\bq \in \CP(v, \Pi; w, \xi)} (-1)^{\nega(\bq)} = (-1)^{\nega(\bp)} + (-1)^{\nega(\bp')} = 0. 
\end{equation*}
This shows that $(-1)^{\nega(\bp')} = -(-1)^{\nega(\bp)}$. 
Here, by Proposition~\ref{prop:matrix}\,(3), we deduce that $(c_{w, \xi}^{v})' = 0$. 
Hence there does not exist a $\bq \in \CP(v, \Pi')$ such that $\ed(\bq) = \ed(\bp) = w$ and $\wt(\bq) = \wt(\bp) = \xi$. 
This shows the proposition in the case $c_{w, \xi}^{v} = 2$. 

Next, we assume that $c_{w, \xi}^{v} = 1$. 
In this case, there does not exist a $\bq \in \CP(v, \Pi) \setminus \{\bp\}$ such that $\ed(\bq) = \ed(\bp) = w$ and $\wt(\bq) = \wt(\bp) = \xi$, 
since $\CP(v, \Pi; w, \xi) = \{\bp\}$. 
We set 
\begin{equation*}
(\sT_{k}^{\pm})' := \sR_{\mp \beta_{k}} \cdots \sR_{\mp \beta_{1}} \sR_{\pm \beta_{q}} \cdots \sR_{\pm \beta_{k+1}}. 
\end{equation*}
Then, by the Yang-Baxter equation \eqref{eq:Yang-Baxter_eq}, we have $(\sT_{k}^{\pm})' = \sT_{k}^{\pm}$. 
Hence, if we write 
\begin{equation*}
(\sT_{k}^{\pm})' v = \sum_{w \in W} \sum_{\xi \in Q^{\vee, +}} (d_{w, \xi}^{v, \pm})' Q^{\xi} w, 
\end{equation*}
with $(d_{w, \xi}^{v, \pm})' \in \BZ$, then we see that $(d_{w, \xi}^{v, \pm})' = d_{w, \xi}^{v, \pm}$. 
Here, by Proposition~\ref{prop:matrix}\,(2), we deduce that $(d_{w, \xi}^{v, \pm})' = d_{w, \xi}^{v, \pm} \in \{1, -1\}$. 
Again, by Proposition~\ref{prop:matrix}\,(2) (by replacing ($\beta_{1}, \ldots, \beta_{q})$ with $(\beta_{q}, \ldots, \beta_{1})$), 
we deduce that $(c_{w, \xi}^{v})' = 1$. 
Hence there exists a unique $\bp' \in \CP(v, \Pi'; w, \xi)$, 
i.e., there exists a unique $\bp' \in \CP(v, \Pi')$ such that $\ed(\bp') = \ed(\bp) = w$ and $\wt(\bp') = \wt(\bp) = \xi$. 
If $\Pi = \Pi^{+}$, then 
\begin{equation*}
(-1)^{\nega(\bp)} = d_{w, \xi}^{v, +} = (d_{w, \xi}^{v, +})' = (-1)^{\nega(\bp')}; 
\end{equation*}
if $\Pi = \Pi^{-}$, then 
\begin{equation*}
(-1)^{\nega(\bp)} = d_{w, \xi}^{v, -} = (d_{w, \xi}^{v, -})' = (-1)^{\nega(\bp')}. 
\end{equation*}
This shows that $(-1)^{\nega(\bp')} = (-1)^{\nega(\bp)}$, as desired. 
This completes the proof of the proposition for the root system $\Delta$ of type $A_{1} \times A_{1}$, $A_{2}$, or $C_{2}$. 

Now, assume that the root system $\Delta$ is of an arbitrary type (except $G_{2}$), not necessarily of rank 2. 
Let $\overline{W}$ be the Weyl group of $\Delta_{\alpha, \beta}$. 
Note that $\overline{W}$ is a (dihedral) subgroup of $W$; 
the quantum Bruhat graph (denoted by $\QBG(\overline{W})$) of $\overline{W}$ is no longer a subgraph of $\QBG(W)$. 
By \cite[Proposition~5.1 and Remarks~5.2\,(2)]{LL2}, for each $u \in W$, 
there exist uniquely $\lfloor u \rfloor \in u \overline{W}$ and $\overline{u} \in \overline{W}$ 
such that 
\begin{itemize}
\item $u = \overline{u} \lfloor u \rfloor$, and 
\item for a positive root $\gamma$ of $\Delta_{\alpha, \beta}$, we have $\ell(\overline{u}) < \ell(\overline{u}s_{\gamma})$ 
if and only if $\ell(\lfloor u \rfloor \overline{u}) < \ell(\lfloor u \rfloor \overline{u} s_{\gamma})$. 
\end{itemize}

We set $w := \ed(\bp)$ and $\xi := \wt(\bp)$. 
Suppose, for a contradiction, that there exist two or more directed paths $\bq \in \CP(v, \Pi; w, \xi) \setminus \{ \bp \}$. 
Then we see that $\# \CP(v, \Pi, w, \xi) \ge 3$. 
By \cite[Theorem~5.3]{LL2}, there exists an injection $\CP(v, \Pi; w, \xi) \hookrightarrow \CP(\overline{v}, \Pi; \overline{w}, \gamma)$, 
where $\CP(\overline{v}, \Pi; \overline{w}, \gamma)$ is the set of all $\Pi$-compatible directed paths in $\QBG(\overline{W})$ 
which starts at $\overline{v}$, ends at $\overline{w}$, and has weight $\xi$, 
where $\Pi$ is considered to be a sequence of roots in the root system $\Delta_{\alpha, \beta}$. 
Hence we have $\# \CP(\overline{v}, \Pi; \overline{w}, \xi) \ge 3$. 
This contradicts the proposition for the rank 2 root systems, shown above. 
Hence we conclude that there exists at most one directed path $\bq \in \CP(v, \Pi; w, \xi)$. 
If such a $\bq$ exists, then, by the proposition for the rank 2 root systems and \cite[Theorem~5.3]{LL2}, we have  $(-1)^{\nega(\bq)} = -(-1)^{\nega(\bp)}$. 
Also, a similar argument shows that there exists at most one directed path $\br \in \CP(v, \Pi'; w, \xi)$. 
If such an $\br$ exists, then we have $(-1)^{\nega(\br)} = (-1)^{\nega(\bp)}$. 

We show that at least one of the directed paths $\bq$ and $\br$ exists. 
We write 
\begin{align*}
\sR_{\gamma_{q}} \cdots \sR_{\gamma_{1}} v &= \sum_{w \in W} \sum_{\xi \in Q^{\vee, +}} d_{w, \xi}^{v} Q^{\xi} w, \\ 
\sR_{\gamma_{1}} \cdots \sR_{\gamma_{q}} v &= \sum_{w \in W} \sum_{\xi \in Q^{\vee, +}} (d_{w, \xi}^{v})' Q^{\xi} w. 
\end{align*}
If there does not exist a directed path $\bq \in \CP(v, \Pi; w, \xi) \setminus \{\bp\}$, then by \eqref{eq:Tk}, we have $d_{w, \xi}^{v} = \pm 1$. 
By the Yang-Baxter equation \eqref{eq:Yang-Baxter_eq}, we deduce that $(d_{w, \xi}^{v})' = d_{w, \xi}^{v} = \pm 1$. 
By \eqref{eq:Tk}, we see that $\CP(v, \Pi'; w, \xi) \not= \emptyset$. 
Therefore, we conclude that there exists a directed path $\br \in \CP(v, \Pi'; w, \xi)$ in this case, as desired.

Finally, suppose, for a contradiction, that both $\bq$ and $\br$ exist at the same time. 
Then, by \cite[Theorem~5.3]{LL2}, we have $\# \CP(\overline{v}, \Pi; \overline{w}, \xi) \ge 2$ and $\# \CP(\overline{v}, \Pi'; \overline{w}, \xi) \ge 1$. 
This contradicts the proposition for the rank 2 root systems, shown above. 

This completes the proof of Proposition~\ref{prop:rank2_shellability}.
\end{proof}

Thus it remains to prove Proposition~\ref{prop:matrix}. 
We assume temporarily that $\Delta$ is of type $A_{1} \times A_{1}$, $A_{2}$, or $C_{2}$. 
If $\Delta$ is of type $A_{2}$ (resp., $A_{1} \times A_{1}$, $C_{2}$), then we have $q = 3$ (resp., $q=2, 4$). 
By the shellability of $\QBG(W)$, there exists a unique label-increasing directed path 
(with respect to $\prec$ or $\prec'$, defined in \eqref{eq:def_prec} and \eqref{eq:def_prec'}) from $v$ to $w$ in $\QBG(W)$ for all $v, w \in W$. 
Hence we have 
\begin{align*}
\sT_{0}^{+} v &= \sT_{q}^{-} v = \sS_{0} v = \sS'_{0} v = \sum_{w \in W} Q^{\wt(v \Rightarrow w)} w, \\ 
\sT_{0}^{-} v &= \sT_{q}^{+} v = \sum_{w \in W} (-1)^{\ell(v \Rightarrow w)} Q^{\wt(v \Rightarrow w)} w 
\end{align*}
for all $v \in W$. Therefore, the proposition is obvious in the case $k = 0, q$. 
Hence it suffices to prove the proposition in the case $k = 1, q-1$ for all types, and in the case $k = 2$ for type $C_{2}$. 


\subsection{Proof of Proposition~\ref{prop:matrix}: \texorpdfstring{$k = 1, q-1$}{k=1, q-1}.}
We prove Proposition~\ref{prop:matrix} in the case $k = 1, q-1$; 
recall that $\beta_{1}$ and $\beta_{q}$ are the simple roots of $\Delta$. 
By \eqref{eq:Tk}, for all $v \in W$, we have 
\begin{align*}
\sT_{1}^{+} v &= \sR_{\beta_{2}} \cdots \sR_{\beta_{q}} (\sR_{-\beta_{1}} v) \\
&= \sR_{\beta_{2}} \cdots \sR_{\beta_{q}} ((1 - \sQ_{\beta_{1}}) v) \\
&= \sR_{\beta_{2}} \cdots \sR_{\beta_{q}} (v - Q^{\wt(v \rightarrow vs_{\beta_{1}})} vs_{\beta_{1}}) \\ 
&= \sum_{\bq \in \CP(v, (\beta_{q}, \ldots, \beta_{2}))} Q^{\wt(\bq)} \ed(\bq) - \sum_{\bq \in \CP(vs_{\beta_{1}}, (\beta_{q}, \ldots, \beta_{2}))} Q^{\wt(v \rightarrow vs_{\beta_{1}}) + \wt(\bq)} \ed(\bq). 
\end{align*}
Recall that the total order $\prec'$ on $\Delta^{+} = \{\beta_{1}, \ldots, \beta_{q}\}$, defined by \eqref{eq:def_prec'}, is a reflection order. 
Hence, by the shellability of $\QBG(W)$, for all $w \in W$, 
there exists at most one directed path $\bq \in \CP(v, (\beta_{q}, \ldots, \beta_{2}))$ such that $\ed(\bq) = w$. 
For such $\bq$, we have $\wt(\bq) = \wt(v \Rightarrow w)$ since $\bq$ is a shortest directed path from $v$ to $w$. 
The same argument shows that for all $w \in W$, there exists at most one directed path $\bq \in \CP(vs_{\beta_{1}}, (\beta_{q}, \ldots, \beta_{2}))$ such that $\ed(\bq) = w$ 
and $\wt(\bq) = \wt(vs_{\beta_{1}} \Rightarrow w)$. 
Hence, if we set 
\begin{equation*}
\delta_{v,w} := \begin{cases} 1 & \text{if there exists } \bq \in \CP(v, (\beta_{q}, \ldots, \beta_{2})) \text{ such that } \ed(\bq) = w, \\ 0 & \text{otherwise} \end{cases}
\end{equation*}
for $v, w \in W$, then we have 
%
%
\begin{align}
\sT_{1}^{+} v &= \sum_{w \in W} \delta_{v,w} Q^{\wt(v \Rightarrow w)} w - \sum_{w \in W} \delta_{vs_{\beta_{1}}, w} Q^{\wt(v \rightarrow vs_{\beta_{1}}) + \wt(vs_{\beta_{1}} \Rightarrow w)} w \nonumber \\ 
&= \sum_{w \in W} (\delta_{v,w} Q^{\wt(v \Rightarrow w)} - \delta_{vs_{\beta_{1}}, w} Q^{\wt(v \rightarrow vs_{\beta_{1}}) + \wt(vs_{\beta_{1}} \Rightarrow w)}) w. \label{eq:k=1_1}
\end{align}
Also, by the same argument, we see that 
%
%
\begin{equation}
\sT_{1}^{-} v = \sum_{w \in W} (-1)^{\ell(v \Rightarrow w)} (\delta_{v,w} Q^{\wt(v \Rightarrow w)} - \delta_{vs_{\beta_{1}}, w} Q^{\wt(v \rightarrow vs_{\beta_{1}}) + \wt(vs_{\beta_{1}} \Rightarrow w)}) w; \label{eq:k=1_2}
\end{equation}
note that for a directed path $\bq$ from $vs_{\beta_{1}}$ to $w$, it follows that $(-1)^{\ell(\bq)} = (-1)^{\ell(vs_{\beta_{1}} \Rightarrow w)}$, 
and hence $(-1)^{\ell(vs_{\beta_{1}} \Rightarrow w)} = (-1)^{\ell(vs_{\beta_{1}} \Rightarrow v) + \ell(v \Rightarrow w)} = (-1)^{1 + \ell(v \Rightarrow w)} = -(-1)^{\ell(v \Rightarrow w)}$. 
Let us consider $\sS_{1}$. By the same argument as for $\sT_{1}^{+}$, we deduce that 
%
%
\begin{align}
\sS_{1} v &= \sum_{\bq \in \CP(v, (\beta_{q}, \ldots, \beta_{2}))} Q^{\wt(\bq)} \ed(\bq) + \sum_{\bq \in \CP(vs_{\beta_{1}}, (\beta_{q}, \ldots, \beta_{2}))} Q^{\wt(v \rightarrow vs_{\beta_{1}}) + \wt(\bq)} \ed(\bq) \nonumber \\ 
&= \sum_{w \in W} (\delta_{v,w} Q^{\wt(v \Rightarrow w)} + \delta_{vs_{\beta_{1}}, w} Q^{\wt(v \rightarrow vs_{\beta_{1}}) + \wt(vs_{\beta_{1}} \Rightarrow w)}) w. \label{eq:k=1_3}
\end{align}
Hence equations \eqref{eq:k=1_1}, \eqref{eq:k=1_2}, and \eqref{eq:k=1_3} imply Proposition~\ref{prop:matrix}\,(1)\,(2) in the case $k = 1$, as desired. 

Next, we consider the case $k = q-1$; 
recall that $\beta_{q}$ is a simple root of $\Delta$. 
By \eqref{eq:Tk}, we have 
\begin{align*}
\sT_{q-1}^{+} v &= \sR_{\beta_{q}} (\sR_{-\beta_{1}} \cdots \sR_{-\beta_{q-1}} v) \\ 
&= \sR_{\beta_{q}} \left( \sum_{\bq \in \CP(v, (-\beta_{q-1}, \ldots, -\beta_{1}))} (-1)^{\ell(\bq)} Q^{\wt(\bq)} \ed(\bq) \right) \\ 
&= \sum_{\bq \in \CP(v, (-\beta_{q-1}, \ldots, -\beta_{1}))} (-1)^{\ell(\bq)} (Q^{\wt(\bq)} \ed(\bq) + Q^{\wt(\bq) + \wt(\ed(\bq) \rightarrow \ed(\bq) s_{\beta_{q}})} \ed(\bq)s_{\beta_{q}}).
\end{align*}
Hence, if we set 
\begin{equation*}
\delta'_{v,w} := \begin{cases} 1 & \text{if there exists } \bq \in \CP(v, (-\beta_{q-1}, \ldots, -\beta_{1})) \text{ such that } \ed(\bq) = w, \\ 0 & \text{otherwise} \end{cases}
\end{equation*}
for $v, w \in W$, then we have
%
%
\begin{equation}\label{eq:k=q-1_1}
\sT_{q-1}^{+} v = \sum_{w \in W} (-1)^{\ell(v \Rightarrow w)} (\delta'_{v,w} Q^{\wt(v \Rightarrow w)} - \delta'_{v,ws_{\beta_{q}}} Q^{\wt(v \Rightarrow ws_{\beta_{q}}) + \wt(ws_{\beta_{q}} \rightarrow w)}) w. 
\end{equation}
Similarly, we have 
%
%
\begin{equation}\label{eq:k=q-1_2}
\sT_{q-1}^{-} v = \sum_{w \in W} (\delta'_{v,w} Q^{\wt(v \Rightarrow w)} - \delta'_{v,ws_{\beta_{q}}} Q^{\wt(v \Rightarrow ws_{\beta_{q}}) + \wt(ws_{\beta_{q}} \rightarrow w)}) w. 
\end{equation}
Also, we see that 
%
%
\begin{equation}\label{eq:k=q-1_3}
\sS_{q-1} v = \sum_{w \in W} (\delta'_{v,w} Q^{\wt(v \Rightarrow w)} + \delta'_{v,ws_{\beta_{q}}} Q^{\wt(v \Rightarrow ws_{\beta_{q}}) + \wt(ws_{\beta_{q}} \rightarrow w)}) w. 
\end{equation}
Hence equations \eqref{eq:k=q-1_1}, \eqref{eq:k=q-1_2}, and \eqref{eq:k=q-1_3} imply Proposition~\ref{prop:matrix}\,(1)\,(2) in the case $k = q-1$. 

It remains to prove Proposition~\ref{prop:matrix}\,(3) in the case $k = 1, q-1$. 
It suffices to prove it in the case $k = 1$; indeed, if we replace $(\beta_{1}, \ldots, \beta_{q})$ with $(\beta_{q}, \ldots, \beta_{1})$ and consider the case $k = 1$, 
then we obtain the proposition in the case $k = q-1$. 
Recall equation \eqref{eq:k=1_3}. 
By the same argument, we see that 
\begin{equation}
\sS'_{1} v = \sum_{w \in W} (\ve_{v, w} Q^{\wt(v \Rightarrow w)} + \ve_{v, ws_{\beta_{1}}} Q^{\wt(v \Rightarrow ws_{\beta_{1}}) + \wt(ws_{\beta_{1}} \rightarrow w)})w, 
\end{equation}
where 
\begin{equation*}
\ve_{v,w} := \begin{cases} 1 & \text{if there exists } \bq \in \CP(v, (\beta_{2}, \ldots, \beta_{q})) \text{ such that } \ed(\bq) = w, \\ 0 & \text{otherwise} \end{cases}
\end{equation*}
for $v, w \in W$. 
Assume that $c_{w, \xi}^{v} = 2$ for some $v, w \in W$ and $\xi \in Q^{\vee, +}$. 
It suffices to show that $(c_{w,\xi}^{v})' = 0$. 
In this case, we deduce from \eqref{eq:k=1_3} that
%
%
%
\begin{align}
& \delta_{v,w} = \delta_{vs_{\beta_{1}}, w} = 1, \label{eq:delta} \\ 
& \wt(v \Rightarrow w) = \wt(v \rightarrow vs_{\beta_{1}}) + \wt(vs_{\beta_{1}} \Rightarrow w) = \xi. \label{eq:wt_equality}
\end{align}
By \eqref{eq:wt_equality}, we see that the concatenation of the edge $v \rightarrow vs_{\beta_{1}}$ with any shortest directed path from $vs_{\beta_{1}}$ to $w$ in $\QBG(W)$ 
is a shortest directed path from $v$ to $w$ (cf. \cite[Lemma~6.7]{BFP}, \cite[Lemma~1\,(2)]{P}, and \cite[Proposition~8.1]{LNSSS1}). 
Now, take the (unique) label-increasing directed path $\br_{0}$ from $vs_{\beta_{1}}$ to $w$ in $\QBG(W)$ with respect to $\prec$ defined by \eqref{eq:def_prec}, 
and let $\br$ be the concatenation of the edge $v \rightarrow vs_{\beta_{1}}$ with the path $\br_{0}$. 
Note that $\br_{0}$ is shortest, and hence $\br$ is also shortest. 
We claim that $\br_{0} \in \CP(v, (\beta_{2}, \ldots, \beta_{q}))$; otherwise, the concatenation 
\begin{equation*}
\br: v \xrightarrow{\beta_{1}} \underbrace{vs_{\beta_{1}} \xrightarrow{\beta_{1}} \cdots \rightarrow w}_{\br_{0}}
\end{equation*}
cannot be shortest. 
Hence $\br$ is the label-increasing directed path from $v$ to $w$ in $\QBG(W)$ such that $\br \notin \CP(v, (\beta_{2}, \ldots, \beta_{q}))$. 
By the uniqueness of a label-increasing directed path, we conclude that $\ve_{v,w} = 0$. 
Since $\delta_{v,w} = 1$ by \eqref{eq:delta}, there exists $\br_{1} \in \CP(v, (\beta_{q}, \ldots, \beta_{2}))$ such that $\ed(\br_{1}) = w$. 
Then the concatenation of the path $\br_{1}$ with the edge $w \rightarrow ws_{\beta_{1}}$ is label-increasing with respect to $\prec'$, defined by \eqref{eq:def_prec'}, 
and hence this concatenation is shortest. 
Also, since $\delta_{vs_{\beta_{1}}, w} = 1$ by \eqref{eq:delta}, there exists $\br_{2} \in \CP(vs_{\beta_{1}}, (\beta_{q}, \ldots, \beta_{2}))$ such that $\ed(\br_{2}) = w$. 
Similarly, the concatenation of the path $\br_{2}$ with the edge $w \rightarrow ws_{\beta_{1}}$ is label-increasing with respect to $\prec'$, 
and hence this concatenation is shortest. 
Since the concatenation of the edge $v \rightarrow vs_{\beta_{1}}$ with any shortest directed path from $vs_{\beta_{1}}$ to $w$ is shortest, 
we obtain: 
\begin{align*}
\ell(v \Rightarrow ws_{\beta_{1}}) &= \underbrace{\ell(v \Rightarrow w)}_{= \ell(\br_{1})} + \ell(w \rightarrow ws_{\beta_{1}}) = \ell(v \rightarrow vs_{\beta_{1}}) + \underbrace{\ell(vs_{\beta_{1}} \Rightarrow w)}_{= \ell(\br_{2})} + \ell(w \rightarrow ws_{\beta_{1}}) \\ 
&= \ell(v \rightarrow vs_{\beta_{1}}) + \ell(vs_{\beta_{1}} \Rightarrow ws_{\beta_{1}}). 
\end{align*}
Hence the concatenation of the edge $v \rightarrow vs_{\beta_{1}}$ with any shortest directed path from $vs_{\beta_{1}}$ to $ws_{\beta_{1}}$ is shortest. 
Take the (unique) label-increasing directed path $\br_{3}$ from $vs_{\beta_{1}}$ to $ws_{\beta_{1}}$ in $\QBG(W)$ with respect to $\prec$. 
Then we deduce that $\br_{3} \in \CP(vs_{\beta_{1}}, (\beta_{2}, \ldots, \beta_{q}))$; otherwise, the concatenation 
\begin{equation*}
v \xrightarrow{\beta_{1}} \underbrace{vs_{\beta_{1}} \xrightarrow{\beta_{1}} \cdots \rightarrow ws_{\beta_{1}}}_{\br_{3}}
\end{equation*}
cannot be shortest. Hence we conclude that $\ve_{v, ws_{\beta_{1}}} = 0$. 
This completes the proof that $(c_{w, \xi}^{v})' = 0$. 

It remains to show that if $c_{w, \xi}^{v} = 1$, then $(c_{w, \xi}^{v})' = 1$. 
Assume that $c_{w, \xi}^{v} = 1$. 
By the above argument (i.e., Proposition~\ref{prop:matrix}\,(2) in the case $k=1$), 
we have $d_{w, \xi}^{v, +} = \pm 1$. 
By the Yang-Baxter equation \eqref{eq:Yang-Baxter_eq}, we see that $(d_{w, \xi}^{v, +})' = d_{w, \xi}^{v, +} = \pm 1$. 
Hence we deduce again from the above argument 
(i.e., Proposition~\ref{prop:matrix}\,(2) in the case $k = 1$, with $(\beta_{1}, \ldots, \beta_{q})$ replaced by $(\beta_{q}, \ldots, \beta_{1})$) 
that $(c_{w, \xi}^{v})' = 1$. 

This completes the proof of Proposition~\ref{prop:matrix} in the case $k = 1, q-1$. 



\subsection{Proof of Proposition~\ref{prop:matrix}: case of type \texorpdfstring{$C_{2}$}{C2}.}\label{subsec:matrix_typeC2}

We consider the root system $\Delta$ of type $C_{2}$. 
We know that $q = 4$, and $(\beta_{1}, \beta_{2}, \beta_{3}, \beta_{4}) = (\alpha_{1}, 2\alpha_{1}+\alpha_{2}, \alpha_{1}+\alpha_{2}, \alpha_{2})$ 
or $(\beta_{1}, \beta_{2}, \beta_{3}, \beta_{4}) = (\alpha_{2}, \alpha_{1}+\alpha_{2}, 2\alpha_{1}+\alpha_{2}, \alpha_{1})$. 

Since only the case $k = 2$ is remaining, it suffices to calculate the matrices (with respect to the basis $W = \{e, s_{1}, s_{2}, s_{1}s_{2}, s_{2}s_{1}, s_{1}s_{2}s_{1}, s_{2}s_{1}s_{2}, w_{\circ}\}$ of $K[W]$) of the following four operators: 
\begin{enu}
\item $\sR_{\alpha_{1}+\alpha_{2}} \sR_{\alpha_{2}} \sR_{-\alpha_{1}} \sR_{-2\alpha_{1}-\alpha_{2}} = \sR_{-2\alpha_{1}-\alpha_{2}} \sR_{-\alpha_{1}} \sR_{\alpha_{2}} \sR_{\alpha_{1}+\alpha_{2}}$; 
\item $\sR_{2\alpha_{1}+\alpha_{2}} \sR_{\alpha_{1}} \sR_{-\alpha_{2}} \sR_{-\alpha_{1}-\alpha_{2}} = \sR_{-\alpha_{1}-\alpha_{2}} \sR_{-\alpha_{2}} \sR_{\alpha_{1}} \sR_{2\alpha_{1}+\alpha_{2}}$; 
\item $\sR_{\alpha_{1}+\alpha_{2}} \sR_{\alpha_{2}} \sR_{\alpha_{1}} \sR_{2\alpha_{1}+\alpha_{2}}$; and 
\item $\sR_{2\alpha_{1}+\alpha_{2}} \sR_{\alpha_{1}} \sR_{\alpha_{2}} \sR_{\alpha_{1}+\alpha_{2}}$, 
\end{enu}
where the equalities in (1) and (2) follow from the Yang-Baxter equation \eqref{eq:Yang-Baxter_eq}. 
The following are the matrices (with respect to the basis $W$) of operators $\sQ_{\gamma}$, $\gamma \in \Delta^{+} = \{\alpha_{1}, 2\alpha_{1}+\alpha_{2}, \alpha_{1}+\alpha_{2}, \alpha_{2}\}$ (cf. \cite[Fig.~2\,(B)]{LL2}): 
\begin{align*}
\sQ_{\alpha_{1}} &= \left( 
\begin{array}{cccccccc} 
0 & Q_{1} & 0 & 0 & 0 & 0 & 0 & 0 \\ 
1 & 0 & 0 & 0 & 0 & 0 & 0 & 0 \\ 
0 & 0 & 0 & 0 & Q_{1} & 0 & 0 & 0 \\ 
0 & 0 & 0 & 0 & 0 & Q_{1} & 0 & 0 \\ 
0 & 0 & 1 & 0 & 0 & 0 & 0 & 0 \\ 
0 & 0 & 0 & 1 & 0 & 0 & 0 & 0 \\ 
0 & 0 & 0 & 0 & 0 & 0 & 0 & Q_{1} \\ 
0 & 0 & 0 & 0 & 0 & 0 & 1 & 0 
\end{array} 
\right); \displaybreak[1] \\ 
\sQ_{2\alpha_{1}+\alpha_{2}} &= \left( 
\begin{array}{cccccccc}
0 & 0 & 0 & 0 & 0 & Q_{1}Q_{2} & 0 & 0 \\ 
0 & 0 & 0 & 0 & 0 & 0 & 0 & 0 \\ 
0 & 0 & 0 & 0 & 0 & 0 & 0 & Q_{1}Q_{2} \\ 
0 & 0 & 0 & 0 & 0 & 0 & 0 & 0 \\ 
0 & 1 & 0 & 0 & 0 & 0 & 0 & 0 \\ 
0 & 0 & 0 & 0 & 0 & 0 & 0 & 0 \\ 
0 & 0 & 0 & 1 & 0 & 0 & 0 & 0 \\ 
0 & 0 & 0 & 0 & 0 & 0 & 0 & 0 
\end{array} 
\right); \displaybreak[1] \\ 
\sQ_{\alpha_{1}+\alpha_{2}} &= \left( 
\begin{array}{cccccccc}
0 & 0 & 0 & 0 & 0 & 0 & 0 & 0 \\ 
0 & 0 & 0 & 0 & 0 & 0 & 0 & 0 \\ 
0 & 0 & 0 & 0 & 0 & 0 & 0 & 0 \\ 
0 & 0 & 1 & 0 & 0 & 0 & 0 & 0 \\ 
0 & 0 & 0 & 0 & 0 & 0 & 0 & 0 \\ 
0 & 0 & 0 & 0 & 1 & 0 & 0 & 0 \\ 
0 & 0 & 0 & 0 & 0 & 0 & 0 & 0 \\ 
0 & 0 & 0 & 0 & 0 & 0 & 0 & 0 \\ 
\end{array}
\right); \displaybreak[1] \\ 
\sQ_{\alpha_{2}} &= \left( 
\begin{array}{cccccccc}
0 & 0 & Q_{2} & 0 & 0 & 0 & 0 & 0 \\ 
0 & 0 & 0 & Q_{2} & 0 & 0 & 0 & 0 \\ 
1 & 0 & 0 & 0 & 0 & 0 & 0 & 0 \\ 
0 & 1 & 0 & 0 & 0 & 0 & 0 & 0 \\ 
0 & 0 & 0 & 0 & 0 & 0 & Q_{2} & 0 \\ 
0 & 0 & 0 & 0 & 0 & 0 & 0 & Q_{2} \\ 
0 & 0 & 0 & 0 & 1 & 0 & 0 & 0 \\ 
0 & 0 & 0 & 0 & 0 & 1 & 0 & 0 
\end{array}
\right). 
\end{align*}
By explicit calculations (by using, e.g., SageMath), we obtain 
\begin{align*}
&\sR_{\alpha_{1}+\alpha_{2}} \sR_{\alpha_{2}} \sR_{-\alpha_{1}} \sR_{-2\alpha_{1}-\alpha_{2}} \\ 
&= \left( 
\begin{array}{cccccccc}
1 & Q_{1}Q_{2}-Q_{1} & Q_{2} & 0 & -Q_{1}Q_{2} & -Q_{1}Q_{2} & 0 & -Q_{1}Q_{2}^{2} \\ 
-1 & 1 & 0 & Q_{2} & 0 & 0 & 0 & 0 \\ 
1 & 0 & 1 & 0 & -Q_{1} & -Q_{1}Q_{2} & 0 & -Q_{1}Q_{2} \\ 
0 & 1 & 1 & 1 & -Q_{1} & -Q_{1} & 0 & -Q_{1}Q_{2} \\ 
0 & -1 & -1 & -Q_{2} & 1 & 0 & Q_{2} & 0 \\ 
0 & -1 & -1 & -1 & 1 & 1 & 0 & Q_{2} \\ 
0 & -1 & -1 & -1 & 1 & 0 & 1 & Q_{1}Q_{2} - Q_{1} \\ 
0 & 0 & 0 & 0 & 0 & 1 & -1 & 1 
\end{array}
\right), \\ 
&\sR_{2\alpha_{1}+\alpha_{2}} \sR_{\alpha_{1}} \sR_{-\alpha_{2}} \sR_{-\alpha_{1}-\alpha_{2}} \\ 
&= \left( 
\begin{array}{cccccccc}
1 & -Q_{1}Q_{2}+Q_{1} & -Q_{2} & 0 & -Q_{1}Q_{2} & Q_{1}Q_{2} & 0 & -Q_{1}Q_{2}^{2} \\ 
1 & 1 & 0 & -Q_{2} & 0 & 0 & 0 & 0 \\ 
-1 & 0 & 1 & 0 & Q_{1} & -Q_{1}Q_{2} & 0 & Q_{1}Q_{2} \\ 
0 & -1 & -1 & 1 & -Q_{1} & Q_{1} & 0 & -Q_{1}Q_{2} \\ 
0 & 1 & 1 & -Q_{2} & 1 & 0 & -Q_{2} & 0 \\ 
0 & -1 & -1 & 1 & -1 & 1 & 0 & -Q_{2} \\ 
0 & -1 & -1 & 1 & -1 & 0 & 1 & -Q_{1}Q_{2}+Q_{1} \\ 
0 & 0 & 0 & 0 & 0 & -1 & 1 & 1 
\end{array}
\right). 
\end{align*}
Also, we obtain 
\begin{align*}
&\sR_{\alpha_{1}+\alpha_{2}} \sR_{\alpha_{2}} \sR_{\alpha_{1}} \sR_{2\alpha_{1}+\alpha_{2}} \\ 
&= \left(
\begin{array}{cccccccc}
1 & Q_{1}Q_{2}+Q_{1} & Q_{2} & 0 & Q_{1}Q_{2} & Q_{1}Q_{2} & 0 & Q_{1}Q_{2}^{2} \\
1 & 1 & 0 & Q_{2} & 0 & 2Q_{1}Q_{2} & 0 &0 \\ 
1 & 2Q_{1} & 1 & 0 & Q_{1} & Q_{1}Q_{2} & 0 & Q_{1}Q_{2} \\ 
2 & 2Q_{1}+1 & 1 & 1 & Q_{1} & 2Q_{1}Q_{2}+Q_{1} & 0 & Q_{1}Q_{2} \\ 
0 & 1 & 1 & Q_{2} & 1 & 0 & Q_{2} & 2Q_{1}Q_{2} \\ 
0 & 1 & 1 & 2Q_{2}+1 & 1 & 1 & 2Q_{2} & 2Q_{1}Q_{2}+Q_{2} \\ 
0 & 1 & 1 & 1 & 1 & 0 & 1 & Q_{1}Q_{2}+Q_{1} \\ 
0 & 0 & 0 & 2 & 0 & 1 & 1 & 1 
\end{array}
\right), \\ 
&\sR_{2\alpha_{1}+\alpha_{2}} \sR_{\alpha_{1}} \sR_{\alpha_{2}} \sR_{\alpha_{1}+\alpha_{2}} \\ 
&= \left( 
\begin{array}{cccccccc}
1 & Q_{1}Q_{2} + Q_{1} & 2Q_{1}Q_{2}+Q_{2} & 2Q_{1}Q_{2} & Q_{1}Q_{2} & Q_{1}Q_{2} & 0 & Q_{1}Q_{2}^{2} \\ 
1 & 1 & 2Q_{2} & Q_{2} & 0 & 0 & 0 & 0 \\ 
1 & 0 & 1 & 0 & 2Q_{1}Q_{2}+Q_{1} & Q_{1}Q_{2} & 2Q_{1}Q_{2} & Q_{1}Q_{2} \\ 
0 & 1 & 1 & 1 & Q_{1} & Q_{1} & 0 & Q_{1}Q_{2} \\ 
2 & 1 & 2Q_{2}+1 & Q_{2} & 1 & 0 & Q_{2} & 0 \\ 
0 & 1 & 1 & 1 & 1 & 1 & 0 & Q_{2} \\ 
0 & 1 & 1 & 1 & 2Q_{1}+1 & 2Q_{1} & 1 & Q_{1}Q_{2}+Q_{1} \\ 
0 & 0 & 0 & 0 & 2 & 1 & 1 & 1 
\end{array}
\right). 
\end{align*}

This proves the proposition. 



\subsection{Case of type \texorpdfstring{$G_{2}$}{G2}. }\label{subsec:typeG2}
We consider the root system $\Delta$ of type $G_{2}$. 
We have $q = 6$, and 
$(\beta_{1}, \beta_{2}, \beta_{3}, \beta_{4}, \beta_{5}, \beta_{6}) = (\alpha_{1}, 3\alpha_{1}+\alpha_{2}, 2\alpha_{1}+\alpha_{2}, 3\alpha_{1}+2\alpha_{2}, \alpha_{1}+\alpha_{2}, \alpha_{2})$ 
or $(\beta_{1}, \beta_{2}, \beta_{3}, \beta_{4}, \beta_{5}, \beta_{6}) = (\alpha_{2}, \alpha_{1}+\alpha_{2}, 3\alpha_{1}+2\alpha_{2}, 2\alpha_{1}+\alpha_{2}, 3\alpha_{1}+\alpha_{2}, \alpha_{1})$. 
In this case, a key proposition to the proof of Theorem~\ref{thm:YB-move} is slightly different from that in the other types. 
First, we state the following proposition for quantum Bruhat operators in type $G_{2}$. 

%
%
\begin{prop}\label{prop:matrix_G}
\begin{enu}
\item All the entries of the matrix of $\sS_{k}$, $k = 0, 1, \ldots, 6$, are of the form $\sum_{j = 1}^{r} m_{j}Q^{\xi_{j}}$, where all $\xi_{j} \in Q^{\vee,+}$ are distinct, and $m_{j} \in \{1, 2, 3\}$. 
\item Let $v, w \in W$. Assume that the $(v, w)$-entry of the matrix of $\sS_{k}$ is of the form $\sum_{j = 1}^{r} m_{j}Q^{\xi_{j}}$ as in (1). 
Also, assume that the $(v, w)$-entry of the matrix of $\sT_{k}^{\pm}$ is of the form $\sum_{\xi \in Q^{\vee, +}} n_{\xi}^{\pm} Q^{\xi}$. 
For $j = 1, \ldots, r$, if $m_{j} = 2$, then $n_{\xi_{j}}^{\pm} = 0$, and if $m_{j} = 1$ or if $m_{j} = 3$, then $n_{\xi_{j}}^{\pm} \in \{1, -1\}$. 
Moreover, for $\xi \in Q^{\vee, +} \setminus \{\xi_{1}, \ldots, \xi_{r}\}$, we have $n_{\xi}^{\pm} = 0$. 
\item Let $v, w \in W$. Assume that the $(v, w)$-entry of the matrix of $\sS_{k}$ is of the form $\sum_{j = 1}^{r} m_{j}Q^{\xi_{j}}$ as in (1). 
Also, assume that the $(v, w)$-entry of the matrix of $\sS'_{k}$ is of the form $\sum_{\xi \in Q^{\vee, +}} n_{\xi}Q^{\xi}$. 
For $j = 1, \ldots, r$, if $m_{j} = 2$, then $n_{\xi_{j}} = 0$ or $n_{\xi_{j}} = 2$, and if $m_{j} = 1$, then $n_{\xi_{j}} = 1$ or $n_{\xi_{j}} = 3$. 
Moreover, if $m_{j} = 3$, then $n_{\xi_{j}} = 1$. 
\end{enu}
\end{prop}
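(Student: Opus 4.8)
The plan is to follow exactly the same template as in the cases $A_1\times A_1$, $A_2$, and $C_2$, namely to reduce everything to the explicit matrices of the relevant quantum Bruhat operators and then read off the stated numerical patterns. As noted before the statement, the cases $k=0$ and $k=6$ are immediate from the shellability of $\QBG(W)$ (Theorem~\ref{thm:shellability}) together with the fact that the orders $\prec$ and $\prec'$ of \eqref{eq:def_prec} and \eqref{eq:def_prec'} are reflection orders: both $\sS_0$, $\sS_0'$, $\sT_0^+$, $\sT_6^-$ send $v\mapsto\sum_{w}Q^{\wt(v\Rightarrow w)}w$, and $\sT_0^-$, $\sT_6^+$ carry the extra sign $(-1)^{\ell(v\Rightarrow w)}$, so all three parts hold trivially there. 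Thus it suffices to treat $k=1,2,3,4,5$. As in the $k=1,q-1$ argument of Section~\ref{subsec:matrix_typeC2}, the boundary cases $k=1$ and $k=5$ can in fact be handled uniformly for all types by the same reflection-order bookkeeping (peeling off the simple reflection $\sR_{\pm\beta_1}$ or $\sR_{\pm\beta_q}$ and using shellability for the remaining length-$(q-1)$ product), since $\beta_1$ and $\beta_6$ are the two simple roots; so the genuinely new work is confined to $k=2,3,4$.

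For those remaining cases I would first record, with respect to the basis $W=\{e,s_1,s_2,s_1s_2,s_2s_1,\dots,w_\circ\}$ of the order-$12$ dihedral group, the matrices of the six operators $\sQ_\gamma$, $\gamma\in\Delta^+$, read directly from $\QBG(W)$ in type $G_2$ (the analogue of the four $C_2$ matrices displayed above). From these one forms $\sR_\gamma=1+\sQ_\gamma$ and computes, by machine (SageMath, as in Section~\ref{subsec:matrix_typeC2}), the matrices of $\sS_k$, $\sS_k'$, and $\sT_k^{\pm}$ for $k=2,3,4$. The two equalities asserting $(\sT_k^{\pm})'=\sT_k^{\pm}$ follow from the Yang-Baxter equation \eqref{eq:Yang-Baxter_eq}, which lets me compute each $\sT_k^{\pm}$ from whichever factorization is convenient. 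Then parts (1), (2), (3) are verified entrywise: part (1) is the observation that every coefficient of $\sS_k$ is a sum $\sum_j m_jQ^{\xi_j}$ of distinct monomials with $m_j\in\{1,2,3\}$; part (2) compares these multiplicities against the coefficients $n_\xi^{\pm}$ of $\sT_k^{\pm}$ (so $m_j=2\Rightarrow n_{\xi_j}^{\pm}=0$, while $m_j\in\{1,3\}\Rightarrow n_{\xi_j}^{\pm}=\pm1$, and no extra monomials appear); part (3) compares $\sS_k$ with $\sS_k'$ (the reversed product), reading off the rule $m_j=2\Rightarrow n_{\xi_j}\in\{0,2\}$, $m_j=1\Rightarrow n_{\xi_j}\in\{1,3\}$, $m_j=3\Rightarrow n_{\xi_j}=1$.

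The main obstacle, and the essential difference from the simply- and doubly-laced rank-$2$ cases, is precisely the appearance of multiplicity $3$. In types $A_1\times A_1$, $A_2$, $C_2$ the coefficients of $\sS_k$ lie in $\{0,1,2\}$, and Proposition~\ref{prop:matrix} could be phrased with the clean dichotomy ``$2\Rightarrow$ cancellation, $1\Rightarrow$ survival.'' For $G_2$ the triple bond forces some monomials of $\sS_k$ to carry coefficient $3$, corresponding to \emph{three} distinct $\Pi$-compatible paths of equal end and weight; I must check that these triple coincidences behave like the isolated paths (contributing $n_{\xi_j}^{\pm}=\pm1$ in part (2) and $n_{\xi_j}=1$ in part (3)) rather than cancelling. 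This is not guaranteed by any general shellability argument and is exactly why the $G_2$ statement is formulated separately; it can only be confirmed by inspecting the explicit $12\times12$ matrices. Once these tables are in hand the verification is a finite, purely mechanical comparison, so the real content of the proof lies in producing the matrices correctly and confirming that the coefficient-$3$ entries obey the refined rules in parts (2) and (3).
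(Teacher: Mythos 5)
Your proposal matches the paper's own proof: the cases $k=0,1,5,6$ are dispatched exactly as in types $A_{1}\times A_{1}$, $A_{2}$, $C_{2}$ (shellability for $k=0,6$, the simple-root peeling argument for $k=1,5$), and the remaining cases $k=2,3,4$ are settled by explicitly computing the twelve $12\times 12$ matrices of the operators $\sS_{k}$, $\sS'_{k}$, $\sT_{k}^{\pm}$ from the $\sQ_{\gamma}$ matrices (by machine, using the Yang--Baxter equation \eqref{eq:Yang-Baxter_eq} to identify the reversed products) and checking the coefficient patterns entrywise, which is precisely what the paper does with the tables in Appendix~\ref{sec:App_A}. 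You also correctly isolate the genuinely new $G_{2}$ phenomenon, the multiplicity-$3$ entries, which the paper records in Remark~\ref{rem:exception_G} as occurring in exactly two entries of the operators (9) and (10).
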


The proof in the case $k = 0, 1, 5, 6$ is the same as that in types $A_{1} \times A_{1}$, $A_{2}$, and $C_{2}$, given above. 
Since the case $k =2, 3, 4$ is remaining, we need to calculate the matrices (with respect to the basis 
\begin{equation*}
W = \{e, s_{1}, s_{2}, s_{1}s_{2}, s_{2}s_{1}, s_{1}s_{2}s_{1}, s_{2}s_{1}s_{2}, s_{1}s_{2}s_{1}s_{2}, s_{2}s_{1}s_{2}s_{1}, s_{1}s_{2}s_{1}s_{2}s_{1}, s_{2}s_{1}s_{2}s_{1}s_{2}, w_{\circ}\} 
\end{equation*}
of $K[W]$) of the following 12 operators: 
\begin{enu}
\item $\sR_{2\alpha_{1}+\alpha_{2}} \sR_{3\alpha_{1}+2\alpha_{2}} \sR_{\alpha_{1}+\alpha_{2}} \sR_{\alpha_{2}} \sR_{-\alpha_{1}} \sR_{-3\alpha_{1}-\alpha_{2}}$; 
\item $\sR_{3\alpha_{1}+2\alpha_{2}} \sR_{\alpha_{1}+\alpha_{2}} \sR_{\alpha_{2}} \sR_{-\alpha_{1}} \sR_{-3\alpha_{1}-\alpha_{2}} \sR_{-2\alpha_{1}-\alpha_{2}}$; 
\item $\sR_{\alpha_{1}+\alpha_{2}} \sR_{\alpha_{2}} \sR_{-\alpha_{1}} \sR_{-3\alpha_{1}-\alpha_{2}} \sR_{-2\alpha_{1}-\alpha_{2}} \sR_{-3\alpha_{1}-2\alpha_{2}}$; 
\item $\sR_{3\alpha_{1}+2\alpha_{2}} \sR_{2\alpha_{1}+\alpha_{2}} \sR_{3\alpha_{1}+\alpha_{2}} \sR_{\alpha_{1}} \sR_{-\alpha_{2}} \sR_{-\alpha_{1}-\alpha_{2}}$; 
\item $\sR_{2\alpha_{1}+\alpha_{2}} \sR_{3\alpha_{1}+\alpha_{2}} \sR_{\alpha_{1}} \sR_{-\alpha_{2}} \sR_{-\alpha_{1}-\alpha_{2}} \sR_{-3\alpha_{1}-2\alpha_{2}}$; 
\item $\sR_{3\alpha_{1}+\alpha_{2}} \sR_{\alpha_{1}} \sR_{-\alpha_{2}} \sR_{-\alpha_{1}-\alpha_{2}} \sR_{-3\alpha_{1}-2\alpha_{2}} \sR_{-2\alpha_{1}-\alpha_{2}}$; 
\item $\sR_{2\alpha_{1}+\alpha_{2}} \sR_{3\alpha_{1}+2\alpha_{2}} \sR_{\alpha_{1}+\alpha_{2}} \sR_{\alpha_{2}} \sR_{\alpha_{1}} \sR_{3\alpha_{1}+\alpha_{2}}$; 
\item $\sR_{3\alpha_{1}+2\alpha_{2}} \sR_{\alpha_{1}+\alpha_{2}} \sR_{\alpha_{2}} \sR_{\alpha_{1}} \sR_{3\alpha_{1}+\alpha_{2}} \sR_{2\alpha_{1}+\alpha_{2}}$; 
\item $\sR_{\alpha_{1}+\alpha_{2}} \sR_{\alpha_{2}} \sR_{\alpha_{1}} \sR_{3\alpha_{1}+\alpha_{2}} \sR_{2\alpha_{1}+\alpha_{2}} \sR_{3\alpha_{1}+2\alpha_{2}}$; 
\item $\sR_{3\alpha_{1}+2\alpha_{2}} \sR_{2\alpha_{1}+\alpha_{2}} \sR_{3\alpha_{1}+\alpha_{2}} \sR_{\alpha_{1}} \sR_{\alpha_{2}} \sR_{\alpha_{1}+\alpha_{2}}$; 
\item $\sR_{2\alpha_{1}+\alpha_{2}} \sR_{3\alpha_{1}+\alpha_{2}} \sR_{\alpha_{1}} \sR_{\alpha_{2}} \sR_{\alpha_{1}+\alpha_{2}} \sR_{3\alpha_{1}+2\alpha_{2}}$; 
\item $\sR_{3\alpha_{1}+\alpha_{2}} \sR_{\alpha_{1}} \sR_{\alpha_{2}} \sR_{\alpha_{1}+\alpha_{2}} \sR_{3\alpha_{1}+2\alpha_{2}} \sR_{2\alpha_{1}+\alpha_{2}}$. 
\end{enu}
Proposition~\ref{prop:matrix_G} can be verified by direct calculations; 
the list of results of calculations is given in Appendix~\ref{sec:App_A}. 

%
%
\begin{rem}\label{rem:exception_G}
From direct calculations of matrices of the operators above, 
we see that only in the following two cases, $m_{j} = 3$ in Proposition~\ref{prop:matrix_G}\,(1), or $n_{\xi_{j}} = 3$ in Proposition~\ref{prop:matrix_G}\,(3): 
\begin{enu}
\item The $(4, 6)$-entry (i.e., the $(s_{1}s_{2}, s_{1}s_{2}s_{1})$-entry) of the matrix of the operator (9); 
\item The $(7, 9)$-entry (i.e., the $(s_{2}s_{1}s_{2}, s_{2}s_{1}s_{2}s_{1})$-entry) of the matrix of the operator (10). 
\end{enu}
\end{rem}

Let $v \in W$, and let $\bp$ be a $\Pi$-compatible directed path in $\QBG(W)$ which starts at $v$, i.e., $\bp \in \CP(v, \Pi)$. 
First, we consider the cases except the following: 
\begin{itemize}
\item[(E1)] $\Pi = (\pm (3\alpha_{1}+2\alpha_{2}), \pm (2\alpha_{1}+\alpha_{2}), \pm (3\alpha_{1}+\alpha_{2}), \pm \alpha_{1}, \mp \alpha_{2}, \mp (\alpha_{1}+\alpha_{2}))$, 
$v = s_{1}s_{2}s_{1}$, $\ed(\bp) = s_{1}s_{2}$, and $\wt(\bp) = \alpha_{1}^{\vee}+\alpha_{2}^{\vee}$; 
\item[(E2)] $\Pi = (\pm(\alpha_{1}+\alpha_{2}), \pm \alpha_{2}, \mp \alpha_{1}, \mp(3\alpha_{1}+\alpha_{2}), \mp(2\alpha_{1}+\alpha_{2}), \mp(3\alpha_{1}+2\alpha_{2}))$, 
$v = s_{1}s_{2}s_{1}$, $\ed(\bp) = s_{1}s_{2}$, and $\wt(\bp) = \alpha_{1}^{\vee}+\alpha_{2}^{\vee}$; 
\item[(E3)] $\Pi = (\pm (3\alpha_{1}+2\alpha_{2}), \pm (2\alpha_{1}+\alpha_{2}), \pm (3\alpha_{1}+\alpha_{2}), \pm \alpha_{1}, \mp \alpha_{2}, \mp (\alpha_{1}+\alpha_{2}))$, 
$v = s_{2}s_{1}s_{2}s_{1}$, $\ed(\bp) = s_{2}s_{1}s_{2}$, and $\wt(\bp) = \alpha_{1}^{\vee}+\alpha_{2}^{\vee}$; 
\item[(E4)] $\Pi = (\pm(\alpha_{1}+\alpha_{2}), \pm \alpha_{2}, \mp \alpha_{1}, \mp(3\alpha_{1}+\alpha_{2}), \mp(2\alpha_{1}+\alpha_{2}), \mp(3\alpha_{1}+2\alpha_{2}))$, 
$v = s_{2}s_{1}s_{2}s_{1}$, $\ed(\bp) = s_{2}s_{1}s_{2}$, and $\wt(\bp) = \alpha_{1}^{\vee}+\alpha_{2}^{\vee}$. 
\end{itemize}

Then, we have the following:

%
%
\begin{prop}\label{prop:rank2_shellability_G1} 
Only one of the following occurs. 
\begin{enu}
\item There exists a unique $\bp' \in \CP(v, \Pi) \setminus \{\bp\}$ such that $\ed(\bp') = \ed(\bp)$ and $\wt(\bp') = \wt(\bp)$. 
This $\bp'$ satisfies $(-1)^{\nega(\bp')} = -(-1)^{\nega(\bp)}$. 
Moreover, there does not exist a path $\bq \in \CP(v, \Pi')$ such that $\ed(\bq) = \ed(\bp)$ and $\wt(\bq) = \wt(\bp)$. 
\item There exists a unique $\bp' \in \CP(v, \Pi')$ such that $\ed(\bp') = \ed(\bp)$ and $\wt(\bp') = \wt(\bp)$. 
This $\bp'$ satisfies $(-1)^{\nega(\bp')} = (-1)^{\nega(\bp)}$. 
Moreover, there does not exist a path $\bq \in \CP(v, \Pi) \setminus \{\bp\}$ such that $\ed(\bq) = \ed(\bp)$ and $\wt(\bq) = \wt(\bp)$. 
\item There exists a unique $\bp' \in \CP(v, \Pi) \setminus \{\bp\}$ such that $\ed(\bp') = \ed(\bp)$ and $\wt(\bp') = \wt(\bp)$. 
This $\bp'$ satisfies $(-1)^{\nega(\bp')} = -(-1)^{\nega(\bp)}$. 
Moreover, there exist exactly two paths $\bq_{1}, \bq_{2} \in \CP(v, \Pi')$ such that $\ed(\bq_{1}) = \ed(\bq_{2}) = \ed(\bp)$ and $\wt(\bq_{1}) = \wt(\bq_{2}) = \wt(\bp)$. 
These $\bq_{1}$, $\bq_{2}$ satisfy $(-1)^{\nega(\bq_{2})} = -(-1)^{\nega(\bq_{1})}$. 
\end{enu}
\end{prop}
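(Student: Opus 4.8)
The plan is to run the rank-two argument from the proof of Proposition~\ref{prop:rank2_shellability} with Proposition~\ref{prop:matrix} replaced throughout by Proposition~\ref{prop:matrix_G}; the only structurally new feature is the extra alternative (3), which is produced by the coefficient-$3$ entries isolated in Remark~\ref{rem:exception_G}. First I set $w := \ed(\bp)$ and $\xi := \wt(\bp)$, so that $\bp \in \CP(v, \Pi; w, \xi)$ and $c_{w,\xi}^v := \# \CP(v, \Pi; w, \xi) \ge 1$. Expanding $\sS_k v$, $\sS'_k v$, and $\sT_k^{\pm} v$ as in \eqref{eq:Sk} and \eqref{eq:Tk}, I identify $c_{w,\xi}^v$, $(c_{w,\xi}^v)' := \# \CP(v, \Pi'; w, \xi)$, and the coefficients $d_{w,\xi}^{v,\pm}$ of $\sT_k^\pm$ with the quantities $m_j$, $n_{\xi_j}$, and $n_{\xi_j}^\pm$ (for the index $j$ with $\xi_j = \xi$) of Proposition~\ref{prop:matrix_G}.

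Next I pin down the pair $(c_{w,\xi}^v, (c_{w,\xi}^v)')$. Since $\bp$ satisfies none of (E1)--(E4), Remark~\ref{rem:exception_G} rules out $c_{w,\xi}^v = 3$ and $(c_{w,\xi}^v)' = 3$; hence by Proposition~\ref{prop:matrix_G}(1),(3) the only surviving possibilities are $(1,1)$, $(2,0)$, and $(2,2)$. These correspond on the nose to alternatives (2), (1), and (3) at the level of path counts: a unique path in each of $\CP(v, \Pi; w, \xi)$ and $\CP(v, \Pi'; w, \xi)$; two paths in the former and none in the latter; and two paths in each.

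It then remains to compute the $\nega$-parities. Whenever $c_{w,\xi}^v = 2$ (the $(2,0)$ and $(2,2)$ cases), Proposition~\ref{prop:matrix_G}(2) forces $d_{w,\xi}^{v,\pm} = 0$, and substituting into \eqref{eq:d+} and \eqref{eq:d-} gives $\sum_{\bq \in \CP(v, \Pi; w, \xi)} (-1)^{\nega(\bq)} = 0$; thus the two members $\bp, \bp'$ of $\CP(v, \Pi; w, \xi)$ satisfy $(-1)^{\nega(\bp')} = -(-1)^{\nega(\bp)}$, as required in (1) and (3). In the $(1,1)$ case I reproduce the argument of Proposition~\ref{prop:rank2_shellability}: the Yang--Baxter equation \eqref{eq:Yang-Baxter_eq} identifies the $\sT_k^\pm$-operator for $\Pi$ with that for $\Pi'$, so the two single-path $\sT_k^\pm$-coefficients agree and hence $(-1)^{\nega(\bp')} = (-1)^{\nega(\bp)}$. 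Finally, to sign-balance the paths $\bq_1, \bq_2 \in \CP(v, \Pi'; w, \xi)$ in the $(2,2)$ case, I apply Proposition~\ref{prop:matrix_G}(2) after reversing $(\beta_1, \ldots, \beta_q)$ to $(\beta_q, \ldots, \beta_1)$, so that $\sS'_k$ takes the role of $\sS_k$; since $(c_{w,\xi}^v)' = 2$, the associated $\sT_k^\pm$-coefficient vanishes and $(-1)^{\nega(\bq_2)} = -(-1)^{\nega(\bq_1)}$.

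The main obstacle is alternative (3), which has no analogue in types $A_1 \times A_1$, $A_2$, or $C_2$: here a coefficient $2$ in $\sS'_k$---rather than the value $0$ seen there---must be reconciled with a coefficient $2$ in $\sS_k$. Handling this cleanly rests on two points: using Remark~\ref{rem:exception_G} to quarantine every genuinely anomalous (coefficient-$3$) entry into the excluded list (E1)--(E4), and, in the sign computation for $\bq_1, \bq_2$, invoking Proposition~\ref{prop:matrix_G}(2) for the reversed chain, whose $\sS$-coefficient is $2$ and not $3$, so that the clause ``$m_j = 2 \Rightarrow n_{\xi_j}^\pm = 0$'' genuinely applies. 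The cases (E1)--(E4), where a coefficient $3$ really does occur, lie outside the present statement and are treated separately.
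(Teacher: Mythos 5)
Your proposal is correct and is essentially the paper's own proof: the paper disposes of Proposition~\ref{prop:rank2_shellability_G1} in one sentence, saying it follows ``by the same argument as in types $A_{1} \times A_{1}$, $A_{2}$, and $C_{2}$'' using Proposition~\ref{prop:matrix_G} and Remark~\ref{rem:exception_G}, and your write-up is precisely that argument spelled out (the identification of $(c_{w,\xi}^{v},(c_{w,\xi}^{v})')$ with $(m_j,n_{\xi_j})$, the exclusion of the value $3$ outside (E1)--(E4), the vanishing of $d_{w,\xi}^{v,\pm}$ when $m_j=2$, the Yang--Baxter identification in the $(1,1)$ case, and the reversal trick for the signs of $\bq_1,\bq_2$ in the new $(2,2)$ case).
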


Next, we consider the exceptional cases (E1)--(E4) above.
Then, we have the following:

%
%
\begin{prop}\label{prop:rank2_shellability_G2} 
\begin{enu}
\item In case (E1) or (E4), there exist exactly two paths $\bp_{1}, \bp_{2} \in \CP(v, \Pi) \setminus \{ \bp \}$ and a unique path $\bq \in \CP(v, \Pi')$ 
such that $\ed(\bp_{1}) = \ed(\bp_{2}) = \ed(\bq) = \ed(\bp)$ and $\wt(\bp_{1}) = \wt(\bp_{2}) = \wt(\bq) = \wt(\bp)$. 
Moreover, there exist two paths $\br_{1}, \br_{2} \in \{\bp, \bp_{1}, \bp_{2}\}$ such that 
for $\br_{3} \in \{\bp, \bp_{1}, \bp_{2}\} \setminus \{\br_{1}, \br_{2}\}$, $(-1)^{\nega(\br_{1})} = (-1)^{\nega(\br_{2})} = -(-1)^{\nega(\br_{3})}$ and $(-1)^{\nega(\br_{1})} = (-1)^{\nega(\bq)}$. 
\item In case (E2) or (E3), there exist exactly three paths $\bq_{1}, \bq_{2}, \bq_{3} \in \CP(v, \Pi')$  
such that $\ed(\bq_{1}) = \ed(\bq_{2}) = \ed(\bq_{3}) = \ed(\bp)$ and $\wt(\bq_{1}) = \wt(\bq_{2}) = \wt(\bq_{3}) = \wt(\bp)$. 
In this case, there does not exist a path $\br \in \CP(v, \Pi) \setminus \{\bp\}$ such that $\ed(\br) = \ed(\bp)$ and $\wt(\br) = \wt(\bp)$. 
Moreover, there exist two paths $\br_{1}, \br_{2} \in \{\bq_{1}, \bq_{2}, \bq_{3}\}$ such that for $\br_{3} \in \{\bq_{1}, \bq_{2}, \bq_{3}\} \setminus \{\br_{1}, \br_{2}\}$, 
$(-1)^{\nega(\br_{1})} = (-1)^{\nega(\br_{2})} = -(-1)^{\nega(\br_{3})}$ and $(-1)^{\nega(\br_{1})} = (-1)^{\nega(\bp)}$. 
\end{enu}
\end{prop}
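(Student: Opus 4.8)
The plan is to run the same dichotomy argument as in the proof of Proposition~\ref{prop:rank2_shellability}, now feeding in Proposition~\ref{prop:matrix_G} in place of Proposition~\ref{prop:matrix} and carefully tracking the multiplicity-$3$ entries that are special to type $G_{2}$. Since a rank-$2$ subsystem $\Delta_{\alpha,\beta}$ can be of type $G_{2}$ only when $\Delta$ itself is of type $G_{2}$, there is no genuine reduction step here: I would work directly in $W = W(G_{2})$, the dihedral group of order $12$, and observe that the four exceptional configurations (E1)--(E4) each single out one of the two multiplicity-$3$ matrix entries recorded in Remark~\ref{rem:exception_G}. The first move is to translate the statement into the counting language of the previous subsections: setting $w := \ed(\bp)$ and $\xi := \wt(\bp)$, I write $c_{w,\xi}^{v} := \#\CP(v, \Pi; w, \xi)$ and $(c_{w,\xi}^{v})' := \#\CP(v, \Pi'; w, \xi)$ for the entries of the matrices of $\sS_{k}$ and $\sS'_{k}$ read off via \eqref{eq:Sk}, and $d_{w,\xi}^{v,\pm}$ for the corresponding entries of $\sT_{k}^{\pm}$, which by \eqref{eq:Tk} together with \eqref{eq:d+}--\eqref{eq:d-} compute the $\nega$-signed (resp.\ $\ell$-signed) counts of paths in $\CP(v,\Pi;w,\xi)$.

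For part (1), covering cases (E1) and (E4), the explicit matrices of operators $(9)$ and $(10)$ in Appendix~\ref{sec:App_A}, via Remark~\ref{rem:exception_G}, give $c_{w,\xi}^{v} = 3$. By Proposition~\ref{prop:matrix_G}(2), since $m_{j} = 3$ we have $d_{w,\xi}^{v,\pm} \in \{1, -1\}$; combined with \eqref{eq:d+}/\eqref{eq:d-} this says that the $\nega$-signed sum over the three paths of $\CP(v,\Pi;w,\xi)$ equals $\pm 1$. Because a sum of three signs that equals $\pm 1$ must consist of two equal signs and one opposite, I obtain the two remaining paths $\bp_{1}, \bp_{2}$ and the asserted two-versus-one distribution of $(-1)^{\nega(\cdot)}$, the majority sign being exactly $d_{w,\xi}^{v,\pm}$. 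Proposition~\ref{prop:matrix_G}(3) (with $m_{j} = 3$, hence $n_{\xi_{j}} = 1$) then yields $(c_{w,\xi}^{v})' = 1$, producing the unique $\bq \in \CP(v, \Pi')$, and the Yang--Baxter equation \eqref{eq:Yang-Baxter_eq}, which gives $(\sT_{k}^{\pm})' = \sT_{k}^{\pm}$ exactly as in the proof of Proposition~\ref{prop:rank2_shellability}, forces the $\nega$-signed count over $\Pi'$-paths to equal $d_{w,\xi}^{v,\pm}$; that is, $(-1)^{\nega(\bq)}$ equals the majority sign $(-1)^{\nega(\br_{1})}$.

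For part (2), covering cases (E2) and (E3), the roles of $\Pi$ and $\Pi'$ are interchanged: the multiplicity-$3$ entry now appears for $\Pi'$, so Proposition~\ref{prop:matrix_G} (read for the reversed chain $(\beta_{q}, \ldots, \beta_{1})$, to which $\sS'_{k}$ corresponds) gives $c_{w,\xi}^{v} = 1$ and $(c_{w,\xi}^{v})' = 3$. The absence of any path $\br \in \CP(v, \Pi) \setminus \{\bp\}$ is immediate from $c_{w,\xi}^{v} = 1$, and the same signed-count argument applied to $\Pi'$ produces the three paths $\bq_{1}, \bq_{2}, \bq_{3}$ with the two-versus-one sign split, whose majority sign is pinned to $(-1)^{\nega(\bp)}$ again through $(\sT_{k}^{\pm})' = \sT_{k}^{\pm}$.

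The step I expect to be most delicate is not the dihedral computation itself, which is entirely subsumed in Proposition~\ref{prop:matrix_G} and the matrices of Appendix~\ref{sec:App_A}, but rather the precise matching of the two exceptional entries of Remark~\ref{rem:exception_G} with the four configurations (E1)--(E4), together with the bookkeeping of whether $\Pi = \Pi^{+}$ or $\Pi = \Pi^{-}$, which determines which of \eqref{eq:d+} and \eqref{eq:d-} supplies the relevant $\nega$-signed count and hence the correct orientation of all the sign identities.
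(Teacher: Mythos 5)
Your proposal is correct and follows essentially the same route as the paper, which proves this proposition by invoking Proposition~\ref{prop:matrix_G} and Remark~\ref{rem:exception_G} and running "the same argument as in types $A_{1} \times A_{1}$, $A_{2}$, and $C_{2}$" — i.e., exactly the signed-count dichotomy you spell out, with the multiplicity-$3$ entries of operators (9) and (10) yielding $c_{w,\xi}^{v}=3$, $(c_{w,\xi}^{v})'=1$ in cases (E1)/(E4) and the reversed roles in (E2)/(E3), and the Yang--Baxter identity $(\sT_{k}^{\pm})' = \sT_{k}^{\pm}$ pinning the majority sign to $(-1)^{\nega(\bq)}$ (resp.\ $(-1)^{\nega(\bp)}$). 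Your elaboration of the "sum of three signs equal to $\pm 1$ forces a two-versus-one split" step and the matching of (E1)--(E4) to the two exceptional matrix entries is precisely the content the paper leaves implicit.
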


By using Proposition~\ref{prop:matrix_G} and Remark~\ref{rem:exception_G}, we can prove Propositions~\ref{prop:rank2_shellability_G1} and \ref{prop:rank2_shellability_G2} by the same argument as in types $A_{1} \times A_{1}$, $A_{2}$, and $C_{2}$. 

%
%
\begin{rem}\label{rem:explicit_rank2_shellability_G}
We have explicit descriptions of $\bp_{1}$, $\bp_{2}$, $\bq$ in Proposition~\ref{prop:rank2_shellability_G2}\,(1), and those of $\bq_{1}$, $\bq_{2}$, $\bq_{3}$ in Proposition~\ref{prop:rank2_shellability_G2}\,(2). 
\begin{enu}
\item  Case (E1). 
In this case, we have 
\begin{equation*}
\Pi = (\pm (3\alpha_{1}+2\alpha_{2}), \pm (2\alpha_{1}+\alpha_{2}), \pm (3\alpha_{1}+\alpha_{2}), \pm \alpha_{1}, \mp \alpha_{2}, \mp (\alpha_{1}+\alpha_{2}))
\end{equation*}
and 
\begin{equation*}
\Pi' = (\mp(\alpha_{1}+\alpha_{2}), \mp \alpha_{2}, \pm \alpha_{1}, \pm(3\alpha_{1}+\alpha_{2}), \pm(2\alpha_{1}+\alpha_{2}), \pm(3\alpha_{1}+2\alpha_{2})). 
\end{equation*}
If we set 
\begin{align*}
\bp^{\E{1}}_{1} &: s_{1}s_{2}s_{1} \xrightarrow{3\alpha_{1}+2\alpha_{2}} s_{2}s_{1}s_{2}s_{1} \xrightarrow{3\alpha_{1}+\alpha_{2}} s_{2} \xrightarrow{\alpha_{1}+\alpha_{2}} s_{1}s_{2}, \\ 
\bp^{\E{1}}_{2} &: s_{1}s_{2}s_{1} \xrightarrow{3\alpha_{1}+\alpha_{2}} e \xrightarrow{\alpha_{1}} s_{1} \xrightarrow{\alpha_{2}} s_{1}s_{2}, \\ 
\bp^{\E{1}}_{3} &: s_{1}s_{2}s_{1} \xrightarrow{3\alpha_{1}+\alpha_{2}} e \xrightarrow{\alpha_{2}} s_{2} \xrightarrow{\alpha_{1}+\alpha_{2}} s_{1}s_{2}, \\ 
\bq^{\E{1}} &: s_{1}s_{2}s_{1} \xrightarrow{\alpha_{2}} s_{1}s_{2}s_{1}s_{2} \xrightarrow{\alpha_{1}} s_{1}s_{2}s_{1}s_{2}s_{1} \xrightarrow{3\alpha_{1}+\alpha_{2}} s_{1}s_{2}, 
\end{align*}
then in Proposition~\ref{prop:rank2_shellability_G2}\,(1), we have $\{\bp, \bp_{1}, \bp_{2}\} = \{\bp^{\E{1}}_{1}, \bp^{\E{1}}_{2}, \bp^{\E{1}}_{3}\}$ 
and $\bq = \bq^{\E{1}}$. 
\item Case (E2). In this case, we have 
\begin{equation*}
\Pi = (\pm(\alpha_{1}+\alpha_{2}), \pm \alpha_{2}, \mp \alpha_{1}, \mp(3\alpha_{1}+\alpha_{2}), \mp(2\alpha_{1}+\alpha_{2}), \mp(3\alpha_{1}+2\alpha_{2}))
\end{equation*}
and 
\begin{equation*}
\Pi' = (\mp (3\alpha_{1}+2\alpha_{2}), \mp (2\alpha_{1}+\alpha_{2}), \mp (3\alpha_{1}+\alpha_{2}), \mp \alpha_{1}, \pm \alpha_{2}, \pm (\alpha_{1}+\alpha_{2})). 
\end{equation*}
If we set 
\begin{align*}
\bp^{\E{2}} &: s_{1}s_{2}s_{1} \xrightarrow{\alpha_{2}} s_{1}s_{2}s_{1}s_{2} \xrightarrow{\alpha_{1}} s_{1}s_{2}s_{1}s_{2}s_{1} \xrightarrow{3\alpha_{1}+\alpha_{2}} s_{1}s_{2}, \\ 
\bq^{\E{2}}_{1} &: s_{1}s_{2}s_{1} \xrightarrow{3\alpha_{1}+2\alpha_{2}} s_{2}s_{1}s_{2}s_{1} \xrightarrow{3\alpha_{1}+\alpha_{2}} s_{2} \xrightarrow{\alpha_{1}+\alpha_{2}} s_{1}s_{2}, \\ 
\bq^{\E{2}}_{2} &: s_{1}s_{2}s_{1} \xrightarrow{3\alpha_{1}+\alpha_{2}} e \xrightarrow{\alpha_{1}} s_{1} \xrightarrow{\alpha_{2}} s_{1}s_{2}, \\ 
\bq^{\E{2}}_{3} &: s_{1}s_{2}s_{1} \xrightarrow{3\alpha_{1}+\alpha_{2}} e \xrightarrow{\alpha_{2}} s_{2} \xrightarrow{\alpha_{1}+\alpha_{2}} s_{1}s_{2}, 
\end{align*}
then in Proposition~\ref{prop:rank2_shellability_G2}\,(2), we have $\bp = \bp^{\E{2}}$ and $\{\bq_{1}, \bq_{2}, \bq_{3}\} = \{\bq^{\E{2}}_{1}, \bq^{\E{2}}_{2}, \bq^{\E{2}}_{3}\}$. 

\item Case (E3). 
In this case, we have 
\begin{equation*}
\Pi = (\pm (3\alpha_{1}+2\alpha_{2}), \pm (2\alpha_{1}+\alpha_{2}), \pm (3\alpha_{1}+\alpha_{2}), \pm \alpha_{1}, \mp \alpha_{2}, \mp (\alpha_{1}+\alpha_{2}))
\end{equation*}
and 
\begin{equation*}
\Pi' = (\mp(\alpha_{1}+\alpha_{2}), \mp \alpha_{2}, \pm \alpha_{1}, \pm(3\alpha_{1}+\alpha_{2}), \pm(2\alpha_{1}+\alpha_{2}), \pm(3\alpha_{1}+2\alpha_{2})). 
\end{equation*}
If we set 
\begin{align*}
\bp^{\E{3}} &: s_{2}s_{1}s_{2}s_{1} \xrightarrow{3\alpha_{1}+\alpha_{2}} s_{2} \xrightarrow{\alpha_{1}} s_{2}s_{1} \xrightarrow{\alpha_{2}} s_{2}s_{1}s_{2}, \\ 
\bq^{\E{3}}_{1} &: s_{2}s_{1}s_{2}s_{1} \xrightarrow{\alpha_{1}+\alpha_{2}} s_{1}s_{2}s_{1}s_{2}s_{1} \xrightarrow{3\alpha_{1}+\alpha_{2}} s_{1}s_{2} \xrightarrow{3\alpha_{1}+2\alpha_{2}} s_{2}s_{1}s_{2} \\ 
\bq^{\E{3}}_{2} &: s_{2}s_{1}s_{2}s_{1} \xrightarrow{\alpha_{2}} s_{2}s_{1}s_{2}s_{1}s_{2} \xrightarrow{\alpha_{1}} w_{\circ} \xrightarrow{3\alpha_{1}+2\alpha_{2}} s_{2}s_{1}s_{2} \\ 
\bq^{\E{3}}_{3} &: s_{2}s_{1}s_{2}s_{1} \xrightarrow{\alpha_{1}+\alpha_{2}} s_{1}s_{2}s_{1}s_{2}s_{1} \xrightarrow{\alpha_{2}} w_{\circ} \xrightarrow{3\alpha_{1}+\alpha_{2}} s_{2}s_{1}s_{2}, 
\end{align*}
then in Proposition~\ref{prop:rank2_shellability_G2}\,(2), we have $\bp = \bp^{\E{3}}$, and $\{\bq_{1}, \bq_{2}, \bq_{3}\} = \{\bq^{\E{3}}_{1}, \bq^{\E{3}}_{2}, \bq^{\E{3}}_{3}\}$. 
\item Case (E4). In this case, we have 
\begin{equation*}
\Pi = (\pm(\alpha_{1}+\alpha_{2}), \pm \alpha_{2}, \mp \alpha_{1}, \mp(3\alpha_{1}+\alpha_{2}), \mp(2\alpha_{1}+\alpha_{2}), \mp(3\alpha_{1}+2\alpha_{2}))
\end{equation*}
and 
\begin{equation*}
\Pi' = (\mp (3\alpha_{1}+2\alpha_{2}), \mp (2\alpha_{1}+\alpha_{2}), \mp (3\alpha_{1}+\alpha_{2}), \mp \alpha_{1}, \pm \alpha_{2}, \pm (\alpha_{1}+\alpha_{2})). 
\end{equation*}
\end{enu}
If we set 
\begin{align*}
\bp^{\E{4}}_{1} &: s_{2}s_{1}s_{2}s_{1} \xrightarrow{\alpha_{1}+\alpha_{2}} s_{1}s_{2}s_{1}s_{2}s_{1} \xrightarrow{3\alpha_{1}+\alpha_{2}} s_{1}s_{2} \xrightarrow{3\alpha_{1}+2\alpha_{2}} s_{2}s_{1}s_{2} \\ 
\bp^{\E{4}}_{2} &: s_{2}s_{1}s_{2}s_{1} \xrightarrow{\alpha_{2}} s_{2}s_{1}s_{2}s_{1}s_{2} \xrightarrow{\alpha_{1}} w_{\circ} \xrightarrow{3\alpha_{1}+2\alpha_{2}} s_{2}s_{1}s_{2} \\ 
\bp^{\E{4}}_{3} &: s_{2}s_{1}s_{2}s_{1} \xrightarrow{\alpha_{1}+\alpha_{2}} s_{1}s_{2}s_{1}s_{2}s_{1} \xrightarrow{\alpha_{2}} w_{\circ} \xrightarrow{3\alpha_{1}+\alpha_{2}} s_{2}s_{1}s_{2}, \\ 
\bq^{\E{4}} &: s_{2}s_{1}s_{2}s_{1} \xrightarrow{3\alpha_{1}+\alpha_{2}} s_{2} \xrightarrow{\alpha_{1}} s_{2}s_{1} \xrightarrow{\alpha_{2}} s_{2}s_{1}s_{2},  
\end{align*}
then in Proposition~\ref{prop:rank2_shellability_G2}\,(1), we have $\{\bp, \bp_{1}, \bp_{2}\} = \{\bp^{\E{4}}_{1}, \bp^{\E{4}}_{2}, \bp^{\E{4}}_{3}\}$ 
and $\bq = \bq^{\E{4}}$. 
\end{rem}


\subsection{Proof of Theorem~\ref{thm:YB-move}.}
Based on Propositions~\ref{prop:rank2_shellability}, \ref{prop:rank2_shellability_G1}, and \ref{prop:rank2_shellability_G2}, we can prove the existence of a generalization of quantum Yang-Baxter moves. 
In the same way as in \eqref{eq:division_A}, we divide $\bp(A)$ for $A \in \CA(w, \Gamma_{1})$ into three parts $\bp(A)^{(1)}$, $\bp(A)^{(2)}$, $\bp(A)^{(3)}$.
If we write $A = \{a_{1}, \ldots, a_{l}\}$, then $\bp(A)$ is of the form: 
\begin{equation*}
\bp(A): w = w_{0} \xrightarrow{|\beta_{a_{1}}|} \cdots \xrightarrow{|\beta_{a_{l}}|} w_{l}, 
\end{equation*}
with $a_{1} < \cdots < a_{l}$; we set $a_{0} := 0$. 
Let $0 \le i_{1} \le l$ be maximal such that $a_{i_{1}} \le t$, 
and $0 \le i_{2} \le l$ maximal such that $a_{i_{2}} \le t+q$. 
Then, we set 
\begin{align*}
& \bp(A)^{(1)}: w = w_{0} \xrightarrow{|\beta_{a_{1}}|} \cdots \xrightarrow{|\beta_{a_{i_{1}}}|} w_{a_{i_{1}}}, \\ 
& \bp(A)^{(2)}: w_{a_{i_{1}}} \xrightarrow{|\beta_{a_{i_{1}+1}}|} \cdots \xrightarrow{|\beta_{a_{i_{2}}}|} w_{a_{i_{2}}}, \\ 
& \bp(A)^{(3)}: w_{a_{i_{2}}} \xrightarrow{|\beta_{a_{i_{2}+1}}|} \cdots \xrightarrow{|\beta_{a_{l}}|} w_{a_{l}}. 
\end{align*}
Note that the concatenation of $\bp(A)^{(1)}$, $\bp(A)^{(2)}$, and $\bp(A)^{(3)}$ coincides with $\bp(A)$. 

Also, in the same way as in \eqref{eq:division_B}, we divide $\bp(B)$ for each $B \in \CA(w, \Gamma_{2})$ into three parts $\bp(B)^{(1)}$, $\bp(B)^{(2)}$, $\bp(B)^{(3)}$. 
If we write $B = \{b_{1}, \ldots, b_{m}\}$, then $\bp(B)$ is of the form: 
\begin{equation*}
\bp(B): w = w_{0} \xrightarrow{|\beta'_{b_{1}}|} \cdots \xrightarrow{|\beta'_{b_{m}}|} w_{m}, 
\end{equation*}
with $b_{1} < \cdots < b_{m}$; we set $b_{0} := 0$. 
Let $0 \le i_{1} \le m$ be maximal such that $b_{i_{1}} \le t$, 
and $0 \le i_{2} \le m$ maximal such that $b_{i_{2}} \le t+q$. 
Then, we set 
\begin{align*}
& \bp(B)^{(1)}: w = w_{0} \xrightarrow{|\beta'_{b_{1}}|} \cdots \xrightarrow{|\beta'_{b_{i_{1}}}|} w_{b_{i_{1}}}, \\ 
& \bp(B)^{(2)}: w_{b_{i_{1}}} \xrightarrow{|\beta'_{b_{i_{1}+1}}|} \cdots \xrightarrow{|\beta'_{b_{i_{2}}}|} w_{b_{i_{2}}}, \\ 
& \bp(B)^{(3)}: w_{b_{i_{2}}} \xrightarrow{|\beta'_{b_{i_{2}+1}}|} \cdots \xrightarrow{|\beta'_{b_{m}}|} w_{b_{m}}. 
\end{align*}
Note that the concatenation of $\bp(B)^{(1)}$, $\bp(B)^{(2)}$, and $\bp(B)^{(3)}$ coincides with $\bp(B)$.

\begin{proof}[Proof of Theorem~\ref{thm:YB-move}]
We divide the proof of the theorem into two parts: 
\begin{enu}
\item $\Delta$ is not of type $G_{2}$; 
\item $\Delta$ is of type $G_{2}$. 
\end{enu}

\paragraph{Part~1: $\Delta$ is not of type $G_{2}$.}
We assume that $\Delta$ is not of type $G_{2}$. 
Let $A \in \CA(w, \Gamma_{1})$. 
Then, by Proposition~\ref{prop:rank2_shellability} with $\Pi = \Gamma_{1}^{(2)}$ and $\Pi' = \Gamma_{2}^{(2)}$, we see that only one of the following occurs: 
\begin{enu}
\item there exists $\br_{0} \in \CP(\ed(\bp(A)^{(1)}), \Gamma_{1}^{(2)}) \setminus \{\bp(A)^{(2)}\}$ such that $\ed(\br_{0}) = \ed(\bp(A)^{(2)})$ and $\wt(\br_{0}) = \wt(\bp(A)^{(2)})$; 
\item there exists $\br_{0} \in \CP(\ed(\bp(A)^{(1)}), \Gamma_{2}^{(2)})$ such that $\ed(\br_{0}) = \ed(\bp(A)^{(2)})$ and $\wt(\br_{0}) = \wt(\bp(A)^{(2)})$. 
\end{enu}
For convenience of explanation, we set 
\begin{equation*}
\varphi(A) := \begin{cases} 1 & \text{if (1) of the above holds,} \\ 2 & \text{if (2) of the above holds.} \end{cases}
\end{equation*}
We define a set $\CA_{0}(w, \Gamma_{1}) \subset \CA(w, \Gamma_{1})$ by 
\begin{equation*}
\CA_{0}(w, \Gamma_{1}) := \{ A \in \CA(w, \Gamma_{1}) \mid \varphi(A) = 2\}. 
\end{equation*}
Then we have 
%
%
\begin{equation}\label{eq:CA0C}
\CA_{0}^{C}(w, \Gamma_{1}) = \CA(w, \Gamma_{1}) \setminus \CA_{0}(w, \Gamma_{1}) = \{ A \in \CA(w, \Gamma_{1}) \mid \varphi(A) = 1\}. 
\end{equation}

Let us define a map $Y: \CA_{0}(w, \Gamma_{1}) \rightarrow \CA(w, \Gamma_{2})$. 
Let $A \in \CA_{0}(w, \Gamma_{1})$. 
Then, by applying Proposition~\ref{prop:rank2_shellability} with $\Pi = \Gamma_{1}^{(2)}$ and $\Pi' = \Gamma_{2}^{(2)}$, 
there exists a unique $\br_{0} \in \CP(\ed(\bp(A)^{(1)}), \Gamma_{2}^{(2)})$ such that $\ed(\br_{0}) = \ed(\bp(A)^{(2)})$ and $\wt(\br_{0}) = \wt(\bp(A)^{(2)})$. 
We write the $\br_{0}$ as: 
\begin{equation*}
\br_{0}: \ed(\bp(A)^{(1)}) = x_{0} \xrightarrow{|\beta'_{j_{1}}|} \cdots \xrightarrow{|\beta'_{j_{p}}|} x_{p}.  
\end{equation*}
Since $\br_{0}$ is $\Gamma_{2}^{(2)}$-compatible, we have $t+1 \le j_{1} < \cdots < j_{p} \le t+q$. 
Set $B^{(2)} := \{j_{1}, \ldots, j_{p}\}$, and define $Y(A)$ by $Y(A) := A^{(1)} \sqcup B^{(2)} \sqcup A^{(3)}$; 
note that $Y(A) \in \CA(w, \Gamma_{2})$. 
We define a set $\CA_{0}(w, \Gamma_{2})$ by 
\begin{equation*}
\CA_{0}(w, \Gamma_{2}) := \{ Y(A) \mid A \in \CA_{0}(w, \Gamma_{1}) \}. 
\end{equation*}
We claim that $Y$ defines a bijection $Y: \CA_{0}(w, \Gamma_{1}) \rightarrow \CA_{0}(w, \Gamma_{2})$. 
To verify this claim, it suffices to show that $Y$ is injective. 

Let $A_{1}, A_{2} \in \CA_{0}(w, \Gamma_{1})$, and assume that $Y(A_{1}) = Y(A_{2})$. 
We show that $A_{1} = A_{2}$. 
First, we see that 
\begin{equation*}
A_{1}^{(1)} = (Y(A_{1}))^{(1)} = (Y(A_{2}))^{(1)} = A_{2}^{(1)}, 
\end{equation*}
and 
\begin{equation*}
A_{1}^{(3)} = (Y(A_{1}))^{(3)} = (Y(A_{2}))^{(3)} = A_{2}^{(3)}. 
\end{equation*}
Hence it remains to show that $A_{1}^{(2)} = A_{2}^{(2)}$. 
By the definition of the map $Y$, we have $\ed(\bp(Y(A_{1}))^{(2)}) = \ed(\bp(A_{1})^{(2)})$ and $\wt(\bp(Y(A_{1}))^{(2)}) = \wt(\bp(A_{1})^{(2)})$. 
Also, we have $\ed(\bp(Y(A_{2}))^{(2)}) = \ed(\bp(A_{2})^{(2)})$ and $\wt(\bp(Y(A_{2}))^{(2)}) = \wt(\bp(A_{2})^{(2)})$. 
Since $\bp(Y(A_{1}))^{(2)} = \bp(Y(A_{2}))^{(2)}$, the uniqueness in Proposition~\ref{prop:rank2_shellability}\,(2) (with $\Pi = \Gamma_{2}^{(2)}$ and $\Pi' = \Gamma_{1}^{(2)}$) 
implies that $\bp(A_{1})^{(2)} = \bp(A_{2})^{(2)}$, from which we obtain $A_{1}^{(2)} = A_{2}^{(2)}$, as desired. 
This shows the injectivity of $Y$. 

To prove that $Y$ satisfies the condition of Theorem~\ref{thm:YB-move}\,(1), 
it remains to show that $\ed(Y(A)) = \ed(A)$, $\down(Y(A)) = \down(A)$, and $(-1)^{n(Y(A))} = (-1)^{n(A)}$. 
The first equation is obvious, since 
\begin{equation*}
\ed(Y(A)) = \ed(\bp(Y(A))) = \ed(\bp(Y(A))^{(3)}) = \ed(\bp(A)^{(3)}) = \ed(\bp(A)) = \ed(A). 
\end{equation*}
The second equation is shown as follows: 
\begin{align*}
\down(Y(A)) &= \wt(\bp(Y(A))) \\ 
&= \wt(\bp(Y(A))^{(1)}) + \wt(\bp(Y(A))^{(2)}) + \wt(\bp(Y(A))^{(3)}) \\ 
&= \wt(\bp(A)^{(1)}) + \wt(\bp(Y(A))^{(2)}) + \wt(\bp(A)^{(3)}) \\ 
&= \wt(\bp(A)^{(1)}) + \wt(\bp(A)^{(2)}) + \wt(\bp(A)^{(3)}) \\ 
&= \wt(\bp(A)) \\
&= \down(A). 
\end{align*}
Since $(-1)^{\nega(\bp(Y(A))^{(2)})} = (-1)^{\nega(\bp(A)^{(2)})}$ by Proposition~\ref{prop:rank2_shellability}\,(2), 
the remaining equation is shown as follows: 
\begin{align*}
(-1)^{n(Y(A))} &= (-1)^{\nega(\bp(Y(A)))} \\ 
&= (-1)^{\nega(\bp(Y(A))^{(1)})} (-1)^{\nega(\bp(Y(A))^{(2)})} (-1)^{\nega(\bp(Y(A))^{(3)})} \\ 
&= (-1)^{\nega(\bp(A)^{(1)})} (-1)^{\nega(\bp(Y(A))^{(2)})} (-1)^{\nega(\bp(A)^{(3)})} \\ 
&= (-1)^{\nega(\bp(A)^{(1)})} (-1)^{\nega(\bp(A)^{(2)})} (-1)^{\nega(\bp(A)^{(3)})} \\ 
&= (-1)^{\nega(\bp(A))} \\
&= (-1)^{n(A)}. 
\end{align*}

Next, we construct an involution $I_{1}$ which satisfies the condition of Theorem~\ref{thm:YB-move}\,(2). 
Let $A \in \CA_{0}^{C}(w, \Gamma_{1})$. 
Then, by applying Proposition~\ref{prop:rank2_shellability} with $\Pi = \Gamma_{1}^{(2)}$, 
we see that there exists a unique $\br_{0} \in \CP(\ed(\bp(A)^{(1)}), \Gamma_{1}^{(2)}) \setminus \{ \bp(A)^{(2)} \}$ such that $\ed(\br_{0}) = \ed(\bp(A)^{(2)})$ and $\wt(\br_{0}) = \wt(\bp(A)^{(2)})$. 
We write the $\br_{0}$ as: 
\begin{equation*}
\br_{0}: \ed(\bp(A)^{(1)}) = x_{0} \xrightarrow{|\beta_{j_{1}}|} \cdots \xrightarrow{|\beta_{j_{p}}|} x_{p}.  
\end{equation*}
Since $\br_{0}$ is $\Gamma_{1}^{(2)}$-compatible, we have $t+1 \le j_{1} < \cdots < j_{p} \le t+q$. 
Set $B^{(2)} := \{j_{1}, \ldots, j_{p}\}$, and define $I_{1}(A)$ by $I_{1}(A) := A^{(1)} \sqcup B^{(2)} \sqcup A^{(3)}$; 
note that $I_{1}(A) \in \CA(w, \Gamma_{1})$.
Since $\bp(A)^{(2)} \in \CP(\ed(\bp(I_{1}(A))^{(1)}), \Gamma_{1}^{(2)})$ satisfies the condition of Proposition~\ref{prop:rank2_shellability}\,(1), with $\bp = \bp(I_{1}(A))^{(2)}$, 
we deduce that $I_{1}(A) \in \CA_{0}^{C}(w, \Gamma_{1})$,
and that $I_{1}(I_{1}(A)) = I_{1}(A^{(1)} \sqcup B^{(2)} \sqcup A^{(3)}) = A^{(1)} \sqcup A^{(2)} \sqcup A^{(3)} = A$ by the definition of $I_{1}$. 
This shows that $I_{1}$ is an involution. 
Hence it remains to show that $\ed(I_{1}(A)) = \ed(A)$, $\down(I_{1}(A)) = \down(A)$, and $(-1)^{n(I_{1}(A))} = -(-1)^{n(A)}$, 
which can be shown by the same argument as that for $Y$. 
This completes the construction of $I_{1}$. 

Finally, we show the existence of an involution $I_{2}$ on $\CA_{0}^{C}(w, \Gamma_{2})$. 
To do this, we examine the set $\CA_{0}^{C}(w, \Gamma_{2})$ in detail. 
Let $B \in \CA(w, \Gamma_{2})$. 
Then, in the same way as for $A \in \CA(w, \Gamma_{1})$, 
we see by Proposition~\ref{prop:rank2_shellability}, with $\Pi = \Gamma_{2}^{(2)}$ and $\Pi' = \Gamma_{1}^{(2)}$, 
that only one of the following occurs: 
\begin{enu}
\item[(1)$'$] there exists $\br_{0} \in \CP(\ed(\bp(B)^{(1)}), \Gamma_{2}^{(2)}) \setminus \{\bp(B)^{(2)}\}$ such that $\ed(\br_{0}) = \ed(\bp(B)^{(2)})$ and $\wt(\br_{0}) = \wt(\bp(B)^{(2)})$; 
\item[(2)$'$] there exists $\br_{0} \in \CP(\ed(\bp(B)^{(1)}), \Gamma_{1}^{(2)})$ such that $\ed(\br_{0}) = \ed(\bp(B)^{(2)})$ and $\wt(\br_{0}) = \wt(\bp(B)^{(2)})$. 
\end{enu}
For convenience of explanation, we set 
\begin{equation*}
\varphi'(B) = \begin{cases} 1 & \text{if (1)$'$ of the above holds,} \\ 2 & \text{if (2)$'$ of the above holds.} \end{cases}
\end{equation*}
We claim that 
%
%
\begin{equation}\label{eq:CA0_2}
\CA_{0}(w, \Gamma_{2}) = \{ B \in \CA(w, \Gamma_{2}) \mid \varphi'(B) = 2 \}. \\ 
\end{equation}
If this equation is shown, then the following holds:
\begin{equation*}
\CA_{0}^{C}(w, \Gamma_{2}) := \CA(w, \Gamma_{2}) \setminus \CA_{0}(w, \Gamma_{2}) = \{ B \in \CA(w, \Gamma_{2}) \mid \varphi'(B) = 1 \}.  
\end{equation*}
First, we take $B \in \CA_{0}(w, \Gamma_{2})$. 
Then, by the definition of $\CA_{0}(w, \Gamma_{2})$, there exists $A \in \CA_{0}(w, \Gamma_{1})$ such that $Y(A) = B$. 
Since $\bp(A)^{(2)} \in \CP(\ed(\bp(A)^{(1)}), \Gamma_{1}^{(2)})$ satisfies the condition of Proposition~\ref{prop:rank2_shellability}\,(2), 
with $\Pi = \Gamma_{2}^{(2)}$, $\Pi' = \Gamma_{1}^{(2)}$, and $\bp = \bp(B)^{(2)}$, 
we have $\varphi'(B) = 2$. 
Next, we take $B \in \CA(w, \Gamma_{2})$ such that $\varphi'(B) = 2$. 
Then, by the definition of $\varphi'$, there exists $\br_{0} \in \CP(\ed(\bp(B)^{(1)}), \Gamma_{1}^{(2)})$ such that $\ed(\br_{0}) = \ed(\bp(B)^{(2)})$ and $\wt(\br_{0}) = \wt(\bp(B)^{(2)})$. 
We write the $\br_{0}$ as: 
\begin{equation*}
\br_{0}: \ed(\bp(B)^{(1)}) = x_{0} \xrightarrow{|\beta_{j_{1}}|} \cdots \xrightarrow{|\beta_{j_{p}}|} x_{p}. 
\end{equation*}
Then, we have $t+1 \le j_{1} < \cdots < j_{p} \le t+q$. 
Set $A^{(2)} := \{j_{1}, \ldots, j_{p}\}$, and then $A := B^{(1)} \sqcup A^{(2)} \sqcup B^{(3)}$. 
We see that $A \in \CA_{0}(w, \Gamma_{1})$ and $Y(A) = B$, 
and hence $B \in \CA_{0}(w, \Gamma_{2})$. 
Thus, equation \eqref{eq:CA0_2} is shown. 
Hence the existence of the desired involution $I_{2}$ on $\CA_{0}^{C}(w, \Gamma_{2})$ can be shown 
by the same argument as that of the involution $I_{1}$ on $\CA_{0}^{C}(w, \Gamma_{1})$. 
This completes the proof of Theorem~\ref{thm:YB-move} for $\Delta$ not of type $G_{2}$. 

\paragraph{Part~2: $\Delta$ is of type $G_{2}$.}
We prove the theorem for $\Delta$ of type $G_{2}$. As in Part~1, 
we define $\varphi(A)$ for $A \in \CA(w, \Gamma_{1})$. 
Let $A \in \CA(w, \Gamma_{1})$. 
We set $\Pi := \Gamma_{1}^{(2)}$, $\Pi' := \Gamma_{2}^{(2)}$, and $\bp := \bp(A)^{(2)}$, $v := \ed(\bp(A)^{(1)})$. 
We first consider the cases except cases (E1)--(E4). Then, by Proposition~\ref{prop:rank2_shellability_G1}, only one of the following (1)--(3) occurs. 
\begin{enu}
\item There exists a unique $\br_{0} \in \CP(\ed(\bp(A)^{(1)}), \Gamma_{1}^{(2)}) \setminus \{\bp(A)^{(2)}\}$ such that $\ed(\br_{0}) = \ed(\bp(A)^{(2)})$ and $\wt(\br_{0}) = \wt(\bp(A)^{(2)})$. 
This $\br_{0}$ satisfies $(-1)^{\nega(\br_{0})} = -(-1)^{\nega(\bp(A)^{(2)})}$. 
Moreover, there does not exist a path $\bq \in \CP(\ed(\bp(A)^{(1)}), \Gamma_{2}^{(2)})$ such that $\ed(\bq) = \ed(\bp(A)^{(2)})$ and $\wt(\bq) = \wt(\bp(A)^{(2)})$. 
\item There exists a unique $\br_{0} \in \CP(\ed(\bp(A)^{(1)}), \Gamma_{2}^{(2)})$ such that $\ed(\br_{0}) = \ed(\bp(A)^{(2)})$ and $\wt(\br_{0}) = \wt(\bp(A)^{(2)})$. 
This $\br_{0}$ satisfies $(-1)^{\nega(\br_{0})} = (-1)^{\nega(\bp(A)^{(2)})}$. 
Moreover, there does not exist a path $\bq \in \CP(\ed(\bp(A)^{(1)}), \Gamma_{1}^{(2)}) \setminus \{\bp(A)^{(2)}\}$ such that $\ed(\bq) = \ed(\bp(A)^{(2)})$ and $\wt(\bq) = \wt(\bp(A)^{(2)})$. 
\item There exists a unique $\br_{0} \in \CP(\ed(\bp(A)^{(1)}), \Gamma_{1}^{(2)}) \setminus \{\bp(A)^{(2)}\}$ such that $\ed(\br_{0}) = \ed(\bp(A)^{(2)})$ and $\wt(\br_{0}) = \wt(\bp(A)^{(2)})$. 
This $\br_{0}$ satisfies $(-1)^{\nega(\br_{0})} = -(-1)^{\nega(\bp(A)^{(2)})}$. 
Moreover, there exist exactly two paths $\bq_{1}, \bq_{2} \in \CP(\ed(\bp(A)^{(1)}), \Gamma_{2}^{(2)})$ such that $\ed(\bq_{1}) = \ed(\bq_{2}) = \ed(\bp(A)^{(2)})$ and $\wt(\bq_{1}) = \wt(\bq_{2}) = \wt(\bp(A)^{(2)})$. 
These $\bq_{1}$, $\bq_{2}$ satisfy $(-1)^{\nega(\bq_{2})} = -(-1)^{\nega(\bq_{1})}$. 
\end{enu}
We set 
\begin{equation*}
\varphi(A) := \begin{cases} 1 & \text{if (1) or (3) of the above holds,} \\ 2 & \text{if (2) of the above holds.} \end{cases}
\end{equation*}

Next, consider the exceptional cases (E1) and (E4). Recall Proposition~\ref{prop:rank2_shellability_G2} and Remark~\ref{rem:explicit_rank2_shellability_G}. 
For $A \in \CA(w, \Gamma_{1})$, we define $\varphi(A)$ by 
\begin{equation*}
\varphi(A) := \begin{cases} 3 & \text{if } \bp(A)^{(2)} = \bp_{1}^{\E{1}}, \bp_{3}^{\E{1}}, \bp_{1}^{\E{4}}, \bp_{3}^{\E{4}}, \\ 4 & \text{if } \bp(A)^{(2)} = \bp_{2}^{\E{1}}, \bp_{2}^{\E{4}}. \end{cases}
\end{equation*}
Finally, in the exceptional cases (E2) and case (E3), we set $\varphi(A) := 5$ for $A \in \CA(w, \Gamma_{1})$. 

With $\varphi(A)$ as above, we define a subset $\CA_{0}(w, \Gamma_{1}) \subset \CA(w, \Gamma_{1})$ by 
\begin{equation*}
\CA_{0}(w, \Gamma_{1}) := \{ A \in \CA(w, \Gamma_{1}) \mid \varphi(A) = 2, 4, 5 \}. 
\end{equation*}
Also, we set 
\begin{equation*}
\CA_{0}^{C}(w, \Gamma_{1}) := \CA(w, \Gamma_{1}) \setminus \CA_{0}(w, \Gamma_{1}) = \{ A \in \CA(w, \Gamma_{1}) \mid \varphi(A) = 1, 3 \}. 
\end{equation*}

Let us define a map $Y: \CA_{0}(w, \Gamma_{1}) \rightarrow \CA(w, \Gamma_{2})$. 
Let $A \in \CA_{0}(w, \Gamma_{1})$. First, assume that $\varphi(A) = 2$. 
Take a unique $\br_{0} \in \CP(\ed(\bp(A)^{(1)}), \Gamma_{2}^{(2)})$ such that $\ed(\br_{0}) = \ed(\bp(A)^{(2)})$ and $\wt(\br_{0}) = \wt(\bp(A)^{(2)})$. 
Next, assume that $\varphi(A) = 4$. Then, $\bp(A)^{(2)} = \bp_{2}^{\E{1}}$ or $\bp(A)^{(2)} = \bp_{2}^{\E{4}}$. 
If $\bp(A)^{(2)} = \bp_{2}^{\E{1}}$, then we set $\br_{0} := \bq^{\E{1}}$; 
if $\bp(A)^{(2)} = \bp_{2}^{\E{4}}$, then we set $\br_{0} := \bq^{\E{4}}$. 
Also, assume that $\varphi(A) = 5$. Then, $\bp(A)^{(2)} = \bp^{\E{2}}$ or $\bp(A)^{(2)} = \bp^{\E{3}}$. 
If $\bp(A)^{(2)} = \bp^{\E{2}}$, then we set $\br_{0} := \bq_{2}^{\E{2}}$; 
if $\bp(A)^{(2)} = \bp^{\E{3}}$, then we set $\br_{0} := \bq_{2}^{\E{3}}$. 
We write the $\br_{0}$ as: 
\begin{equation*}
\br_{0}: \ed(\bp(A)^{(1)}) = x_{0} \xrightarrow{|\beta'_{j_{1}}|} x_{1} \xrightarrow{|\beta'_{j_{2}}|} \cdots \xrightarrow{|\beta'_{j_{p}}|} x_{p}. 
\end{equation*}
Set $B^{(2)} := \{j_{1}, \ldots, j_{p}\}$, 
and define $Y(A) := A^{(1)} \sqcup B^{(2)} \sqcup A^{(3)}$. 
By the same argument as that in Part~1, 
if we set 
\begin{equation*}
\CA_{0}(w, \Gamma_{2}) := \{ Y(A) \mid A \in \CA_{0}(w, \Gamma_{1}) \}, 
\end{equation*}
then $Y$ defines a bijection between $\CA_{0}(w, \Gamma_{1})$ and $\CA_{0}(w, \Gamma_{2})$. 
Also, we can show that $(-1)^{n(Y(A))} = (-1)^{n(A)}$, $\ed(Y(A)) = \ed(A)$, and $\down(Y(A)) = \down(A)$. 

Next, we construct an involution on $\CA_{0}^{C}(w, \Gamma_{1})$. Let $A \in \CA_{0}^{C}(w, \Gamma_{1})$. 
If $\varphi(A) = 1$, then there exists a unique $\br_{0} \in \CP(\ed(\bp(A)^{(1)}), \Gamma_{1}^{(2)}) \setminus \{ \bp(A)^{(2)} \}$ 
such that $\ed(\br_{0}) = \ed(\bp(A)^{(2)})$ and $\wt(\br_{0}) = \wt(\bp(A)^{(2)})$. 
If $\varphi(A) = 3$, then we set 
\begin{equation*}
\br_{0} := \begin{cases} \bp_{3}^{\E{1}} & \text{if } \bp(A)^{(2)} = \bp_{1}^{\E{1}}, \\ \bp_{3}^{\E{4}} & \text{if } \bp(A)^{(2)} = \bp_{1}^{\E{4}}, \\ \bp_{1}^{\E{1}} & \text{if } \bp(A)^{(2)} = \bp_{3}^{\E{1}}, \\ \bp_{1}^{\E{4}} & \text{if } \bp(A)^{(2)} = \bp_{3}^{\E{4}}. \end{cases}
\end{equation*}
We write the $\br_{0}$ as: 
\begin{equation*}
\br_{0}: \ed(\bp(A)^{(1)}) = x_{0} \xrightarrow{|\beta_{j_{1}}|} x_{1} \xrightarrow{|\beta'_{j_{2}}|} \cdots \xrightarrow{|\beta_{j_{p}}|} x_{p}. 
\end{equation*}
Set $B^{(2)} := \{j_{1}, \ldots, j_{p}\}$, 
and define $I_{1}(A) := A^{(1)} \sqcup B^{(2)} \sqcup A^{(3)}$. 
We see that $I_{1}$ defines an involution on $\CA_{0}^{C}(w, \Gamma_{1})$ such that 
$(-1)^{n(I_{1}(A))} = -(-1)^{n(A)}$, $\ed(I_{1}(A)) = \ed(A)$, and $\down(I_{1}(A)) = \down(A)$. 

Finally, we construct an involution $I_{2}$. 
Let $B \in \CA(w, \Gamma_{2})$. Set $\Pi := \Gamma_{2}^{(2)}$, $\Pi' := \Gamma_{1}^{(2)}$, and $\bp := \bp(B)^{(2)}$, $v := \ed(\bp(B)^{(1)})$. 
Let us consider the cases except cases (E1)--(E4). Then, by Proposition~\ref{prop:rank2_shellability_G1}, only one of the following (1)$'$--(3)$'$ occurs. 
\begin{enu}
\item[(1)$'$] There exists a unique $\br_{0} \in \CP(\ed(\bp(B)^{(1)}), \Gamma_{2}^{(2)}) \setminus \{\bp(B)^{(2)}\}$ such that $\ed(\br_{0}) = \ed(\bp(B)^{(2)})$ and $\wt(\br_{0}) = \wt(\bp(B)^{(2)})$. 
This $\br_{0}$ satisfies $(-1)^{\nega(\br_{0})} = -(-1)^{\nega(\bp(B)^{(2)})}$. 
Moreover, there does not exist a path $\bq \in \CP(\ed(\bp(B)^{(1)}), \Gamma_{1}^{(2)})$ such that $\ed(\bq) = \ed(\bp(B)^{(2)})$ and $\wt(\bq) = \wt(\bp(B)^{(2)})$. 
\item[(2)$'$] There exists a unique $\br_{0} \in \CP(\ed(\bp(B)^{(1)}), \Gamma_{1}^{(2)})$ such that $\ed(\br_{0}) = \ed(\bp(B)^{(2)})$ and $\wt(\br_{0}) = \wt(\bp(B)^{(2)})$. 
This $\br_{0}$ satisfies $(-1)^{\nega(\br_{0})} = (-1)^{\nega(\bp(B)^{(2)})}$. 
Moreover, there does not exist a path $\bq \in \CP(\ed(\bp(B)^{(1)}), \Gamma_{2}^{(2)}) \setminus \{\bp(B)^{(2)}\}$ such that $\ed(\bq) = \ed(\bp(B)^{(2)})$ and $\wt(\bq) = \wt(\bp(B)^{(2)})$. 
\item[(3)$'$] There exists a unique $\br_{0} \in \CP(\ed(\bp(B)^{(1)}), \Gamma_{2}^{(2)}) \setminus \{\bp(B)^{(2)}\}$ such that $\ed(\br_{0}) = \ed(\bp(B)^{(2)})$ and $\wt(\br_{0}) = \wt(\bp(B)^{(2)})$. 
This $\br_{0}$ satisfies $(-1)^{\nega(\br_{0})} = -(-1)^{\nega(\bp(B)^{(2)})}$. 
Moreover, there exist exactly two paths $\bq_{1}, \bq_{2} \in \CP(\ed(\bp(B)^{(1)}), \Gamma_{1}^{(2)})$ such that $\ed(\bq_{1}) = \ed(\bq_{2}) = \ed(\bp(B)^{(2)})$ and $\wt(\bq_{1}) = \wt(\bq_{2}) = \wt(\bp(B)^{(2)})$. 
These $\bq_{1}$, $\bq_{2}$ satisfy $(-1)^{\nega(\bq_{2})} = -(-1)^{\nega(\bq_{1})}$. 
\end{enu}
We define $\varphi'(B)$ by 
\begin{equation*}
\varphi'(B) := \begin{cases} 1 & \text{if (1)$'$ or (3)$'$ of the above holds,} \\ 2 & \text{if (2)$'$ of the above holds.} \end{cases}
\end{equation*}

For the exceptional cases (E1) and (E4), we set 
\begin{equation*}
\varphi'(B) := \begin{cases} 3 & \text{if } \bp(B)^{(2)} = \bp_{1}^{\E{1}}, \bp_{3}^{\E{1}}, \bp_{1}^{\E{4}}, \bp_{3}^{\E{4}}, \\ 4 & \text{if } \bp(B)^{(2)} = \bp_{2}^{\E{1}}, \bp_{2}^{\E{4}}; \end{cases}
\end{equation*}
for the exceptional cases (E2) and (E3), we set $\varphi'(B) := 5$. 

Now, by the same argument as that in Part~1, we obtain 
\begin{align*}
\CA_{0}(w, \Gamma_{2}) &= \{ B \in \CA(w, \Gamma_{2}) \mid \varphi'(B) = 2, 4, 5 \}, \\ 
\CA_{0}^{C}(w, \Gamma_{2}) &= \{ B \in \CA(w, \Gamma_{2}) \mid \varphi'(B) = 1, 3 \}. 
\end{align*}
Hence an involution $I_{2}$ on $\CA_{0}^{C}(w, \Gamma_{2})$ can be defined in the same way as $I_{1}$ on $\CA_{0}(w, \Gamma_{1})$. 
This completes the proof of the theorem. 
\end{proof}


\subsection{Proof of Theorem~\ref{thm:wt_height_preserving}.}
We will prove that the maps $Y$, $I_{1}$ and $I_{2}$ preserve weights and heights. 

We need additional notation. 
Let $\Psi = (\psi_{1}, \ldots, \psi_{p})$ be a sequence of roots $\psi_{1}, \ldots, \psi_{p} \in \Delta$, 
$\bk = (k_{1}, \ldots, k_{p})$ a sequence of integers $k_{1}, \ldots, k_{p} \in \BZ$, 
and $w \in W$. 
For a subset $J = \{j_{1} < \cdots < j_{a}\} \subset \{ 1, \ldots, p \}$ such that 
\begin{equation*}
w \xrightarrow{|\psi_{j_{1}}|} ws_{|\psi_{j_{1}}|} \xrightarrow{|\psi_{j_{2}}|} \cdots \xrightarrow{|\psi_{j_{a}}|} ws_{|\psi_{j_{1}}|} \cdots s_{|\psi_{j_{a}}|} 
\end{equation*}
is a directed path in $\QBG(W)$ 
(note that if $\Psi$ is a $\lambda$-chain for some $\lambda \in P$, then $J$ is a $w$-admissible subset), 
we define $\height_{\bk, \Psi}(w, J)$ by 
\begin{equation*}
\height_{\bk, \Psi}(w, J) := \sum_{j \in J^{-}} \sgn(\psi_{j}) k_{j}, 
\end{equation*}
where 
\begin{equation*}
J^{-} = \left\{ j_{i} \in J \ \middle| \ ws_{|\psi_{j_{1}}|} \cdots s_{|\psi_{j_{i-1}}|} \xrightarrow{|\psi_{j_{i}}|} ws_{|\psi_{j_{1}}|} \cdots s_{|\psi_{j_{i}}|} \text{ is a quantum edge} \right\}. 
\end{equation*}
Also, we generalize the definition of down statistics: 
\begin{equation*}
\down_{\Psi}(w, J) := \sum_{j \in J^{-}} |\psi_{j}|^{\vee}. 
\end{equation*}
Note that if $\Psi = \Gamma_{1}$, $\bk = (\wti{l_{1}}, \ldots, \wti{l_{r}})$, $w \in W$, and $J = A \in \CA(w, \Gamma_{1})$, then we have 
\begin{equation*}
\down_{\Psi}(w, A) = \down(A), \quad \height_{\bk, \Psi}(w, A) = \height(A); 
\end{equation*}
if $\Psi = \Gamma_{2}$, $\bk = (\wti{l'_{1}}, \ldots, \wti{l'_{r}}) = (\wti{l_{1}}, \ldots, \wti{l_{t}}, \wti{l_{t+q}}, \ldots, \wti{l_{t+1}}, \wti{l_{t+q+1}}, \ldots, \wti{l_{r}})$, $w \in W$, and $J = B \in \CA(w, \Gamma_{2})$, then we have 
\begin{equation*}
\down_{\Psi}(w, B) = \down(B), \quad \height_{\bk, \Psi}(w, B) = \height(B). 
\end{equation*}
In addition, for $A \in \CA(w, \Gamma_{1})$, it follows that 
\begin{equation*}
\down(A) = \down_{\Gamma_{1}^{(1)}}(w, A^{(1)}) + \down_{\Gamma_{1}^{(2)}}(\ed(\bp(A)^{(1)}), A^{(2)}) + \down_{\Gamma_{1}^{(3)}}(\ed(\bp(A)^{(2)}), A^{(3)}), \\ 
\end{equation*}
and that 
%
%
\begin{equation}\label{eq:height_A}
\begin{split}
\height(A) &= \height_{(\wti{l_{1}}, \ldots, \wti{l_{t}}), \Gamma_{1}^{(1)}}(w, A^{(1)}) + \height_{(\wti{l_{t+1}}, \ldots, \wti{l_{t+q}}), \Gamma_{1}^{(2)}}(\ed(\bp(A)^{(1)}), A^{(2)}) \\ 
& \hspace*{4mm} + \height_{(\wti{l_{t+q+1}}, \ldots, \wti{l_{r}}), \Gamma_{1}^{(3)}}(\ed(\bp(A)^{(2)}), A^{(3)}); 
\end{split}
\end{equation}
for $B \in \CA(w, \Gamma_{2})$, it follows that 
\begin{equation*}
\down(B) = \down_{\Gamma_{2}^{(1)}}(w, B^{(1)}) + \down_{\Gamma_{2}^{(2)}}(\ed(\bp(B)^{(1)}), B^{(2)}) + \down_{\Gamma_{2}^{(3)}}(\ed(\bp(B)^{(2)}), B^{(3)}), \\ 
\end{equation*}
and that 
%
%
\begin{align}
\begin{split}
\height(B) &= \height_{(\wti{l'_{1}}, \ldots, \wti{l'_{t}}), \Gamma_{2}^{(1)}}(w, B^{(1)}) + \height_{(\wti{l'_{t+1}}, \ldots, \wti{l'_{t+q}}), \Gamma_{2}^{(2)}}(\ed(\bp(B)^{(1)}), B^{(2)}) \\ 
& \hspace*{4mm} + \height_{(\wti{l'_{t+q+1}}, \ldots, \wti{l'_{r}}), \Gamma_{2}^{(3)}}(\ed(\bp(B)^{(2)}), B^{(3)}). 
\end{split} \nonumber \\ 
\begin{split}
&= \height_{(\wti{l_{1}}, \ldots, \wti{l_{t}}), \Gamma_{2}^{(1)}}(w, B^{(1)}) + \height_{(\wti{l_{t+q}}, \ldots, \wti{l_{t+1}}), \Gamma_{2}^{(2)}}(\ed(\bp(B)^{(1)}), B^{(2)}) \\ 
& \hspace*{4mm} + \height_{(\wti{l_{t+q+1}}, \ldots, \wti{l_{r}}), \Gamma_{2}^{(3)}}(\ed(\bp(B)^{(2)}), B^{(3)}). 
\end{split} \label{eq:height_B}
\end{align}

Next, we consider weights. For the above $J$, we set 
\begin{equation*}
\wh{r}_{\bk, \Psi}(J) := s_{\psi_{j_{1}}, -k_{j_{1}}} \cdots s_{\psi_{j_{a}}, -k_{j_{a}}}. 
\end{equation*}
Then, for $A \in \CA(w, \Gamma_{1})$, we have 
%
%
\begin{align}
\wt(A) &= -\wh{r}_{(l_{1}, \ldots, l_{r}), \Gamma_{1}}(A) (-\lambda). \nonumber \\ 
&= - \wh{r}_{(l_{1}, \ldots, l_{t}), \Gamma_{1}^{(1)}}(A^{(1)}) \wh{r}_{(l_{t+1}, \ldots, l_{t+q}), \Gamma_{1}^{(2)}}(A^{(2)}) \wh{r}_{(l_{t+q+1}, \ldots, l_{r}), \Gamma_{1}^{(3)}}(A^{(3)}) (-\lambda) \label{eq:weight_A};
\end{align}
for $B \in \CA(w, \Gamma_{2})$, we have 
%
%
\begin{align}
\wt(B) &= -\wh{r}_{(l'_{1}, \ldots, l'_{r}), \Gamma_{2}}(B) (-\lambda) \nonumber \\ 
&= -\wh{r}_{(l_{1}, \ldots, l_{t}, l_{t+q}, \ldots, l_{t+1}, l_{t+q+1}, \ldots, l_{r}), \Gamma_{2}}(B) (-\lambda) \nonumber \\ 
&= -\wh{r}_{(l_{1}, \ldots, l_{t}), \Gamma_{2}^{(1)}}(B^{(1)}) \wh{r}_{(l_{t+q}, \ldots, l_{t+1}), \Gamma_{2}^{(2)}}(B^{(2)}) \wh{r}_{(l_{t+q+1}, \ldots, l_{r}), \Gamma_{2}^{(3)}}(B^{(3)}) (-\lambda). \label{eq:weight_B} 
\end{align}

\begin{proof}[Proof of Theorem~\ref{thm:wt_height_preserving}]
First, we consider heights. 
For $A \in \CA(w, \Gamma_{1})$, we see that 
%
%
\begin{align}
& \height_{(\wti{l_{t+1}}, \ldots, \wti{l_{t+q}}), \Gamma_{1}^{(2)}}(\ed(\bp(A)^{(1)}), A^{(2)}) \nonumber \\ 
&= \sum_{j \in (A^{(2)})^{-}} \sgn(\beta_{j})(\pair{\lambda}{\beta_{j}^{\vee}} - l_{j}) \nonumber \\ 
&= \sum_{j \in (A^{(2)})^{-}} \sgn(\beta_{j})\pair{\lambda}{\beta_{j}^{\vee}} - \height_{(l_{t+1}, \ldots, l_{t+q}), \Gamma_{1}^{(2)}}(\ed(\bp(A)^{(2)}), A^{(2)}) \nonumber \\ 
&= \lrpair{\lambda}{\sum_{j \in (A^{(2)})^{-}} \sgn(\beta_{j})\beta_{j}^{\vee}} - \height_{(l_{t+1}, \ldots, l_{t+q}), \Gamma_{1}^{(2)}}(\ed(\bp(A)^{(2)}), A^{(2)}) \nonumber \\ 
&= \pair{\lambda}{\down_{\Gamma_{1}^{(2)}}(\ed(\bp(A)^{(1)}), A^{(2)})} - \height_{(l_{t+1}, \ldots, l_{t+q}), \Gamma_{1}^{(2)}}(\ed(\bp(A)^{(2)}), A^{(2)}). \label{eq:height_LHS}
\end{align}
Let us assume that $A \in \CA_{0}(w, \Gamma_{1})$, and set $B := Y(A)$. Then we have 
%
%
\begin{align}
& \height_{(\wti{l_{t+q}}, \ldots, \wti{l_{t+1}}), \Gamma_{2}^{(2)}}(\ed(\bp(B)^{(1)}), B^{(2)}) \nonumber \\ 
&= \pair{\lambda}{\down_{\Gamma_{2}^{(2)}}(\ed(\bp(B)^{(1)}), B^{(2)})} - \height_{(l_{t+q}, \ldots, l_{t+1}), \Gamma_{2}^{(2)}}(\ed(\bp(B)^{(2)}), B^{(2)}). \label{eq:height_RHS}
\end{align}
Here, since $\down(Y(A)) = \down(A)$, it follows that 
\begin{equation*}
\down_{\Gamma_{2}^{(2)}}(\ed(\bp(B)^{(1)}), B^{(2)}) = \down_{\Gamma_{1}^{(2)}}(\ed(\bp(A)^{(1)}), A^{(2)}). 
\end{equation*}
Also, by \cite[Lemma~3.5]{LL2}, we know that 
\begin{equation*}
\bigcap_{k = t+1}^{t+q} H_{\beta_{k}, -l_{k}} \not= \emptyset. 
\end{equation*}
Therefore, by \cite[Lemma~44]{LNS}, we have 
\begin{equation*}
\height_{(l_{t+q}, \ldots, l_{t+1}), \Gamma_{2}^{(2)}}(\ed(\bp(B)^{(2)}), B^{(2)}) = \height_{(l_{t+1}, \ldots, l_{t+q}), \Gamma_{1}^{(2)}}(\ed(\bp(A)^{(2)}), A^{(2)}),
\end{equation*}
and hence by \eqref{eq:height_LHS} and \eqref{eq:height_RHS}, we obtain
\begin{equation*}
\height_{(\wti{l_{t+q}}, \ldots, \wti{l_{t+1}}), \Gamma_{2}^{(2)}}(\ed(\bp(B)^{(1)}), B^{(2)}) = \height_{(\wti{l_{t+1}}, \ldots, \wti{l_{t+q}}), \Gamma_{1}^{(2)}}(\ed(\bp(A)^{(1)}), A^{(2)}). 
\end{equation*}
Now, by \eqref{eq:height_A} and \eqref{eq:height_B}, we deduce that $\height(B) = \height(A)$, as desired. 

Assume that $A \in \CA_{0}^{C}(w, \Gamma_{1})$, and set $B := I_{1}(A)$. As in \eqref{eq:height_LHS}, we have 
\begin{align*}
& \height_{(\wti{l_{t+1}}, \ldots, \wti{l_{t+q}}), \Gamma_{1}^{(2)}}(\ed(\bp(B)^{(1)}), B^{(2)}) \\ 
&= \pair{\lambda}{\down_{\Gamma_{1}^{(2)}}(\ed(\bp(B)^{(1)}), B^{(2)})} - \height_{(l_{t+1}, \ldots, l_{t+q}), \Gamma_{1}^{(2)}}(\ed(\bp(B)^{(2)}), B^{(2)}). 
\end{align*}
Again, by the definition of $I_{1}$ and \cite[Lemma~44]{LNS}, we have 
\begin{align*}
& \height_{(\wti{l_{t+1}}, \ldots, \wti{l_{t+q}}), \Gamma_{1}^{(2)}}(\ed(\bp(B)^{(1)}), B^{(2)}) \\ 
&= \pair{\lambda}{\down_{\Gamma_{1}^{(2)}}(\ed(\bp(B)^{(1)}), B^{(2)})} - \height_{(l_{t+1}, \ldots, l_{t+q}), \Gamma_{1}^{(2)}}(\ed(\bp(B)^{(2)}), B^{(2)}) \\ 
&= \pair{\lambda}{\down_{\Gamma_{1}^{(2)}}(\ed(\bp(A)^{(1)}), A^{(2)})} - \height_{(l_{t+1}, \ldots, l_{t+q}), \Gamma_{1}^{(2)}}(\ed(\bp(A)^{(2)}), A^{(2)}) \\ 
&= \height_{(\wti{l_{t+1}}, \ldots, \wti{l_{t+q}}), \Gamma_{1}^{(2)}}(\ed(\bp(A)^{(1)}), A^{(2)}), 
\end{align*}
and hence obtain $\height(B) = \height(A)$, as desired. 
Now, the assertion for $I_{2}$ is shown by the same argument as for $I_{1}$. 

It remains to consider weights. Again, by \cite[Lemma~3.5]{LL2}, we can take $\mu \in \Fh_{\BR}^{\ast}$ such that 
\begin{equation*}
\mu \in \bigcap_{k = t+1}^{t+q} H_{\beta_{k}, -l_{k}} \not= \emptyset. 
\end{equation*}
Note that $\pair{\mu}{\beta_{k}^{\vee}} = -l_{k}$ for $k = t+1, \ldots, t+q$. 

Let $A \in \CA_{0}(w, \Gamma_{1})$, and set $B := Y(A)$. 
Recall that $\ed(\bp(B)^{(2)}) = \ed(\bp(A)^{(2)})$.
For simplicity of notation, we set $v := \ed(\bp(A)^{(1)}) = \ed(\bp(B)^{(1)})$. 
For $\nu \in \Fh_{\BR}^{\ast}$, we denote by $t_{\nu}$ the translation by $\nu$, i.e., 
we define $t_{\nu}: \Fh_{\BR}^{\ast} \rightarrow \Fh_{\BR}^{\ast}$ by $t_{\nu}(\xi) := \xi + \nu$. 
We see that for $\nu \in \Fh_{\BR}^{\ast}$ and $\gamma \in \Delta^{+}$, 
$s_{\gamma} t_{\nu} = t_{s_{\gamma}(\nu)} s_{\gamma}$, 
and for $\nu_{1}, \nu_{2} \in \Fh_{\BR}^{\ast}$, $t_{\nu_{1}}t_{\nu_{2}} = t_{\nu_{1}+\nu_{2}}$. 
Also, for $\gamma \in \Delta$ and $k \in \BZ$, we have $t_{k\gamma} s_{|\gamma|} = s_{\gamma, k}$. 
If we write $A^{(2)} = \{j_{1}, \ldots, j_{a}\}$, then we have 
%
%
\begin{align}
t_{\mu} v^{-1} \ed(\bp(A^{(2)})) t_{-\mu} &= t_{\mu} v^{-1} (v s_{|\beta_{j_{1}}|} \cdots s_{|\beta_{j_{a}}|}) t_{-\mu} \nonumber \\ 
&= (t_{\mu} s_{|\beta_{j_{1}}|} t_{-\mu}) \cdots (t_{\mu} s_{|\beta_{j_{a}}|} t_{-\mu}) \nonumber \\ 
&= (t_{\mu} t_{s_{|\beta_{j_{1}}|}(-\mu)}s_{|\beta_{j_{1}}|}) \cdots (t_{\mu} t_{s_{|\beta_{j_{a}}|}(-\mu)}s_{|\beta_{j_{a}}|}) \nonumber \\ 
&= (t_{\pair{\mu}{|\beta_{j_{1}}|^{\vee}}|\beta_{j_{1}}|} s_{|\beta_{j_{1}}|}) \cdots (t_{\pair{\mu}{|\beta_{j_{a}}|^{\vee}}|\beta_{j_{a}}|} s_{|\beta_{j_{a}}|}) \nonumber \\ 
&= (t_{\pair{\mu}{\beta_{j_{1}}^{\vee}}\beta_{j_{1}}} s_{|\beta_{j_{1}}|}) \cdots (t_{\pair{\mu}{\beta_{j_{a}}^{\vee}}\beta_{j_{a}}} s_{|\beta_{j_{a}}|}) \nonumber \\ 
&= (t_{-l_{j_{1}}\beta_{j_{1}}} s_{|\beta_{j_{1}}|}) \cdots (t_{-l_{j_{a}}\beta_{j_{a}}} s_{|\beta_{j_{a}}|}) \nonumber \\ 
&= s_{\beta_{j_{1}}, -l_{j_{1}}} \cdots s_{\beta_{j_{a}}, -l_{j_{a}}} \nonumber \\ 
&= \wh{r}_{(l_{t+1}, \ldots, l_{t+q}), \Gamma_{1}^{(2)}}(A^{(2)}). \label{eq:wtA_calculation}
\end{align}
By the same calculation, we have 
\begin{equation*}
t_{\mu} v^{-1} \ed(\bp(B^{(2)})) t_{-\mu} = \wh{r}_{(l_{t+q}, \ldots, l_{t+1}), \Gamma_{2}^{(2)}}(B^{(2)}). 
\end{equation*}
Since $\ed(\bp(B)^{(2)}) = \ed(\bp(A)^{(2)})$, it follows that 
\begin{equation*}
\wh{r}_{(l_{t+q}, \ldots, l_{t+1}), \Gamma_{2}^{(2)}}(B^{(2)}) = \wh{r}_{(l_{t+1}, \ldots, l_{t+q}), \Gamma_{1}^{(2)}}(A^{(2)}). 
\end{equation*}
Hence, by \eqref{eq:weight_A} and \eqref{eq:weight_B}, we obtain $\wt(B) = \wt(A)$, as desired. 

Next, assume that $A \in \CA_{0}^{C}(w, \Gamma_{1})$, and set $B := I_{1}(A)$. 
By the same calculation as for \eqref{eq:wtA_calculation}, we have 
\begin{equation*}
t_{\mu} v^{-1} \ed(\bp(B^{(2)})) t_{-\mu} = \wh{r}_{(l_{t+1}, \ldots, l_{t+q}), \Gamma_{1}^{(2)}}(B^{(2)}). 
\end{equation*}
Hence, from the equality $\ed(\bp(B)^{(2)}) = \ed(\bp(A)^{(2)})$, we deduce that 
\begin{equation*}
\wh{r}_{(l_{t+1}, \ldots, l_{t+q}), \Gamma_{1}^{(2)}}(B^{(2)}) = \wh{r}_{(l_{t+1}, \ldots, l_{t+q}), \Gamma_{1}^{(2)}}(A^{(2)}). 
\end{equation*}
Therefore, we conclude by \eqref{eq:weight_A} and \eqref{eq:weight_B} that $\wt(B) = \wt(A)$, as desired. 

The assertion for $I_{2}$ is shown by the same argument as for $I_{1}$. 
This completes the proof of the theorem. 
\end{proof}



\section{Generating functions of certain statistics.}\label{sec:generating_function}

We consider the generating function of the statistics associated with the quantum alcove model. 
We describe the relationship between the generating functions associated to two alcove paths which are related by the procedures (YB) and (D). We also investigate the composition of generating functions. 
As an application, we derive an identity of ``Chevalley type'' for the graded characters of Demazure submodules of (level-zero) extremal weight modules over a quantum affine algebra. 



\subsection{Generating functions.}\label{subsec:generating_function}

Take an indeterminate $q$, and consider the ring $R := \BZ[q, q^{-1}]$ of Laurent polynomials in $q$. Recall that an element of the affine Weyl group $W_{\af}$ can be written $x = w t_{\xi}$,  with $w$ in the finite Weyl group $W$ 
and $\xi$ in the coroot lattice $Q^{\vee}$.

\begin{dfn}
For each $\lambda$-chain $\Gamma$ and $x = w t_{\xi} \in W_{\af}$, we define a \emph{generating function} $\bG_{\Gamma}(x) \in R[P][W_{\af}]$ 
associated to the set $\CA(w, \Gamma)$ of $w$-admissible subsets by 
\begin{equation}\label{def:gf-adm}
\bG_{\Gamma}(x) := \sum_{A \in \CA(w, \Gamma)} (-1)^{n(A)} q^{-\height(A)-\pair{\lambda}{\xi}} e^{\wt(A)} \ed(A)t_{\xi+\down(A)}. 
\end{equation}
We also think of $\bG_{\Gamma}$ as a linear function on $R[P][W_{\af}]$.
\end{dfn}

Let $\lambda \in P$, and take $\lambda$-chains $\Gamma_1$, $\Gamma_2$. 
We consider the relation of the two generating functions $\bG_{\Gamma_1}(x)$ and $\bG_{\Gamma_2}(x)$ for $x= w t_{\xi} \in W_{\af}$. 

First, we consider the case in which $\Gamma_{2}$ is obtained from $\Gamma_{1}$ by the procedure (YB). 
As a corollary of Theorems~\ref{thm:YB-move} and \ref{thm:wt_height_preserving}, we obtain the equality between the two generating functions. 
%
%
\begin{prop}\label{prop:generating_function_YB}
Assume that $\Gamma_{2}$ is obtained from $\Gamma_{1}$ by (YB). 
Then we have $\bG_{\Gamma_1}(x) = \bG_{\Gamma_2}(x)$. 
\end{prop}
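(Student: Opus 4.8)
The plan is to read off the identity from the ``sijection'' $(I_{1}, I_{2}, Y)$ furnished by Theorem~\ref{thm:YB-move}, combined with the preservation of weights and heights in Theorem~\ref{thm:wt_height_preserving}. Since $\Gamma_{1}$ and $\Gamma_{2}$ are $\lambda$-chains for the same weight $\lambda$ and $x = wt_{\xi}$ is fixed, the scalar $q^{-\pair{\lambda}{\xi}}$ is common to both generating functions. For $A \in \CA(w, \Gamma_{k})$ write
\begin{equation*}
c(A) := (-1)^{n(A)} q^{-\height(A)-\pair{\lambda}{\xi}} e^{\wt(A)} \ed(A)\, t_{\xi+\down(A)}
\end{equation*}
for the corresponding summand, so that $\bG_{\Gamma_{k}}(x) = \sum_{A \in \CA(w, \Gamma_{k})} c(A)$. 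First I would split each sum along the partition $\CA(w, \Gamma_{k}) = \CA_{0}(w, \Gamma_{k}) \sqcup \CA_{0}^{C}(w, \Gamma_{k})$.

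For the complementary pieces, I would use the involutions $I_{k}$. By Theorems~\ref{thm:YB-move}\,(2) and \ref{thm:wt_height_preserving}\,(2), each $I_{k}$ preserves $\wt(\cdot)$, $\height(\cdot)$, $\ed(\cdot)$, and $\down(\cdot)$, while reversing the sign via $(-1)^{n(I_{k}(A))} = -(-1)^{n(A)}$; hence $c(I_{k}(A)) = -c(A)$. In particular $I_{k}$ can have no fixed point, so it pairs the elements of $\CA_{0}^{C}(w, \Gamma_{k})$ into two-element orbits whose contributions cancel, giving $\sum_{A \in \CA_{0}^{C}(w, \Gamma_{k})} c(A) = 0$ for $k = 1, 2$.

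For the main pieces, I would use the bijection $Y$. By Theorems~\ref{thm:YB-move}\,(1) and \ref{thm:wt_height_preserving}\,(1), the map $Y : \CA_{0}(w, \Gamma_{1}) \to \CA_{0}(w, \Gamma_{2})$ preserves all of $\wt(\cdot)$, $\height(\cdot)$, $\ed(\cdot)$, $\down(\cdot)$, and the sign $(-1)^{n(\cdot)}$, so $c(Y(A)) = c(A)$ for every $A$. Since $Y$ is a bijection, $\sum_{A \in \CA_{0}(w, \Gamma_{1})} c(A) = \sum_{B \in \CA_{0}(w, \Gamma_{2})} c(B)$. Combining this with the vanishing of the complementary sums, $\bG_{\Gamma_{1}}(x) = \sum_{A \in \CA_{0}(w, \Gamma_{1})} c(A) = \sum_{B \in \CA_{0}(w, \Gamma_{2})} c(B) = \bG_{\Gamma_{2}}(x)$.

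There is essentially no deep obstacle here, since the entire content has been packaged into Theorems~\ref{thm:YB-move} and \ref{thm:wt_height_preserving}, and the present argument is just the standard ``sijection'' bookkeeping. The only points that must be checked carefully are that every statistic occurring in $c(A)$ is indeed among those controlled by the two theorems---namely $n(\cdot)$, $\height(\cdot)$, $\wt(\cdot)$, $\ed(\cdot)$, $\down(\cdot)$, with $\pair{\lambda}{\xi}$ a common constant---and that the sign-reversal of $I_{k}$ forces fixed-point-freeness, which is what makes the complementary sums vanish rather than merely telescope.
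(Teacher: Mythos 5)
Your proof is correct and follows essentially the same route as the paper's: split each $\CA(w,\Gamma_k)$ into $\CA_0(w,\Gamma_k)\sqcup\CA_0^C(w,\Gamma_k)$, cancel the complementary sums via the sign-reversing involutions $I_k$ (which preserve $\wt$, $\height$, $\ed$, $\down$), and match the remaining sums term-by-term via the sign-preserving bijection $Y$. Your explicit remark that sign-reversal forces $I_k$ to be fixed-point-free is a detail the paper leaves implicit, but the argument is the same.
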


\begin{proof}
As in Theorem~\ref{thm:YB-move}, we take subsets $\CA_{0}(w, \Gamma_{1}), \CA_{0}^{C}(w, \Gamma_{1})$ of $\CA(w, \Gamma_{1})$. 
Also, we take subsets $\CA_{0}(w, \Gamma_{2}), \CA_{0}^{C}(w, \Gamma_{2})$ of $\CA(w, \Gamma_{2})$. 
Then we have the maps $Y$, $I_{1}$, $I_{2}$ as in Theorem~\ref{thm:YB-move}. 
Note that by Theorem~\ref{thm:wt_height_preserving}, $Y$, $I_{1}$, and $I_{2}$ preserve 
weights and heights. 

Since $I_{1}$ is a sign-reversing involution which preserves weights, heights, and down statistics, we have 
\begin{equation*}
\sum_{A \in \CA_{0}^{C}(w, \Gamma_{1})} (-1)^{n(A)} q^{-\height(A)-\pair{\lambda}{\xi}} e^{\wt(A)} \ed(A)t_{\xi+\down(A)} = 0, 
\end{equation*}
and hence 
\begin{equation*}
\bG_{\Gamma_1}(x) = \sum_{A \in \CA_{0}(w, \Gamma_{1})}(-1)^{n(A)} q^{-\height(A)-\pair{\lambda}{\xi}} e^{\wt(A)}  \ed(A)t_{\xi+\down(A)}. 
\end{equation*}
We derive the similar result for $\bG_{\Gamma_2}(x)$ via the sign-reversing involution $I_{2}$. 
Using the map $Y$ given by a generalization of quantum Yang-Baxter moves, we deduce that 
\begin{align*}
\bG_{\Gamma_1}(x) &= \sum_{A \in \CA_{0}(w, \Gamma_{1})}(-1)^{n(Y(A))} q^{-\height(Y(A))-\pair{\lambda}{\xi}} e^{\wt(Y(A))}  \ed(Y(A))t_{\xi+\down(Y(A))} \\ 
&= \bG_{\Gamma_2}(x). 
\end{align*}
This concludes the proof.
\end{proof}

Next, we consider the case in which $\Gamma_{2}$ is obtained from $\Gamma_{1}$ by the procedure (D). 
%
%
\begin{prop}\label{prop:generating_function_D}
Assume that $\Gamma_{2}$ is obtained from $\Gamma_{1}$ by the procedure (D), 
which deletes the segment $(\pm \beta, \mp \beta)$ of $\Gamma_{1}$, where $\beta$ is not a simple root. 
Then we have $\bG_{\Gamma_1}(x) = \bG_{\Gamma_2}(x)$. 
\end{prop}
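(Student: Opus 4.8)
The plan is to track how a $w$-admissible subset $A$ meets the two deleted positions $t+1,t+2$, where $(\beta_{t+1},\beta_{t+2})=(\pm\beta,\mp\beta)$, and to set $\gamma:=|\beta|\in\Delta^{+}$, which is not simple by hypothesis. Since $|\beta_{t+1}|=|\beta_{t+2}|=\gamma$, both positions carry the same label $\gamma$ in $\QBG(W)$. First I would extract the geometric constraints from the underlying alcove path: the segment $(\pm\beta,\mp\beta)$ crosses a hyperplane and immediately crosses back, so $A_{t+2}=A_{t}$, which forces the two separating walls to lie on the \emph{same} affine hyperplane. This yields $l_{t+2}=-l_{t+1}$ and, at the level of affine reflections, $s_{\beta_{t+1},-l_{t+1}}=s_{\beta_{t+2},-l_{t+2}}$. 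It also gives $\pair{\lambda}{\beta_{t+2}^{\vee}}=-\pair{\lambda}{\beta_{t+1}^{\vee}}$, since $\beta_{t+2}^{\vee}=-\beta_{t+1}^{\vee}$.

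Next I would rule out that $A$ contains \emph{both} $t+1$ and $t+2$. If it did, then, writing $u$ for the vertex of $\bp(A)$ reached immediately before these steps, the path would have to traverse $u\xrightarrow{\gamma}us_{\gamma}\xrightarrow{\gamma}u$, i.e.\ both directed edges $u\to us_{\gamma}$ and $us_{\gamma}\to u$ would lie in $\QBG(W)$. A short length count using the edge conditions (B) and (Q): assuming w.l.o.g.\ $\ell(us_{\gamma})>\ell(u)$, the first edge must be Bruhat (a quantum edge would force $\ell(us_{\gamma})<\ell(u)$ as $\pair{\rho}{\gamma^{\vee}}\ge 1$), so $\ell(us_{\gamma})=\ell(u)+1$; then the reverse edge must be quantum, forcing $\ell(u)=\ell(us_{\gamma})-2\pair{\rho}{\gamma^{\vee}}+1$, whence $\pair{\rho}{\gamma^{\vee}}=1$, i.e.\ $\gamma$ simple --- a contradiction. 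Hence every $A\in\CA(w,\Gamma_{1})$ meets $\{t+1,t+2\}$ in $\emptyset$ or in a single element.

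I would then define a sign-reversing involution on those $A$ meeting $\{t+1,t+2\}$ in exactly one element, namely the swap $A\mapsto A\mathbin{\triangle}\{t+1,t+2\}$. Selecting $t+1$ and selecting $t+2$ both traverse the single edge $u\xrightarrow{\gamma}us_{\gamma}$ and end at the same vertex, so the swapped subset is again admissible and the rest of the path is unchanged; thus $\ed(\cdot)$ and $\down(\cdot)$ are preserved (same label, same coweight $\gamma^{\vee}$, and identical Bruhat/quantum status). Using $\sgn(\beta_{t+1})=-\sgn(\beta_{t+2})$ together with $l_{t+2}=-l_{t+1}$ and $\pair{\lambda}{\beta_{t+2}^{\vee}}=-\pair{\lambda}{\beta_{t+1}^{\vee}}$, one checks $\sgn(\beta_{t+1})\wti{l_{t+1}}=\sgn(\beta_{t+2})\wti{l_{t+2}}$, so $\height(\cdot)$ is preserved; and since the two positions give the same affine reflection, $\wt(\cdot)$ is preserved as well. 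Because $\beta_{t+1}$ and $\beta_{t+2}$ have opposite signs, exactly one of them contributes to $n(\cdot)$, so the swap flips $(-1)^{n(A)}$. Consequently all such terms cancel in $\bG_{\Gamma_1}(x)$.

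Finally I would observe that the subsets with $A\cap\{t+1,t+2\}=\emptyset$ are in bijection with $\CA(w,\Gamma_{2})$ by reindexing (deleting positions $t+1,t+2$ lowers each later index by two), and this bijection preserves $\wt$, $\height$, $\down$, $\ed$, and $n$ verbatim, since the omitted positions play no role in $\bp(A)$. Combining this bijection with the cancellation from the involution gives $\bG_{\Gamma_1}(x)=\bG_{\Gamma_2}(x)$. The main obstacle I anticipate is the bookkeeping behind the height- and weight-invariance of the swap: one must correctly derive $l_{t+2}=-l_{t+1}$ and the equality of affine reflections from the alcove-path geometry, and then verify the sign cancellations. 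The impossibility of the ``both positions'' case for non-simple $\beta$ is the other crucial---though brief---point, and is exactly where the hypothesis that $\beta$ is not simple is used.
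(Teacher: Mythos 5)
Your proof is correct and is essentially the paper's own argument: the paper likewise observes that no admissible subset can contain both deleted positions (precisely because $\beta$ is not simple), cancels the subsets containing exactly one of them via the swap involution $A \mapsto A \mathbin{\triangle} \{u+1, u+2\}$, and maps the subsets avoiding both positions to $\CA(w,\Gamma_{2})$ by the index-shifting bijection. If anything, your write-up is more careful on two points the paper leaves implicit: you actually derive the wall identities ($l_{t+2}=-l_{t+1}$, equality of the two affine reflections) and the impossibility of the two-step loop from the (B)/(Q) conditions, and you correctly state that the swap \emph{reverses} the sign $(-1)^{n(\cdot)}$ while preserving the other statistics --- which is exactly what the cancellation requires (the paper's wording that $I_{D}$ ``preserves'' $n(\cdot)$ is a slip).
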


\begin{proof}
We write $\Gamma_{1} = (\beta_{1}, \ldots, \beta_{r})$. 
By the assumption, there exists $u \in \{1, \ldots, r-2\}$ such that 
\begin{itemize}
\item $\beta_{u+2} = -\beta_{u+1}$, 
\item $\beta_{u+1}$ and $\beta_{u+2}$ are not simple roots, and 
\item $\Gamma_{2} = (\beta_{1}, \ldots, \beta_{u}, \beta_{u+3}, \ldots, \beta_{r})$. 
\end{itemize}
Set $\beta := |\beta_{u+1}| = |\beta_{u+2}|$. Since $\beta$ is not a simple root, 
there does not exist any path of the form $v \xrightarrow{|\beta_{u+1}| = \beta} v' \xrightarrow{|\beta_{u+2}| = \beta} v'' = v$. 
Hence, for $A \in \CA(w, \Gamma_{1})$, we have $A \cap \{u+1, u+2\} \not= \{u+1, u+2\}$. 
We define a subset $\CA_{\emptyset}(w, \Gamma_{1}) \subset \CA(w, \Gamma_{1})$ by 
\begin{equation*}
\CA_{\emptyset}(w, \Gamma_{1}) := \{ A \in \CA(w, \Gamma_{1}) \mid A \cap \{u+1, u+2\} = \emptyset \}. 
\end{equation*}
Also, we define a subset $\CA_{\emptyset}^{C}(w, \Gamma_{1}) \subset \CA(w, \Gamma_{1})$ by 
\begin{align*}
\CA_{\emptyset}^{C}(w, \Gamma_{1}) &:= \CA(w, \Gamma_{1}) \setminus \CA_{\emptyset}(w, \Gamma_{1}) \\ 
&= \{ A \in \CA(w, \Gamma_{1}) \mid A \cap \{u+1, u+2\} = \{u+1\}, \{u+2\} \}. 
\end{align*}
We define a map $I_{D}: \CA_{\emptyset}^{C}(w, \Gamma_{1}) \rightarrow \CA_{\emptyset}^{C}(w, \Gamma_{1})$ as follows. 
If $A \in \CA_{\emptyset}^{C}(w, \Gamma_{1})$ satisfies $A \cap \{u+1, u+2\} = \{u+1\}$, then we set 
\begin{equation*}
I_{D}(A) := (A \cap \{1, \ldots, u\}) \sqcup \{u+2\} \sqcup (A \cap \{u+3, \ldots, r\}); 
\end{equation*}
if $A \in \CA_{\emptyset}^{C}(w, \Gamma_{1})$ satisfies $A \cap \{u+1, u+2\} = \{u+2\}$, then we set 
\begin{equation*}
I_{D}(A) := (A \cap \{1, \ldots, u\}) \sqcup \{u+1\} \sqcup (A \cap \{u+3, \ldots, r\}). 
\end{equation*}
We see that for all $A \in \CA_{\emptyset}^{C}(w, \Gamma_{1})$, we have $I_{D}(A) \in \CA_{\emptyset}^{C}(w, \Gamma_{1})$. 
Also, we see that $I_{D}(I_{D}(A)) = A$. 
Hence $I_{D}$ defines an involution on $\CA_{\emptyset}^{C}(w, \Gamma_{1})$, and it is easy to see that it preserves the statistics $n(\cdot)$, $\down(\cdot)$, $\ed(\cdot)$, $\height(\cdot)$, and $\wt(\cdot)$. 
Therefore, we obtain 
\begin{equation*}
\sum_{A \in \CA_{\emptyset}^{C}(w, \Gamma_{1})} (-1)^{n(A)} q^{-\height(A)-\pair{\lambda}{\xi}} e^{\wt(A)} \ed(A)t_{\xi+\down(A)} = 0. 
\end{equation*}

Now, we define a bijection $D: \CA_{\emptyset}(w, \Gamma_{1}) \rightarrow \CA(w, \Gamma_{2})$ by 
\begin{equation*}
D(A) := (A \cap \{1, \ldots, u\}) \sqcup \{ j-2 \mid j \in A \cap \{u+3, \ldots, r\}\}. 
\end{equation*}
It is again easy to see that this bijection preserves all the statistics. 
Therefore, we deduce that 
\begin{align*}
\bG_{\Gamma_1}(x) &= \sum_{A \in \CA(w, \Gamma_{1})} (-1)^{n(A)} q^{-\height(A)-\pair{\lambda}{\xi}} e^{\wt(A)} \ed(A)t_{\xi+\down(A)} \\ 
&= \sum_{A \in \CA_{\emptyset}(w, \Gamma_{1})} (-1)^{n(A)} q^{-\height(A)-\pair{\lambda}{\xi}} e^{\wt(A)} \ed(A)t_{\xi+\down(A)} \\ 
&= \sum_{A \in \CA_{\emptyset}(w, \Gamma_{1})} (-1)^{n(D(A))} q^{-\height(D(A))-\pair{\lambda}{\xi}} e^{\wt(D(A))} \ed(D(A))t_{\xi+\down(D(A))} \\ 
&= \bG_{\Gamma_2}(x), 
\end{align*}
as desired. 
This concludes the proof.
\end{proof}

\begin{rem} In the setup of of Proposition~\ref{prop:generating_function_D}, assume that $\beta$ is a simple root such that $\pm\beta$ appears in positions $u+1$ and $u+2$ in $\Gamma_1$. Then the equality $\bG_{\Gamma_1}(x) = \bG_{\Gamma_2}(x)$ does not hold. This is because there exists a directed path of the form $v \xrightarrow{\beta} v' \xrightarrow{\beta} v$ for all $v \in W$, 
in contrast to the case that $\beta$ is not a simple root. In fact, in $\CA(w,\Gamma_1)$ we can pair each $A$ for which $A\cap\{u+1,u+2\}=\emptyset$ with $A':=A\sqcup\{u+1,u+2\}$. Let $h\in\BZ$ be the contribution of (one of the) positions $u+1$, $u+2$ to $\height(A')$; note that this is independent of $A$. By using the above pairing, as well as the map $D$ and the cancellations given by the involution $I_D$ in the proof of  Proposition~\ref{prop:generating_function_D}, we derive
\begin{equation*}
\bG_{\Gamma_1}(x) = \bG_{\Gamma_2}(x)\,(1-q^{-h}t_{\beta^\vee}). 
\end{equation*}
\end{rem}

We need the following weaker version of the notion of a reduced $\lambda$-chain.

\begin{dfn}\label{def-weak-red} A $\lambda$-chain is \emph{weakly reduced} if it does not contain both a simple root and its negative.
\end{dfn}

Now let us consider arbitrary weakly reduced $\lambda$-chains $\Gamma_1$ and $\Gamma_2$. 
Then there exists a sequence $\Gamma_{1} = \Psi_{0}, \Psi_{1}, \ldots, \Psi_{p} = \Gamma_{1}^\ast$ of $\lambda$-chains 
such that $\Gamma_1^\ast$ is reduced, and each $\Psi_{k}$ is obtained from $\Psi_{k-1}$ by one of the procedure (YB) or (D). In a similar way, we relate $\Gamma_2$ to a reduced $\lambda$-chain $\Gamma_2^\ast$. Finally, we relate $\Gamma_1^\ast$ to $\Gamma_2^\ast$ by successively applying the procedure (YB). The weakly reduced property of  $\Gamma_1$ and $\Gamma_2$ implies that, in the above process, the procedure (D) never deletes a segment $(\pm\beta,\mp\beta)$ where $\beta$ is a simple root. 
By Propositions~\ref{prop:generating_function_YB} and \ref{prop:generating_function_D}, we derive the following theorem. 
%
%
\begin{thm}\label{thm:generating_function_reduced}
For arbitrary weakly reduced $\lambda$-chains $\Gamma_1$ and $\Gamma_2$, we have $\bG_{\Gamma_1}(x) = \bG_{\Gamma_2}(x)$. 
\end{thm}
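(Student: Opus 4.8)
The plan is to reduce the comparison of $\bG_{\Gamma_1}(x)$ and $\bG_{\Gamma_2}(x)$ to a sequence of elementary moves whose effect on the generating function is already controlled by Propositions~\ref{prop:generating_function_YB} and \ref{prop:generating_function_D}. The starting point is the structural fact, recalled at the end of Section~\ref{subsec:YB-transformations}, that any $\lambda$-chain can be transformed into any reduced $\lambda$-chain by repeated applications of the Yang--Baxter transformation (YB) and the deletion (D). So first I would fix a reduced $\lambda$-chain $\Gamma_1^\ast$ and choose a sequence $\Gamma_1 = \Psi_0, \Psi_1, \ldots, \Psi_p = \Gamma_1^\ast$ in which each $\Psi_k$ arises from $\Psi_{k-1}$ by one application of (YB) or (D), and likewise relate $\Gamma_2$ to a reduced $\lambda$-chain $\Gamma_2^\ast$.

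Next I would invoke invariance of the generating function along each step of these two sequences. For a (YB) step, Proposition~\ref{prop:generating_function_YB} gives directly $\bG_{\Psi_{k-1}}(x) = \bG_{\Psi_k}(x)$. For a (D) step, the situation is more delicate: Proposition~\ref{prop:generating_function_D} only guarantees invariance when the deleted segment $(\pm\beta,\mp\beta)$ has $\beta$ a \emph{non-simple} root, and the remark following it shows that deletion of a simple-root pair introduces an extra factor $(1-q^{-h}t_{\beta^\vee})$. This is exactly where the weakly reduced hypothesis enters, and I expect it to be the main point requiring care. The key observation is that a weakly reduced $\lambda$-chain, by Definition~\ref{def-weak-red}, never simultaneously contains a simple root and its negative; I would need to argue that the transformation process connecting a weakly reduced chain to a reduced one can be arranged so that the procedure (D) is never forced to delete a simple-root pair $(\pm\beta,\mp\beta)$. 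Intuitively, since such a pair is absent from the weakly reduced chain to begin with, and since (YB) moves only reverse order within a segment determined by a pair $\alpha,\beta$ with $\langle\alpha,\beta^\vee\rangle\le 0$ and $\alpha\neq-\beta$, no $\pm$(simple root) cancellation can be created along the way; hence every (D) step encountered acts on a non-simple root pair.

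With that established, each step in $\Gamma_1 \to \Gamma_1^\ast$ preserves $\bG$, so $\bG_{\Gamma_1}(x) = \bG_{\Gamma_1^\ast}(x)$, and similarly $\bG_{\Gamma_2}(x) = \bG_{\Gamma_2^\ast}(x)$. Finally, since $\Gamma_1^\ast$ and $\Gamma_2^\ast$ are both \emph{reduced} $\lambda$-chains, they are related purely by (YB) transformations (no deletions are needed between two reduced chains, as they have the same length), so repeated application of Proposition~\ref{prop:generating_function_YB} gives $\bG_{\Gamma_1^\ast}(x) = \bG_{\Gamma_2^\ast}(x)$. Chaining these three equalities yields $\bG_{\Gamma_1}(x) = \bG_{\Gamma_2}(x)$, as desired.

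The hardest part will be the bookkeeping justifying that the weakly reduced property is preserved—or at least that it suffices to rule out simple-root deletions—throughout the reduction process; once that is pinned down, the rest is a direct telescoping of the two propositions. I would want to be explicit that the concatenation of moves never manufactures an adjacent $(\pm\beta,\mp\beta)$ pair with $\beta$ simple, which is really the content of the phrase in the excerpt that ``the procedure (D) never deletes a segment $(\pm\beta,\mp\beta)$ where $\beta$ is a simple root.''
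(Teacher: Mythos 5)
Your proposal is correct and follows essentially the same route as the paper: relate each weakly reduced chain to a reduced one by (YB) and (D) moves, relate the two reduced chains by (YB) moves alone, and telescope Propositions~\ref{prop:generating_function_YB} and \ref{prop:generating_function_D}, using the weakly reduced hypothesis (together with the fact that (YB) only permutes and (D) only deletes roots, so no simple-root pair $(\pm\beta,\mp\beta)$ can ever appear if absent initially) to rule out the problematic simple-root deletions.
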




\subsection{Combinatorial realization of commutativity.}\label{subsec:comm}

In this section we realize combinatorially the symmetry of the general Chevalley formula in~\cite{LNS} coming from commutativity in equivariant $K$-theory. As explained in the Introduction, this realization involves the commutativity of the composition of two functions $\bG_{\Gamma_1}$ and $\bG_{\Gamma_2}$, and is based on the generalized quantum Yang-Baxter moves. The main result here will also play an important role in the proof of the character identity in Section~\ref{subsec:character_identity}. 

We start by developing the notion of a weakly reduced chain of roots in Definition~\ref{def-weak-red}. Consider an arbitrary weight $\lambda$ and an arbitrary decomposition of it $\lambda=\lambda_1+\cdots+\lambda_p$. Let $\lambda_j=\sum_{i\in I} m_{ij}\varpi_i$.

\begin{dfn} The weight decomposition $\lambda=\lambda_1+\cdots+\lambda_p$ is \emph{cancellation free} if, for any $i\in I$, all the nonzero coefficients among $m_{i1},\ldots,m_{ip}$ have the same sign.
\end{dfn}

Given the above weight decomposition, consider $\lambda_j$-chains of roots $\Gamma_j$, for $j=1,\ldots,p$. Their concatenation, defined in the obvious way and denoted $\Gamma=\Gamma_1\ast\cdots\ast\Gamma_p$, is clearly a $\lambda$-chain. Note that the alcove path corresponding to $\Gamma$ is obtained by considering the shift of the alcove path for $\Gamma_j$ by $\lambda_1+\cdots+\lambda_{j-1}$, for $j=1,\ldots,p$, and by concatenating them in this order.

\begin{prop}\label{prop-wr} The $\lambda$-chain $\Gamma$ is a weakly reduced if and only if the weight decomposition $\lambda=\lambda_1+\cdots+\lambda_p$ is cancellation free and each $\lambda_j$-chain $\Gamma_j$ is weakly reduced.
\end{prop}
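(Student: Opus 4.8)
The plan is to reduce the equivalence to a single counting identity: for each simple root $\alpha_i$, the number of times $\alpha_i$ minus the number of times $-\alpha_i$ appears in a $\mu$-chain equals $\pair{\mu}{\alpha_i^\vee}$. Once this is available, the weakly reduced condition becomes a statement about occurrence counts, and the concatenation can be handled by two elementary inequalities whose equality cases match exactly the two conditions on the right-hand side.

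First I would establish the counting identity. Let $\mu \in P$ and let $(\delta_1, \ldots, \delta_s)$ be a $\mu$-chain, and for $i \in I$ let $N_i^{\pm}$ be the number of indices $k$ with $\delta_k = \pm\alpha_i$. I claim
\begin{equation*}
N_i^{+} - N_i^{-} = \pair{\mu}{\alpha_i^\vee}.
\end{equation*}
To prove this, recall that the chain comes from an alcove path $A_\circ = A_0 \xrightarrow{-\delta_1} \cdots \xrightarrow{-\delta_s} A_{-\mu}$, and that the step $A_{k-1} \xrightarrow{-\delta_k} A_k$ crosses the hyperplane $H_{|\delta_k|, m}$ for some $m \in \BZ$. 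Hence the indices with $\delta_k = \pm\alpha_i$ are precisely those at which the path crosses a hyperplane $H_{\alpha_i, m}$, and no other step alters the value of the linear function $\nu \mapsto \pair{\nu}{\alpha_i^\vee}$ across an integral level of $\alpha_i$. Since $-\delta_k$ points from $A_{k-1}$ to $A_k$, a step with $\delta_k = \alpha_i$ decreases $\pair{\cdot}{\alpha_i^\vee}$ by one level and a step with $\delta_k = -\alpha_i$ increases it; as the path runs from $A_\circ$ to $A_{-\mu} = A_\circ - \mu$, the total level change is $-\pair{\mu}{\alpha_i^\vee}$, which gives the identity. This is the technical heart of the argument, and the point to handle cleanly is that only crossings of the $H_{\alpha_i, \cdot}$ contribute, so the net count depends solely on the two endpoints.

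Next I would translate weak reducedness into this language. For the chain $\Gamma$ in question write $n_i^{\pm} := N_i^{\pm}(\Gamma)$; then $\Gamma$ contains neither both $\alpha_i$ and $-\alpha_i$ exactly when $n_i^{+} n_i^{-} = 0$. Using the identity together with $n_i^{+} + n_i^{-} \ge |n_i^{+} - n_i^{-}| = |\pair{\lambda}{\alpha_i^\vee}|$, and noting that equality holds iff $\min(n_i^{+}, n_i^{-}) = 0$, I obtain the reformulation: $\Gamma$ is weakly reduced with respect to $\alpha_i$ if and only if $n_i^{+} + n_i^{-} = |\pair{\lambda}{\alpha_i^\vee}|$, and $\Gamma$ is weakly reduced iff this holds for every $i \in I$.

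Finally I would analyze the concatenation $\Gamma = \Gamma_1 \ast \cdots \ast \Gamma_p$. Because the root sequence of $\Gamma$ is literally the concatenation of those of the $\Gamma_j$, the counts are additive: $N_i^{\pm}(\Gamma) = \sum_j N_i^{\pm}(\Gamma_j)$; and applying the counting identity to each $\lambda_j$-chain $\Gamma_j$ gives $N_i^{+}(\Gamma_j) - N_i^{-}(\Gamma_j) = \pair{\lambda_j}{\alpha_i^\vee} = m_{ij}$. Fixing $i$, I would then chain the estimates
\begin{equation*}
\sum_j \left( N_i^{+}(\Gamma_j) + N_i^{-}(\Gamma_j) \right) \ge \sum_j |m_{ij}| \ge \left| \sum_j m_{ij} \right| = |\pair{\lambda}{\alpha_i^\vee}|.
\end{equation*}
By the reformulation, $\Gamma$ is weakly reduced with respect to $\alpha_i$ iff the extreme quantities coincide, i.e.\ iff both inequalities are equalities: the first holds term by term iff each $\Gamma_j$ is weakly reduced with respect to $\alpha_i$, and the second (the triangle inequality) holds iff all nonzero $m_{ij}$ share a common sign, i.e.\ the decomposition is cancellation free in coordinate $i$. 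Taking the conjunction over all $i \in I$ yields exactly the asserted equivalence. The one thing to be careful about is that these two equality conditions must be tracked simultaneously and coordinate-wise, since failure of either one strictly increases the total count in that coordinate and thereby breaks weak reducedness of $\Gamma$.
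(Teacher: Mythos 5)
Your proof is correct and takes essentially the same route as the paper: both arguments rest entirely on the counting identity $\pair{\lambda}{\alpha_i^\vee}=\#\{j \mid \beta_j=\alpha_i\}-\#\{j \mid \beta_j=-\alpha_i\}$ applied to simple roots, which the paper simply cites from \cite[Lemma~5.3]{LP1} while you reprove it via the hyperplane-crossing argument. Your inequality chain with its two equality cases (termwise equality $\Leftrightarrow$ each $\Gamma_j$ weakly reduced, triangle-inequality equality $\Leftrightarrow$ cancellation free) is precisely the ``easy derivation'' that the paper leaves implicit, so the two proofs coincide in substance.
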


\begin{proof} The result is easily derived from the following general fact about a (not necessarily reduced) $\lambda$-chain $\Gamma=(\beta_1,\ldots,\beta_r)$, for an arbitrary weight $\lambda$, where $\alpha$ is a positive root (see, e.g., \cite[Lemma~5.3]{LP1}): 
\begin{equation*}
\pair{\lambda}{\alpha^\vee}= \#\{j \mid \beta_j = \alpha\} - \#\{j \mid \beta_j = -\alpha\}. 
\end{equation*}
This fact is applied to a simple root $\alpha=\alpha_i$, by noting that $\pair{\lambda}{\alpha_i^\vee}$ is the coefficient of $\varpi_i$ in the expansion of $\lambda$.
\end{proof}

Let us now consider a cancellation free weight decomposition $\lambda=\mu+\nu$, and weakly reduced chains of roots $\Gamma_1$ and $\Gamma_2$ corresponding to $\mu$ and $\nu$, respectively. Then, by Proposition~\ref{prop-wr}, we have the weakly reduced $\lambda$-chain $\Gamma:=\Gamma_1\ast\Gamma_2$. Observe that there exists a natural bijection 
\begin{equation}\label{concat-adm-subs}
\{ (A, B) \mid A \in \CA(w, \Gamma_1), \ B \in \CA(\ed(A), \Gamma_2) \} \rightarrow \CA(w, \Gamma); 
\end{equation}
let $A \ast B \in \CA(w, \Gamma)$ denote the image of $(A, B)$ under this bijection. 
The following lemma relates the statistics of interest under the bijection; its proof is based on completely similar arguments to those in the proof of \cite[Theorem~46]{LNS}.
%
%
\begin{lem}[{\cite{LNS}}]\label{lem:statistics}
For $A \in \CA(w, \Gamma_1)$ and $B \in \CA(\ed(A), \Gamma_2)$, the following hold: 
\begin{enu}
\item $n(A \ast B) = n(A)+n(B)$; 
\item $\ed(A \ast B) = \ed(B)$; 
\item $\down(A \ast B) = \down(A) + \down(B)$; 
\item $\height(A \ast B) = \height(A) + \height(B)+\pair{\nu}{\down(A)}$; 
\item $\wt(A \ast B) = \wt(A) + \wt(B)$. 
\end{enu}
\end{lem}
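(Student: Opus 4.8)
The plan is to analyze the concatenation bijection in \eqref{concat-adm-subs} by unwinding the definitions of the five statistics on $\Gamma = \Gamma_1 \ast \Gamma_2$ and tracking how the admissible subset $A \ast B$ decomposes. Writing $\Gamma_1 = (\beta_1,\ldots,\beta_s)$ and $\Gamma_2 = (\gamma_1,\ldots,\gamma_{r-s})$, the concatenation is $\Gamma = (\beta_1,\ldots,\beta_s,\gamma_1,\ldots,\gamma_{r-s})$, and $A \ast B$ consists of $A \subset \{1,\ldots,s\}$ together with the shifted copy $\{j+s \mid j \in B\}$. The crucial observation is that the directed path $\bp(A \ast B)$ in $\QBG(W)$ is precisely the concatenation of $\bp(A)$ (starting at $w$ and ending at $\ed(A)$) with $\bp(B)$ (starting at $\ed(A)$, which is why $B$ is taken in $\CA(\ed(A),\Gamma_2)$). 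Once this path-concatenation is established, most parts follow by splitting the defining sums.

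For the individual parts I would proceed as follows. Part (2) is immediate since $\ed(A \ast B) = \ed(\bp(A \ast B)) = \ed(\bp(B)) = \ed(B)$. Part (1) follows because $n(\cdot) = \nega(\bp(\cdot))$ counts negative roots among the selected labels, and the selected indices of $A \ast B$ are exactly those of $A$ together with those of $B$ (with $\gamma_j$ in place of the shifted position), so the counts add. Part (3) uses $\down(\cdot) = \wt(\bp(\cdot))$ together with the fact that the quantum edges of the concatenated path are exactly the quantum edges of $\bp(A)$ followed by those of $\bp(B)$; hence the coroot sums add. Part (5) should follow from the product formula for $\wt$ in terms of the affine reflections $s_{\beta_{j},-l_j}$: the reflections for $A \ast B$ factor as the product of the $A$-reflections and the $B$-reflections, and applying this product to $-\lambda$ and using $t=0$-type bookkeeping gives additivity, provided one correctly accounts for the shift of the alcove path for $\Gamma_2$ by $\mu$.

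The main obstacle will be part (4), the height identity, because of the correction term $\pair{\nu}{\down(A)}$. The heights are defined via $\height(\cdot) = \sum_{j \in (\cdot)^-} \sgn(\beta_j)\wti{l_j}$ where $\wti{l_j} = \pair{\lambda}{\beta_j^\vee} - l_j$, and the subtlety is that the $\wti{l}$-values for the $\Gamma_2$-portion are computed with respect to the full weight $\lambda = \mu + \nu$ rather than with respect to $\nu$ alone, whereas the intrinsic height $\height(B)$ of $B$ as an element of $\CA(\ed(A),\Gamma_2)$ uses $\pair{\nu}{\gamma_j^\vee}$. The discrepancy between $\pair{\lambda}{\gamma_j^\vee}$ and $\pair{\nu}{\gamma_j^\vee}$ is exactly $\pair{\mu}{\gamma_j^\vee}$; I expect that the geometric shift of the alcove path for $\Gamma_2$ by $\mu$ converts the raw level integers $l_j$ in a way that, when summed over the quantum edges in $B^-$, produces the term $\pair{\nu}{\down(A)}$ (or its analogue) via the identity $\down(A) = \sum_{j \in A^-}|\beta_j|^\vee$. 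Pinning down this shift precisely, and confirming the pairing is with $\nu$ and not $\mu$, is the delicate computation; since the statement attributes the argument to \cite[Theorem~46]{LNS}, the plan is to follow that reference's bookkeeping of the level integers under concatenation rather than to reprove it from scratch.
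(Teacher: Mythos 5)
Your overall strategy---identifying $\bp(A\ast B)$ as the concatenation of $\bp(A)$ with $\bp(B)$ and splitting each defining sum---is the right one, and it is essentially what the cited argument does; note that the paper itself gives no independent proof, saying only that the argument is ``completely similar'' to that of \cite[Theorem~46]{LNS}. Your sketches of parts (1), (2), (3), and (5) are fine.

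However, your plan for part (4) misplaces the source of the correction term, and the step as you describe it would fail. You propose to extract $\pair{\nu}{\down(A)}$ from the $\Gamma_2$-portion, by summing the discrepancy $\pair{\lambda}{\gamma_j^\vee}-\pair{\nu}{\gamma_j^\vee}=\pair{\mu}{\gamma_j^\vee}$ over the quantum edges of $\bp(B)$. That sum equals $\pair{\mu}{\down(B)}$: it involves the wrong weight and the wrong admissible subset, and no bookkeeping can turn data depending only on $B$ into a term depending on $A$. In fact the $\Gamma_2$-portion contributes exactly $\height(B)$, with no correction: inside the alcove path for $\Gamma$, the alcove path for $\Gamma_2$ is translated by $-\mu$, so the level integer at the position carrying $\gamma_j$ changes from $m_j$ to $m_j+\pair{\mu}{\gamma_j^\vee}$, and this shift cancels precisely against the change of the pairing weight from $\nu$ to $\lambda=\mu+\nu$ in $\wti{l}=\pair{\lambda}{\cdot}-l$. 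The correction term comes instead from the $\Gamma_1$-portion: there the level integers $l_j$ are unchanged, but the value $\wti{l_j}$ computed in $\Gamma$ equals $\pair{\lambda}{\beta_j^\vee}-l_j=\bigl(\pair{\mu}{\beta_j^\vee}-l_j\bigr)+\pair{\nu}{\beta_j^\vee}$, so summing $\sgn(\beta_j)\wti{l_j}$ over $j\in A^-$ and using $\sgn(\beta_j)\beta_j^\vee=|\beta_j|^\vee$ yields $\height(A)+\pair{\nu}{\down(A)}$. With the correction term relocated to the $A$-side in this way, the rest of your argument goes through.
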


We are now ready to prove the main result of this section.

\begin{thm}\label{comm_mu_nu} Given the above setup and any $x=wt_\xi \in W_{\af}$, we have
\begin{equation*}
\bG_{\Gamma_1}\circ\bG_{\Gamma_2}(x) = \bG_{\Gamma_2}\circ\bG_{\Gamma_1}(x)=\bG_{\Gamma}(x). 
\end{equation*}
These identities are realized combinatorially via the bijection~\eqref{concat-adm-subs} and the generalized quantum Yang-Baxter moves.
\end{thm}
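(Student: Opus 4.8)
The plan is to reduce both composition identities to Theorem~\ref{thm:generating_function_reduced} by showing that each composition computes the generating function of an appropriate concatenated chain. Write $\Gamma=\Gamma_1\ast\Gamma_2$ as in the setup, and introduce the reversed concatenation $\Gamma':=\Gamma_2\ast\Gamma_1$. The two claims I would establish are: (i) $\bG_{\Gamma_2}\circ\bG_{\Gamma_1}(x)=\bG_{\Gamma}(x)$ and $\bG_{\Gamma_1}\circ\bG_{\Gamma_2}(x)=\bG_{\Gamma'}(x)$; and (ii) $\bG_{\Gamma}(x)=\bG_{\Gamma'}(x)$. Granting these, the desired chain $\bG_{\Gamma_1}\circ\bG_{\Gamma_2}(x)=\bG_{\Gamma'}(x)=\bG_{\Gamma}(x)=\bG_{\Gamma_2}\circ\bG_{\Gamma_1}(x)$ follows at once.

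For claim (i), first I would expand $\bG_{\Gamma_2}\circ\bG_{\Gamma_1}(x)$ by applying the inner function $\bG_{\Gamma_1}$ to the basis element $x=wt_\xi$, which produces a sum over $A\in\CA(w,\Gamma_1)$ of terms supported on the affine Weyl group elements $\ed(A)t_{\xi+\down(A)}$, and then applying $\bG_{\Gamma_2}$ linearly to each such basis element (reading $\ed(A)$ as its finite part and $\xi+\down(A)$ as its translation part). This yields a double sum over pairs $(A,B)$ with $A\in\CA(w,\Gamma_1)$ and $B\in\CA(\ed(A),\Gamma_2)$. On the other side, I would expand $\bG_{\Gamma}(x)$ and reindex the sum over $\CA(w,\Gamma)$ via the bijection~\eqref{concat-adm-subs}, so that it too becomes a sum over the same pairs $(A,B)$, with $C=A\ast B$. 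It then remains to match the four pieces of data term by term using Lemma~\ref{lem:statistics}: the sign from $n(A\ast B)=n(A)+n(B)$, the weight from $\wt(A\ast B)=\wt(A)+\wt(B)$, and the group element and translation from $\ed(A\ast B)=\ed(B)$ and $\down(A\ast B)=\down(A)+\down(B)$. The only nontrivial matching is the power of $q$: the height contributes $-\height(A)-\height(B)-\pair{\nu}{\down(A)}$ by Lemma~\ref{lem:statistics}(4), while the accumulated $q^{-\pair{\cdot}{\xi}}$ factors from the two applications contribute $-\pair{\mu}{\xi}-\pair{\nu}{\xi+\down(A)}$; these agree with the single factor $-\height(C)-\pair{\lambda}{\xi}$ precisely because $\lambda=\mu+\nu$, which is where the cross-term $\pair{\nu}{\down(A)}$ is absorbed. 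The identity $\bG_{\Gamma_1}\circ\bG_{\Gamma_2}(x)=\bG_{\Gamma'}(x)$ is then proved by the identical computation with the roles of $(\Gamma_1,\mu)$ and $(\Gamma_2,\nu)$ interchanged.

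For claim (ii), I would invoke Proposition~\ref{prop-wr}: since the decomposition $\lambda=\mu+\nu$ is assumed cancellation free and both $\Gamma_1$ and $\Gamma_2$ are weakly reduced, both $\Gamma=\Gamma_1\ast\Gamma_2$ and the reordering $\Gamma'=\Gamma_2\ast\Gamma_1$ are weakly reduced $\lambda$-chains, as the cancellation-free condition and the weak reducedness of the factors are insensitive to the order of concatenation. Theorem~\ref{thm:generating_function_reduced} then gives $\bG_{\Gamma}(x)=\bG_{\Gamma'}(x)$ directly. This is exactly the step realized combinatorially by the generalized quantum Yang-Baxter moves, since those moves underlie the proof of Theorem~\ref{thm:generating_function_reduced}, whereas the two equalities in claim (i) are realized by the bijection~\eqref{concat-adm-subs}.

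I expect the main obstacle to be the bookkeeping in claim (i), specifically verifying that the $q$-exponents match. The subtlety is that $\bG_{\Gamma_2}$, viewed as a linear function, carries a factor $q^{-\pair{\nu}{\cdot}}$ depending on the translation part of its argument, so applying it to $\ed(A)t_{\xi+\down(A)}$ produces the extra term $-\pair{\nu}{\down(A)}$; one must check that this coincides with the cross-term in $\height(A\ast B)$ from Lemma~\ref{lem:statistics}(4) and that the remaining $\pair{\mu}{\xi}$ and $\pair{\nu}{\xi}$ recombine into $\pair{\lambda}{\xi}$. Once the two double sums are shown to agree termwise under~\eqref{concat-adm-subs}, the proof is complete.
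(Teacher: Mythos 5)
Your proposal is correct and follows essentially the same route as the paper: expand the composition into a double sum over pairs $(A,B)$, match it termwise with $\bG_{\Gamma}(x)$ via the bijection~\eqref{concat-adm-subs} and Lemma~\ref{lem:statistics} (with the cross-term $\pair{\nu}{\down(A)}$ absorbed exactly as you describe), obtain the reversed-concatenation identity by symmetry, and conclude with Theorem~\ref{thm:generating_function_reduced}. Your explicit check via Proposition~\ref{prop-wr} that $\Gamma'=\Gamma_2\ast\Gamma_1$ is also weakly reduced is a detail the paper leaves implicit, and is a welcome addition.
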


\begin{proof} It suffices to prove the second equality. Indeed, this would imply that $\bG_{\Gamma_1}\circ\bG_{\Gamma_2}(x)=\bG_{\Gamma'}(x)$, where $\Gamma':=\Gamma_2\ast\Gamma_1$. The proof is then concluded by using Theorem~\ref{thm:generating_function_reduced} to show that $\bG_{\Gamma}(x)=\bG_{\Gamma'}(x)$. Recall that the mentioned theorem is proved by applying the generalized quantum Yang-Baxter moves.

By iterating the definition~\eqref{def:gf-adm}, we obtain
\begin{align*}
& \bG_{\Gamma_2}\circ\bG_{\Gamma_1}(x) \\[3mm]
& = \sum_{A \in \CA(w,\Gamma_1)}
     (-1)^{n(A)}q^{-\height(A)-\pair{{\mu}}{\xi}} e^{\wt(A)}
      \bG_{\Gamma_2}\left(\ed(A)t_{\xi+\down(A)}\right) \\[3mm]
\begin{split}
& = \sum_{A \in \CA(w,\Gamma_1)}\sum_{B \in \CA(\ed(A),\Gamma_2)}
    (-1)^{n(A)+n(B)} q^{-\height(A)-\pair{\mu}{\xi}-\height(B)-\pair{\nu}{\xi+\down(A)} } \\
& \hspace*{47mm} \times e^{\wt(A)+\wt(B)} \ed(B)t_{\xi+\down(A)+\down(B)} 
\end{split} \\[3mm]
& = \sum_{A \in \CA(w,\Gamma_1)}\sum_{B \in \CA(\ed(A),\Gamma_2)}
 (-1)^{n(A\ast B)}q^{-\height(A\ast B)-\pair{\lambda}{\xi}} e^{\wt(A\ast B)} \ed(A\ast B)t_{\xi+\down(A\ast B)} \\[3mm]
& =\bG_{\Gamma}(x). 
\end{align*}
The last two equalities are based on the bijection~\eqref{concat-adm-subs} and Lemma~\ref{lem:statistics}.
\end{proof}

Theorem~\ref{comm_mu_nu} immediately implies the following corollary involving a composition of more than two functions $\bG_{\Gamma_i}$. Here we use the corresponding setup that was defined above. Namely, we consider the cancellation free weight decomposition $\lambda=\lambda_1+\cdots+\lambda_p$, the weakly reduced $\lambda_j$-chains of roots $\Gamma_j$, for $j=1,\ldots,p$, and their concatenation $\Gamma=\Gamma_1\ast\cdots\ast\Gamma_p$.

\begin{cor}\label{cor:mult-ch} In the above setup, the composite of generating functions $\bG_{\Gamma_1}\circ\cdots\circ\bG_{\Gamma_p}(x)$ is invariant under permuting the maps $\bG_{\Gamma_\cdot}$, and coincides with $\bG_{\Gamma}(x)$.
\end{cor}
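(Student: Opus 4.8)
The plan is to derive Corollary~\ref{cor:mult-ch} as a direct consequence of Theorem~\ref{comm_mu_nu}, treating the two assertions (coincidence with $\bG_{\Gamma}(x)$ and invariance under permutations) in that order, since the first is the base case of an induction and the second reduces to generating a symmetric group by adjacent transpositions. First I would establish, by induction on $p$, that the iterated composite equals the single generating function associated to the concatenation:
\begin{equation*}
\bG_{\Gamma_1}\circ\cdots\circ\bG_{\Gamma_p}(x) = \bG_{\Gamma_1\ast\cdots\ast\Gamma_p}(x).
\end{equation*}
The case $p=2$ is exactly the content of Theorem~\ref{comm_mu_nu} (specifically $\bG_{\Gamma_1}\circ\bG_{\Gamma_2}(x)=\bG_{\Gamma_1\ast\Gamma_2}(x)$, obtained there from the second displayed equality together with the independence Theorem~\ref{thm:generating_function_reduced}). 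For the inductive step one writes $\Gamma_1\ast\cdots\ast\Gamma_p=(\Gamma_1\ast\cdots\ast\Gamma_{p-1})\ast\Gamma_p$, applies the inductive hypothesis to collapse $\bG_{\Gamma_1}\circ\cdots\circ\bG_{\Gamma_{p-1}}$ into $\bG_{\Gamma_1\ast\cdots\ast\Gamma_{p-1}}$, and then invokes the $p=2$ case with the first chain taken to be $\Gamma_1\ast\cdots\ast\Gamma_{p-1}$ (a $(\lambda_1+\cdots+\lambda_{p-1})$-chain) and the second taken to be $\Gamma_p$.

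The key technical point I must verify at each stage is that the hypotheses of Theorem~\ref{comm_mu_nu} remain satisfied, namely that the relevant weight decomposition is cancellation free and that both chains being concatenated are weakly reduced. Here I would appeal to Proposition~\ref{prop-wr}: since the full decomposition $\lambda=\lambda_1+\cdots+\lambda_p$ is cancellation free and each $\Gamma_j$ is weakly reduced, the same proposition guarantees that every partial concatenation $\Gamma_1\ast\cdots\ast\Gamma_{k}$ is itself a weakly reduced $(\lambda_1+\cdots+\lambda_k)$-chain, and that the two-part decomposition $(\lambda_1+\cdots+\lambda_{p-1})+\lambda_p$ is cancellation free (a sub-decomposition of a cancellation free one is cancellation free). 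This licenses the use of Theorem~\ref{comm_mu_nu} in the inductive step.

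For the invariance under permutations, I would reduce to adjacent transpositions, since these generate the symmetric group $S_p$. Permuting $\Gamma_j$ and $\Gamma_{j+1}$ in the composite changes the concatenation $\Gamma_1\ast\cdots\ast\Gamma_j\ast\Gamma_{j+1}\ast\cdots\ast\Gamma_p$ only in the segment $\Gamma_j\ast\Gamma_{j+1}$, which becomes $\Gamma_{j+1}\ast\Gamma_j$; by Theorem~\ref{comm_mu_nu} applied to the cancellation free decomposition $\lambda_j+\lambda_{j+1}$ these two concatenated segments have equal generating functions, and since the composite equals $\bG$ of the full concatenation by the first part, the value is unchanged. Because every permutation is a product of such transpositions, the composite is fully symmetric. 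The only genuine obstacle I anticipate is the bookkeeping needed to confirm that the cancellation free and weakly reduced hypotheses are inherited by all the intermediate concatenations and sub-decompositions that arise; once Proposition~\ref{prop-wr} is correctly marshalled to supply these, the argument is a routine induction and does not require reopening the combinatorics of the sijection.
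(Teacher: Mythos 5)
Your proposal is correct and follows essentially the same route as the paper, which states the corollary as an immediate iteration of Theorem~\ref{comm_mu_nu}; your write-up simply supplies the details the paper leaves implicit (induction on $p$, inheritance of the cancellation-free and weakly reduced hypotheses via Proposition~\ref{prop-wr}, and reduction of permutations to adjacent transpositions). The one point worth stating explicitly is that replacing $\bG_{\Gamma_j}\circ\bG_{\Gamma_{j+1}}$ by $\bG_{\Gamma_{j+1}}\circ\bG_{\Gamma_j}$ inside a longer composite is legitimate because the two agree on every $x\in W_{\af}$ and hence, by $R[P]$-linearity, as operators on $R[P][W_{\af}]$ — but this is exactly how the paper intends $\bG_\Gamma$ to be used.
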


We now generalize the function $\bG_\Gamma$ on $R[P][W_{\af}]$ by defining the function $\widehat{\bG}_\Gamma$, which expresses the general $K$-theory Chevalley formula for semi-infinite flag manifolds in \cite{LNS}. In order to do this, we need some notation for partitions. 
Let $\lambda \in P$ and write it as $\lambda = \sum_{i \in I} m_{i}\vpi_{i}$. 
We define the set $\bPar(\lambda)$ by 
%
%
\begin{equation}\label{eq:def_Par}
\bPar(\lambda) := \left\{ \bchi = (\chi^{(i)})_{i \in I} \ \middle| \begin{array}{l} \chi^{(i)} \text{ is a partition whose length is} \\ \text{less than or equal to } \max\{m_{i}, 0\} \end{array} \right\}. 
\end{equation}
For $\bchi = (\chi^{(i)})_{i \in I} \in \bPar(\lambda)$, we write it as $\chi^{(i)} = (\chi_{1}^{(i)} \ge \chi_{2}^{(i)} \ge \cdots \ge \chi_{l_{i}}^{(i)} > 0)$, 
where $0 \le l_{i} \le \max\{m_{i}, 0\}$ and $\chi_{1}^{(i)}, \ldots, \chi_{l_{i}}^{(i)} \in \BZ$, and set 
%
%
\begin{equation}\label{eq:def_statistics_par}
|\bchi| := \sum_{i \in I} \sum_{k = 1}^{l_{i}} \chi_{k}^{(i)}, \quad \iota(\bchi) := \sum_{i \in I} \chi_{1}^{(i)} \alpha_{i}^{\vee}; 
\end{equation}
if $\chi^{(i)} = \emptyset$, then we understand that $l_{i} = 0$ and $\chi_{1}^{(i)} = 0$.

\begin{dfn}
For each $\lambda$-chain $\Gamma$ and $x \in W_{\af}$, we define 
\begin{equation}\label{def:gf-adm-par}
\widehat{\bG}_{\Gamma}(x) := \sum_{\bchi\in\bPar(\lambda)}q^{-|\bchi|} \bG_\Gamma(x)t_{\iota(\bchi)}. 
\end{equation}
\end{dfn}

Like above, we now consider a cancellation free weight decomposition $\lambda=\mu+\nu$, weakly reduced chains of roots $\Gamma_1$ and $\Gamma_2$ corresponding to $\mu$ and $\nu$, respectively, and the weakly reduced $\lambda$-chain $\Gamma:=\Gamma_1\ast\Gamma_2$. Let $\mu=\sum_{i\in I} m_{i1}\varpi_i$ and $\nu=\sum_{i\in I} m_{i2}\varpi_i$, so $\lambda=\sum_{i\in I} m_{i}\varpi_i$ with $m_i=m_{i1}+m_{i2}$. We will show that there exists a natural bijection 
\begin{equation}\label{concat-par}
\bPar(\mu)\times\bPar(\nu)\rightarrow\bPar(\lambda), \quad (\bpsi,\bomega)\mapsto\bchi:=\bpsi\ast\bomega,
\end{equation}
which is compatible with the corresponding statistics. The above map is constructed by defining the partition $\chi^{(i)}$ in terms of the partitions $\psi^{(i)}$ and $\omega^{(i)}$, for each $i\in I$; we will identify a partition with its Young diagram. We may assume that $m_{i1},m_{i2}\ge 0$, and at least one is positive; indeed, otherwise $m_{i1},m_{i2}\le 0$, so $\psi^{(i)}=\omega^{(i)}=\emptyset$, and we let $\chi^{(i)}:=\emptyset$. In the non-trivial case, we consider a rectangular partition with $m_{i2}$ rows of size $\psi^{(i)}_1$; then $\chi^{(i)}$ is defined as the result of attaching $\omega^{(i)}$ at the right of the rectangle (top justified) and $\psi^{(i)}$ at the bottom of the rectangle (left justified). It is easy to verify that the result is indeed a partition of length at most $m_i$, as needed, as well as the fact that this map is invertible.

\begin{lem}\label{lem:statistics-par} For $\bpsi\in\bPar(\mu)$ and $\bomega\in\bPar(\nu)$, the following hold: 
\begin{enu}
\item $\iota(\bpsi\ast\bomega)=\iota(\bpsi)+\iota(\bomega)$; 
\item $|\bpsi\ast\bomega|=|\bpsi|+|\bomega|+\pair{\nu}{\iota(\bpsi)}$. 
\end{enu}
\end{lem}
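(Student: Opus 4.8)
The plan is to unwind the definitions of the statistics $\iota$ and $|\cdot|$ from \eqref{eq:def_statistics_par}, and to read off the required identities directly from the explicit construction of the bijection $\bpsi\ast\bomega$ given just above the lemma. Since everything is defined coordinate-wise in $i\in I$, I would fix $i\in I$ throughout and write $\psi^{(i)}=(\psi^{(i)}_1\ge\psi^{(i)}_2\ge\cdots)$, $\omega^{(i)}=(\omega^{(i)}_1\ge\cdots)$, and $\chi^{(i)}=(\psi\ast\omega)^{(i)}$, reducing both parts of the lemma to statements about how these three partitions are related. Recall that in the nontrivial case ($m_{i1},m_{i2}\ge 0$, at least one positive) the Young diagram of $\chi^{(i)}$ is assembled from a rectangle with $m_{i2}$ rows of size $\psi^{(i)}_1$, with $\omega^{(i)}$ attached to the right of the rectangle (top justified) and $\psi^{(i)}$ attached at the bottom (left justified).

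For part (1), I would compute the first part $\chi^{(i)}_1$ of the assembled partition. From the construction, the top row of $\chi^{(i)}$ consists of the top row of the rectangle (of length $\psi^{(i)}_1$) together with the top row of $\omega^{(i)}$ appended to its right, so $\chi^{(i)}_1=\psi^{(i)}_1+\omega^{(i)}_1$ (with the usual convention that an empty partition contributes $0$); the degenerate cases where $m_{i2}=0$ or one of the partitions is empty must be checked to be consistent with this formula. Summing the defining relation $\iota(\bchi)=\sum_{i\in I}\chi^{(i)}_1\alpha_i^\vee$ over $i$ then gives $\iota(\bpsi\ast\bomega)=\iota(\bpsi)+\iota(\bomega)$ immediately.

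For part (2), the size $|\bchi|=\sum_i\sum_k\chi^{(i)}_k$ is just the total number of boxes, so I would count boxes in the three pieces of the assembled diagram: the rectangle contributes $m_{i2}\,\psi^{(i)}_1$ boxes, the appended $\omega^{(i)}$ contributes $|\omega^{(i)}|$, and the appended $\psi^{(i)}$ contributes $|\psi^{(i)}|$. Summing over $i$ gives $|\bpsi\ast\bomega|=|\bpsi|+|\bomega|+\sum_{i\in I}m_{i2}\,\psi^{(i)}_1$. It then remains to identify the last sum with $\pair{\nu}{\iota(\bpsi)}$: since $\nu=\sum_{i\in I}m_{i2}\varpi_i$ and $\iota(\bpsi)=\sum_{i\in I}\psi^{(i)}_1\alpha_i^\vee$, using $\pair{\varpi_i}{\alpha_j^\vee}=\delta_{ij}$ yields $\pair{\nu}{\iota(\bpsi)}=\sum_{i\in I}m_{i2}\,\psi^{(i)}_1$, exactly as needed.

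The only genuine care required is the bookkeeping in the degenerate cases: when some $m_{i1}$ or $m_{i2}$ is negative (so the coordinate is trivial and $\chi^{(i)}=\emptyset$), or when $\psi^{(i)}$ or $\omega^{(i)}$ is empty, or when $m_{i2}=0$ so the rectangle is absent. In each such case both the box count and the first-part formula must still return the correct value, and checking this uniformly against the conventions $l_i=0$, $\chi^{(i)}_1=0$ fixed after \eqref{eq:def_statistics_par} is the main (though routine) obstacle. Once these conventions are matched, both identities follow from the two elementary computations above, so I do not expect any substantive difficulty beyond careful case analysis.
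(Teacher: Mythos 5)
Your proposal is correct and follows essentially the same route as the paper: both read off $\chi^{(i)}_1=\psi^{(i)}_1+\omega^{(i)}_1$ and the box count $|\chi^{(i)}|=|\psi^{(i)}|+|\omega^{(i)}|+m_{i2}\psi^{(i)}_1$ directly from the rectangle-plus-attachments construction, then identify $\sum_{i}m_{i2}\psi^{(i)}_1$ with $\pair{\nu}{\iota(\bpsi)}$, using cancellation-freeness to dispose of the coordinates where $m_{i2}$ could be negative (there $\psi^{(i)}=\emptyset$, so the term vanishes). The paper's proof is just a terser version of the same computation, recording the last point as $\pair{\nu}{\iota(\bpsi)}=\sum_{i\in I}m_{i2}\psi^{(i)}_1=\sum_{i\in I}\max\{m_{i2},0\}\,\psi^{(i)}_1$.
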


\begin{proof} We use the above notation, in particular $\bchi:=\bpsi\ast\bomega$, as well as the fact that the weight decomposition $\lambda=\mu+\nu$ is cancellation free. The first relation is clear by construction. The second one follows from the fact that $\pair{\nu}{\iota(\bpsi)}=\sum_{i\in I}m_{i2}\psi^{(i)}_1=\sum_{i\in I}\max\{m_{i2},0\} \psi^{(i)}_1$; here we note that $m_{i2}\psi^{(i)}_1$ is the size of the rectangle used to construct $\chi^{(i)}$ in the non-trivial case.
\end{proof}

\begin{thm}\label{comm_mu_nu_par} Given the above setup and any $x=wt_\xi \in W_{\af}$, we have
\begin{equation*}
\widehat{\bG}_{\Gamma_1}\circ\widehat{\bG}_{\Gamma_2}(x) = \widehat{\bG}_{\Gamma_2}\circ\widehat{\bG}_{\Gamma_1}(x)=\widehat{\bG}_{\Gamma}(x). 
\end{equation*}
These identities are realized combinatorially via the bijections~\eqref{concat-adm-subs}, \eqref{concat-par}, and the generalized quantum Yang-Baxter moves.
\end{thm}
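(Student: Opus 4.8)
The plan is to mirror the strategy used for Theorem~\ref{comm_mu_nu}. It suffices to establish the second equality $\widehat{\bG}_{\Gamma_2}\circ\widehat{\bG}_{\Gamma_1}(x)=\widehat{\bG}_{\Gamma}(x)$. Indeed, exchanging the roles of $\mu$ and $\nu$ (hence of $\Gamma_1$ and $\Gamma_2$) in this identity gives $\widehat{\bG}_{\Gamma_1}\circ\widehat{\bG}_{\Gamma_2}(x)=\widehat{\bG}_{\Gamma'}(x)$ with $\Gamma':=\Gamma_2\ast\Gamma_1$; since $\lambda=\nu+\mu$ is again cancellation free, Proposition~\ref{prop-wr} guarantees that $\Gamma'$ is a weakly reduced $\lambda$-chain. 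Because $\bPar(\lambda)$, together with $|\cdot|$ and $\iota(\cdot)$, depends only on $\lambda$, the defining formula~\eqref{def:gf-adm-par} shows that $\widehat{\bG}_{\Gamma}$ is determined by $\bG_{\Gamma}$. Hence Theorem~\ref{thm:generating_function_reduced}, applied to the two weakly reduced $\lambda$-chains $\Gamma$ and $\Gamma'$, yields $\widehat{\bG}_{\Gamma}(x)=\widehat{\bG}_{\Gamma'}(x)$, which closes the argument once the second equality is known.

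To prove that second equality, I would iterate the definitions~\eqref{def:gf-adm} and~\eqref{def:gf-adm-par}. Applying $\widehat{\bG}_{\Gamma_1}$ to $x=wt_\xi$ expands into a double sum over $A\in\CA(w,\Gamma_1)$ and $\bpsi\in\bPar(\mu)$, whose typical monomial is a scalar in $R[P]$ times the affine Weyl group element $\ed(A)\,t_\zeta$ with $\zeta=\xi+\down(A)+\iota(\bpsi)$. Since $\widehat{\bG}_{\Gamma_2}$ is a linear function on $R[P][W_{\af}]$, I then apply it to each such $\ed(A)\,t_\zeta$, producing a further double sum over $B\in\CA(\ed(A),\Gamma_2)$ and $\bomega\in\bPar(\nu)$ and contributing the factor $q^{-|\bomega|-\height(B)-\pair{\nu}{\zeta}}$ along with the right multiplication by $t_{\down(B)+\iota(\bomega)}$.

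Collecting the exponents of $q$, the crucial observation is that the pairing $\pair{\nu}{\zeta}=\pair{\nu}{\xi}+\pair{\nu}{\down(A)}+\pair{\nu}{\iota(\bpsi)}$ splits into three pieces, two of which are precisely the cross-terms of Lemmas~\ref{lem:statistics} and~\ref{lem:statistics-par}. Concretely, $-\height(A)-\height(B)-\pair{\nu}{\down(A)}=-\height(A\ast B)$ by Lemma~\ref{lem:statistics}\,(4); $-|\bpsi|-|\bomega|-\pair{\nu}{\iota(\bpsi)}=-|\bpsi\ast\bomega|$ by Lemma~\ref{lem:statistics-par}\,(2); and the leftover $-\pair{\mu}{\xi}-\pair{\nu}{\xi}=-\pair{\lambda}{\xi}$. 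Simultaneously, the remaining statistics combine through the additivity relations $n(A\ast B)=n(A)+n(B)$, $\wt(A\ast B)=\wt(A)+\wt(B)$, $\ed(A\ast B)=\ed(B)$, $\down(A\ast B)=\down(A)+\down(B)$, and $\iota(\bpsi\ast\bomega)=\iota(\bpsi)+\iota(\bomega)$. After these substitutions the general monomial becomes
\[
(-1)^{n(A\ast B)}\,q^{-|\bpsi\ast\bomega|-\height(A\ast B)-\pair{\lambda}{\xi}}\,e^{\wt(A\ast B)}\,\ed(A\ast B)\,t_{\xi+\down(A\ast B)+\iota(\bpsi\ast\bomega)}.
\]
Invoking the bijections~\eqref{concat-adm-subs} and~\eqref{concat-par} to re-index the sums by $C=A\ast B\in\CA(w,\Gamma)$ and $\bchi=\bpsi\ast\bomega\in\bPar(\lambda)$, and comparing with~\eqref{def:gf-adm-par}, identifies the result as $\widehat{\bG}_{\Gamma}(x)$.

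The one place demanding care—and which I expect to be the main point rather than a genuine obstacle, since Lemmas~\ref{lem:statistics} and~\ref{lem:statistics-par} are already in hand—is the bookkeeping of the cross-terms. One must verify that the pairing $\pair{\nu}{\zeta}$ generated by feeding the translated element $\ed(A)\,t_\zeta$ into $\widehat{\bG}_{\Gamma_2}$ splits into exactly the two contributions $\pair{\nu}{\down(A)}$ and $\pair{\nu}{\iota(\bpsi)}$ that are absorbed, respectively, into the height of $A\ast B$ and the size of $\bpsi\ast\bomega$, with the residual $\pair{\nu}{\xi}$ merging with $\pair{\mu}{\xi}$ into $\pair{\lambda}{\xi}$. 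This interlocking of the admissible-subset concatenation with the partition concatenation, mediated by the common weight $\nu$, is what makes the hatted generating functions compose correctly; everything else is a routine expansion. In particular, the resulting identity is realized combinatorially via the bijections~\eqref{concat-adm-subs}, \eqref{concat-par}, and the generalized quantum Yang-Baxter moves underlying Theorem~\ref{thm:generating_function_reduced}.
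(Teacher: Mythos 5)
Your proposal is correct and follows essentially the same route as the paper: reduce to the second equality by exchanging the roles of $\mu$ and $\nu$ together with Theorem~\ref{thm:generating_function_reduced} (via Proposition~\ref{prop-wr}), then expand the composition and recombine all statistics using Lemmas~\ref{lem:statistics} and \ref{lem:statistics-par} with the bijections~\eqref{concat-adm-subs} and \eqref{concat-par}. The only organizational difference is that the paper factors the computation through Theorem~\ref{comm_mu_nu}, recognizing $\bG_{\Gamma_2}\circ\bG_{\Gamma_1}(x)$ inside the expansion, whereas you inline that step by applying Lemma~\ref{lem:statistics} directly to the full quadruple sum.
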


\begin{proof} Like in the proof of Theorem~\ref{comm_mu_nu}, it suffices to prove the second equality. By iterating the definition~\eqref{def:gf-adm-par} and by also using~\eqref{def:gf-adm}, we obtain
\begin{align*}
& \widehat{\bG}_{\Gamma_2}\circ\widehat{\bG}_{\Gamma_1}(x) \\[3mm]
&=\sum_{\bpsi\in\bPar(\mu)}\sum_{A\in\CA(w,\Gamma_1)}(-1)^{n(A)}q^{-\pair{\mu}{\xi}-\height(A)-|\bpsi|}e^{\wt(A)}\widehat{\bG}_{\Gamma_2}(\ed(A)t_{\xi+\down(A)+\iota(\bpsi)})\\[3mm]
\begin{split}
&=\sum_{\bpsi\in\bPar(\mu)}\sum_{\bomega\in\bPar(\nu)}\sum_{A\in\CA(w,\Gamma_1)}(-1)^{n(A)}q^{-\pair{\mu}{\xi}-\height(A)-|\bpsi|-|\bomega|}\\
 &\hspace*{51mm} \times e^{\wt(A)}{\bG}_{\Gamma_2}(\ed(A)t_{\xi+\down(A)+\iota(\bpsi)})t_{\iota(\bomega)}
\end{split} \\[3mm]
\begin{split}
&=\sum_{\bpsi\in\bPar(\mu)}\sum_{\bomega\in\bPar(\nu)}q^{-|\bpsi|-|\bomega|-\pair{\nu}{\iota(\bpsi)}}\sum_{A\in\CA(w,\Gamma_1)}(-1)^{n(A)}q^{-\pair{\mu}{\xi}-\height(A)}\\
 &\hspace*{80mm} \times e^{\wt(A)}{\bG}_{\Gamma_2}(\ed(A)t_{\xi+\down(A)})t_{\iota(\bpsi)+\iota(\bomega)} 
\end{split} \\[3mm]
&=\sum_{\bpsi\in\bPar(\mu)}\sum_{\bomega\in\bPar(\nu)}q^{-|\bpsi|-|\bomega|-\pair{\nu}{\iota(\bpsi)}}{\bG}_{\Gamma_2}\circ\bG_{\Gamma_1}(wt_{\xi})t_{\iota(\bpsi)+\iota(\bomega)} \\[3mm]
&=\sum_{\bpsi\in\bPar(\mu)}\sum_{\bomega\in\bPar(\nu)}q^{-|\bpsi\ast\bomega|}{\bG}_{\Gamma}(wt_{\xi})t_{\iota(\bpsi\ast\bomega)}\\[3mm]
&=\widehat{\bG}_{\Gamma}(x).
\end{align*}
The last two equalities are based on the bijection~\eqref{concat-par}, Lemma~\ref{lem:statistics-par}, and Theorem~\ref{comm_mu_nu}.
\end{proof}

\begin{rems} (1) Theorem~\ref{comm_mu_nu_par} exhibits a combinatorial realization of the symmetry of the general Chevalley formula~\cite[Theorem~33]{LNS} coming from commutativity in equivariant $K$-theory.

(2) Corollary~\ref{cor:mult-ch} can be extended to the setup of Theorem~\ref{comm_mu_nu_par}.
\end{rems}



\subsection{Identity of Chevalley type for graded characters.}\label{subsec:character_identity}
As an application of the results in Sections~\ref{subsec:generating_function} and \ref{subsec:comm}, 
we obtain an identity of ``Chevalley-type'' for the graded characters of Demazure submodules of (level-zero) extremal weight modules over a quantum affine algebra. 

Let $\Fg_{\af}$ be the untwisted affine Lie algebra whose underlying finite-dimensional simple Lie algebra is $\Fg$, 
and let $U_{\q}(\Fg_{\af})$ denote the quantum affine algebra associated to $\Fg_{\af}$ with Chevalley generator $E_{i}, F_{i} \in U_{\q}(\Fg_{\af})$, $i \in I_{\af} = I \sqcup \{0\}$, 
where $\q$ is an indeterminate. 
We denote by $U_{\q}^{-}(\Fg_{\af}) := \langle F_{i} \rangle_{i \in I_{\af}} \subset U_{\q}(\Fg_{\af})$ 
the subalgebra of $U_{\q}(\Fg_{\af})$ generated by $\{F_{i} \mid i \in I_{\af}\}$. 
Also, let $W_{\af} = W \ltimes \{t_{\xi} \mid \xi \in Q^{\vee}\} \simeq W \ltimes Q^{\vee}$ be the (affine) Weyl group of $\Fg_{\af}$, 
where $t_{\xi}$, $\xi \in Q^{\vee}$, denotes the translation by $\xi$ (see \cite[Proposition~6.5]{Kac}). 

For each $\lambda \in P^{+}$, we denote by $V(\lambda)$ 
the \emph{level-zero extremal weight module} of extremal weight $\lambda$ over $U_{\q}(\Fg_{\af})$, 
which is equipped with a family $\{v_{x}\}_{x \in W_{\af}} \subset V(\lambda)$ of extremal weight vectors, 
where $v_{x} \in V(\lambda)$, $x \in W_{\af}$, is an extremal weight vector of weight $x\lambda$ (see \cite[Proposition~8.2.2]{Kas}). 
For $x \in W_{\af}$ and $\lambda \in P^{+}$, the \emph{Demazure submodule} $V_{x}^{-}(\lambda)$ is defined by $V_{x}^{-}(\lambda) := U_{\q}^{-}(\Fg_{\af})v_{x}$. 
We denote by $\gch V_{x}^{-}(\lambda)$ the \emph{graded character} of $V_{x}^{-}(\lambda)$ (see \cite[\S 2.4]{KNS}).
If $x = w t_{\xi}$ with $w \in W$ and $\xi \in Q^{\vee}$, then we know that $\gch V_{x}^{-}(\lambda) \in \mathbb{Z}[P]\bra{q^{-1}}q^{-\pair{\lambda}{\xi}}$; in fact, we know that $\gch V_{w}^{-}(\lambda) \in \mathbb{Z}\bra{q^{-1}}[P]$ for $w \in W$.

We will prove the following identity for graded characters, which is a representation-theoretic analogue 
of the general Chevalley formula in the equivariant $K$-group of semi-infinite flag manifolds (\cite[Theorem~33]{LNS}). 
%
%
\begin{thm}\label{thm:PC-type_formula}
Let $\mu \in P^{+}$ and $x \in W_{\af}$. 
We write $x$ as $x = w t_{\xi}$ with $w \in W$ and $\xi \in Q^{\vee}$. 
Take $\lambda \in P$ such that $\mu + \lambda \in P^{+}$, 
and let $\Gamma$ be an arbitrary reduced $\lambda$-chain. 
Then we have 
\begin{equation}
\begin{split}\label{eq:PC-type_formula}
& \gch V_{x}^{-}(\mu + \lambda) = \\ 
& \sum_{A \in \CA(w, \Gamma)} \sum_{\bchi \in \bPar(\lambda)} (-1)^{n(A)} q^{-\height(A) - \pair{\lambda}{\xi} - |\bchi|} e^{\wt(A)} \gch V_{\ed(A)t_{\xi + \down(A) + \iota(\bchi)}}^{-}(\mu). 
\end{split}
\end{equation}
\end{thm}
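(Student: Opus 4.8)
The plan is to recognize the right-hand side of~\eqref{eq:PC-type_formula} as the image of the generating function $\widehat{\bG}_{\Gamma}(x)$ under a ``character evaluation'' map. Concretely, for $\mu \in P^{+}$ let $\Phi_{\mu}$ be the linear map defined on $R[P][W_{\af}]$ by $\Phi_{\mu}(e^{\nu} y) := e^{\nu} \gch V_{y}^{-}(\mu)$ for $\nu \in P$ and $y \in W_{\af}$, extended to the appropriate $q^{-1}$-adic completion in which these graded characters and generating functions live. Expanding the definitions~\eqref{def:gf-adm} and~\eqref{def:gf-adm-par}, one checks immediately that $\Phi_{\mu}(\widehat{\bG}_{\Gamma}(x))$ equals the right-hand side of~\eqref{eq:PC-type_formula}. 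Thus the theorem is equivalent to the operator identity
\begin{equation*}
\Phi_{\mu + \lambda} = \Phi_{\mu} \circ \widehat{\bG}_{\Gamma} \quad \text{on } R[P][W_{\af}],
\end{equation*}
where evaluating at $x = wt_{\xi}$ reads off the $\CA(w, \Gamma)$-sum from the $W$-component $w$ of the argument.

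I would then prove this operator identity by induction on a decomposition of $\lambda$ into fundamental weights. Writing $\lambda = \sum_{i} m_{i}\varpi_{i}$, decompose $\lambda = \sum_{j=1}^{p} \lambda_{j}$ with each $\lambda_{j} = \varepsilon_{j} \varpi_{i_{j}}$, $\varepsilon_{j} \in \{\pm 1\}$, using $|m_{i}|$ copies of $\sgn(m_{i})\varpi_{i}$ for each $i$. This decomposition is automatically cancellation free, since for a fixed index all summands have the same sign. Moreover, because the coefficient of each $\varpi_{i}$ moves monotonically from $\pair{\mu}{\alpha_{i}^{\vee}} \ge 0$ to $\pair{\mu + \lambda}{\alpha_{i}^{\vee}} \ge 0$, the summands can be listed in any order while keeping every partial sum $\mu + \lambda_{1} + \cdots + \lambda_{k}$ dominant; hence every intermediate Demazure submodule below is well defined. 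The base case is the single-fundamental-weight identity $\Phi_{\mu \pm \varpi_{i}} = \Phi_{\mu} \circ \widehat{\bG}_{\Gamma_{\pm \varpi_{i}}}$ whenever $\mu, \mu \pm \varpi_{i} \in P^{+}$; this is the (inverse) Pieri--Chevalley formula for graded characters of level-zero Demazure submodules, supplied by prior work (\cite{KNS, NOS}; in the $\q = 1$ limit this is the fundamental-weight case of~\cite{FM} via~\cite{N}).

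For the inductive step, set $\nu = \lambda_{1} + \cdots + \lambda_{p-1}$ and $\lambda = \nu + \lambda_{p}$. The inductive hypothesis gives $\Phi_{\mu + \nu} = \Phi_{\mu} \circ \widehat{\bG}_{\Gamma_{\nu}}$, while the base case applied with base weight $\mu + \nu \in P^{+}$ gives $\Phi_{\mu + \lambda} = \Phi_{\mu + \nu} \circ \widehat{\bG}_{\Gamma_{\lambda_{p}}}$. Composing and invoking $\widehat{\bG}_{\Gamma_{\nu}} \circ \widehat{\bG}_{\Gamma_{\lambda_{p}}} = \widehat{\bG}_{\Gamma_{\nu} \ast \Gamma_{\lambda_{p}}}$ from Theorem~\ref{comm_mu_nu_par} (applicable since $\lambda = \nu + \lambda_{p}$ is cancellation free), we obtain $\Phi_{\mu + \lambda} = \Phi_{\mu} \circ \widehat{\bG}_{\Gamma_{\nu} \ast \Gamma_{\lambda_{p}}}$. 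Finally, $\Gamma_{\nu} \ast \Gamma_{\lambda_{p}}$ is a weakly reduced $\lambda$-chain by Proposition~\ref{prop-wr}, so Theorem~\ref{thm:generating_function_reduced} identifies $\widehat{\bG}_{\Gamma_{\nu} \ast \Gamma_{\lambda_{p}}}$ with $\widehat{\bG}_{\Gamma}$ for the reduced $\lambda$-chain $\Gamma$ of the statement, closing the induction. (Equivalently, one may assemble all $p$ factors at once using Corollary~\ref{cor:mult-ch} in its $\widehat{\bG}$ form.)

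The main obstacle is the base case, which is the genuinely representation-theoretic input: the inductive machinery is a formal consequence of the composition and chain-independence results of Section~\ref{subsec:comm}, but the single-fundamental-weight formula must be imported. Within the base case, the delicate direction is $\lambda = -\varpi_{i}$, the inverse Chevalley formula, which cannot be obtained by formally inverting the $+\varpi_{i}$ formula: the pair $\{+\varpi_{i}, -\varpi_{i}\}$ is \emph{not} cancellation free, and indeed $\widehat{\bG}_{\Gamma_{\varpi_{i}}} \circ \widehat{\bG}_{\Gamma_{-\varpi_{i}}}$ is not the identity but carries the extra factor coming from deleting a simple-root segment $(\pm\beta, \mp\beta)$, as in the Remark following Proposition~\ref{prop:generating_function_D}. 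Care is also needed to keep all partial sums dominant so that no undefined $\gch V^{-}(\cdot)$ appears; the complementary vanishing of the right-hand side when $\mu + \lambda \notin P^{+}$ is a separate statement, established in Appendix~C, consistent with but not used in this argument.
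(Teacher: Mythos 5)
Your proposal is correct, and its skeleton coincides with the paper's: both interpret the right-hand side of \eqref{eq:PC-type_formula} as the image of $\widehat{\bG}_{\Gamma}(x)$ under the $R[P]$-linear character-evaluation map (your $\Phi_{\mu}\colon y\mapsto \gch V^{-}_{y}(\mu)$), and both then conclude by combining Theorem~\ref{comm_mu_nu_par}, the chain-independence Theorem~\ref{thm:generating_function_reduced}, and Chevalley-type identities imported from \cite{KNS,NOS} (restated in the paper as Theorems~\ref{thm:PC-dominant} and~\ref{thm:PC-anti_dominant}). Where you genuinely differ is the decomposition of $\lambda$. The paper splits $\lambda$ exactly once, into its dominant and anti-dominant parts $\lambda=\lambda^{+}+\lambda^{-}$ (a cancellation free decomposition), takes the lex chains $\Gamma^{\pm}$ and the weakly reduced concatenation $\Gamma_{0}=\Gamma^{+}\ast\Gamma^{-}$, and finishes in one composition: $\Phi_{\mu+\lambda}=\Phi_{\mu+\lambda^{-}}\circ\widehat{\bG}_{\Gamma^{+}}=\Phi_{\mu}\circ\widehat{\bG}_{\Gamma^{-}}\circ\widehat{\bG}_{\Gamma^{+}}=\Phi_{\mu}\circ\widehat{\bG}_{\Gamma_{0}}=\Phi_{\mu}\circ\widehat{\bG}_{\Gamma}$, the four equalities using, respectively, Theorem~\ref{thm:PC-dominant} (with base weight $\mu+\lambda^{-}$), Theorem~\ref{thm:PC-anti_dominant} (with base weight $\mu$), Theorem~\ref{comm_mu_nu_par}, and Theorem~\ref{thm:generating_function_reduced}; the only dominance check needed is that $\mu+\lambda\in P^{+}$ implies $\mu+\lambda^{-}\in P^{+}$, which is the one-step analogue of your partial-sum monotonicity argument. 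Your induction over signed fundamental weights buys a weaker representation-theoretic import (only the $\pm\varpi_{i}$ cases of those two theorems) at the cost of the induction itself and the bookkeeping keeping every partial sum $\mu+\lambda_{1}+\cdots+\lambda_{k}$ dominant; since the prior results are quoted in the paper for arbitrary dominant and anti-dominant weights, your base-case economy in effect re-derives the general dominant and anti-dominant identities rather than citing them. Both routes are valid, and your side remarks are accurate: the $-\varpi_{i}$ input cannot be obtained by formally inverting the $+\varpi_{i}$ case, precisely because $\{\varpi_{i},-\varpi_{i}\}$ is not cancellation free (cf. the remark following Proposition~\ref{prop:generating_function_D}), and the vanishing statement for $\mu+\lambda\notin P^{+}$ (Appendix~\ref{sec:App_C}) is indeed separate from, and not needed in, this argument.
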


\begin{rem}\label{rhs-cancel}
The right-hand side of \eqref{eq:PC-type_formula} is identical to zero if $\mu + \lambda \notin P^{+}$; 
the proof is given in Appendix~\ref{sec:App_C}. 
\end{rem}

Although Theorem~\ref{thm:PC-type_formula} can be proved in a parallel way to \cite[Theorem~33]{LNS},
we show that it follows immediately from the results in Sections~\ref{subsec:generating_function} and \ref{subsec:comm}. 

Now we recall two special cases of Theorem~\ref{thm:PC-type_formula}, 
i.e., the cases that $\lambda$ is dominant or anti-dominant.
The following theorem gives the identity for dominant weights;
this is a restatement of \cite[Corollary~C.1]{NOS} in terms of the quantum alcove model, which is given by exactly the same argument as for \cite[Theorem~29]{LNS}.
Here, for a dominant weight $\lambda \in P^{+}$, the \emph{lex $\lambda$-chain} is a $\lambda$-chain constructed in \cite[Proposition~4.2]{LP2}. 
%
%
\begin{thm}[{cf. \cite[Theorem~29]{LNS} and \cite[Corollary~C.1]{NOS}; see also \cite[Proposition~D.1]{KNS}}]\label{thm:PC-dominant}
Let $\mu, \lambda \in P^{+}$, and $x = w t_{\xi} \in W_{\af}$ with $w \in W$ and $\xi \in Q^{\vee}$. 
Let $\Gamma$ be the lex $\lambda$-chain.
Then we have 
\begin{equation*}
\gch V_{x}^{-}(\mu + \lambda) = \sum_{A \in \CA(w, \Gamma)} \sum_{\bchi \in \bPar(\lambda)} q^{-\height(A) - \pair{\lambda}{\xi} - |\bchi|} e^{\wt(A)} \gch V_{\ed(A)t_{\xi + \down(A) + \iota(\bchi)}}^{-}(\mu). 
\end{equation*}
\end{thm}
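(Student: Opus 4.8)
The plan is to obtain this dominant-weight identity not by a fresh computation but by transcribing the already-established graded-character Pieri--Chevalley formula for semi-infinite flag manifolds into the language of the quantum alcove model attached to the lex $\lambda$-chain. For $\mu,\lambda\in P^{+}$, the graded character $\gch V_{x}^{-}(\mu+\lambda)$ was determined in \cite{KNS,NOS} (see \cite[Corollary~C.1]{NOS}) as a weighted sum, indexed by semi-infinite (equivalently, quantum) Lakshmibai--Seshadri paths of shape $\lambda$, of the characters $\gch V_{\cdot}^{-}(\mu)$; the additional summation over $\bchi\in\bPar(\lambda)$ records the $q$-graded contribution of the dominant line bundle associated with $\lambda$, i.e.\ the degrees coming from the affine directions, which is exactly what produces the factor $q^{-|\bchi|}$ and the translation shift $t_{\iota(\bchi)}$. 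Thus the whole task reduces to rewriting that path sum as a sum over $\CA(w,\Gamma)$.

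First I would invoke the bijection between the quantum alcove model for a reduced $\lambda$-chain and the set of quantum Lakshmibai--Seshadri paths of shape $\lambda$, established for $\CA(e,\Gamma)$ in \cite{LNSSS2} and extended to the $w$-admissible subsets $\CA(w,\Gamma)$ in \cite{LNS}, and then match the statistics term by term: $\wt(A)$ with the weight of the path, $\ed(A)$ with its final direction, $\down(A)$ with the coweight recording the quantum (rather than Bruhat) steps, and $\height(A)$ with the degree statistic. The crucial simplification specific to the dominant case is that the lex $\lambda$-chain consists only of positive roots: by the counting identity $\pair{\lambda}{\alpha^{\vee}}=\#\{j\mid\beta_{j}=\alpha\}-\#\{j\mid\beta_{j}=-\alpha\}$ used in Proposition~\ref{prop-wr}, a reduced $\lambda$-chain for dominant $\lambda$ contains each positive root $\alpha$ exactly $\pair{\lambda}{\alpha^{\vee}}\ge 0$ times and never its negative, whence $n(A)=0$ for every $A\in\CA(w,\Gamma)$. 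This is precisely why no sign $(-1)^{n(A)}$ appears here, in contrast with the general formula \eqref{eq:PC-type_formula}. The affine bookkeeping is light: writing $x=wt_{\xi}$, one tracks the translation part $t_{\xi}$ through the formula, which produces the explicit normalizing factor $q^{-\pair{\lambda}{\xi}}$ and feeds $\xi$ into the subscript $t_{\xi+\down(A)+\iota(\bchi)}$ of the inner character, exactly as in the argument of \cite[Theorem~29]{LNS}; this is consistent with $\gch V_{x}^{-}(\mu+\lambda)\in\BZ[P]\bra{q^{-1}}q^{-\pair{\mu+\lambda}{\xi}}$.

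Substituting this dictionary into the cited formula then yields the claimed identity verbatim. I expect the main obstacle to be the on-the-nose verification of the statistic correspondence under the QLS bijection --- above all the matching of the degree/height statistic and the compatibility of the partition data $\bPar(\lambda)$ with the degree contributions in the source formula --- together with checking that the derivation of \cite[Theorem~29]{LNS} transfers unchanged to the general dominant setting of \cite[Corollary~C.1]{NOS}. Once these points are settled the result follows immediately; moreover, by Theorem~\ref{thm:generating_function_reduced} the right-hand side does not depend on the choice of reduced $\lambda$-chain, so singling out the lex $\lambda$-chain is merely a convenient normalization.
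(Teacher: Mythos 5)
Your proposal matches the paper's treatment: the paper gives no independent proof of Theorem~\ref{thm:PC-dominant}, presenting it instead as a restatement of \cite[Corollary~C.1]{NOS} in terms of the quantum alcove model, ``given by exactly the same argument as for \cite[Theorem~29]{LNS}'' --- precisely the QLS-path-to-admissible-subset dictionary you describe. Your supporting observations (the lex $\lambda$-chain for dominant $\lambda$ consists only of positive roots, so $n(A)=0$ and no sign appears; the translation part of $x=wt_{\xi}$ contributes the factor $q^{-\pair{\lambda}{\xi}}$ and the shift in the subscript) are correct and consistent with the paper.
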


Also, the following theorem gives the identity for anti-dominant weights;
this is a restatement of \cite[Corollary~3.15]{NOS} in terms of the quantum 
alcove model, which is given by exactly the same argument as for \cite[Theorem~32]{LNS}.
Here, following \cite[Section~4.2]{LNS}, 
the \emph{lex $\lambda$-chain} for an anti-dominant weight $\lambda \in -P^{+}$ 
is defined to be the reverse of the lex $(-\lambda)$-chain. 
%
%
\begin{thm}[{cf. \cite[Theorem~32]{LNS} and \cite[Corollary~3.15]{NOS}; see also \cite[Proposition~D.1]{KNS}}]\label{thm:PC-anti_dominant}
Let $\mu \in P^{+}$, and $x = w t_{\xi} \in W_{\af}$ with $w \in W$ and $\xi \in Q^{\vee}$. 
Take $\lambda \in -P^{+}$ such that $\mu + \lambda \in P^{+}$, 
and let $\Gamma$ be the lex $\lambda$-chain. 
Then we have 
\begin{equation*}
\gch V_{x}^{-}(\mu + \lambda) = \sum_{A \in \CA(w, \Gamma)} (-1)^{|A|} q^{-\height(A) - \pair{\lambda}{\xi}} e^{\wt(A)} \gch V_{\ed(A)t_{\xi + \down(A)}}^{-}(\mu). 
\end{equation*}
\end{thm}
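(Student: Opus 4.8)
The plan is to obtain Theorem~\ref{thm:PC-anti_dominant} as a translation of the representation-theoretic Chevalley formula \cite[Corollary~3.15]{NOS} into the quantum alcove model attached to the lex $\lambda$-chain, following verbatim the argument by which \cite[Theorem~32]{LNS} was deduced from its geometric counterpart. Before carrying out the translation, I would record two structural simplifications specific to the anti-dominant case. First, since $\lambda \in -P^{+}$, writing $\lambda = \sum_{i \in I} m_{i}\vpi_{i}$ we have $m_{i} \le 0$ for all $i$, so $\max\{m_{i}, 0\} = 0$ and hence $\bPar(\lambda)$ reduces to the single tuple $\bchi = (\emptyset)_{i \in I}$ with $|\bchi| = 0$ and $\iota(\bchi) = 0$; this is why no sum over $\bPar(\lambda)$ and no factors $q^{-|\bchi|}$ or $t_{\iota(\bchi)}$ appear, by contrast with \eqref{eq:PC-type_formula}. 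Second, because the lex $\lambda$-chain is the reverse of the lex $(-\lambda)$-chain for the dominant weight $-\lambda$, and the latter consists of positive roots, every root $\beta_{k}$ of $\Gamma$ lies in $-\Delta^{+}$; consequently $n(A) = \#\{j \in A \mid \beta_{j} \in -\Delta^{+}\} = |A|$ for every $A \in \CA(w, \Gamma)$, which accounts for the sign $(-1)^{|A|}$.

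With these reductions in place, the core of the proof is to exhibit a statistic-preserving bijection between $\CA(w, \Gamma)$ and the combinatorial indexing set of \cite[Corollary~3.15]{NOS}. Concretely, I would send $A = \{j_{1} < \cdots < j_{p}\} \in \CA(w, \Gamma)$ to its associated directed path $\bp(A): w = w_{0} \xrightarrow{|\beta_{j_{1}}|} \cdots \xrightarrow{|\beta_{j_{p}}|} w_{p}$ in $\QBG(W)$, and check that the paths arising this way reproduce exactly the data recorded in \cite{NOS}. The verification then proceeds statistic by statistic: the endpoint $\ed(A) = w_{p}$ together with the coroot contribution $\down(A) = \wt(\bp(A))$ coming from the quantum edges assembles the affine-Weyl-group element $\ed(A)t_{\xi + \down(A)}$ that indexes the Demazure submodule on the right-hand side; the weight $\wt(A)$ matches the $P$-grading; and $\height(A) + \pair{\lambda}{\xi}$ matches the $q$-degree. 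For the height comparison I would use the same normalization bookkeeping as in the proof of Theorem~\ref{thm:wt_height_preserving}, built on the identity $\wti{l_{j}} = \pair{\lambda}{\beta_{j}^{\vee}} - l_{j}$, to reconcile the convention of \cite{NOS} with the one built into $\height(\cdot)$.

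I expect the main obstacle to be the simultaneous bookkeeping of the height and the affine/translation part: one must track how each quantum edge of $\bp(A)$ contributes both a coroot $|\beta_{j}|^{\vee}$ to $\down(A)$ and a term $\sgn(\beta_{j})\wti{l_{j}}$ to $\height(A)$, and confirm that, after accounting for the normalizations $\gch V_{wt_{\eta}}^{-}(\nu) \in \BZ[P]\bra{q^{-1}}q^{-\pair{\nu}{\eta}}$ on both sides, these assemble into precisely the index $\ed(A)t_{\xi + \down(A)}$ and the exponent $-\height(A) - \pair{\lambda}{\xi}$ of \cite[Corollary~3.15]{NOS}. Because the relevant compatibility is identical in form to the one already established in Sections~\ref{subsec:generating_function} and \ref{subsec:comm}, and because the lex $\lambda$-chain is reduced (so no deletion or cancellation subtleties arise), this step is routine once the dictionary is fixed; the only genuinely delicate point is ensuring that the sign and degree conventions of \cite{NOS} agree with those encoded in $\bG_{\Gamma}$, which I would settle by direct comparison on the indexing data, exactly as in \cite[Theorem~32]{LNS}.
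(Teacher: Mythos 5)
Your proposal matches the paper's treatment: the paper does not prove Theorem~\ref{thm:PC-anti_dominant} from scratch, but presents it as a restatement of \cite[Corollary~3.15]{NOS} in terms of the quantum alcove model, obtained ``by exactly the same argument as for \cite[Theorem~32]{LNS}'' --- which is precisely the translation you carry out. Your two preliminary observations (that $\bPar(\lambda)$ is a singleton with $|\bchi|=0$ and $\iota(\bchi)=0$ for anti-dominant $\lambda$, and that the lex $\lambda$-chain consists of negative roots so that $n(A)=|A|$) are correct and account exactly for the differences between this statement and \eqref{eq:PC-type_formula}.
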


\begin{proof}[Proof of Theorem~\ref{thm:PC-type_formula}]
Let $\mu, \lambda, x$ and $\Gamma$ be as in the statement of Theorem~\ref{thm:PC-type_formula}. Write $\lambda = \lambda^{+} + \lambda^{-}$, with $\lambda^{+} \in P^{+}$ and $\lambda^{-} \in -P^{+}$ given by
\begin{equation*}
\lambda^{+} := \sum_{i \in I} \max\{ \langle\lambda, \alpha_{i}^{\vee}\rangle, 0 \} \varpi_{i}, \qquad \lambda^{-} := \sum_{i \in I} \min\{\langle\lambda, \alpha_{i}^{\vee}\rangle, 0\} \varpi_{i}.
\end{equation*}
Note that the weight decomposition $\lambda = \lambda^{+} + \lambda^{-}$ is cancellation free. 
Take a lex $\lambda^{+}$-chain (resp., lex $\lambda^{-}$-chain) $\Gamma^{+}$ (resp., $\Gamma^{-}$).  
Note that the two chains of roots are reduced, and $\Gamma^+$ consists of positive roots, while $\Gamma^-$ consists of negative roots. 
Define a $\lambda$-chain $\Gamma_{0}$ as the concatenation $\Gamma^+\ast\Gamma^-$, which is weakly reduced by Proposition~\ref{prop-wr}.

By Theorems~\ref{comm_mu_nu_par} and \ref{thm:generating_function_reduced}, for $x\in W_{\af}$ we have
\begin{equation}\label{proof_PC}\widehat{\bG}_{\Gamma^-}\circ\widehat{\bG}_{\Gamma^+}(x)=\widehat{\bG}_{\Gamma_0}(x)=\widehat{\bG}_{\Gamma}(x).\end{equation} 

Now consider the correspondence $x \mapsto \gch V_{x}^{-}(\mu)$ for $x \in W_{\af}$, which defines an $R[P]$-module homomorphism $R[P][W_\af] \rightarrow \BZ \pra{q^{-1}}[P]$. Under this homomorphism, $\widehat{\bG}_{\Gamma}(x)$ is mapped to the right-hand side of~\eqref{eq:PC-type_formula}. By~\eqref{proof_PC}, we obtain the same result by applying the homomorphism to $\widehat{\bG}_{\Gamma^-}\circ\widehat{\bG}_{\Gamma^+}(x)$. We observe that doing this parallels the process of expanding $\gch V_{x}^{-}(\mu+\lambda)=\gch V_{x}^{-}((\mu+\lambda^-)+\lambda^+)$ in terms of $\gch V_{\cdot}^{-}(\mu+\lambda^-)$ by Theorem~\ref{thm:PC-dominant}, followed by expanding the result in terms of $\gch V_{\cdot}^{-}(\mu)$ by Theorem~\ref{thm:PC-anti_dominant}; here we use the fact that $\mu+\lambda\in P^+$ implies $\mu+\lambda^-\in P^+$. The mentioned observation proves that, by applying the above homomorphism to $\widehat{\bG}_{\Gamma^-}\circ\widehat{\bG}_{\Gamma^+}(x)$, we obtain $\gch V_{x}^{-}(\mu+\lambda)$. We conclude that the right-hand side of~\eqref{eq:PC-type_formula} coincides with $\gch V_{x}^{-}(\mu+\lambda)$. 
\end{proof}

\begin{rem}
Theorem~\ref{thm:PC-type_formula} can also be proved by using the $\lambda$-chain $\Gamma_{0}^{\ast} :=\Gamma^-\ast\Gamma^+$ instead of $\Gamma_0$.
\end{rem}


\subsection{Towards a signed crystal structure on the quantum alcove model for an arbitrary weight.} 
Crystals are colored directed graphs encoding the structure of representations of quantum algebras $U_{\q}(\Fg)$ in the limit $\q\to 0$, where $\Fg$ is a symmetrizable Kac-Moody algebra. The vertices $B$ of the crystal correspond to the elements of a crystal basis for the representation, and the edges correspond to the action of the Chevalley generators $e_i,\,f_i$ in the above limit. Formally, we define the \emph{crystal operators} $\widetilde{e}_i,\widetilde{f}_i:B\rightarrow B\sqcup\{\mathbf{0}\}$, where the value $\mathbf{0}$ corresponds to the operators being undefined, and $\widetilde{e}_i$ is a partial inverse to $\widetilde{f}_i$. These operators are subject to several conditions; see e.g.~\cite{HK} for all the background information on crystals. 

We define a \emph{signed crystal} simply as a crystal together with a sign function on the vertex set $B$; note that we do not require the crystal operators to preserve signs. An isomorphism of signed crystals $B$ and $B'$ is a sijection between $B$ and $B'$ which commutes with the crystal operators.

Given a dominant weight $\lambda$ and a reduced $\lambda$-chain $\Gamma$, an affine crystal structure was constructed on $\CA(e,\Gamma)$ in~\cite{LL1}, which was then shown in~\cite{LNSSS2} to uniformly describe tensor products of single-column Kirillov-Reshetikhin crystals of quantum affine algebras; 
note that for all $w \in W$, the set $\CA(w, \Gamma)$ can be equipped with an affine crystal structure 
through the bijection with the affine crystal $\CA(e, \Gamma)$, which is afforded by \cite[Proposition~28]{LNS} and quantum Yang-Baxter moves. 
Also, for an anti-dominant weight $\lambda'$ and a reduced $\lambda'$-chain $\Gamma'$, an argument similar to that in the proof of \cite[Theorem~8.6]{LP1} yields a signed crystal structure on the set $\CA(\Gamma', w) := \{ A \in \CA(\Gamma') \mid \ed(A) = w \}$, with $\CA(\Gamma') := \bigsqcup_{w \in W} \CA(w, \Gamma')$, 
which is in bijective correspondence with $\CA(ww_{\circ}, (\Gamma')^{\ast})$, where $(\Gamma')^{\ast}$ is a reduced $w_{\circ}\lambda'$-chain of roots dual to $\Gamma'$. 
In both the dominant and anti-dominant cases, as well as in general (below), we use the the same sign function as throughout the paper; in particular, in the dominant case the sign function is identically $1$.

We now propose the construction of a (partial) signed crystal structure on $\CA(\Gamma) := \bigsqcup_{w \in W} \CA(w, \Gamma)$, where $\Gamma$ is a reduced chain of roots corresponding to an arbitrary weight $\lambda$. We use the same objects and facts as in Sections~\ref{subsec:comm} and \ref{subsec:character_identity}, namely $\lambda=\lambda^++\lambda^-$, the lex $\lambda^{+}$-chain (resp., lex $\lambda^{-}$-chain) $\Gamma^{+}$ (resp., $\Gamma^{-}$), their concatenation $\Gamma_0^{\ast}$ (not $\Gamma_0$), and the bijection~\eqref{concat-adm-subs}. 
Based on these facts, we can define a signed crystal structure on $\CA(\Gamma_0^{\ast})$ by decomposing it as $\CA(\Gamma_0^{\ast}) = \bigsqcup_{w \in W} (\CA(\Gamma^{-}, w) \ast \CA(w, \Gamma^{+}))$, where $\CA(\Gamma^{-}, w) := \{ A \in \CA(\Gamma^{-}) \mid \text{end}(A) = w \}$ for $w \in W$; note that the concatenation $\CA(\Gamma^{-}, w) \ast \CA(w, \Gamma^{+})$ is a well-defined crystal for all $w \in W$.
On another hand, we know that $\Gamma_0^{\ast}$ can be related to the reduced $\lambda$-chain $\Gamma$ by the procedures (YB) and (D). By propagating the signed crystal structures through the corresponding generalized quantum Yang-Baxter moves, we end up with a (partial) signed crystal structure on $\CA(\Gamma)$, that is, a signed crystal structure for which crystal operators are defined only on a subset of $\CA(\Gamma)$.
This construction is inspired by the construction of the level-zero extremal weight module $V(\lambda)$ over a quantum affine algebra as a quotient, with a certain universality, of $V(\Lambda^{-}) \otimes  V(\Lambda^{+})$, where $\Lambda^{+}$ (resp., $\Lambda^{-}$) is an affine dominant (resp., anti-dominant) weight such that $\lambda = \Lambda^{+} + \Lambda^{-}$, and $V(\Lambda^{+})$ (resp., $V(\Lambda^{-})$) is the irreducible highest (resp., lowest) weight module of the corresponding weight. 

We state the following conjecture, which will have several consequences.

\begin{conj}\label{YB-move-crystal} For all reduced chain of roots $\Gamma$ corresponding to an arbitrary weight $\lambda$, there exists a signed crystal structure on the whole of $\CA(\Gamma)$, which extends the (partial) crystal structure defined above. Moreover, the sijection $(I_1,I_2,Y)$ defining a generalized quantum Yang-Baxter move in Theorem~\ref{thm:YB-move} commutes with the crystal operators defined on $\CA(\Gamma)$. 
\end{conj}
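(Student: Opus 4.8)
The plan is to reduce everything to a single fundamental fact, namely that the generalized quantum Yang-Baxter moves commute with the crystal operators; both the extension of the (partial) signed crystal structure to all of $\CA(\Gamma)$ and the commutation assertion itself rest on this, as already flagged in the Introduction. As in the dominant case of \cite{LL2}, I would first localize the problem: a crystal operator $\widetilde{e}_i$ or $\widetilde{f}_i$ (for $i \in I_{\af}$) acting on an admissible subset $A$ modifies $A$ only through a local rule attached to the $\alpha_i$-string of the associated alcove path. When the positions affected by this rule are disjoint from the reversed segment $\{t+1,\ldots,t+q\}$ of the Yang-Baxter transformation, commutation with $Y$ (and with $I_1,I_2$) is immediate from the fact that these maps fix $A^{(1)}$ and $A^{(3)}$. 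Hence the entire content is concentrated in the interaction between a crystal operator and the reversed segment, which involves only the rank-two subsystem $\Delta_{\alpha,\beta}$. Using the decomposition $u=\overline{u}\lfloor u\rfloor$ of \cite[Proposition~5.1]{LL2}, this interaction transports to $\QBG(\overline{W})$, so that the required commutation becomes a finite check in the rank-two root systems $A_1\times A_1$, $A_2$, $C_2$, and $G_2$.

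Having isolated the rank-two commutation, I would carry it out by the same explicit computations underlying Propositions~\ref{prop:rank2_shellability}--\ref{prop:rank2_shellability_G2}, but now tracking the action of $\widetilde{e}_i,\widetilde{f}_i$ on the finitely many $\Pi$- and $\Pi'$-compatible paths sharing a fixed endpoint and weight. The desired output is that $Y$ carries each $\widetilde{f}_i$-string of $\CA_0(w,\Gamma_1)$ isomorphically onto the corresponding string of $\CA_0(w,\Gamma_2)$, and that $I_1,I_2$ pair up the remaining elements compatibly with the strings. I expect the $G_2$ exceptional cases (E1)--(E4), where the sijection is not literally a bijection and where the multiplicities $m_j=3$ of Remark~\ref{rem:exception_G} occur, to demand separate bookkeeping; the explicit paths listed in Remark~\ref{rem:explicit_rank2_shellability_G} are precisely the data one would feed into this verification.

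For the existence and extension of the signed crystal structure, the plan is to build it first on $\CA(\Gamma_0^{\ast})=\bigsqcup_{w\in W}\bigl(\CA(\Gamma^-,w)\ast\CA(w,\Gamma^+)\bigr)$, taking the crystal operators to be those of the tensor product of the (anti-dominant) signed crystal $\CA(\Gamma^-,w)$ and the (dominant) crystal $\CA(w,\Gamma^+)$, the latter everywhere defined. This models the crystal of $V(\Lambda^-)\otimes V(\Lambda^+)$, in parallel with the realization of $V(\lambda)$ as a universal quotient of that tensor product. One then transports the structure to $\CA(\Gamma)$ along the fixed sequence of procedures (YB) and (D) relating $\Gamma_0^{\ast}$ to $\Gamma$; the commutation fact established above guarantees that the transported operators are well defined, while the cancellation involutions $I_1,I_2,I_D$ dictate how to define the operators on the subsets of $\CA(\Gamma)$ untouched by $Y$ and $D$, namely by requiring that they intertwine those involutions. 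Showing that this prescription is coherent---independent of the chosen sequence of moves and of the decomposition $\lambda=\lambda^++\lambda^-$---is where I expect the main difficulty to lie: it is a coherence statement for the crystal structure under all the sijections at once, and is the combinatorial shadow of the well-definedness of the quotient $V(\lambda)$. A natural way to pin the extension down uniquely, and thereby to prove coherence, is to match the resulting signed character of each $\widetilde{f}_i$-string against the corresponding extremal-weight data through Theorem~\ref{thm:PC-type_formula}, so that the already-established representation-theoretic identity certifies the combinatorial construction.
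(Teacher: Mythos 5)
The first thing to observe is that the statement you are addressing is Conjecture~\ref{YB-move-crystal}: the paper offers \emph{no proof} of it, and explicitly leaves open both assertions (the existence of a signed crystal structure on all of $\CA(\Gamma)$ extending the partial one, and the commutation of the sijection $(I_1,I_2,Y)$ with the crystal operators). So there is no proof in the paper to compare yours against; the only question is whether your proposal, taken on its own, constitutes a proof. It does not. Your overall architecture --- the tensor structure on $\CA(\Gamma_0^{\ast})=\bigsqcup_{w\in W}\bigl(\CA(\Gamma^-,w)\ast\CA(w,\Gamma^+)\bigr)$, transport along the procedures (YB) and (D), and reduction of the commutation to rank two via \cite[Proposition~5.1]{LL2} --- is exactly the route the paper itself sketches when formulating the conjecture, so the plan is sensible; but at each of the three points where you yourself locate the difficulty, the argument is absent rather than merely compressed.

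Concretely: (i) the rank-two commutation is never carried out --- you describe ``the desired output'' (that $Y$ maps $\widetilde{f}_i$-strings to $\widetilde{f}_i$-strings and that $I_1,I_2$ respect strings) but give no argument, and this is the entire content of the second assertion; even in the dominant case this is the hard part of \cite[Theorem~3.8]{LL2}, not a routine finite check. Your localization step is also too quick: in the quantum alcove model the operators $\widetilde{e}_i,\widetilde{f}_i$ are defined by a \emph{global} rule along the whole chain (via the full sequence of heights and foldings), so disjointness of the affected positions from the reversed segment $\{t+1,\ldots,t+q\}$ does not immediately yield commutation with $Y$, $I_1$, $I_2$. (ii) The extension step is circular as stated: transporting the tensor structure through the generalized quantum Yang--Baxter moves produces only the \emph{partial} structure the paper already constructs, and your prescription for the remaining elements --- ``requiring that the operators intertwine the involutions'' --- determines nothing, since $I_1$, $I_2$, $I_D$ are merely sign-reversing pairings of equal-weight elements and carry no string data; the first assertion of the conjecture is precisely that an extension exists, so invoking the involutions here restates the problem rather than solving it. (iii) The proposed certification of coherence via Theorem~\ref{thm:PC-type_formula} cannot work even in principle: graded (signed) character identities do not determine crystal structures, since non-isomorphic crystals can have equal characters, and the sign cancellations discard further information; matching characters of strings against extremal-weight data can therefore neither pin down the extension uniquely nor prove its independence of the decomposition $\lambda=\lambda^{+}+\lambda^{-}$ and of the chosen sequence of moves.
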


Conjecture~\ref{YB-move-crystal} would generalize the corresponding result for the quantum Yang-Baxter moves, namely~\cite[Theorem~3.8]{LL2}. Moreover, it would realize the generating function identity in Theorem~\ref{thm:generating_function_reduced} as an isomorphism of the underlying signed crystals, given that the proof of this identity is based on the generalized quantum Yang-Baxter moves.

Recall that, based on the generalized quantum Yang-Baxter moves, we also proved the invariance of a composite of generating functions $\bG_{\Gamma_1}\circ\cdots\circ\bG_{\Gamma_p}(x)$ with respect to permuting the maps $\bG_{\Gamma_\cdot}$ (under the appropriate conditions, see Theorem~\ref{comm_mu_nu} and Corollary~\ref{cor:mult-ch}). This proof essentially amounts to concatenating the chains of roots $\Gamma_1,\ldots,\Gamma_p$ in different orders, considering the quantum alcove models based on these concatenations, and relating them by the procedures (YB) and (D). Thus, Conjecture~\ref{YB-move-crystal} would also realize the invariance of a composite of $\bG_{\Gamma_\cdot}$ as an isomorphism of the underlying signed crystals. This result would generalize the realization of the combinatorial $R$-matrix in~\cite{LL2}. Indeed, given $\lambda=\sum_j\varpi_{i_j}$ and a chain of roots $\Gamma$ obtained by concatenating $\varpi_{i_j}$-chains, we know from~\cite{LNSSS2} that $\CA(e,\Gamma)$ realizes the tensor product of single-column Kirillov-Reshetikhin crystals corresponding to $\varpi_{i_j}$, for $j=1,2,\ldots$, in that order; therefore, changing the order of $j$ corresponds to permuting the tensor factors and the $\varpi_{i_j}$-subchains of $\Gamma$, while the quantum alcove models based on the respective $\lambda$-chains are related by quantum Yang-Baxter moves. 

Finally, we believe that Conjecture~\ref{YB-move-crystal} would lead to a (signed) crystal-theoretic realization of the character identity \eqref{eq:PC-type_formula} in Theorem~\ref{thm:PC-type_formula}. Indeed, both sides of this identity involve characters of (Demazure submodules of) level-zero extremal weight modules, whereas the corresponding crystals were realized in~\cite{INS} in terms of so-called \emph{semi-infinite Lakshmibai-Seshadri paths}. The right-hand side also involves the sets $\CA(w,\Gamma)$, with the signed crystal structure defined above. Thus, we have a signed crystal structure on the objects on the right-hand side coming from tensor products of a crystal of the first type with one of the second type. The sijection relating the objects on the two sides would include a sign-reversing involution realizing significant cancellations on the right-hand side, as Remark~\ref{rhs-cancel} suggests (see Proposition~\ref{prop:sum_equals_0} in Appendix~C). As supporting evidence for the crystal-theoretic realization of~\eqref{eq:PC-type_formula}, we indicate the crystal-theoretic proof in \cite[Appendix~B]{NOS} (based on semi-infinite Lakshmibai-Seshadri paths) of Proposition~\ref{prop:sum_equals_0_anti-dominant}, which is used to prove Proposition~\ref{prop:sum_equals_0}.

\appendix
%
\section*{Appendices.}
%


\section{Matrices of some operators in type \texorpdfstring{$G_{2}$}{G2}.}\label{sec:App_A}
Here we give the list of the matrices of operators (1)--(12) in Section~\ref{subsec:typeG2}. 
First, the following are the matrices (with respect to the basis 
\begin{equation*}
W = \{e, s_{1}, s_{2}, s_{1}s_{2}, s_{2}s_{1}, s_{1}s_{2}s_{1}, s_{2}s_{1}s_{2}, s_{1}s_{2}s_{1}s_{2}, s_{2}s_{1}s_{2}s_{1}, s_{1}s_{2}s_{1}s_{2}s_{1}, s_{2}s_{1}s_{2}s_{1}s_{2}, w_{\circ}\} 
\end{equation*}
of $K[W]$) of operators $\sQ_{\gamma}$, 
$\gamma \in \Delta^{+} = \{ \alpha_{1}, 3\alpha_{1}+\alpha_{2}, 2\alpha_{1}+\alpha_{2}, 3\alpha_{1}+2\alpha_{2}, \alpha_{1}+\alpha_{2}, \alpha_{2} \}$ 
(cf. \cite[Fig.~2\,(C)]{LL2}): 

\vspace{5mm}

\noindent [1] $\sQ_{\alpha_{1}}$: 
\begin{equation*}
\left( 
\begin{array}{cccccccccccc}
0 & Q_{1} & 0 & 0 & 0 & 0 & 0 & 0 & 0 & 0 & 0 & 0 \\ 
1 & 0 & 0 & 0 & 0 & 0 & 0 & 0 & 0 & 0 & 0 & 0 \\ 
0 & 0 & 0 & 0 & Q_{1} & 0 & 0 & 0 & 0 & 0 & 0 & 0 \\ 
0 & 0 & 0 & 0 & 0 & Q_{1} & 0 & 0 & 0 & 0 & 0 & 0 \\ 
0 & 0 & 1 & 0 & 0 & 0 & 0 & 0 & 0 & 0 & 0 & 0 \\ 
0 & 0 & 0 & 1 & 0 & 0 & 0 & 0 & 0 & 0 & 0 & 0 \\ 
0 & 0 & 0 & 0 & 0 & 0 & 0 & 0 & Q_{1} & 0 & 0 & 0 \\ 
0 & 0 & 0 & 0 & 0 & 0 & 0 & 0 & 0 & Q_{1} & 0 & 0 \\ 
0 & 0 & 0 & 0 & 0 & 0 & 1 & 0 & 0 & 0 & 0 & 0 \\ 
0 & 0 & 0 & 0 & 0 & 0 & 0 & 1 & 0 & 0 & 0 & 0 \\ 
0 & 0 & 0 & 0 & 0 & 0 & 0 & 0 & 0 & 0 & 0 & Q_{1} \\ 
0 & 0 & 0 & 0 & 0 & 0 & 0 & 0 & 0 & 0 & 1 & 0 
\end{array}
\right)
\end{equation*}

\noindent [2] $\sQ_{3\alpha_{1}+\alpha_{2}}$: 
\begin{equation*}
\left( 
\begin{array}{cccccccccccc}
0 & 0 & 0 & 0 & 0 & Q_{1}Q_{2} & 0 & 0 & 0 & 0 & 0 & 0 \\ 
0 & 0 & 0 & 0 & 0 & 0 & 0 & 0 & 0 & 0 & 0 & 0 \\ 
0 & 0 & 0 & 0 & 0 & 0 & 0 & 0 & Q_{1}Q_{2} & 0 & 0 & 0 \\ 
0 & 0 & 0 & 0 & 0 & 0 & 0 & 0 & 0 & Q_{1}Q_{2} & 0 & 0 \\ 
0 & 1 & 0 & 0 & 0 & 0 & 0 & 0 & 0 & 0 & 0 & 0 \\ 
0 & 0 & 0 & 0 & 0 & 0 & 0 & 0 & 0 & 0 & 0 & 0 \\ 
0 & 0 & 0 & 0 & 0 & 0 & 0 & 0 & 0 & 0 & 0 & Q_{1}Q_{2} \\ 
0 & 0 & 0 & 0 & 0 & 0 & 0 & 0 & 0 & 0 & 0 & 0 \\ 
0 & 0 & 0 & 0 & 0 & 0 & 0 & 0 & 0 & 0 & 0 & 0 \\ 
0 & 0 & 0 & 0 & 0 & 0 & 0 & 0 & 0 & 0 & 0 & 0 \\ 
0 & 0 & 0 & 0 & 0 & 0 & 0 & 1 & 0 & 0 & 0 & 0 \\ 
0 & 0 & 0 & 0 & 0 & 0 & 0 & 0 & 0 & 0 & 0 & 0 
\end{array}
\right) 
\end{equation*}

\noindent [3] $\sQ_{2\alpha_{1}+\alpha_{2}}$: 
\begin{equation*}
\left( 
\begin{array}{cccccccccccc}
0 & 0 & 0 & 0 & 0 & 0 & 0 & 0 & 0 & 0 & 0 & 0 \\ 
0 & 0 & 0 & 0 & 0 & 0 & 0 & 0 & 0 & 0 & 0 & 0 \\ 
0 & 0 & 0 & 0 & 0 & 0 & 0 & 0 & 0 & 0 & 0 & 0 \\ 
0 & 0 & 0 & 0 & 0 & 0 & 0 & 0 & 0 & 0 & 0 & 0 \\ 
0 & 0 & 0 & 0 & 0 & 0 & 0 & 0 & 0 & 0 & 0 & 0 \\ 
0 & 0 & 0 & 0 & 1 & 0 & 0 & 0 & 0 & 0 & 0 & 0 \\ 
0 & 0 & 0 & 0 & 0 & 0 & 0 & 0 & 0 & 0 & 0 & 0 \\ 
0 & 0 & 0 & 0 & 0 & 0 & 1 & 0 & 0 & 0 & 0 & 0 \\ 
0 & 0 & 0 & 0 & 0 & 0 & 0 & 0 & 0 & 0 & 0 & 0 \\ 
0 & 0 & 0 & 0 & 0 & 0 & 0 & 0 & 0 & 0 & 0 & 0 \\ 
0 & 0 & 0 & 0 & 0 & 0 & 0 & 0 & 0 & 0 & 0 & 0 \\ 
0 & 0 & 0 & 0 & 0 & 0 & 0 & 0 & 0 & 0 & 0 & 0 
\end{array}
\right) 
\end{equation*}

\noindent [4] $\sQ_{3\alpha_{1}+2\alpha_{2}}$: 
\begin{equation*}
\left( 
\begin{array}{cccccccccccc}
0 & 0 & 0 & 0 & 0 & 0 & 0 & 0 & 0 & 0 & Q_{1}Q_{2}^{2} & 0 \\ 
0 & 0 & 0 & 0 & 0 & 0 & 0 & 0 & 0 & 0 & 0 & Q_{1}Q_{2}^{2} \\ 
0 & 0 & 0 & 0 & 0 & 0 & 0 & 0 & 0 & 0 & 0 & 0 \\ 
0 & 0 & 0 & 0 & 0 & 0 & 0 & 0 & 0 & 0 & 0 & 0 \\ 
0 & 0 & 0 & 0 & 0 & 0 & 0 & 0 & 0 & 0 & 0 & 0 \\ 
0 & 0 & 0 & 0 & 0 & 0 & 0 & 0 & 0 & 0 & 0 & 0 \\ 
0 & 0 & 0 & 1 & 0 & 0 & 0 & 0 & 0 & 0 & 0 & 0 \\ 
0 & 0 & 0 & 0 & 0 & 0 & 0 & 0 & 0 & 0 & 0 & 0 \\ 
0 & 0 & 0 & 0 & 0 & 1 & 0 & 0 & 0 & 0 & 0 & 0 \\ 
0 & 0 & 0 & 0 & 0 & 0 & 0 & 0 & 0 & 0 & 0 & 0 \\ 
0 & 0 & 0 & 0 & 0 & 0 & 0 & 0 & 0 & 0 & 0 & 0 \\ 
0 & 0 & 0 & 0 & 0 & 0 & 0 & 0 & 0 & 0 & 0 & 0 
\end{array}
\right) 
\end{equation*}

\noindent [5] $\sQ_{\alpha_{1}+\alpha_{2}}$: 
\begin{equation*}
\left( 
\begin{array}{cccccccccccc}
0 & 0 & 0 & 0 & 0 & 0 & 0 & 0 & 0 & 0 & 0 & 0 \\ 
0 & 0 & 0 & 0 & 0 & 0 & 0 & 0 & 0 & 0 & 0 & 0 \\ 
0 & 0 & 0 & 0 & 0 & 0 & 0 & 0 & 0 & 0 & 0 & 0 \\ 
0 & 0 & 1 & 0 & 0 & 0 & 0 & 0 & 0 & 0 & 0 & 0 \\ 
0 & 0 & 0 & 0 & 0 & 0 & 0 & 0 & 0 & 0 & 0 & 0 \\ 
0 & 0 & 0 & 0 & 0 & 0 & 0 & 0 & 0 & 0 & 0 & 0 \\ 
0 & 0 & 0 & 0 & 0 & 0 & 0 & 0 & 0 & 0 & 0 & 0 \\ 
0 & 0 & 0 & 0 & 0 & 0 & 0 & 0 & 0 & 0 & 0 & 0 \\ 
0 & 0 & 0 & 0 & 0 & 0 & 0 & 0 & 0 & 0 & 0 & 0 \\ 
0 & 0 & 0 & 0 & 0 & 0 & 0 & 0 & 1 & 0 & 0 & 0 \\ 
0 & 0 & 0 & 0 & 0 & 0 & 0 & 0 & 0 & 0 & 0 & 0 \\ 
0 & 0 & 0 & 0 & 0 & 0 & 0 & 0 & 0 & 0 & 0 & 0 
\end{array}
\right) 
\end{equation*}

\noindent [6] $\sQ_{\alpha_{2}}$: 
\begin{equation*}
\left( 
\begin{array}{cccccccccccc}
0 & 0 & Q_{2} & 0 & 0 & 0 & 0 & 0 & 0 & 0 & 0 & 0 \\ 
0 & 0 & 0 & Q_{2} & 0 & 0 & 0 & 0 & 0 & 0 & 0 & 0 \\ 
1 & 0 & 0 & 0 & 0 & 0 & 0 & 0 & 0 & 0 & 0 & 0 \\ 
0 & 1 & 0 & 0 & 0 & 0 & 0 & 0 & 0 & 0 & 0 & 0 \\ 
0 & 0 & 0 & 0 & 0 & 0 & Q_{2} & 0 & 0 & 0 & 0 & 0 \\ 
0 & 0 & 0 & 0 & 0 & 0 & 0 & Q_{2} & 0 & 0 & 0 & 0 \\ 
0 & 0 & 0 & 0 & 1 & 0 & 0 & 0 & 0 & 0 & 0 & 0 \\ 
0 & 0 & 0 & 0 & 0 & 1 & 0 & 0 & 0 & 0 & 0 & 0 \\ 
0 & 0 & 0 & 0 & 0 & 0 & 0 & 0 & 0 & 0 & Q_{2} & 0 \\ 
0 & 0 & 0 & 0 & 0 & 0 & 0 & 0 & 0 & 0 & 0 & Q_{2} \\ 
0 & 0 & 0 & 0 & 0 & 0 & 0 & 0 & 1 & 0 & 0 & 0 \\ 
0 & 0 & 0 & 0 & 0 & 0 & 0 & 0 & 0 & 1 & 0 & 0 
\end{array}
\right) 
\end{equation*}

\vspace{5mm}

Hence the matrices of operators (1)--(12) in Section~\ref{subsec:typeG2} can be calculated (by using, e.g., SageMath) as follows: 

\vspace{5mm}

\noindent (1) $\sR_{2\alpha_{1}+\alpha_{2}} \sR_{3\alpha_{1}+2\alpha_{2}} \sR_{\alpha_{1}+\alpha_{2}} \sR_{\alpha_{2}} \sR_{-\alpha_{1}} \sR_{-3\alpha_{1}-\alpha_{2}}$: 
\begin{equation*}
{\fontsize{5pt}{0pt}
\left( 
\renewcommand{\arraystretch}{0.7}
\setlength{\arraycolsep}{2pt}
\begin{array}{cccccccccccc}
1 & Q_{1}Q_{2}-Q_{1} & Q_{2} & 0 & -Q_{1}Q_{2} & -Q_{1}Q_{2} & -Q_{1}Q_{2}^{2} & -Q_{1}Q_{2}^{2} & 0 & 0 & Q_{1}Q_{2}^{2} & Q_{1}^{2}Q_{2}^{3}-Q_{1}^{2}Q_{2}^{2} \\ 
-1 & 1 & 0 & Q_{2} & 0 & 0 & 0 & 0 & 0 & 0 & -Q_{1}Q_{2}^{2} & Q_{1}Q_{2}^{2} \\ 
1 & 0 & 1 & 0 & -Q_{1} & -Q_{1}Q_{2} & 0 & 0 & -Q_{1}Q_{2} & 0 & 0 & 0 \\ 
0 & 1 & 1 & 1 & -Q_{1} & -Q_{1} & 0 & 0 & -Q_{1}Q_{2} & -Q_{1}Q_{2} & 0 & 0 \\ 
0 & -1 & -1 & 0 & 1 & 0 & Q_{2} & 0 & 0 & 0 & 0 & -Q_{1}Q_{2}^{2} \\ 
0 & -1 & -1 & -1 & 1 & 1 & Q_{2} & Q_{2} & 0 & 0 & 0 & -Q_{1}Q_{2} \\ 
0 & 0 & 0 & 1 & -Q_{1}+1 & -Q_{1} & 1 & 0 & -Q_{1} & -Q_{1}Q_{2} & 0 & -Q_{1}Q_{2} \\ 
0 & 0 & 0 & 0 & -Q_{1}+1 & -Q_{1}+1 & 1 & 1 & -Q_{1} & -Q_{1} & 0 & -Q_{1}Q_{2} \\ 
0 & 0 & 0 & -1 & 0 & 1 & -1 & 0 & 1 & 0 & Q_{2} & 0 \\ 
0 & 0 & 0 & 0 & 0 & 0 & -1 & -1 &1 & 1 & 0 & Q_{2} \\ 
0 & 0 & 0 & 0 & 0 & 0 & -1 & -1 & 1 & 0 & 1 & Q_{1}Q_{2}-Q_{1} \\ 
0 & 0 & 0 & 0 & 0 & 0 & 0 & 0 & 0 & 1 & -1 & 1 
\end{array}
\right)}
\end{equation*}

\noindent (2) $\sR_{3\alpha_{1}+2\alpha_{2}} \sR_{\alpha_{1}+\alpha_{2}} \sR_{\alpha_{2}} \sR_{-\alpha_{1}} \sR_{-3\alpha_{1}-\alpha_{2}} \sR_{-2\alpha_{1}-\alpha_{2}}$: 
\begin{equation*}
{\fontsize{5pt}{0pt}
\left( 
\renewcommand{\arraystretch}{0.7}
\setlength{\arraycolsep}{2pt}
\begin{array}{cccccccccccc}
1 & Q_{1}Q_{2}-Q_{1} & Q_{2} & 0 & 0 & -Q_{1}Q_{2} & 0 & -Q_{1}Q_{2}^{2} & 0 & 0 & Q_{1}Q_{2}^{2} & Q_{1}^{2}Q_{2}^{3}-Q_{1}^{2}Q_{2}^{2} \\ 
-1 & 1 & 0 & Q_{2} & 0 & 0 & 0 & 0 & 0 & 0 & -Q_{1}Q_{2}^{2} & Q_{1}Q_{2}^{2} \\ 
1 & 0 & 1 & 0 & Q_{1}Q_{2}-Q_{1} & -Q_{1}Q_{2} & 0 & 0 & -Q_{1}Q_{2} & 0 & 0 & 0 \\ 
0 & 1 & 1 & 1 & 0 & -Q_{1} & 0 & 0 & -Q_{1}Q_{2} & -Q_{1}Q_{2} & 0 & 0 \\  
0 & -1 & -1 & 0 & 1 & 0 & Q_{2} & 0 & 0 & 0 & 0 & -Q_{1}Q_{2}^{2} \\ 
0 & 0 & 0 & -1 & -1 & 1 & -Q_{2} & Q_{2} & 0 & 0 & 0 & 0 \\ 
0 & 0 & 0 & 1 & 1 & -Q_{1} & 1 & 0 & -Q_{1} & -Q_{1}Q_{2} & 0 & -Q_{1}Q_{2} \\ 
0 & 0 & 0 & -1 & -1 & 1 & -1 & 1 & 0 & Q_{1}Q_{2}-Q_{1} & 0 & 0 \\ 
0 & 0 & 0 & -1 & -1 & 1 & -1 & 0 & 1 & 0 & Q_{2} & 0 \\ 
0 & 0 & 0 & 0 & 0 & 0 & 0 & -1 & 1 & 1 & 0 & Q_{2} \\ 
0 & 0 & 0 & 0 & 0 & 0 & 0 & -1 & 1 & 0 & 1 & Q_{1}Q_{2}-Q_{1} \\ 
0 & 0 & 0 & 0 & 0 & 0 & 0 & 0 & 0 & 1 & -1 & 1 
\end{array}
\right)}
\end{equation*}

\noindent (3) $\sR_{\alpha_{1}+\alpha_{2}} \sR_{\alpha_{2}} \sR_{-\alpha_{1}} \sR_{-3\alpha_{1}-\alpha_{2}} \sR_{-2\alpha_{1}-\alpha_{2}} \sR_{-3\alpha_{1}-2\alpha_{2}}$: 
\begin{equation*}
{\fontsize{5pt}{0pt}
\left( 
\renewcommand{\arraystretch}{0.7}
\setlength{\arraycolsep}{2pt}
\begin{array}{cccccccccccc}
1 & Q_{1}Q_{2}-Q_{1} & Q_{2} & 0 & 0 & Q_{1}Q_{2}^{2}-Q_{1}Q_{2} & 0 & 0 & -Q_{1}Q_{2}^{2} & 0 & -Q_{1}Q_{2}^{2} & -Q_{1}^{2}Q_{2}^{3}+Q_{1}^{2}Q_{2}^{2} \\ 
-1 & 1 & 0 & Q_{2} & 0 & 0 & 0 & 0 & 0 & -Q_{1}Q_{2}^{2} & Q_{1}Q_{2}^{2} & -Q_{1}Q_{2}^{2} \\ 
1 & 0 & 1 & 0 & Q_{1}Q_{2}-Q_{1} & 0 & 0 & 0 & -Q_{1}Q_{2} & 0 & -Q_{1}Q_{2}^{2} & 0 \\ 
0 & 1 & 1 & 1 & 0 & Q_{1}Q_{2}-Q_{1} & 0 & 0 & -Q_{1}Q_{2} & -Q_{1}Q_{2} & 0 & -Q_{1}Q_{2}^{2} \\ 
0 & -1 & -1 & -Q_{2} & 1 & 0 & Q_{2} & 0 & 0 & 0 & 0 & 0 \\ 
0 & 0 & 0 & Q_{2}-1 & -1 & 1 & -Q_{2} & Q_{2} & 0 & 0 & 0 & 0 \\ 
0 & -1 & -1 & -1 & 1 & -Q_{1}Q_{2}+Q_{1} & 1 & 0 & Q_{1}Q_{2}-Q_{1} & 0 & 0 & Q_{1}Q_{2}^{2}-Q_{1}Q_{2} \\ 
0 & 0 & 0 & 0 & -1 & 1 & -1 & 1 & 0 & Q_{1}Q_{2}-Q_{1} & 0 & 0 \\ 
0 & 0 & 0 & -Q_{2}+1 & 0 & -1 & Q_{2}-1 & -Q_{2} & 1 & 0 & Q_{2} & 0 \\ 
0 & 0 & 0 & 0 & 0 & -1 & 0 & -1 & 1 & 1 & 0 & Q_{2} \\ 
0 & 0 & 0 & 0 & 0 & -1 & 0 & -1 & 1 & 0 & 1 & Q_{1}Q_{2}-Q_{1} \\ 
0 & 0 & 0 & 0 & 0 & 0 & 0 & 0 & 0 & 1 & -1 & 1 
\end{array}
\right)}
\end{equation*}

\noindent (4) $\sR_{3\alpha_{1}+2\alpha_{2}} \sR_{2\alpha_{1}+\alpha_{2}} \sR_{3\alpha_{1}+\alpha_{2}} \sR_{\alpha_{1}} \sR_{-\alpha_{2}} \sR_{-\alpha_{1}-\alpha_{2}}$: 
\begin{equation*}
{\fontsize{5pt}{0pt}
\left( 
\renewcommand{\arraystretch}{0.7}
\setlength{\arraycolsep}{2pt}
\begin{array}{cccccccccccc}
1 & -Q_{1}Q_{2}+Q_{1} & -Q_{2} & 0 & 0 & -Q_{1}Q_{2}^{2}+Q_{1}Q_{2} & 0 & 0 & -Q_{1}Q_{2}^{2} & 0 & Q_{1}Q_{2}^{2} & -Q_{1}^{2}Q_{2}^{3} + Q_{1}^{2}Q_{2}^{2} \\ 
1 & 1 & 0 & -Q_{2} & 0 & 0 & 0 & 0 & 0 & -Q_{1}Q_{2}^{2} & Q_{1}Q_{2}^{2} & Q_{1}Q_{2}^{2} \\ 
-1 & 0 & 1 & 0 & -Q_{1}Q_{2}+Q_{1} & 0 & 0 & 0 & Q_{1}Q_{2} & 0 & -Q_{1}Q_{2}^{2} & 0 \\ 
0 & -1 & -1 & 1 & 0 & -Q_{1}Q_{2}+Q_{1} & 0 & 0 & -Q_{1}Q_{2} & Q_{1}Q_{2} & 0 & -Q_{1}Q_{2}^{2} \\ 
0 & 1 & 1 & -Q_{2} & 1 & 0 & -Q_{2} & 0 & 0 & 0 & 0 & 0 \\ 
0 & 0 & 0 & -Q_{2}+1 & 1 & 1 & -Q_{2} & -Q_{2} & 0 & 0 & 0 & 0 \\ 
0 & -1 & -1 & 1 & -1 & -Q_{1}Q_{2}+Q_{1} & 1 & 0 & -Q_{1}Q_{2}+Q_{1} & 0 & 0 & -Q_{1}Q_{2}^{2}+Q_{1}Q_{2} \\ 
0 & 0 & 0 & 0 & -1 & -1 & 1 & 1 & 0 & -Q_{1}Q_{2}+Q_{1} & 0 & 0 \\ 
0 & 0 & 0 & -Q_{2}+1 & 0 & 1 & -Q_{2}+1 & -Q_{2} & 1 & 0 & -Q_{2} & 0 \\ 
0 & 0 & 0 & 0 & 0 & -1 & 0 & 1 & -1 & 1 & 0 & -Q_{2} \\ 
0 & 0 & 0 & 0 & 0 & -1 & 0 & 1 & -1 & 0 & 1 & -Q_{1}Q_{2}+Q_{1} \\ 
0 & 0 & 0 & 0 & 0 & 0 & 0 & 0 & 0 & -1 & 1 & 1 
\end{array}
\right)}
\end{equation*}

\noindent (5) $\sR_{2\alpha_{1}+\alpha_{2}} \sR_{3\alpha_{1}+\alpha_{2}} \sR_{\alpha_{1}} \sR_{-\alpha_{2}} \sR_{-\alpha_{1}-\alpha_{2}} \sR_{-3\alpha_{1}-2\alpha_{2}}$: 
\begin{equation*}
{\fontsize{5pt}{0pt}
\left( 
\renewcommand{\arraystretch}{0.7}
\setlength{\arraycolsep}{2pt}
\begin{array}{cccccccccccc}
1 & -Q_{1}Q_{2}+Q_{1} & -Q_{2} & 0 & 0 & Q_{1}Q_{2} & 0 & -Q_{1}Q_{2}^{2} & 0 & 0 & -Q_{1}Q_{2}^{2} & Q_{1}^{2}Q_{2}^{3}-Q_{1}^{2}Q_{2}^{2} \\ 
1 & 1 & 0 & -Q_{2} & 0 & 0 & 0 & 0 & 0 & 0 & -Q_{1}Q_{2}^{2} & -Q_{1}Q_{2}^{2} \\ 
-1 & 0 & 1 & 0 & -Q_{1}Q_{2}+Q_{1} & -Q_{1}Q_{2} & 0 & 0 & Q_{1}Q_{2} & 0 & 0 & 0 \\ 
0 & -1 & -1 & 1 & 0 & Q_{1} & 0 & 0 & -Q_{1}Q_{2} & Q_{1}Q_{2} & 0 & 0 \\ 
0 & 1 & 1 & 0 & 1 & 0 & -Q_{2} & 0 & 0 & 0 & 0 & -Q_{1}Q_{2}^{2} \\ 
0 & 0 & 0 & 1 & 1 & 1 & -Q_{2} & -Q_{2} & 0 & 0 & 0 & 0 \\ 
0 & 0 & 0 & -1 & -1 & -Q_{1} & 1 & 0 & Q_{1} & -Q_{1}Q_{2} & 0 & Q_{1}Q_{2} \\ 
0 & 0 & 0 & -1 & -1 & -1 & 1 & 1 & 0 & -Q_{1}Q_{2}+Q_{1} & 0 & 0 \\ 
0 & 0 & 0 & -1 & -1 & -1 & 1 & 0 & 1 & 0 & -Q_{2} & 0 \\ 
0 & 0 & 0 & 0 & 0 & 0 & 0 & 1 & -1 & 1 & 0 & -Q_{2} \\ 
0 & 0 & 0 & 0 & 0 & 0 & 0 & 1 & -1 & 0 & 1 & -Q_{1}Q_{2}+Q_{1} \\ 
0 & 0 & 0 & 0 & 0 & 0 & 0 & 0 & 0 & -1 & 1 & 1 
\end{array}
\right)}
\end{equation*}

\noindent (6) $\sR_{3\alpha_{1}+\alpha_{2}} \sR_{\alpha_{1}} \sR_{-\alpha_{2}} \sR_{-\alpha_{1}-\alpha_{2}} \sR_{-3\alpha_{1}-2\alpha_{2}} \sR_{-2\alpha_{1}-\alpha_{2}}$: 
\begin{equation*}
{\fontsize{5pt}{0pt}
\left( 
\renewcommand{\arraystretch}{0.7}
\setlength{\arraycolsep}{2pt}
\begin{array}{cccccccccccc}
1 & -Q_{1}Q_{2}+Q_{1} & -Q_{2} & 0 & -Q_{1}Q_{2} & Q_{1}Q_{2} & Q_{1}Q_{2}^{2} & -Q_{1}Q_{2}^{2} & 0 & 0 & -Q_{1}Q_{2}^{2} & Q_{1}^{2}Q_{2}^{3}-Q_{1}^{2}Q_{2}^{2} \\ 
1 & 1 & 0 & -Q_{2} & 0 & 0 & 0 & 0 & 0 & 0 & -Q_{1}Q_{2}^{2} & -Q_{1}Q_{2}^{2} \\ 
-1 & 0 & 1 & 0 & Q_{1} & -Q_{1}Q_{2} & 0 & 0 & Q_{1}Q_{2} & 0 & 0 & 0 \\ 
0 & -1 & -1 & 1 & -Q_{1} & Q_{1} & 0 & 0 & -Q_{1}Q_{2} & Q_{1}Q_{2} & 0 & 0 \\ 
0 & 1 & 1 & 0 & 1 & 0 & -Q_{2} & 0 & 0 & 0 & 0 & -Q_{1}Q_{2}^{2} \\ 
0 & -1 & -1 & 1 & -1 & 1 & Q_{2} & -Q_{2} & 0 & 0 & 0 & Q_{1}Q_{2}^{2} \\ 
0 & 0 & 0 & -1 & Q_{1}-1 & -Q_{1} & 1 & 0 & Q_{1} & -Q_{1}Q_{2} & 0 & Q_{1}Q_{2} \\ 
0 & 0 & 0 & 0 & -Q_{1}+1 & Q_{1}-1 & -1 & 1 & -Q_{1} & Q_{1} & 0 & -Q_{1}Q_{2} \\ 
0 & 0 & 0 & -1 & 0 & -1 & 1 & 0 & 1 & 0 & -Q_{2} & 0 \\ 
0 & 0 & 0 & 0 & 0 & 0 & -1 & 1 & -1 & 1 & 0 & -Q_{2} \\ 
0 & 0 & 0 & 0 & 0 & 0 & -1 & 1 & -1 & 0 & 1 & -Q_{1}Q_{2}+Q_{1} \\ 
0 & 0 & 0 & 0 & 0 & 0 & 0 & 0 & 0 & -1 & 1 & 1 
\end{array}
\right)}
\end{equation*}

\noindent (7) $\sR_{2\alpha_{1}+\alpha_{2}} \sR_{3\alpha_{1}+2\alpha_{2}} \sR_{\alpha_{1}+\alpha_{2}} \sR_{\alpha_{2}} \sR_{\alpha_{1}} \sR_{3\alpha_{1}+\alpha_{2}}$: 
\begin{equation*}
{\fontsize{5pt}{0pt}
\left( 
\renewcommand{\arraystretch}{0.7}
\setlength{\arraycolsep}{2pt}
\begin{array}{cccccccccccc}
1 & Q_{1}Q_{2}+Q_{1} & Q_{2} & 0 & Q_{1}Q_{2} & Q_{1}Q_{2} & Q_{1}Q_{2}^{2} & Q_{1}Q_{2}^{2} & 2Q_{1}Q_{2}^{2} & 0 & Q_{1}Q_{2}^{2} & Q_{1}^{2}Q_{2}^{3}+Q_{1}^{2}Q_{2}^{2} \\ 
1 & 1 & 0 & Q_{2} & 0 & 2Q_{1}Q_{2} & 0 & 2Q_{1}Q_{2}^{2} & 0 & 2Q_{1}Q_{2}^{2} & Q_{1}Q_{2}^{2} & Q_{1}Q_{2}^{2} \\ 
1 & 2Q_{1} & 1 & 0 & Q_{1} & Q_{1}Q_{2} & 0 & 0 & Q_{1}Q_{2} & 0 & 0 & 0 \\ 
2 & 2Q_{1}+1 & 1 & 1 & Q_{1} & 2Q_{1}Q_{2}+Q_{1} & 0 & 0 & Q_{1}Q_{2} & Q_{1}Q_{2} & 0 & 0 \\ 
0 & 1 & 1 & 0 & 1 & 0 & Q_{2} & 0 & 2Q_{1}Q_{2} & 0 & 0 & Q_{1}Q_{2}^{2} \\ 
0 & 1 & 1 & 1 & 1 & 1 & Q_{2} & Q_{2} & 2Q_{1}Q_{2} & 2 Q_{1}Q_{2} & 0 & Q_{1}Q_{2}^{2} \\ 
2 & 2Q_{1}+2 & 2 & 1 & Q_{1}+1 & 2Q_{1}Q_{2}+Q_{1} & 1 & 0 & 2Q_{1}Q_{2}+Q_{1} & Q_{1}Q_{2} & 0 & Q_{1}Q_{2} \\ 
2 & 2Q_{1}+2 & 2 & 2 & Q_{1}+1 &  2Q_{1}Q_{2}+Q_{1}+1 & 1 & 1 & 2Q_{1}Q_{2}+Q_{1} & 2Q_{1}Q_{2}+Q_{1} & 0 & Q_{1}Q_{2} \\ 
0 & 0 & 0 & 1 & 0 & 1 & 1 & 2Q_{2} & 1 & 2Q_{1}Q_{2} & Q_{2} & 2Q_{1}Q_{2} \\ 
0 & 0 & 0 & 0 & 0 & 0 & 1 & 2Q_{2}+1 & 1 & 1 & 2Q_{2} & 2Q_{1}Q_{2}+Q_{2} \\ 
0 & 0 & 0 & 0 & 0 & 0 & 1 & 1 & 1 & 0 & 1 & Q_{1}Q_{2}+Q_{1} \\ 
0 & 0 & 0 & 0 & 0 & 0 & 0 & 2 & 0 & 1 & 1 & 1 
\end{array}
\right)}
\end{equation*}

\noindent (8) $\sR_{3\alpha_{1}+2\alpha_{2}} \sR_{\alpha_{1}+\alpha_{2}} \sR_{\alpha_{2}} \sR_{\alpha_{1}} \sR_{3\alpha_{1}+\alpha_{2}} \sR_{2\alpha_{1}+\alpha_{2}}$: 
\begin{equation*}
{\fontsize{5pt}{0pt}
\left( 
\renewcommand{\arraystretch}{0.7}
\setlength{\arraycolsep}{2pt}
\begin{array}{cccccccccccc}
1 & Q_{1}Q_{2}+Q_{1} & Q_{2} & 0 & 2Q_{1}Q_{2} & Q_{1}Q_{2} & 2Q_{1}Q_{2}^{2} & Q_{1}Q_{2}^{2} & 2Q_{1}Q_{2}^{2} & 0 & Q_{1}Q_{2}^{2} & Q_{1}^{2}Q_{2}^{3}+Q_{1}^{2}Q_{2}^{2} \\ 
1 & 1 & 0 & Q_{2} & 2Q_{1}Q_{2} & 2Q_{1}Q_{2} & 2Q_{1}Q_{2}^{2} & 2Q_{1}Q_{2}^{2} & 0 & 2Q_{1}Q_{2}^{2} & Q_{1}Q_{2}^{2} & Q_{1}Q_{2}^{2} \\ 
1 & 2Q_{1} & 1 & 0 & Q_{1}Q_{2}+Q_{1} & Q_{1}Q_{2} & 0 & 0 & Q_{1}Q_{2} & 0 & 0 & 0 \\ 
2 & 2Q_{1}+1 & 1 & 1 & 2Q_{1}Q_{2}+2Q_{1} & 2Q_{1}Q_{2}+Q_{1} & 0 & 0 & Q_{1}Q_{2} & Q_{1}Q_{2} & 0 & 0 \\ 
0 & 1 & 1 & 0 & 1 & 0 & Q_{2} & 0 & 2Q_{1}Q_{2} & 0 & 0 & Q_{1}Q_{2}^{2} \\ 
0 & 0 & 0 & 1 & 1 & 1 & Q_{2} & Q_{2} & 0 & 2Q_{1}Q_{2} & 0 & 0 \\ 
2 & 2Q_{1}+2 & 2 & 1 & 2Q_{1}Q_{2}+2Q_{1}+1 & 2Q_{1}Q_{2}+Q_{1} & 1 & 0 & 2Q_{1}Q_{2}+Q_{1} & Q_{1}Q_{2} & 0 & Q_{1}Q_{2} \\ 
0 & 0 & 0 & 1 & 1 & 1 & 1 & 1 & 0 & Q_{1}Q_{2}+Q_{1} & 0 & 0 \\ 
0 & 0 & 0 & 1 & 1 & 1 & 2Q_{2}+1 & 2Q_{2} & 1 & 2Q_{1}Q_{2} & Q_{2} & 2Q_{1}Q_{2} \\ 
0 & 0 & 0 & 0 & 0 & 0 & 2Q_{2}+2 & 2Q_{2}+1 & 1 & 1 & 2Q_{2} & 2Q_{1}Q_{2}+Q_{2} \\ 
0 & 0 & 0 & 0 & 0 & 0 & 2 & 1 & 1 & 0 & 1 & Q_{1}Q_{2}+Q_{1} \\ 
0 & 0 & 0 & 0 & 0 & 0 & 2 & 2 & 0 & 1 & 1 & 1 
\end{array}
\right)}
\end{equation*}

\noindent (9) $\sR_{\alpha_{1}+\alpha_{2}} \sR_{\alpha_{2}} \sR_{\alpha_{1}} \sR_{3\alpha_{1}+\alpha_{2}} \sR_{2\alpha_{1}+\alpha_{2}} \sR_{3\alpha_{1}+2\alpha_{2}}$: 
\begin{equation*}
{\fontsize{5pt}{0pt}
\left( 
\renewcommand{\arraystretch}{0.7}
\setlength{\arraycolsep}{2pt}
\begin{array}{cccccccccccc}
1 & Q_{1}Q_{2}+Q_{1} & Q_{2} & 0 & 2Q_{1}Q_{2} & Q_{1}Q_{2}^{2}+Q_{1}Q_{2} & 0 & 0 & Q_{1}Q_{2}^{2} & 0 & Q_{1}Q_{2}^{2} & Q_{1}^{2}Q_{2}^{3}+Q_{1}^{2}Q_{2}^{2} \\ 
1 & 1 & 0 & Q_{2} & 2Q_{1}Q_{2} & 2Q_{1}Q_{2} & 0 & 0 & 0 & Q_{1}Q_{2}^{2} & Q_{1}Q_{2}^{2} & Q_{1}Q_{2}^{2} \\ 
1 & 2Q_{1} & 1 & 0 & Q_{1}Q_{2}+Q_{1} & 2Q_{1}Q_{2} & 0 & 0 & Q_{1}Q_{2} & 0 & Q_{1}Q_{2}^{2} & 2Q_{1}^{2}Q_{2}^{2} \\ 
2 & 2Q_{1}+1 & 1 & 1 & 2Q_{1}Q_{2}+2Q_{1} & 3Q_{1}Q_{2}+Q_{1} & 0 & 0 & Q_{1}Q_{2} & Q_{1}Q_{2} & 2Q_{1}Q_{2}^{2} & 2Q_{1}^{2}Q_{2}^{2}+Q_{1}Q_{2}^{2} \\ 
0 & 1 & 1 & Q_{2} & 1 & 2Q_{1}Q_{2} & Q_{2} & 0 & 2Q_{1}Q_{2} & 0 & 0 & 2Q_{1}Q_{2}^{2} \\ 
0 & 0 & 0 & Q_{2}+1 & 1 & 1 & Q_{2} & Q_{2} & 0 & 2Q_{1}Q_{2} & 0 & 0 \\ 
0 & 1 & 1 & 1 & 1 & Q_{1}Q_{2}+Q_{1} & 1 & 0 & Q_{1}Q_{2}+Q_{1} & 0 & 0 & Q_{1}Q_{2}^{2}+Q_{1}Q_{2} \\ 
0 & 0 & 0 & 2 & 1 & 1 & 1 & 1 & 0 & Q_{1}Q_{2}+Q_{1} & 0 & 0 \\ 
0 & 0 & 0 & Q_{2}+1 & 0 & 1 & Q_{2}+1 & Q_{2} & 1 & 0 & Q_{2} & 2Q_{1}Q_{2} \\ 
0 & 0 & 0 & 2Q_{2}+2 & 0 & 1 & 2Q_{2}+2 & 2Q_{2}+1 & 1 & 1 & 2Q_{2} & 2Q_{1}Q_{2}+Q_{2} \\ 
0 & 0 & 0 & 2 & 0 & 1 & 2 & 1 & 1 & 0 & 1 & Q_{1}Q_{2}+Q_{1} \\ 
0 & 0 & 0 & 2 & 0 & 0 & 2 & 2 & 0 & 1 & 1 & 1 
\end{array}
\right)}
\end{equation*}

\noindent (10) $\sR_{3\alpha_{1}+2\alpha_{2}} \sR_{2\alpha_{1}+\alpha_{2}} \sR_{3\alpha_{1}+\alpha_{2}} \sR_{\alpha_{1}} \sR_{\alpha_{2}} \sR_{\alpha_{1}+\alpha_{2}}$: 
\begin{equation*}
{\fontsize{5pt}{0pt}
\left( 
\renewcommand{\arraystretch}{0.7}
\setlength{\arraycolsep}{2pt}
\begin{array}{cccccccccccc}
1 & Q_{1}Q_{2}+Q_{1} & 2Q_{1}Q_{2}+Q_{2} & 2Q_{1}Q_{2} & 0 & Q_{1}Q_{2}^{2}+Q_{1}Q_{2} & 0 & 2Q_{1}Q_{2}^{2} & 2Q_{1}^{2}Q_{2}^{2}+Q_{1}Q_{2}^{2} & 2Q_{1}^{2}Q_{2}^{2} & Q_{1}Q_{2}^{2} & Q_{1}^{2}Q_{2}^{3}+Q_{1}^{2}Q_{2}^{2} \\ 
1 & 1 & 2Q_{2} & Q_{2} & 0 & 0 & 0 & 0 & 2Q_{1}Q_{2}^{2} & Q_{1}Q_{2}^{2} & Q_{1}Q_{2}^{2} & Q_{1}Q_{2}^{2} \\ 
1 & 0 & 1 & 0 & Q_{1}Q_{2}+Q_{1} & 0 & 2Q_{1}Q_{2} & 0 & Q_{1}Q_{2} & 0 & Q_{1}Q_{2}^{2} & 0 \\ 
0 & 1 & 1 & 1 & 0 & Q_{1}Q_{2}+Q_{1} & 0 & 2Q_{1}Q_{2} & Q_{1}Q_{2} & Q_{1}Q_{2} & 0 & Q_{1}Q_{2}^{2} \\ 
2 & 1 & 2Q_{2}+1 & Q_{2} & 1 & 0 & Q_{2} & 0 & 0 & 0 & 0 & 0 \\ 
2 & 2 & 2Q_{2}+2 & Q_{2}+1 & 1 & 1 & Q_{2} & Q_{2} & 0 & 0 & 0 & 0 \\ 
0 & 1 & 1 & 1 & 1 & Q_{1}Q_{2}+Q_{1} & 1 & 2Q_{1}Q_{2} & 3Q_{1}Q_{2}+Q_{1} & 2Q_{1}Q_{2} & 2Q_{1}Q_{2} & Q_{1}Q_{2}^{2}+Q_{1}Q_{2} \\ 
0 & 0 & 0 & 0 & 1 & 1 & 1 & 1 & 2Q_{1}Q_{2}+2Q_{1} & Q_{1}Q_{2}+Q_{1} & 2Q_{1}Q_{2} & 2Q_{1}Q_{2} \\ 
2 & 2 & 2Q_{2}+2 & Q_{2}+1 & 2 & 1 & Q_{2}+1 & Q_{2} & 1 & 0 & Q_{2} & 0 \\ 
0 & 0 & 0 & 0 & 0 & 1 & 0 & 1 & 1 & 1 & 0 & Q_{2} \\ 
0 & 0 & 0 & 0 & 0 & 1 & 0 & 1 & 2Q_{1}+1 & 2Q_{1} & 1 & Q_{1}Q_{2}+Q_{1} \\ 
0 & 0 & 0 & 0 & 0 & 0 & 0 & 0 & 2 & 1 & 1 & 1 
\end{array}
\right)}
\end{equation*}

\noindent (11) $\sR_{2\alpha_{1}+\alpha_{2}} \sR_{3\alpha_{1}+\alpha_{2}} \sR_{\alpha_{1}} \sR_{\alpha_{2}} \sR_{\alpha_{1}+\alpha_{2}} \sR_{3\alpha_{1}+2\alpha_{2}}$: 
\begin{equation*}
{\fontsize{5pt}{0pt}
\left( 
\renewcommand{\arraystretch}{0.7}
\setlength{\arraycolsep}{2pt}
\begin{array}{cccccccccccc}
1 & Q_{1}Q_{2}+Q_{1} & 2Q_{1}Q_{2}+Q_{2} & 2Q_{1}Q_{2} & 0 & Q_{1}Q_{2} & 0 & Q_{1}Q_{2}^{2} & 0 & 0 & Q_{1}Q_{2}^{2} & Q_{1}^{2}Q_{2}^{3}+Q_{1}^{2}Q_{2}^{2} \\ 
1 & 1 & 2Q_{2} & Q_{2} & 0 & 0 & 0 & 0 & 0 & 0 & Q_{1}Q_{2}^{2} & Q_{1}Q_{2}^{2} \\ 
1 & 0 & 1 & 2Q_{1}Q_{2} & Q_{1}Q_{2}+Q_{1} & Q_{1}Q_{2} & 2Q_{1}Q_{2} & 0 & Q_{1}Q_{2} & 0 & 2Q_{1}Q_{2}^{2} & 0 \\ 
0 & 1 & 1 & 1 & 0 & 2Q_{1}Q_{2}+Q_{1} & 0 & 2Q_{1}Q_{2} & Q_{1}Q_{2} & Q_{1}Q_{2} & 0 & 2Q_{1}Q_{2}^{2} \\ 
2 & 1 & 2Q_{2}+1  &2Q_{2} & 1 & 0 & Q_{2} & 0 & 0 & 0 & 2Q_{1}Q_{2}^{2} & Q_{1}Q_{2}^{2} \\ 
2 & 2 & 2Q_{2}+2 & 2Q_{2}+1 & 1 & 1 & Q_{2} & Q_{2} & 0 & 0 & 2Q_{1}Q_{2}^{2} & 2Q_{1}Q_{2}^{2} \\ 
0 & 0 & 0 & 1 & 1 & 2Q_{1}Q_{2}+Q_{1} & 1 & 0 & 2Q_{1}Q_{2}+Q_{1} & Q_{1}Q_{2} & 2Q_{1}Q_{2} & Q_{1}Q_{2} \\ 
0 & 0 & 0 & 1 & 1 & 2Q_{1}Q_{2}+2Q_{1}+1 & 1 & 1 & 2Q_{1}Q_{2}+2Q_{1} & Q_{1}Q_{2}+Q_{1} & 2Q_{1}Q_{2} & 2Q_{1}Q_{2} \\ 
0 & 0 & 0 & 1 & 1 & 1 & 1 & 0 & 1 & 0 & Q_{2} & 0 \\ 
0 & 0 & 0 & 0 & 0 & 2 & 0 & 1 & 1 & 1 & 0 & Q_{2} \\ 
0 & 0 & 0 & 0 & 0 & 2Q_{1}+2 & 0 & 1 & 2Q_{1}+1 & 2Q_{1} & 1 & Q_{1}Q_{2}+Q_{1} \\ 
0 & 0 & 0 & 0 & 0 & 2 & 0 & 0 & 2 & 1 & 1 & 1 
\end{array}
\right)}
\end{equation*}

\noindent (12) $\sR_{3\alpha_{1}+\alpha_{2}} \sR_{\alpha_{1}} \sR_{\alpha_{2}} \sR_{\alpha_{1}+\alpha_{2}} \sR_{3\alpha_{1}+2\alpha_{2}} \sR_{2\alpha_{1}+\alpha_{2}}$: 
\begin{equation*}
{\fontsize{5pt}{0pt}
\left( 
\renewcommand{\arraystretch}{0.7}
\setlength{\arraycolsep}{2pt}
\begin{array}{cccccccccccc}
1 & Q_{1}Q_{2}+Q_{1} & 2Q_{1}Q_{2}+Q_{2} & 2Q_{1}Q_{2} & Q_{1}Q_{2} & Q_{1}Q_{2} & Q_{1}Q_{2}^{2} & Q_{1}Q_{2}^{2} & 0 & 0 & Q_{1}Q_{2}^{2} & Q_{1}^{2}Q_{2}^{3}+Q_{1}^{2}Q_{2}^{2} \\ 
1 & 1 & 2Q_{2} & Q_{2} & 0 & 0 & 0 & 0 & 0 & 0 & Q_{1}Q_{2}^{2} & Q_{1}Q_{2}^{2} \\ 
1 & 0 & 1 & 2Q_{1}Q_{2} & 2Q_{1}Q_{2}+Q_{1} & Q_{1}Q_{2} & 2Q_{1}Q_{2} & 0 & Q_{1}Q_{2} & 0 & 2Q_{1}Q_{2}^{2} & 0 \\ 
0 & 1 & 1 & 1 & 2Q_{1}Q_{2}+Q_{1} & 2Q_{1}Q_{2}+Q_{1} & 2Q_{1}Q_{2} & 2Q_{1}Q_{2} & Q_{1}Q_{2} & Q_{1}Q_{2} & 0 & 2Q_{1}Q_{2}^{2} \\ 
2 & 1 & 2Q_{2}+1 & 2Q_{2} & 1 & 0 & Q_{2} & 0 & 0 & 0 & 2Q_{1}Q_{2}^{2} & Q_{1}Q_{2}^{2} \\ 
0 & 1 & 1 & 1 & 1 & 1 & 	Q_{2} & Q_{2} & 0 & 0 & 0 & Q_{1}Q_{2}^{2} \\ 
0 & 0 & 0 & 1 & 2Q_{1}Q_{2}+Q_{1}+1 & 2Q_{1}Q_{2}+Q_{1} & 1 & 0 & 2Q_{1}Q_{2}+Q_{1} & Q_{1}Q_{2} & 2Q_{1}Q_{2} & Q_{1}Q_{2} \\ 
0 & 0 & 0 & 0 & Q_{1}+1 & Q_{1}+1 & 1 & 1 & Q_{1} & Q_{1} & 0 & Q_{1}Q_{2} \\ 
0 & 0 & 0 & 1 & 2 & 1 & 1 & 0 & 1 & 0 & Q_{2} & 0 \\ 
0 & 0 & 0 & 0 & 2 & 2 & 1 & 1 & 1 & 1 & 0 & Q_{2} \\ 
0 & 0 & 0 & 0 & 2Q_{1}+2 & 2Q_{1}+2 & 1 & 1 & 2Q_{1}+1 & 2Q_{1} & 1 & Q_{1}Q_{2}+Q_{1} \\ 
0 & 0 & 0 & 0 & 2 & 2 & 0 & 0 & 2 & 1 & 1 & 1 
\end{array}
\right)}
\end{equation*}



\section{Example of quantum Yang-Baxter moves in type \texorpdfstring{$C_{2}$}{C2}.}\label{sec:App_B}

Based on Proposition~\ref{prop:rank2_shellability}, we explain how to construct quantum Yang-Baxter moves explicitly in a specific case. 
We assume that $\Fg$ is of type $C_{2}$. 
Let $\Pi$, $\Pi'$ be sequences of roots introduced in Section~\ref{subsec:rank2_shellability}. 
We consider the case that $v = s_{2}$ and $\Pi = (-2\alpha_{1}-\alpha_{2}, -\alpha_{1}, \alpha_{2}, \alpha_{1}+\alpha_{2})$. 
Note that $\Pi' = (\alpha_{1}+\alpha_{2}, \alpha_{2}, -\alpha_{1}, -2\alpha_{1}-\alpha_{2})$. 
Let us construct an explicit matching between a certain subset of $\CP(v, \Pi)$ and that of $\CP(v, \Pi')$, 
and also sign-reversing involutions outside of those subsets. 

Recall the matrices of the operators $\sR_{\alpha_{1}+\alpha_{2}}\sR_{\alpha_{2}}\sR_{\alpha_{1}}\sR_{2\alpha_{1}+\alpha_{2}}$ 
and $\sR_{2\alpha_{1}+\alpha_{2}}\sR_{\alpha_{1}}\sR_{\alpha_{2}}\sR_{\alpha_{1}+\alpha_{2}}$ calculated in Section~\ref{subsec:matrix_typeC2}. 
In particular, the $v$-column of the matrix of the operator $\sR_{\alpha_{1}+\alpha_{2}}\sR_{\alpha_{2}}\sR_{\alpha_{1}}\sR_{2\alpha_{1}+\alpha_{2}}$ 
(resp., $\sR_{2\alpha_{1}+\alpha_{2}}\sR_{\alpha_{1}}\sR_{\alpha_{2}}\sR_{\alpha_{1}+\alpha_{2}}$) is ${}^{t}(Q_{2}, 0, 1, 1, 1, 1, 1, 0)$ 
(resp., ${}^{t}(2Q_{1}Q_{2}+Q_{2}, 2Q_{2}, 1, 1, 2Q_{2}+1, 1, 1, 0)$). 
For example, the $(e, v)$-entry of the matrix of the operator $\sR_{2\alpha_{1}+\alpha_{2}}\sR_{\alpha_{1}}\sR_{\alpha_{2}}\sR_{\alpha_{1}+\alpha_{2}}$ 
is $2Q_{1}Q_{2}+Q_{2}$. 
Therefore, we deduce from equation \eqref{eq:prod_quantum_Bruhat_op_abs} that there exist exactly three $\Pi'$-compatible directed paths $\br^{(1)}, \br^{(2)}, \br^{(3)}$ 
such that 
\begin{itemize}
\item $\br^{(j)}$ starts at $v = s_{2}$ for $j = 1, 2, 3$, 
\item $\ed(\br^{(j)}) = e$, $j = 1, 2, 3$, 
\item $\down(\br^{(j)}) = \alpha_{1}^{\vee} + \alpha_{2}^{\vee}$, $j = 1, 2$, and 
\item $\down(\br^{(3)}) = \alpha_{2}^{\vee}$; 
\end{itemize}
remark that $Q^{\alpha_{1}^{\vee}+\alpha_{2}^{\vee}} = Q_{1}Q_{2}$ and $Q^{\alpha_{2}^{\vee}} = Q_{2}$. 
Similarly, we see that there exist six $\Pi$-compatible directed paths 
$\bp_{1}, \ldots, \bp_{6}$ such that $\CP(v, \Pi) = \{ \bp_{1}, \ldots, \bp_{6} \}$. 
Also, there exist twelve $\Pi'$-compatible directed paths 
$\bq_{1}, \ldots, \bq_{12}$ such that $\CP(v, \Pi') = \{ \bq_{1}, \ldots, \bq_{12} \}$. 
For $\bp \in \CP(v, \Pi)$ (resp., $\bq \in \CP(v, \Pi')$), the statistics $\ed(\bp)$, $\down(\bp)$ (resp., $\ed(\bq)$, $\down(\bq)$) 
are given in Tables~\ref{tab:end+down1}, \ref{tab:end+down2}. 

\begin{table}[htbp]
\begin{minipage}[t]{0.5\hsize}
\centering
\caption{Statistics of elements $\bp \in \CP(v, \Pi)$}
\label{tab:end+down1}
\begin{tabular}{|c||cc|} \hline 
$\bp \in \CP(v, \Pi)$ & $\ed(\bp)$ & $\down(\bp)$ \\ \hline
$\bp_{1}$ & $e$ & $\alpha_{2}^{\vee}$ \\ 
$\bp_{2}$ & $s_{2}$ & $0$ \\ 
$\bp_{3}$ & $s_{1}s_{2}$ & $0$ \\ 
$\bp_{4}$ & $s_{2}s_{1}$ & $0$ \\ 
$\bp_{5}$ & $s_{1}s_{2}s_{1}$ & $0$ \\ 
$\bp_{6}$ & $s_{2}s_{1}s_{2}$ & $0$ \\ \hline
\end{tabular}
\end{minipage}
\begin{minipage}[t]{0.5\hsize}
\centering
\caption{Statistics of elements $\bq \in \CP(v, \Pi')$}
\label{tab:end+down2}
\begin{tabular}{|c||cc|} \hline 
$\bq \in \CP(v, \Pi')$ & $\ed(\bq)$ & $\down(\bq)$ \\ \hline 
$\bq_{1}$ & $e$ & $\alpha_{1}^{\vee} + \alpha_{2}^{\vee}$ \\ 
$\bq_{2}$ & $e$ & $\alpha_{1}^{\vee} + \alpha_{2}^{\vee}$ \\ 
$\bq_{3}$ & $e$ & $\alpha_{2}^{\vee}$ \\ 
$\bq_{4}$ & $s_{1}$ & $\alpha_{2}^{\vee}$ \\ 
$\bq_{5}$ & $s_{1}$ & $\alpha_{2}^{\vee}$ \\ 
$\bq_{6}$ & $s_{2}$ & $0$ \\ 
$\bq_{7}$ & $s_{1}s_{2}$ & $0$ \\ 
$\bq_{8}$ & $s_{2}s_{1}$ & $\alpha_{2}^{\vee}$ \\ 
$\bq_{9}$ & $s_{2}s_{1}$ & $\alpha_{2}^{\vee}$ \\ 
$\bq_{10}$ & $s_{2}s_{1}$ & $0$ \\ 
$\bq_{11}$ & $s_{1}s_{2}s_{1}$ & $0$ \\ 
$\bq_{12}$ & $s_{2}s_{1}s_{2}$ & $0$ \\ \hline
\end{tabular}
\end{minipage}
\end{table}

Note that $\bp \in \CP(v, \Pi)$ and $\bq \in \CP(v, \Pi')$ are explicitly written as follows: 
\begin{alignat*}{2}
\bp_{1} &: s_{2} \xrightarrow{\alpha_{2}} e; \quad & \bp_{2} &: s_{2} \quad \text{(the trivial directed path)}; \\ 
\bp_{3} &: s_{2} \xrightarrow{\alpha_{1}+\alpha_{2}} s_{1}s_{2}; \quad & \bp_{4} &: s_{2} \xrightarrow{\alpha_{1}} s_{2}s_{1}; \displaybreak[1] \\
\bp_{5} &: s_{2} \xrightarrow{\alpha_{1}} s_{2}s_{1} \xrightarrow{\alpha_{1}+\alpha_{2}} s_{1}s_{2}s_{1}; \quad & \bp_{6} &: s_{2} \xrightarrow{\alpha_{1}} s_{2}s_{1} \xrightarrow{\alpha_{2}} s_{2}s_{1}s_{2}; \displaybreak[1] \\ 
\bq_{1} &: s_{2} \xrightarrow{\alpha_{1}+\alpha_{2}} s_{1}s_{2} \xrightarrow{\alpha_{2}} s_{1} \xrightarrow{\alpha_{1}} e; \quad & \bq_{2} &: s_{2} \xrightarrow{\alpha_{1}+\alpha_{2}} s_{1}s_{2} \xrightarrow{\alpha_{1}} s_{1}s_{2}s_{1} \xrightarrow{2\alpha_{1}+\alpha_{2}} e; \displaybreak[1] \\ 
\bq_{3} &: s_{2} \xrightarrow{\alpha_{2}} e; \quad & \bq_{4} &: s_{2} \xrightarrow{\alpha_{1}+\alpha_{2}} s_{1}s_{2} \xrightarrow{\alpha_{2}} s_{1}; \displaybreak[1] \\ 
\bq_{5} &: s_{2} \xrightarrow{\alpha_{2}} e \xrightarrow{\alpha_{1}} s_{1}; \quad & \bq_{6} &: s_{2} \quad \text{(the trivial directed path)}; \displaybreak[1] \\ 
\bq_{7} &: s_{2} \xrightarrow{\alpha_{1}+\alpha_{2}} s_{1}s_{2}; \quad & \bq_{8} &: s_{2} \xrightarrow{\alpha_{1}+\alpha_{2}} s_{1}s_{2} \xrightarrow{\alpha_{2}} s_{1} \xrightarrow{2\alpha_{1}+\alpha_{2}} s_{2}s_{1}; \displaybreak[1] \\ 
\bq_{9} &: s_{2} \xrightarrow{\alpha_{2}} e \xrightarrow{\alpha_{1}} s_{1} \xrightarrow{2\alpha_{1}+\alpha_{2}} s_{2}s_{1}; \quad & \bq_{10} &: s_{2} \xrightarrow{\alpha_{1}} s_{2}s_{1}; \displaybreak[1] \\ 
\bq_{11} &: s_{2} \xrightarrow{\alpha_{1}+\alpha_{2}} s_{1}s_{2} \xrightarrow{\alpha_{1}} s_{1}s_{2}s_{1}; \quad & \bq_{12} &: s_{2} \xrightarrow{\alpha_{1}+\alpha_{2}} s_{1}s_{2} \xrightarrow{2\alpha_{1}+\alpha_{2}} s_{2}s_{1}s_{2}. 
\end{alignat*}
Thus, if we set
\begin{align*}
\CP_{0}(v, \Pi) &:= \CP(v, \Pi), \\ 
\CP_{0}(v, \Pi') &:= \{ \bq_{3}, \bq_{6}, \bq_{7}, \bq_{10}, \bq_{11}, \bq_{12} \} \subset \CP(v, \Pi'), \\ 
\CP_{0}^{C}(v, \Pi') &:= \{ \bq_{1}, \bq_{2}, \bq_{4}, \bq_{5}, \bq_{8}, \bq_{9} \} = \CP(v, \Pi') \setminus \CP_{0}(v, \Pi'),
\end{align*}
then we obtain the following bijection $Y^{v, \Pi}: \CP_{0}(v, \Pi) \rightarrow \CP_{0}(v, \Pi')$ and involution $I_{2}^{v, \Pi'}$ on $\CP_{0}^{C}(v, \Pi')$, 
which preserve $\ed(\cdot)$ and $\down(\cdot)$: 
\begin{alignat*}{6}
Y^{v, \Pi} &: \bp_{1} \mapsto \bq_{3}, \quad & &\bp_{2} \mapsto \bq_{6}, \quad & &\bp_{3} \mapsto \bq_{7}, \quad & &\bp_{4} \mapsto \bq_{10}, \quad & &\bp_{5} \mapsto \bq_{11}, \quad & &\bp_{6} \mapsto \bq_{12}; \\ 
I_{2}^{v, \Pi'} &: \bq_{1} \mapsto \bq_{2}, \quad & &\bq_{2} \mapsto \bq_{1}, \quad & &\bq_{4} \mapsto \bq_{5}, \quad & &\bq_{5} \mapsto \bq_{4}, \quad & &\bq_{8} \mapsto \bq_{9}, \quad & &\bq_{9} \mapsto \bq_{8}. 
\end{alignat*}
These maps give the correspondence $\bp \mapsto \bp'$ in Proposition~\ref{prop:rank2_shellability}. 

Now, let us give an example of generalized quantum Yang-Baxter moves. 
Let $\lambda \in P$. Take $\lambda$-chains $\Gamma_1$ and $\Gamma_2$ 
such that $\Gamma_{2}$ is obtained from $\Gamma_{1}$ by the Yang-Baxter transformation (YB). 
Let $w \in W$. 
As in equations \eqref{eq:division_Gamma1} and \eqref{eq:division_Gamma2}, we take $\Gamma_{1}^{(k)}$, $\Gamma_{2}^{(k)}$, $k = 1, 2, 3$. 
Also, as in equations \eqref{eq:division_A} and \eqref{eq:division_B}, 
we take $A^{(k)}$ (resp., $B^{(k)}$), $k = 1, 2, 3$, for $A \in \CA(w, \Gamma_{1})$ (resp., $B \in \CA(w, \Gamma_{2})$). 
In this example, we consider the case that $\Gamma_{1}^{(2)} = \Pi$ and $\Gamma_{2}^{(2)} = \Pi'$. 
By the consideration above, we can give an explicit description of quantum Yang-Baxter moves 
for $A \in \CA(w, \Gamma_{1})$ (resp., $B \in \CA(w, \Gamma_{2})$) such that $\ed(A^{(1)}) = s_{2}$ (resp., $\ed(B^{(1)}) = s_{2}$), 
as given in Tables~\ref{tab:explicit_YB-move1}, \ref{tab:explicit_YB-move2}. 

\begin{table}[htbp]
\centering
\caption{List of $Y(A)$ for $A \in \CA_{0}(w, \Gamma_{1})$ such that $\ed(A^{(1)}) = s_{2}$}
\label{tab:explicit_YB-move1}
\begin{tabular}{|c||c|c||c|} \hline 
$A^{(2)}$ & $\bp(A^{(2)})$ & $\bp(Y(A)^{(2)}) = Y^{v, \Pi}(\bp(A^{(2)}))$ & $Y(A)^{(2)}$ \\ \hline
$\emptyset$ & $\bp_{2}$ & $\bq_{6}$ & $\emptyset$ \\ 
$\{t+2\}$ & $\bp_{4}$ & $\bq_{10}$ & $\{t+3\}$ \\
$\{t+3\}$ & $\bp_{1}$ & $\bq_{3}$ & $\{t+2\}$ \\ 
$\{t+4\}$ & $\bp_{3}$ & $\bq_{7}$ & $\{t+1\}$ \\ 
$\{t+2, t+3\}$ & $\bp_{6}$ & $\bq_{12}$ & $\{t+1, t+4\}$ \\ 
$\{t+2, t+4\}$ & $\bp_{5}$ & $\bq_{11}$ & $\{t+1, t+3\}$ \\ \hline
\end{tabular}
\end{table}

\begin{table}[htbp]
\centering
\caption{List of $I_{2}(B)$ for $B \in \CA_{0}^{C}(w, \Gamma_{2})$ such that $\ed(B^{(1)}) = s_{2}$}
\label{tab:explicit_YB-move2}
\begin{tabular}{|c||c|c||c|} \hline
$B^{(2)}$ & $\bp(B^{(2)})$ & $\bp(I_{2}(B)^{(2)}) = I_{2}^{v, \Pi'}(\bp(B^{(2)}))$ & $I_{2}(B)^{(2)}$ \\ \hline 
$\{t+1, t+2\}$ & $\bq_{4}$ & $\bq_{5}$ & $\{t+2, t+3\}$ \\ 
$\{t+2, t+3\}$ & $\bq_{5}$ & $\bq_{4}$ & $\{t+1, t+2\}$ \\ 
$\{t+1, t+2, t+3\}$ & $\bq_{1}$ & $\bq_{2}$ & $\{t+1, t+3, t+4\}$ \\ 
$\{t+1, t+2, t+4\}$ & $\bq_{8}$ & $\bq_{9}$ & $\{t+2, t+3, t+4\}$ \\ 
$\{t+1, t+3, t+4\}$ & $\bq_{2}$ & $\bq_{1}$ & $\{t+1, t+2, t+3\}$ \\ 
$\{t+2, t+3, t+4\}$ & $\bq_{9}$ & $\bq_{8}$ & $\{t+1, t+2, t+4\}$ \\ \hline 
\end{tabular}
\end{table}



\section{The right-hand side of the identity of Chevalley type for graded characters.} \label{sec:App_C}
We show that the right-hand side of \eqref{eq:PC-type_formula} is identical to zero if $\mu + \lambda \notin P^{+}$. 
\begin{prop}\label{prop:sum_equals_0}
Let $\mu \in P^{+}$, and $x = wt_{\xi} \in W_{\af}$ with $w \in W$ and $\xi \in Q^{\vee}$. 
Take $\lambda \in P$ such that $\mu + \lambda \notin P^{+}$, and let $\Gamma$ be an arbitrary reduced $\lambda$-chain. 
Then we have
\begin{equation*}
\sum_{A \in \CA(w, \Gamma)} \sum_{\bchi \in \bPar(\lambda)} (-1)^{n(A)} q^{-\height(A) - \pair{\lambda}{\xi} - |\bchi|} e^{\wt(A)} \gch V_{\ed(A)t_{\xi + \down(A) + \iota(\bchi)}}^{-}(\mu) = 0. 
\end{equation*}
\end{prop}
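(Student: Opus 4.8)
The plan is to deduce this vanishing from the corresponding statement for an \emph{anti-dominant} weight, exploiting the fact that the displayed sum is nothing but the image of $\widehat{\bG}_\Gamma(x)$ under the $R[P]$-module homomorphism $\phi\colon R[P][W_\af]\to\BZ\pra{q^{-1}}[P]$, $y\mapsto\gch V_y^-(\mu)$, introduced in the proof of Theorem~\ref{thm:PC-type_formula}; so it suffices to show $\phi(\widehat{\bG}_\Gamma(x))=0$. First I would reuse the setup from that proof: decompose $\lambda=\lambda^++\lambda^-$ with $\lambda^+\in P^+$ and $\lambda^-\in-P^+$ (a cancellation-free decomposition), fix lex $\lambda^\pm$-chains $\Gamma^\pm$, and form $\Gamma_0:=\Gamma^+\ast\Gamma^-$, which is weakly reduced by Proposition~\ref{prop-wr}. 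Since $\bG_\Gamma$, and hence $\widehat{\bG}_\Gamma$, is independent of the choice of weakly reduced $\lambda$-chain by Theorem~\ref{thm:generating_function_reduced}, I may replace $\Gamma$ by $\Gamma_0$, after which Theorem~\ref{comm_mu_nu_par} supplies the factorization $\widehat{\bG}_{\Gamma_0}(x)=\widehat{\bG}_{\Gamma^+}\circ\widehat{\bG}_{\Gamma^-}(x)$.

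Next I would peel off the outer operator $\widehat{\bG}_{\Gamma^+}$ using the dominant Chevalley formula. Because $\mu\in P^+$ and $\lambda^+\in P^+$ force $\mu+\lambda^+\in P^+$, Theorem~\ref{thm:PC-dominant} (stated in terms of the lex $\lambda^+$-chain $\Gamma^+$) says precisely that $\phi\circ\widehat{\bG}_{\Gamma^+}$ coincides, as an $R[P]$-module homomorphism, with $\phi''\colon y\mapsto\gch V_y^-(\mu+\lambda^+)$. Applying this identity of homomorphisms yields
\begin{equation*}
\phi(\widehat{\bG}_\Gamma(x))=\phi\bigl(\widehat{\bG}_{\Gamma^+}(\widehat{\bG}_{\Gamma^-}(x))\bigr)=\phi''\bigl(\widehat{\bG}_{\Gamma^-}(x)\bigr).
\end{equation*}
Expanding the right-hand side through the definition of $\widehat{\bG}_{\Gamma^-}$---and noting that $\bPar(\lambda^-)$ is a singleton while $n(A)=|A|$, since $\Gamma^-$ consists of negative roots---shows that it is exactly the anti-dominant Chevalley-type sum with base weight $\mu+\lambda^+$ and added weight $\lambda^-$.

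It then remains to prove that this anti-dominant sum is zero, which I would isolate as the anti-dominant analogue of the present proposition (Proposition~\ref{prop:sum_equals_0_anti-dominant}): for a dominant weight $\mu'$, an anti-dominant weight $\lambda'$ with $\mu'+\lambda'\notin P^+$, and the lex $\lambda'$-chain $\Gamma'$, the alternating sum $\sum_{A\in\CA(w,\Gamma')}(-1)^{|A|}q^{-\height(A)-\pair{\lambda'}{\xi}}e^{\wt(A)}\gch V_{\ed(A)t_{\xi+\down(A)}}^{-}(\mu')$ vanishes. I would invoke it with $\mu'=\mu+\lambda^+\in P^+$ and $\lambda'=\lambda^-$; the required hypothesis holds because $(\mu+\lambda^+)+\lambda^-=\mu+\lambda\notin P^+$. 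This forces $\phi''(\widehat{\bG}_{\Gamma^-}(x))=0$, and hence $\phi(\widehat{\bG}_\Gamma(x))=0$, which is the assertion. Note that this route avoids any delicate case analysis on $\mu+\lambda^-$: the dominant step is unconditional, and the non-dominance is fed directly into the anti-dominant step.

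The hard part is the anti-dominant vanishing itself (Proposition~\ref{prop:sum_equals_0_anti-dominant}): unlike the reduction above, it cannot be obtained by formal manipulation of the generating functions, since it expresses a genuine cancellation among graded characters forced by the non-dominance of $\mu'+\lambda'$. I expect to establish it by a representation-theoretic argument, realizing the relevant graded characters via semi-infinite Lakshmibai--Seshadri paths and constructing a sign-reversing involution on the indexing set that witnesses the cancellation, following \cite[Appendix~B]{NOS}. This step, rather than the purely combinatorial reduction, is where the essential input lies.
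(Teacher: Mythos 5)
Your proposal is correct, and it reaches the same essential input as the paper (the anti-dominant vanishing, Proposition~\ref{prop:sum_equals_0_anti-dominant} from \cite[Appendix~B]{NOS}), but the route through the middle is genuinely different. The paper never invokes Theorem~\ref{thm:PC-dominant}: after replacing $\Gamma$ by $\Gamma_0=\Gamma^+\ast\Gamma^-$, it expands the sum explicitly over pairs $A\in\CA(w,\Gamma^+)$, $B\in\CA(\ed(A),\Gamma^-)$ via the concatenation bijection~\eqref{concat-adm-subs} and Lemma~\ref{lem:statistics}, pulls translation factors out of the characters by Proposition~\ref{prop:character_translation}, and then applies the anti-dominant vanishing \emph{fiberwise} -- for each fixed $A$, to the inner sum over $B$ -- with base weight $\mu$ itself and anti-dominant weight $\lambda^-$; this requires the observation that $\mu+\lambda\notin P^+$ forces $\mu+\lambda^-\notin P^+$. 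You instead exploit the commutativity in Theorem~\ref{comm_mu_nu_par} to place $\widehat{\bG}_{\Gamma^+}$ next to the character homomorphism $\phi$, collapse $\phi\circ\widehat{\bG}_{\Gamma^+}$ into $y\mapsto\gch V_y^-(\mu+\lambda^+)$ by Theorem~\ref{thm:PC-dominant} (valid unconditionally since $\mu,\lambda^+\in P^+$), and then apply the anti-dominant vanishing \emph{once}, with base weight $\mu+\lambda^+$, the hypothesis $(\mu+\lambda^+)+\lambda^-=\mu+\lambda\notin P^+$ being immediate. Your version buys a cleaner, computation-free argument (no explicit use of Lemma~\ref{lem:statistics} or of the translation identity in the main chain of equalities, beyond the remark extending Proposition~\ref{prop:sum_equals_0_anti-dominant} to $W_{\af}$) at the cost of one extra black box; the paper's version is longer but uses the vanishing only at the original base weight $\mu$ and keeps Theorem~\ref{thm:PC-dominant} out of the logical dependencies of this proposition. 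Both are complete proofs, and your deferral of the anti-dominant vanishing itself to the sign-reversing-involution argument of \cite[Appendix~B]{NOS} matches exactly how the paper treats that input.
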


In the proof of Proposition~\ref{prop:sum_equals_0}, we make use of the following equalities for graded characters. 
\begin{prop}[{\cite[Proposition~D.1]{KNS}}]\label{prop:character_translation}
For each $x \in W_{\af}$, $\xi \in Q^{\vee}$, and $\lambda \in P^{+}$, one has 
\begin{equation*}
\gch V_{xt_{\xi}}^{-}(\lambda) = q^{-\pair{\lambda}{\xi}} \gch V_{x}^{-}(\lambda). 
\end{equation*}
\end{prop}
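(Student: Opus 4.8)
The plan is to deduce the identity from the structural fact that translation by an element of $Q^{\vee}$ acts on the level-zero extremal weight module $V(\lambda)$ as a degree-shifting automorphism commuting with the derived quantum affine algebra. First I would recall that $U_{\q}^{-}(\Fg_{\af})$ is contained in the derived subalgebra $U_{\q}'(\Fg_{\af})$ (generated by $E_i,F_i$, $i\in I_{\af}$, without the degree operator), and that, for each $\xi\in Q^{\vee}$, the theory of level-zero extremal weight modules (cf.~\cite{Kas}) provides a $U_{\q}'(\Fg_{\af})$-linear automorphism $S_{\xi}$ of $V(\lambda)$ which is characterized on the extremal weight vectors by $S_{\xi}v_{x}=v_{xt_{\xi}}$ for all $x\in W_{\af}$. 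The existence of this mutually commuting family of shift operators, together with their commutation with $U_{\q}'(\Fg_{\af})$, is the essential input.

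Next I would track weights. Since $\lambda$ has level zero, the translation $t_{\xi}$ acts on it by $t_{\xi}\lambda=\lambda-\pair{\lambda}{\xi}\delta$, where $\delta$ is the null root of $\Fg_{\af}$; as $\delta$ is fixed by all of $W_{\af}$, the weight of $v_{xt_{\xi}}$ equals $x\lambda-\pair{\lambda}{\xi}\delta$, so it differs from the weight of $v_{x}$ purely by the shift $-\pair{\lambda}{\xi}\delta$ in the imaginary direction. Consequently $S_{\xi}$ is a linear isomorphism that preserves the classical ($P$-)weight and lowers the $\delta$-grading uniformly by $\pair{\lambda}{\xi}$.

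Then I would pass to the Demazure submodules. Because $S_{\xi}$ commutes with $U_{\q}^{-}(\Fg_{\af})$ and sends $v_{x}$ to $v_{xt_{\xi}}$, it restricts to an isomorphism
\begin{equation*}
V_{xt_{\xi}}^{-}(\lambda)=U_{\q}^{-}(\Fg_{\af})v_{xt_{\xi}}=U_{\q}^{-}(\Fg_{\af})S_{\xi}v_{x}=S_{\xi}\bigl(U_{\q}^{-}(\Fg_{\af})v_{x}\bigr)=S_{\xi}\bigl(V_{x}^{-}(\lambda)\bigr).
\end{equation*}
Since $S_{\xi}$ preserves classical weights and shifts the $\delta$-grading uniformly, and since, under the grading convention of \cite[\S 2.4]{KNS}, a weight shift by $-\pair{\lambda}{\xi}\delta$ is recorded by the factor $q^{-\pair{\lambda}{\xi}}$, applying $\gch$ to both sides yields $\gch V_{xt_{\xi}}^{-}(\lambda)=q^{-\pair{\lambda}{\xi}}\gch V_{x}^{-}(\lambda)$, as desired. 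This is consistent with the containments $\gch V_{w}^{-}(\lambda)\in\BZ\bra{q^{-1}}[P]$ and $\gch V_{wt_{\xi}}^{-}(\lambda)\in\BZ[P]\bra{q^{-1}}q^{-\pair{\lambda}{\xi}}$ recorded earlier.

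The main obstacle is the first step: producing the commuting automorphism $S_{\xi}$ with $S_{\xi}v_{x}=v_{xt_{\xi}}$. One must either invoke Kashiwara's realization of $V(\lambda)$ as a module over a Laurent-polynomial ring of such shift operators commuting with $U_{\q}'(\Fg_{\af})$, or bypass the module structure and argue at the level of crystals: realize the crystal of $V(\lambda)$ by semi-infinite Lakshmibai--Seshadri paths as in \cite{INS}, identify $V_{x}^{-}(\lambda)$ with the corresponding Demazure-type subcrystal, and construct an explicit weight-preserving, degree-shifting bijection between the subcrystals attached to $xt_{\xi}$ and to $x$, induced by translation of paths. I would favour the crystal route for a self-contained argument, relying on the known compatibility of the degree function with translation; both routes reduce to the same book-keeping of the $\delta$-grading and of the shift $v_{x}\mapsto v_{xt_{\xi}}$.
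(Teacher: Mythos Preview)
The paper does not prove this proposition; it is quoted without proof from \cite[Proposition~D.1]{KNS} and then used as a black box in the proof of Proposition~\ref{prop:sum_equals_0}. So there is no in-paper argument to compare your proposal against.

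That said, your outline is the standard one and is essentially what underlies the cited result: at level zero the translation $t_{\xi}$ acts on extremal weights by a pure $\delta$-shift, and Kashiwara's theory furnishes a $U_{\q}'(\Fg_{\af})$-linear automorphism of $V(\lambda)$ (the $z_i$-action in his construction of level-zero extremal weight modules) implementing $v_{x}\mapsto v_{xt_{\xi}}$. Your deduction that this restricts to an isomorphism $V_{x}^{-}(\lambda)\cong V_{xt_{\xi}}^{-}(\lambda)$ shifting the $\delta$-degree by $\pair{\lambda}{\xi}$ is correct, and the crystal alternative via \cite{INS} you mention is also a legitimate route. The only caveat is that the ``main obstacle'' you flag---pinning down $S_{\xi}$ precisely and checking it sends each $v_{x}$ to $v_{xt_{\xi}}$ rather than a scalar multiple---does require care with Kashiwara's normalization, but this is exactly what \cite[Proposition~D.1]{KNS} packages, and the present paper is content to cite it.
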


\begin{prop}[{cf. \cite[Appendix~B]{NOS}}]\label{prop:sum_equals_0_anti-dominant}
Let $\mu \in P^{+}$ and $x \in W$. 
Take $\lambda \in -P^{+}$ such that $\mu + \lambda \notin P^{+}$, and let $\Gamma$ be the lex $\lambda$-chain. 
Then we have 
\begin{equation*}
\sum_{A \in \CA(x, \Gamma)} (-1)^{|A|} q^{-\height(A)} e^{\wt(A)} \gch V_{\ed(A) t_{\down(A)}}^{-}(\mu) = 0. 
\end{equation*}
\end{prop}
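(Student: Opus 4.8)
The plan is to identify the left-hand side of the proposition with the image of the generating function $\widehat{\bG}_{\Gamma}(x)$ under the $R[P]$-module homomorphism $\eta\colon R[P][W_{\af}]\to\BZ\pra{q^{-1}}[P]$ determined by $y\mapsto\gch V_{y}^{-}(\mu)$, exactly the homomorphism used in the proof of Theorem~\ref{thm:PC-type_formula}. Indeed, unravelling the definitions~\eqref{def:gf-adm} and~\eqref{def:gf-adm-par}, the left-hand side of the proposition is precisely $\eta(\widehat{\bG}_{\Gamma}(x))$, so it suffices to prove $\eta(\widehat{\bG}_{\Gamma}(x))=0$.

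First I would decompose $\lambda=\lambda^{+}+\lambda^{-}$ with $\lambda^{+}\in P^{+}$ and $\lambda^{-}\in -P^{+}$ as in the proof of Theorem~\ref{thm:PC-type_formula}; this decomposition is cancellation free. The crucial elementary observation is that $\mu+\lambda\notin P^{+}$ forces $\mu+\lambda^{-}\notin P^{+}$: if $\pair{\mu+\lambda}{\alpha_{i}^{\vee}}<0$ for some $i$, then necessarily $\pair{\lambda}{\alpha_{i}^{\vee}}<0$ (as $\mu\in P^{+}$), whence $\pair{\mu+\lambda^{-}}{\alpha_{i}^{\vee}}=\pair{\mu+\lambda}{\alpha_{i}^{\vee}}<0$. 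Taking $\Gamma^{+}$ (resp.\ $\Gamma^{-}$) to be the lex $\lambda^{+}$-chain (resp.\ lex $\lambda^{-}$-chain) and $\Gamma_{0}:=\Gamma^{+}\ast\Gamma^{-}$, Proposition~\ref{prop-wr} guarantees that $\Gamma_{0}$ is weakly reduced, so Theorems~\ref{comm_mu_nu_par} and~\ref{thm:generating_function_reduced} yield $\widehat{\bG}_{\Gamma}(x)=\widehat{\bG}_{\Gamma_{0}}(x)=\widehat{\bG}_{\Gamma^{-}}\circ\widehat{\bG}_{\Gamma^{+}}(x)$, exactly as in~\eqref{proof_PC}.

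The heart of the argument is then to show that the composite $\eta\circ\widehat{\bG}_{\Gamma^{-}}$ annihilates every basis element $y=w't_{\xi'}\in W_{\af}$. Since $\lambda^{-}\in -P^{+}$, the set $\bPar(\lambda^{-})$ consists only of the all-empty tuple (the length bound $\max\{m_{i},0\}$ is zero for every $i$), so $\widehat{\bG}_{\Gamma^{-}}(y)=\bG_{\Gamma^{-}}(y)$. Moreover $\Gamma^{-}$ consists entirely of negative roots, hence $n(A)=|A|$ for every $A\in\CA(w',\Gamma^{-})$. Applying $\eta$ and using Proposition~\ref{prop:character_translation} to rewrite $\gch V_{\ed(A)t_{\xi'+\down(A)}}^{-}(\mu)=q^{-\pair{\mu}{\xi'}}\gch V_{\ed(A)t_{\down(A)}}^{-}(\mu)$, I would pull out the scalar $q^{-\pair{\lambda^{-}}{\xi'}-\pair{\mu}{\xi'}}$, which is independent of $A$, and recognize the remaining sum as precisely the left-hand side of Proposition~\ref{prop:sum_equals_0_anti-dominant} (with $x=w'$ and weight $\lambda^{-}$, for which $\Gamma^{-}$ is indeed the required lex $\lambda^{-}$-chain). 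Since $\mu+\lambda^{-}\notin P^{+}$, that proposition gives $\eta(\bG_{\Gamma^{-}}(y))=0$.

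Finally, writing $\widehat{\bG}_{\Gamma^{+}}(x)=\sum_{y}c_{y}\,y$ as a finite $R[P]$-linear combination of basis elements and invoking the $R[P]$-linearity of both $\widehat{\bG}_{\Gamma^{-}}$ and $\eta$, I conclude $\eta(\widehat{\bG}_{\Gamma}(x))=\sum_{y}c_{y}\,\eta(\widehat{\bG}_{\Gamma^{-}}(y))=0$, as desired. I expect the only real obstacle to be bookkeeping: carefully matching the normalizing powers of $q$ and the identity $(-1)^{n(A)}=(-1)^{|A|}$ so that the sum produced by $\eta\circ\bG_{\Gamma^{-}}$ agrees term-by-term with the expression in Proposition~\ref{prop:sum_equals_0_anti-dominant}. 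The conceptual content—reducing the vanishing for an arbitrary $\lambda$ to the anti-dominant vanishing via the cancellation-free splitting together with the commutativity and chain-independence of the generating functions—is already supplied by the results cited above.
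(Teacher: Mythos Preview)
Your argument is circular. The statement you are asked to prove is Proposition~\ref{prop:sum_equals_0_anti-dominant}, in which $\lambda$ is already anti-dominant ($\lambda\in -P^{+}$). Carrying out your decomposition $\lambda=\lambda^{+}+\lambda^{-}$ in this situation gives $\lambda^{+}=0$ and $\lambda^{-}=\lambda$, so $\Gamma^{+}$ is the empty chain, $\Gamma_{0}=\Gamma^{-}=\Gamma$, and $\widehat{\bG}_{\Gamma^{+}}(x)=x$. Your ``heart of the argument'' then amounts to showing $\eta(\bG_{\Gamma^{-}}(y))=0$ by invoking Proposition~\ref{prop:sum_equals_0_anti-dominant} with weight $\lambda^{-}=\lambda$ --- that is, the very statement you are trying to establish. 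What you have actually reproduced is the proof of Proposition~\ref{prop:sum_equals_0} (the general-$\lambda$ vanishing), which \emph{uses} the anti-dominant case as input; you have not proved that input.

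The paper does not supply its own proof of Proposition~\ref{prop:sum_equals_0_anti-dominant}; it cites \cite[Appendix~B]{NOS}, where the argument is carried out in the language of semi-infinite Lakshmibai--Seshadri paths rather than via the generating-function machinery of Section~\ref{sec:generating_function}. A genuine proof must therefore appeal to some representation-theoretic or crystal-theoretic input (e.g., the structure of the Demazure crystals $V_{y}^{-}(\mu)$ and an explicit sign-reversing involution on the combinatorial objects indexing them), not merely to the formal properties of $\bG_{\Gamma}$ and $\widehat{\bG}_{\Gamma}$, which by themselves cannot detect the hypothesis $\mu+\lambda\notin P^{+}$.
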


\begin{rem}
\begin{enu}
\item In \cite{NOS}, Proposition~\ref{prop:sum_equals_0_anti-dominant} is stated and proved in terms of semi-infinite Lakshmibai-Seshadri paths. 
\item By Proposition~\ref{prop:character_translation}, we see that Proposition~\ref{prop:sum_equals_0_anti-dominant} also holds for $x \in W_{\af}$. 
\end{enu}
\end{rem}

\begin{proof}[Proof of Proposition~\ref{prop:sum_equals_0}]
By considering $\lambda^{\pm}$, $\Gamma^{\pm}$, $\Gamma_{0}$, and by using Theorems~\ref{comm_mu_nu_par} and \ref{thm:generating_function_reduced} as in the proof of Theorem~\ref{thm:PC-type_formula} (cf.~\eqref{proof_PC}), we obtain:
\begin{align}
& \sum_{A \in \CA(w, \Gamma)} \sum_{\bchi \in \bPar(\lambda)} (-1)^{n(A)} q^{-\height(A) - \pair{\lambda}{\xi} - |\bchi|} e^{\wt(A)} \gch V_{\ed(A)t_{\xi + \down(A) + \iota(\bchi)}}^{-}(\mu) \nonumber \\[3mm] 
&= \sum_{A \in \CA(w, \Gamma_{0})} \sum_{\bchi \in \bPar(\lambda)} (-1)^{n(A)} q^{-\height(A) - \pair{\lambda}{\xi} - |\bchi|} e^{\wt(A)} \gch V_{\ed(A)t_{\xi + \down(A) + \iota(\bchi)}}^{-}(\mu) \nonumber \\[3mm] 
\begin{split}
&= \sum_{A \in \CA(w, \Gamma^{+})} \sum_{B \in \CA(\ed(A), \Gamma^{-})} \sum_{\bchi \in \bPar(\lambda^{+})} (-1)^{|B|} q^{-\height(A)-\height(B)-\pair{\lambda^{-}}{\down(A)+\iota(\bchi)}-\pair{\lambda}{\xi}-|\bchi|} \\ 
& \hspace*{64mm} \times e^{\wt(A) + \wt(B)} \gch V_{\ed(B) t_{\xi + \down(A) + \down(B) + \iota(\bchi)}}^{-}(\mu)
\end{split} \nonumber \\[3mm] 
\begin{split}
&= \sum_{A \in \CA(w, \Gamma^{+})} \sum_{B \in \CA(\ed(A), \Gamma^{-})} \sum_{\bchi \in \bPar(\lambda^{+})} (-1)^{|B|} q^{-\height(A)-\height(B)-\pair{\lambda^{-}}{\down(A)+\iota(\bchi)}-\pair{\lambda}{\xi}-|\bchi|} \\ 
& \hspace*{64mm} \times q^{-\pair{\mu}{\xi + \down(A) + \iota(\bchi)}} e^{\wt(A) + \wt(B)} \gch V_{\ed(B) t_{\down(B)}}^{-}(\mu)
\end{split} \nonumber \\[3mm] 
\begin{split}
&= q^{-\pair{\mu + \lambda}{\xi}} \sum_{A \in \CA(w, \Gamma^{+})} \sum_{\bchi \in \bPar(\lambda^{+})} q^{-\height(A) - \pair{\lambda^{-}+\mu}{\down(A) + \iota(\bchi)} - |\bchi|} e^{\wt(A)} \\ 
& \hspace*{45mm} \times \sum_{B \in \CA(\ed(A), \Gamma^{-})} (-1)^{|B|} q^{-\height(B)}  e^{\wt(B)} \gch V_{\ed(B) t_{\down(B)}}^{-}(\mu);  
\end{split} \label{eq:sum_equals_0_middle}
\end{align}
here the third equality follows by Proposition~\ref{prop:character_translation}. 
Since $\mu + \lambda \notin P^{+}$, it follows that $\mu + \lambda^{-} \notin P^{+}$. 
Therefore, we deduce by Proposition~\ref{prop:sum_equals_0_anti-dominant} that 
\begin{equation*}
\sum_{B \in \CA(\ed(A), \Gamma^{-})} (-1)^{|B|} q^{-\height(B)}  e^{\wt(B)} \gch V_{\ed(B) t_{\down(B)}}^{-}(\mu) = 0
\end{equation*}
for each $A \in \CA(w, \Gamma^{+})$, and hence that \eqref{eq:sum_equals_0_middle} is identical to zero, as needed. 
\end{proof}


\end{document}